\documentclass[11pt, notitlepage]{article}
\makeatletter\@addtoreset{section}{part}\makeatother%
\usepackage{amssymb}
\usepackage[margin=2.5cm]{geometry}
\usepackage{amsmath}
\usepackage{amsthm}
\usepackage{bbm}
\usepackage{amsfonts}
\usepackage{mathtools}
\usepackage{graphicx} 
\usepackage{subcaption} 
\usepackage{relsize}
\usepackage[numbers]{natbib}
\usepackage{titling}
\usepackage[]{algorithm2e}
\usepackage{enumitem}
\usepackage{hyperref}

\usepackage{framed}
\usepackage[title]{appendix}
\appendixtitleon
\appendixtitletocon
\newcommand{\subtitle}[1]{%
  \posttitle{%
    \par\end{center}
    \begin{center}\large#1\end{center}
    \vskip0.5em}%
}

\usepackage[dvipsnames]{xcolor}

\newcommand{\xddots}{%
  \raise 4pt \hbox {.}
  \mkern 6mu
  \raise 1pt \hbox {.}
  \mkern 6mu
  \raise -2pt \hbox {.}
}
\usepackage{hyperref}
\hypersetup{
colorlinks=true,
citecolor=OliveGreen, 
linkcolor=blue 
}

\title{Kac's Process with Hard Potentials and a Moderate Angular Singularity}

\author{Daniel Heydecker}
\date{\today}
\newcommand{\EE}{\ensuremath{\mathbb{E}}}

\newcommand{\Nn}{\ensuremath{\mathcal{N}}}
\newcommand{\PP}{\ensuremath{\mathbb{P}}}
\newcommand{\Ss}{\ensuremath{\mathcal{S}}}

\newcommand{\T}{\ensuremath{\mathcal{T}}}
\newcommand{\Q}{\ensuremath{\mathcal{Q}}}

\newcommand{\hs}{\ensuremath{\hspace{1cm}}}
\newcommand{\h}{\ensuremath{\hspace{0.1cm}}}

\definecolor{do}{HTML}{9932cc}

\newcommand{\RRd}{\ensuremath{\mathbb{R}^d}}
\newcommand{\SSd}{\ensuremath{\mathbb{S}^{d-2}}}

\newtheorem{theorem}{Theorem}
\newtheorem{lemma}{Lemma}[section]
\newtheorem{cor}[lemma]{Corollary}
\newtheorem{rmk}[lemma]{Remark}
\newtheorem{proposition}[lemma]{Proposition}
\newtheorem{remark}[theorem]{Remark}
\newtheorem{defn}[lemma]{Definition}

\begin{document}
\maketitle

\begin{abstract}
 We investigate Kac's many-particle stochastic model of gas dynamics in the case of hard potentials with a moderate angular singularity, and show that the noncutoff particle system can be obtained as the limit of cutoff systems, with a rate independent of the number of particles $N$. As consequences, we obtain a wellposedness result for the corresponding Boltzmann equation, and convergence of the particle system in the limit $N\rightarrow \infty$.

\end{abstract}

\section{Introduction \& Main Results} Let us consider Kac's model \cite{kac1956foundations} for the behaviour of a dilute gas.  We consider an ensemble of $N$ indistinguishable particles, with velocities $V^1_t, ..., V^N_t \in \mathbb{R}^d$, which are are encoded in the empirical velocity distribution $\mu^N_t=N^{-1}\sum_{i=1}^N \delta_{V^i_t}$. The rates of each possible collision are governed by a \emph{collision kernel} $B:\mathbb{R}^d\times \mathbb{S}^{d-1}\rightarrow [0,\infty)$, which reflects the physics of the underlying system, and the dynamics can be described informally as follows. For every unordered pair of particles with velocities $v,v_\star$, the velocities change to  \begin{equation}\label{eq: postcollisional}
     v\mapsto v'=\frac{v+v_\star}{2}+\frac{|v-v_\star|}{2}\sigma; \hs v_\star\mapsto v'_\star=\frac{v+v_\star}{2}-\frac{|v-v_\star|}{2}\sigma;
 \end{equation} at a rate $2B(v-v_\star,\sigma)d\sigma/N$. These collisions preserve energy and momentum, so that the total energy $\langle |v|^2, \mu^N_t\rangle$ and momentum $\langle v, \mu^N_t\rangle$ are preserved as time runs. Let us define the \emph{Boltzmann Sphere} $\mathcal{S}$ as those measures on $\mathbb{R}^d$ with normalised mass, momentum and energy:\footnote{here, and throughout, angle brackets $\langle\cdot ,\cdot \rangle$ denote integration against a measure, and $v$ denotes the identity function on $\mathbb{R}^d$.} \begin{equation}\label{eq: normalisation} \langle 1, \mu \rangle =1; \hspace{0.5cm} \langle v, \mu \rangle =0; \hspace{0.5cm} \langle |v|^2, \mu \rangle = 1.\end{equation} We also write $\mathcal{S}_N$ for the subspace of $\mathcal{S}$ consisting of normalised empirical measures on $N$ points; we equip both spaces with the (Monge-Kantorovich-)Wasserstein distance $w_1$ recalled below (\ref{eq: usual WP}, \ref{eq: dual W1}). Due to Gallilean invariance, the collision kernel is assumed to be of the form $B(v,\sigma)=B(|v|, \cos \theta)$, where $\theta$ is  the deflection angle given by $\cos \theta=\sigma\cdot v/|v|$. We will consider the case of \emph{noncutoff hard potentials}, where the kernel has the form \begin{equation}\label{eq: form of B}
     B(v,\sigma)=|v|^\gamma (\sin \theta)^{2-d}
    \beta(\theta); \quad \beta(\theta)\sim \theta^{-1-\nu}\text{ as } \theta\downarrow 0
 \end{equation} with $\gamma \in [0,1], \nu\in [0,1)$. We will also assume that $\beta$ takes the form $\beta(\theta)=b(\cos \theta)$, for a convex function $b:(-1,1)\rightarrow [0,\infty)$; (\ref{eq: form of B}) then rearranges to $b(x)\sim (1-x)^{-1/2-\nu/2}$ as $x\uparrow 1$. Thanks to the symmetry of collisions, we may assume further that $b$ is supported on $[0,1)$; see the discussion in Alexandre  et al. \cite{alexandre2000entropy}. \bigskip \\ In $d=3$, such kernels arise when modelling particles interacting through a repulsive potential $V(r)=r^{-s}, s>5$, with $\gamma=\frac{s-5}{s-s}, \nu=\frac{2}{s-2}$. Importantly, the kernel is not integrable, due to to the abundance of \emph{grazing collisions}, reflected in the non-integrable singularity of $\beta$ as $\theta\downarrow 0$.  The cases we consider have a mild angular singularity, so that \begin{equation} \label{eq: mild singularity}  \int_{\mathbb{S}^{d-1}} B(v,\sigma)d\sigma=\infty; \hs 
     \int_{\mathbb{S}^{d-1}} \theta B(v,\sigma)d\sigma<\infty.
 \end{equation}The divergence of the total rate $\int B d\sigma$ implies that each pair of particles undergoes infinitely many collisions on any nontrivial time interval, and there is work to be done in understanding the informal description of the dynamics above. Formally, a Kac process is a Markov process $\mu^N_t$ in $(\mathcal{S}_N,w_1)$, with c\`adl\`ag paths and generator, defined for Lipschitz $F:(\mathcal{S}_N, w_1)\rightarrow \mathbb{R}$, \begin{equation}\begin{split}\label{eq: generator}
     (\mathcal{G}^NF)(\mu^N)=&N\int_{\mathbb{R}^d\times\mathbb{R}^d\times \mathbb{S}^{d-1}} (F(\mu^{N, v,v_\star,\sigma})-F(\mu^N))\\& \hs \hs \dots\times B(v-v_\star,\sigma)\mu^N(dv)\mu^N(dv_\star)d\sigma. \end{split}
 \end{equation} Here, we have written  $\mu^{N,v,v_\star,\sigma}=\mu^N+N^{-1}(\delta_{v'}+\delta_{v'_\star}-\delta_v-\delta_{v_\star})$ for the measure $\mu^N$ replacing precollisional velocities $v,v_\star\in \text{supp}(\mu^N)$ by postcollisional velocities $v',v_\star'$, given in terms of $v,v',\sigma$ by (\ref{eq: postcollisional}). One can check that $w_1(\mu^{N,v,v_\star,\sigma}, \mu^N)\le 2|v-v_\star|\theta/N$, and so, thanks to (\ref{eq: mild singularity}), the integral written above is convergent for Lipschitz $F$. However, the total rate is still infinite, and it is not \emph{\`a priori} clear that the associated martingale problem is well-posed.

 \paragraph{Labelled vs Unlabelled Dynamics.} Let us briefly mention that it is also possible to work with a \emph{labelled} Kac process $\mathcal{V}^N_t=(V^1_t,...V^N_t) \in (\mathbb{R}^d)^N$, where each particle is assigned a label $i=1,...,N$; in this case, the same normalisation (\ref{eq: normalisation}) now defines a subspace $\mathbb{S}_N\subset (\mathbb{R}^d)^N$. The interchangability of the particles means that such processes have a $\text{Sym}(N)$-symmetry by exchanging the labels; we identify the orbit of $\mathcal{V}^N\in \mathbb{S}_N$ with its empirical measures, and $\mathcal{S}_N\equiv (\mathbb{R}^d)^N/\text{Sym}(N)$. Let us also write $\theta_N:\mathbb{S}_N\rightarrow \mathcal{S}_N$ for the map taking a $N$ velocities $\mathcal{V}^N=(V^1,...,V^N)\in \mathbb{S}_N$ to the associated empirical measure $\mu^N=N^{-1}\sum_i\delta_{V^i}.$ \bigskip \\ Our results are most naturally phrased at the level of empirical measures $\mu^N_t$, especially as this emphasises the important $N$-independence and makes the connections to other results clear. However, many of the intermediate calculations are more naturally phrased in terms of the labelled process; in section \ref{sec: JSDE}, we will introduce a formulation of the labelled process in terms of a stochastic differential equation with jumps, and formalise relationship to the unlabelled process.  \paragraph{Grad's Angular Cutoff.} A convenient simplification of the dynamics above is Grad's angular cutoff, which truncates $B$ near small values of $\theta$; one might hope that this truncation preserves, in some meaningful sense, the physics of the system under consideration. Let us define, for $K>0$, \begin{equation}\label{eq: cutoff kernel}
     B_K(v,\sigma)=B(v,\sigma)1(\theta>\theta_0(K))
 \end{equation} where $\theta_0(K)$ is chosen so that $\int_{\mathbb{S}^{d-1}} B_K(v,\theta)d\sigma=K|v|^\gamma.$ We can now consider the $K$-cutoff Kac processes $\mathcal{V}^{N,K}_t$ on these kernels, with generator defined analagously to (\ref{eq: generator}). In this case, the total rate is finite, and the associated martingale problem has uniqueness in law. The central approach of this paper, summarised in Theorem \ref{thrm: approximate with cutoff}, is that any noncutoff Kac process $\mu^N_t$ on $N$ particles can be obtained as the limit of cutoff process $\mu^{N,K}_t$ of cutoff processes as $K\rightarrow \infty$, in an $N$-uniform way. This will imply a similar result for the cutoff and non-cutoff Boltzmann equations described below, and demonstrates the well-posedness of the martingale problem for (\ref{eq: generator}). 
 \paragraph{Measure Solutions to the Boltzmann Equation.} Kac introduced the stochastic system described above in an effort to justify the spatially homogeneous Boltzmann Equation; following previous works \cite{fournier2009well,mischler2013kac,norris2016consistency}, we will consider measure-valued solutions. For a measure $\mu$ with finite second moment, we define the Boltzmann collision operator by specifying, for all Lipschitz $f:\RRd\rightarrow \mathbb{R}$, \begin{equation} \begin{split} \langle f, Q(\mu)\rangle =\int_{\mathbb{R}^d\times\mathbb{R}^d\times\mathbb{S}^{d-1}}& \left\{f(v')+f(v_\star)-f(v)-f(v_\star)\right\}\\&\hs \hs \dots\times B(v-v_\star,d\sigma)\mu(dv)\mu(dv_\star). \end{split} \end{equation} The same argument as below (\ref{eq: generator}) shows that this integral is well-defined for Lipschitz $f$, but $Q$ must be interpreted as a distribution, for example in the negative Sobolev space $W^{-1,\infty}(\mathbb{R}^d)=W^{1,\infty}(\mathbb{R}^d)^\star$, rather than a signed measure.  We say that a family $(\mu_t)_{t\geq 0}$ of measures in $\mathcal{S}$ satisfies the \emph{Boltzmann equation} if, for any Lipschitz $f$ of compact support, \begin{equation} \tag{BE}\label{BE} \forall t \geq 0 \hspace{1cm} \langle f, \mu_t \rangle =\langle f, \mu_0 \rangle +\int_0^t \langle f, Q(\mu_s)\rangle ds. \end{equation} Replacing $Q$ with the cutoff collision operator $Q_K$, with $B_K$ in place of $B$, we obtain the corresponding $K$-cutoff Boltzmann equations: \begin{equation} \tag{BE$_K$} \label{eq: BEK} \mu_t=\mu_0+\int_0^t Q_K(\mu_s)ds
 \end{equation} which may be interpreted in a similar way to (\ref{BE}) above. \paragraph{Notation regarding Moments.} Our estimates will frequently include moments of the Kac process or Boltzmann flow, and it is convenient to introduce notation to deal with this.  Let us define $\mathcal{S}^k$ as those measures $\mu\in \mathcal{S}$ with a finite $k^\text{th}$ moment $\langle |v|^k, \mu\rangle$, and define \begin{equation}\Lambda_k(\mu):=\langle(1+|v|^2)^{k/2}, \mu\rangle; \hs \Lambda_k(\mu, \nu):=\max(\Lambda_k(\mu), \Lambda_k(\nu)). \end{equation}  With this notation, we define  \begin{equation} \label{eq: definition of SKA} \mathcal{S}^k_a := \{\mu \in \mathcal{S}: \Lambda_k(\mu)\leq a\}. \end{equation}

 \paragraph{A Weighted Wasserstein$_2$ Distance.} When working with labelled dynamics, we will use the following semimetric. We write, for $p\ge 0$, \begin{equation}
     d_p(v,w)=(1+|v|^p+|w|^p)^{1/2}|v-w|.
 \end{equation} At the level of unlabelled dynamics, or the limiting equation, our results are most naturally phrased in terms of the following semimetric of Wasserstein type.  For $\mu, \nu\in \mathcal{S}^{p+2}$, we define $W_p$ as an optimal transportation cost \begin{equation}\label{eq: define Wp} W_{p}(\mu,\nu)=\inf_{\pi \in \Pi(\mu,\nu)} \left(\int_{\RRd\times\RRd} d_p(v,w)^2 \pi(dv,dw)\right)^{1/2} \end{equation} where $\Pi(\mu, \nu)$ denotes the set of couplings of $\mu$ and $\nu$; we emphasise that $p$ here denotes a weighting, and not an exponent in the usual sense of Wasserstein$_p$ metrics. For completeness, we will discuss the properties of these distances, including their relationship to other metrics of Wasserstein type, in Section \ref{sec: metrics}. 
 
\subsection{Main Results} With this notation and terminology fixed, we can now state our main results. The central result, from which our other results will follow, is the following coupling of cutoff and noncutoff Kac processes. \begin{theorem}[Tanaka Coupling of Kac Processes]\label{thrm: approximate with cutoff} Let $B$ be a kernel of the form described above. There exists $p_0=p_0(B,d)$ and, for $p>p_0$, $K_0=K_0(B,p,d)$ such that, whenever $p>p_0$ and $K>K_0$, there exists $C=C(B,p,d)$ such that the following holds. \bigskip \\  
Fix $a_1, a_2, a_3\ge 1$, $N\ge 2$, and let $\mu^N_0, \widetilde{\mu}^{N,K}_0\in \mathcal{S}_N$ be empirical measures satisfing \begin{equation} \Lambda_{p+\gamma}\left(\mu^N_0,\widetilde{\mu}^{N,K}_0\right)\le a_1; \hs  \Lambda_{l}\left(\mu^N_0,\widetilde{\mu}^{N,K}_0\right)\le a_2; \hs  \Lambda_{q}\left(\mu^N_0,\widetilde{\mu}^{N,K}_0\right)\le a_3 \end{equation}where $l=p+2+\gamma, q=2p+4+2\gamma$. Then there exists a coupling of a noncutoff Kac process $\mu^N_t$ starting at $\mu^N_0$ and a $K$-cutoff Kac process $\widetilde{\mu}^{N,K}_t$ starting at $\widetilde{\mu}^{N,K}_0$ such that, for all $t\ge 0$,  \begin{equation}\begin{split}
    \EE\left[W_p^2\left(\mu^N_t,\widetilde{\mu}^{N,K}_t\right)\right] &\le e^{Ca_1(1+t)}\left(W_p^2\left(\mu^N_0,\widetilde{\mu}^{N,K}_0\right)+a_2K^{1-1/\nu}\right)\\& \hspace{5.5cm}+Ca_3^2tN^{-1/2}\end{split}
\end{equation} and, for all $t_\mathrm{fin}\ge 0$,  \begin{equation}\begin{split}\label{eq: statement of LU tanaka} \EE\left[\sup_{t\le t_\mathrm{fin}} \h W_p^2\left(\mu^N_t,\widetilde{\mu}^{N,K}_t\right)\right]& \le e^{Ca_1(1+t_\mathrm{fin})}\bigg(W_p^2\left(\mu^N_0,\widetilde{\mu}^{N,K}_0\right)+a_2tK^{1-1/\nu}\\& \hspace{3.5cm}\dots+\frac{Ca_3^2(1+t_\mathrm{fin})^{2}}{N^{1/2}}\bigg)\end{split}\end{equation}
\end{theorem}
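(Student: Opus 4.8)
The plan is to realise a noncutoff Kac process and a $K$-cutoff Kac process on a common probability space as solutions of a single coupled system of jump stochastic differential equations, and then to run a Grönwall argument on the labelled discrepancy functional $\Phi_t := \sum_{i=1}^N d_p(V^i_t, \widetilde V^{i}_t)^2$, using that $\Phi_t \geq N\, W_p^2(\mu^N_t, \widetilde\mu^{N,K}_t)$ for every choice of labelling, with equality at $t=0$ if the initial labelling is chosen to realise the $W_p$-optimal coupling of $\mu^N_0$ and $\widetilde\mu^{N,K}_0$ (which, for two $N$-point empirical measures, is a permutation). The coupling is of Tanaka type: for each unordered pair of labels $\{i,j\}$, a collision at deflection angle $\theta > \theta_0(K)$ is made to occur simultaneously in both systems, on the same pair and at the same $\theta$, but with the $\SSd$-part of the collision parameter for the cutoff system rotated by an orthogonal map of $\RRd$ carrying $(V^i-V^j)/|V^i-V^j|$ to $(\widetilde V^i-\widetilde V^j)/|\widetilde V^i-\widetilde V^j|$, while a collision at $\theta < \theta_0(K)$ occurs in the noncutoff system only; the mismatch between the two pair-rates (which differ through the hard-potential factor $|v-v_\star|^\gamma$ and the rotation) is absorbed into an auxiliary coupling of the driving Poisson random measures. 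Using the jump-SDE representation of Section~\ref{sec: JSDE} and its compatibility with the unlabelled dynamics, the theorem reduces to bounds on $\mathbb{E}[\Phi_t]$ and $\mathbb{E}[\sup_{t\le t_{\mathrm{fin}}}\Phi_t]$.

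Next I would apply Dynkin's formula to $\Phi_t$ and split the generator of the coupled process into the contributions of the three kinds of event. For a matched collision of $\{i,j\}$ at $\theta > \theta_0(K)$, the increment of $\Phi$ is $[d_p(V^{i\prime},\widetilde V^{i\prime})^2 - d_p(V^i, \widetilde V^i)^2]$ plus the term with $i$ and $j$ exchanged; expanding $d_p^2$ and integrating against $b(\cos\theta)\,d\sigma$ over $\theta > \theta_0(K)$, one applies the classical Tanaka estimate --- the rotation is chosen so that, after averaging over the $\SSd$-fibre, the contribution linear in the deflection has a favourable sign, and the convexity of $b$ dominates the quadratic remainder --- together with $|V^{i\prime}-V^i| \lesssim |V^i - V^j|\,\theta$. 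Both $\int \theta^2 b\,d\sigma$ and $\int\theta\, b\,d\sigma$ are finite because $\nu < 1$, and dominate the corresponding tail integrals over $\theta>\theta_0(K)$ uniformly in $K$; summing over pairs, and using $\gamma \leq 1$ to absorb the collision-rate growth $|v-v_\star|^\gamma$ into the weight of $d_p$, the matched-collision contribution is $\leq C a_1 \Phi_t$ (up to the moment bounds below). This stability term does not vanish as $K\to\infty$, but is harmless since it is controlled by $\Phi_t$. For an unmatched collision at $\theta < \theta_0(K)$ only $V^i, V^j$ move, and $|d_p(V^{i\prime},\widetilde V^i)^2 - d_p(V^i,\widetilde V^i)^2| \lesssim (1+|V^i|+|V^j|+|\widetilde V^i|)^{p}\,|V^i - V^j|\,\theta\,(1+|V^i-\widetilde V^i|)$; integrating $\theta\, b(\cos\theta)\,d\sigma$ over $\theta < \theta_0(K)$, using $\theta_0(K)\sim K^{-1/\nu}$ to get a factor $\sim \theta_0(K)^{1-\nu}\sim K^{1-1/\nu}$, and splitting the $|V^i-\widetilde V^i|$-independent part (which yields the genuine error) from the remainder (which is absorbed into the stability term), this contribution is $\leq C a_2 K^{1-1/\nu}$. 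Finally, the rate mismatch and the $O(1/N)$ scale of the empirical-measure jumps produce, after a Cauchy--Schwarz reduction and using the higher moment $\Lambda_q$, a compensator term of order $a_3^2 N^{-1/2}$, and, for the time-uniform bound, a martingale whose bracket is of the same order.

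Assembling these pieces gives a linear differential inequality for $\mathbb{E}[\Phi_t]$ whose homogeneous coefficient is $C a_1$ and whose inhomogeneity is of order $a_2 K^{1-1/\nu}$ and $a_3^2 N^{-1/2}$ per particle; Grönwall's inequality then yields the first display. For the time-uniform estimate one follows the same route but, before closing the Grönwall argument, applies Doob's and the Burkholder--Davis--Gundy inequalities to $\sup_{t\le t_{\mathrm{fin}}}$ of the martingale part of $\Phi_t$, whose bracket is controlled by the same moment; this is where the extra factor of $t_{\mathrm{fin}}$ and the $(1+t_{\mathrm{fin}})^2$ enter. Throughout, the constants $C$ hide a priori moment bounds on both the cutoff and noncutoff Kac processes --- that $\Lambda_{p+\gamma}$, $\Lambda_l$ and $\Lambda_q$ remain controlled (by $Ca_i e^{Ct}$) over time --- which I would establish beforehand by a Povzner-type argument; hard potentials together with the fixed energy on the Boltzmann sphere make this moment propagation benign, and it is responsible for the factor $e^{Ca_1(1+t)}$.

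The main obstacle is the matched-collision estimate, where two difficulties must be overcome simultaneously. First, the Tanaka contraction must be made compatible with the nonintegrable kernel: its constant must depend only on $\int\theta^2 b\,d\sigma$ and $\int\theta\,b\,d\sigma$, never on $\int b\,d\sigma = \infty$, which forces a genuinely second-order argument --- cancellation of the $O(\theta)$ terms after integration over the $\SSd$-fibre, followed by use of the convexity of $b$ to dominate the $O(\theta^2)$ remainder --- carried out uniformly in $K$ through the passage to a tail integral. Second, the resulting bound must be shown to close, in $W_p$ (equivalently in $\Phi$), with a homogeneous coefficient that is only a \emph{first}-order multiple of the low moment $\Lambda_{p+\gamma}$; this is where the weight exponent $p$ must be taken large (depending on $B$ and $d$) and where $\gamma \leq 1$ is essential, so that the weight $(1+|v|^p)$ of $d_p$ decreases enough under a generic large-angle collision to absorb both the growth $|v-v_\star|^\gamma$ of the collision rate and the extra velocity moments of the collision partner. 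The requirement $K > K_0$ is then the condition that $\theta_0(K)$ be small enough for the small-angle expansions and the associated constants to be valid, and the $N$-uniformity of all constants is automatic from the structure above, since $N$ enters only through the explicit $1/N$-scaling of the jumps.
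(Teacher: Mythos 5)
Your broad strategy is the correct one and matches the paper's: realise both processes as labelled jump SDEs driven by a common family of Poisson random measures, run a Gr\"onwall argument on $\Phi_t=\sum_i d_p(V^i_t,\widetilde V^i_t)^2$, and exploit a Povzner-type cancellation (carried by the negative term $-\tfrac{\lambda_p}{2}|v|^{p+\gamma}|v-\widetilde v|^2$ produced after integrating in the deflection variable, cf.\ Lemmas \ref{lemma: big calculation} and \ref{lemma: broken up big calculation}) so that, for $p$ large, the homogeneous coefficient is $O(a_1)$ rather than an unbounded multiple of the $p^{\text{th}}$ moment. You also correctly identify the Accurate Tanaka Trick as the source of the favourable sign in the $\SSd$-averaged cross term, and the $K^{1-1/\nu}$ error as arising from the unmatched small-angle tail with $\theta_0(K)\sim K^{-1/\nu}$ and $\int_0^{\theta_0}\theta\,\beta(\theta)\,d\theta\sim\theta_0^{1-\nu}$.

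Two points where your sketch diverges from the paper and where the argument would need repair if pursued literally. First, the coupling is not ``same pair, same $\theta$, with a rate-mismatch absorbed into an auxiliary coupling of the driving PRMs''. In the $z$-parametrisation of Section~\ref{sec: JSDE} the deflection is $\theta=G(z/|v-v_\star|^\gamma)$, so a single Poisson atom $(s,\varphi,z)$ produces \emph{different} angles for the two systems because they have different relative speeds $x=|V^i-V^j|$ and $\widetilde x=|\widetilde V^i-\widetilde V^j|$; the hard-potential factor $|v-v_\star|^\gamma$ is absorbed into the amplitude map, not into a thinning of rates, and there is no residual auxiliary coupling to construct. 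The mismatch between $G(z/x^\gamma)$ and $G(z/\widetilde x^\gamma)$ is estimated directly (the terms $\T_2,\T_3$ and the $L^2$ estimate $\int_0^\infty(G(z/x)-G(z/y))^2dz\lesssim |x-y|^2/(x+y)$ in Lemma~\ref{lemma: estimates for G}). Your construction, with thinning, is a genuine alternative but then the extra and missing collisions form a separate error stream that must be controlled; you do not address this.

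Second, and more substantively, you have not closed the Gr\"onwall argument. The coefficient multiplying $\Phi_s$ in the differential inequality is (a multiple of) $\Lambda_{p+\gamma}(\mu^N_s)+\Lambda_{p+\gamma}(\widetilde\mu^{N,K}_s)$, which is \emph{random} and correlated with $\Phi_s$; the a~priori bounds $\mathbb{E}[\Lambda_{p+\gamma}(\mu^N_s)]\le Ca_1$ you invoke are only in $L^1(\mathbb{P})$, and the only almost-sure bound available, $\Lambda_{p+\gamma}\le N^{(p+\gamma)/2}$, would destroy $N$-uniformity of the exponent. The paper resolves this with a stopping time $T^N_b$ at which the empirical $(p+\gamma)^{\text{th}}$ moment first exceeds a deterministic threshold, runs Gr\"onwall on $[0,T^N_b]$, and separately controls the event $\{T^N_b\le t\}$ via the novel concentration-of-moments estimate (Lemma~\ref{lemma: concentration of moments}), which bounds $\mathbb{P}(T^N_b\le t)\le Ct a_3 N^{-1}$; a Cauchy--Schwarz on that event then produces the $a_3^2 N^{-1/2}$ error in the pointwise bound. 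You instead attribute the $N^{-1/2}$ error in the pointwise estimate to ``a compensator term'' of the jump martingale --- that ingredient appears only in the locally uniform bound (via Doob/BDG and the quadratic Lemma~\ref{lemma: quadratic pozvner}), and even there the stopping-time contribution is still present in addition. Without the stopping time and the concentration lemma, the Gr\"onwall step does not close with $N$-uniform constants, which is the main point of the theorem.
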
 We now exhibit two results which arise as a consequence of this coupling. Our first result uses our analysis of the Kac process to study the well-posedness of the limit equation (\ref{BE}).  \begin{theorem}[Wasserstein Stability of the Boltzmann Flow]\label{thrm: wellposedness} Let $B$ be a kernel of the form described above, and let $p>p_0$, $l=p+2+\gamma$ be as in Theorem \ref{thrm: approximate with cutoff}. For any $p'>p+2$ and $\mu_0\in \Ss^{p'}$, there exists a unique energy-conserving solution $(\mu_t)_{t\ge 0}$ to the Boltzmann equation starting at $\mu_0$, which we write as $\mu_t=\phi_t(\mu_0)$. Moreover, for some constant $C=C(B,p,d)$, \begin{enumerate}[label=\roman*).]
    \item Whenever $\mu_0, \nu_0 \in \mathcal{S}^{p'}$ satisfy the moment bound $\Lambda_{p+\gamma}(\mu_0, \nu_0)\le a$, we have the continuity estimate \begin{equation}\label{eq: big wellposedness} W_p\left(\phi_t(\mu_0),\phi_t(\nu_0)\right)\le e^{Ca(1+t)}W_p(\mu_0, \nu_0). \end{equation}  \item Whenever $\mu_0 \in \mathcal{S}^l$, the solution $\phi_t(\mu)$ is the $W_p$-limit of the solutions $\phi^K_t(\mu_0)$ to the $K$-cutoff Boltzmann Equations (\ref{eq: BEK}) starting at $\mu_0$, as the cutoff parameter $K\rightarrow \infty$. More precisely, if $\Lambda_{p+\gamma}(\mu_0)\le a_1, \Lambda_l(\mu_0)\le a_2$, for some $a_1, a_2\ge 1$, then we have \begin{equation}
        W_p(\phi^K_t(\mu_0), \phi_t(\mu_0))\le e^{Ca_1(1+t)}a_2tK^{1-1/\nu}.
    \end{equation}
\end{enumerate}  \end{theorem}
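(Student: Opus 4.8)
The plan is to work up from the cutoff theory, with Theorem~\ref{thrm: approximate with cutoff} serving as the bridge that makes the passage $K\to\infty$ quantitative and free of $N$. The first step is the cutoff Boltzmann equation~\eqref{eq: BEK}: since $B_K$ has finite angular mass, \eqref{eq: BEK} is a bounded, $W_p$-Lipschitz evolution on $\Ss^2$, so for any $\mu_0\in\Ss^2$ it has a unique energy-conserving solution $\phi^K_t(\mu_0)$; and a Povzner/Gronwall argument for hard potentials ($\gamma\in[0,1]$) propagates moments, with constants controlled by the finite angular integral $\int\theta B\,\mathrm{d}\sigma$ from~\eqref{eq: mild singularity}, so that $\Lambda_k(\phi^K_t(\mu_0))$ is bounded locally uniformly in $t$ and --- crucially --- \emph{uniformly in $K$}, in terms of $\Lambda_k(\mu_0)$. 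I would also record the cutoff analogue of part~(i): a $W_p$-stability estimate $W_p(\phi^K_t(\mu_0),\phi^K_t(\nu_0))\le e^{Ca(1+t)}W_p(\mu_0,\nu_0)$ with $a=\Lambda_{p+\gamma}(\mu_0,\nu_0)$, uniform in $K$, proved by a deterministic Tanaka coupling of two copies of~\eqref{eq: BEK}; this is the same Gronwall computation that underlies Theorem~\ref{thrm: approximate with cutoff}, but with no martingale term.

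The second step is to pass to the particle system and then to $K\to\infty$. For fixed $K$, the law of large numbers for cutoff Kac processes gives $\EE\big[W_p^2(\widetilde{\mu}^{N,K}_t,\phi^K_t(\mu_0))\big]\to 0$ as $N\to\infty$ whenever the initial data converge in $W_p$ with matching moment bounds; together with Theorem~\ref{thrm: approximate with cutoff}, whose $N^{-1/2}$ term then disappears, this both upgrades to a law of large numbers for the \emph{noncutoff} processes and shows that $\{\phi^K_t(\mu_0)\}_K$ is $W_p$-Cauchy as $K\to\infty$ (couple $\widetilde{\mu}^{N,K}$ and $\widetilde{\mu}^{N,K'}$ through a common noncutoff process and apply the theorem twice). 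Define $\phi_t(\mu_0)$ to be the limit. The rate in~(ii) follows either by letting $N\to\infty$ in the estimate of Theorem~\ref{thrm: approximate with cutoff} with zero initial distance, or --- to reach the stated exponent $K^{1-1/\nu}$ rather than its square root --- by running the Gronwall estimate directly between $\phi_t(\mu_0)$ and $\phi^K_t(\mu_0)$: the perturbation from replacing $Q$ by $Q_K$ enters linearly, through $\int_0^t\langle\,\cdot\,,(Q-Q_K)(\phi_s)\rangle\,\mathrm{d}s$, which the mild singularity~\eqref{eq: mild singularity} and the $K$-uniform moment bounds control by $\Lambda_l(\mu_0)$ times a power of $\theta_0(K)\asymp K^{-1/\nu}$.

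That $\phi_t(\mu_0)$ solves~\eqref{BE} is checked by passing to the limit in the weak form of~\eqref{eq: BEK}: for compactly supported Lipschitz $f$, $\langle f,\phi^K_t(\mu_0)\rangle\to\langle f,\phi_t(\mu_0)\rangle$ since $W_p$-convergence implies $W_1$-convergence, while $\langle f,Q_K(\phi^K_s(\mu_0))\rangle\to\langle f,Q(\phi_s(\mu_0))\rangle$ for almost every $s$, because $|\langle f,(Q-Q_K)(\mu)\rangle|$ is bounded by $\|f\|_{\mathrm{Lip}}\int|v-v_\star|\theta B\,\mathbf{1}(\theta\le\theta_0(K))\,\mu(\mathrm{d}v)\mu(\mathrm{d}v_\star)$, which the $K$-uniform moment bound and $\theta_0(K)\to0$ send to zero; the same bound supplies the domination needed to pass the limit inside $\int_0^t$. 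Part~(i) for $\phi_t$ is then the $K\to\infty$ limit of the cutoff stability estimate of the first step. For uniqueness among energy-conserving solutions: given another such solution $\nu_t$ with $\nu_0=\mu_0\in\Ss^{p'}$, one first derives (Povzner estimates for hard potentials, using $p'>p+2$) an a priori bound showing $\nu_t$ carries enough moments, then runs the Tanaka/Gronwall comparison between $\nu_t$ and $\phi^K_t(\mu_0)$ and lets $K\to\infty$ to conclude $\nu_t=\phi_t(\mu_0)$.

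I expect the main obstacle to be precisely this last point --- making the Tanaka comparison work with one side a \emph{generic} energy-conserving measure solution, which comes with no regularity, no a priori moment control, and no coupling structure, unlike $\phi_t$ or the Kac processes. Establishing the requisite a priori moment bounds on an arbitrary energy-conserving solution, and verifying that ``energy-conserving'' together with $\mu_0\in\Ss^{p'}$ is enough to close the stability estimate, is the delicate part; by comparison the limits $K\to\infty$ and the transfer from Theorem~\ref{thrm: approximate with cutoff} are comparatively routine, resting on~\eqref{eq: mild singularity} and the $K$-uniform moment estimates.
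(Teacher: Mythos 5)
Your construction of $\phi_t(\mu_0)$ as a $K\to\infty$ limit, the Cauchy argument via coupling $\widetilde{\mu}^{N,K}$ and $\widetilde{\mu}^{N,K'}$ through a common noncutoff process, the verification that the limit solves (\ref{BE}) by dominated convergence in the weak form, and the deduction of item (i) by passing to the limit in the cutoff stability estimate --- all of this is essentially the paper's route (Lemmas \ref{lemma: T1.1}--\ref{lemma: T1.3} and Corollary \ref{cor: couple cutoff}). One small structural difference: you propose getting the $K$-uniform cutoff stability $W_p(\phi^K_t(\mu_0),\phi^K_t(\nu_0))\le e^{Ca(1+t)}W_p(\mu_0,\nu_0)$ by a deterministic nonlinear Tanaka coupling of two copies of (\ref{eq: BEK}); the paper deliberately avoids setting up that infinite-dimensional coupling and instead discretizes, applies the finite-$N$ Tanaka coupling (Corollary \ref{cor: couple cutoff}) together with the cutoff law of large numbers (Lemma \ref{lemma: convergence of cutoff KP}), and lets $N\to\infty$. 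Both routes work, but the particle route avoids any McKean--Vlasov wellposedness issues for the nonlinear SDE.

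The genuine gap is in uniqueness, and you have correctly put your finger on it: running a Tanaka/Gronwall comparison between $\phi^K_t(\mu_0)$ and a \emph{generic} energy-conserving measure solution $\nu_t$ is not something the machinery here supports, since $\nu_t$ comes with no SDE representation and no coupling structure, and constructing one would amount to re-proving Fournier--Mouhot from scratch. The paper does not attempt this. Instead it cites Proposition \ref{prop: weak wellposedness} (the Fournier--Mouhot uniqueness result), which requires a finite exponential moment $\langle e^{\epsilon|v|^\gamma},\mu_0\rangle<\infty$, and combines it with the \emph{a priori} exponential-moment production result, Proposition \ref{thrm:noncutoffmomentinequalities} item v) (a Lu--Mouhot estimate valid for any measure solution): for every $t>0$ there is $\epsilon(t)>0$ with $\langle e^{\epsilon(t)|v|^\gamma},\mu_t\rangle<\infty$. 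Hence for every $s>0$ the solution started from $\mu_s$ is unique, so $\mu_{t+s}=\phi_t(\mu_s)$; then one sends $s\downarrow 0$ using $W_p$-continuity of $\mu_s\to\mu_0$ (from the weak form of (\ref{BE}) plus moment bounds and (\ref{eq: elementary interpolation})) and the continuity estimate of item (i), concluding $\mu_t=\phi_t(\mu_0)$. This sidesteps entirely the issue you flagged. If you want to keep the proof self-contained you would have to supply a substitute for the cited Fournier--Mouhot result, which is the real work; citing it is the intended move.

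A minor point on exponents in item (ii): the Gronwall for $W_p^2$ in Theorem \ref{thrm: approximate with cutoff} yields $K^{(1-1/\nu)/2}=K^{1/2-1/2\nu}$ for $W_p$ after taking square roots, which is exactly what the paper obtains in Lemma \ref{lemma: T1.2}; the exponent $K^{1-1/\nu}$ appearing in the theorem statement is the stronger decay and is not actually what the paper's proof delivers, so your remark that a linear Gronwall estimate would be needed to reach the unsquare-rooted rate is apt, but that estimate is not in the paper.
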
 We note that this is a much stronger well-posedness estimate than exists in the literature; see the discussion in the literature review below. \bigskip \\ Our final result is to study the convergence of the noncutoff Kac process in the large number limit $N\rightarrow \infty$.\begin{theorem}\label{thrm: Kac Process} Let $B$ be a kernel of the form described above. For all $N$, the $N$-particle Kac process defined by the generator (\ref{eq: generator}) has uniqueness in law. Moreover, if $p,q$ are as in Theorem \ref{thrm: approximate with cutoff},  $a\ge 1$ and $t_\mathrm{fin}\ge 0$, then whenever $\mu_0\in \mathcal{S}^q$ has a moment $\Lambda_q(\mu_0)\le a$ and $\mu^N_t$ is a $N$-particle Kac process with initial data satisfying $\Lambda_q(\mu^N_0)\le a$ almost surely then we have the estimate \begin{equation} \label{eq: convergence of Kac process} \EE\left[\sup_{t\le t_\mathrm{fin}} W_p\left(\phi_t(\mu_0), \mu^N_t\right)\right] \le e^{Ca(1+t_\mathrm{fin})}\left((\log N)^{1/2-1/2\nu}+\EE\left[W_p(\mu^N_0, \mu_0)\right] \right).  \end{equation} \end{theorem}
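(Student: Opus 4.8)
The plan is to compare $\mu^N_t$ with $\phi_t(\mu_0)$ by interpolating through the two cutoff objects at our disposal — the $K$-cutoff Kac process $\mu^{N,K}_t$ and the $K$-cutoff Boltzmann flow $\phi^K_t(\mu_0)$ — and then to optimise over the cutoff parameter $K=K(N)$. Since $q=2p+4+2\gamma$ dominates $l=p+2+\gamma$ and $p+\gamma$, the single bound $\Lambda_q(\mu_0),\Lambda_q(\mu^N_0)\le a$ controls every moment needed below, and we may apply Theorem \ref{thrm: wellposedness} with $p'=q$. For any admissible $K>K_0$ and any $t\le t_\mathrm{fin}$, the triangle inequality for $W_p$ gives
\[
 W_p\big(\phi_t(\mu_0),\mu^N_t\big)\le W_p\big(\phi_t(\mu_0),\phi^K_t(\mu_0)\big)+W_p\big(\phi^K_t(\mu_0),\mu^{N,K}_t\big)+W_p\big(\mu^{N,K}_t,\mu^N_t\big).
\]
The first term is deterministic and bounded by Theorem \ref{thrm: wellposedness}(ii), so it is at most $e^{Ca(1+t_\mathrm{fin})}a\,t_\mathrm{fin}K^{1-1/\nu}$. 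For the third term I would invoke Theorem \ref{thrm: approximate with cutoff}, coupling a $K$-cutoff process to the given process $\mu^N_t$ and starting it from $\widetilde\mu^{N,K}_0=\mu^N_0$ so that the initial-data contribution vanishes; then (\ref{eq: statement of LU tanaka}) gives $\EE[\sup_{t\le t_\mathrm{fin}}W_p^2(\mu^{N,K}_t,\mu^N_t)]\le e^{Ca(1+t_\mathrm{fin})}\big(a\,t_\mathrm{fin}K^{1-1/\nu}+Ca^2(1+t_\mathrm{fin})^2N^{-1/2}\big)$, all the processes involved being realised on a common probability space.

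The middle term is the only genuinely $N\to\infty$ ingredient, and the angular singularity plays no role in it: $B_K$ has finite total rate $K|v|^\gamma$, so the convergence of the $K$-cutoff particle system to $\phi^K_t(\mu_0)$ is of classical mean-field type. I would obtain it by a Tanaka-type coupling of the labelled cutoff process against $N$ copies of the associated cutoff McKean--Vlasov process — which is a \emph{linear} time-inhomogeneous jump SDE, since the limiting law $\phi^K_s(\mu_0)$ is prescribed, so the copies are genuinely i.i.d. — controlling $W_p$ between the two empirical measures by a Grönwall estimate in the semimetric $d_p$ (as in Theorem \ref{thrm: wellposedness}(i), linearly in the initial $W_p$-distance), and then the distance between the i.i.d. empirical measure and $\phi^K_t(\mu_0)$ by a law-of-large-numbers bound for empirical measures. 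This produces an estimate whose initial error is $e^{Ca(1+t_\mathrm{fin})}\EE[W_p(\mu^N_0,\mu_0)]$ together with a fluctuation term of order $e^{CaK(1+t_\mathrm{fin})}N^{-1/4}$, the exponential degradation in $K$ reflecting that each particle now suffers $\mathcal{O}(K)$ collisions per unit time; I would quote this from an earlier section or from the literature (Fournier, Norris, Mischler--Mouhot) rather than reprove it.

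Assembling the three bounds at the level of $\EE[\sup_{t\le t_\mathrm{fin}}W_p]$, and passing from the $\EE[\sup W_p^2]$-bound (\ref{eq: statement of LU tanaka}) to $\EE[\sup W_p]\le(\EE[\sup W_p^2])^{1/2}$ for the third term, the error splits into a cutoff contribution of order $K^{(1-1/\nu)/2}$ (the square root of the cutoff term in (\ref{eq: statement of LU tanaka}), which dominates the $\mathcal{O}(K^{1-1/\nu})$ term from Theorem \ref{thrm: wellposedness}(ii) since $1-1/\nu<0$), the i.i.d.-sampling term $\EE[W_p(\mu^N_0,\mu_0)]$ from the middle step, and a term of order $e^{CaK(1+t_\mathrm{fin})}N^{-1/4}$. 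Choosing $K=K_N\sim c\log N/(a(1+t_\mathrm{fin}))$ with $c$ small sends the last term to $0$ like a negative power of $N$, while $K_N^{(1-1/\nu)/2}\sim (a(1+t_\mathrm{fin}))^{(1/\nu-1)/2}(\log N)^{(1-1/\nu)/2}$; absorbing the polynomial prefactor into $e^{Ca(1+t_\mathrm{fin})}$ yields exactly (\ref{eq: convergence of Kac process}), with the exponent $\tfrac12-\tfrac1{2\nu}=\tfrac12(1-\tfrac1\nu)$ originating from this square root. Uniqueness in law is not part of this chain: it is the well-posedness of the martingale problem for (\ref{eq: generator}), obtained together with Theorem \ref{thrm: approximate with cutoff} — equivalently, pathwise uniqueness for the labelled jump SDE of Section \ref{sec: JSDE}, proved by the same $d_p$-Grönwall argument — so one simply records it.

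The step I expect to demand the most care is the middle one: one must produce the classical cutoff convergence with \emph{explicit}, at most exponential, dependence on $K$, with all moments uniformly controlled by $\Lambda_q(\mu_0)\le a$, and — crucially — with the initial-data error entering linearly and \emph{without} the $e^{CaK}$ prefactor, since it is precisely this structure that makes the $K\sim\log N$ optimisation close and fixes the $(\log N)^{1/2-1/2\nu}$ rate. Everything else is bookkeeping with the functionals $\Lambda_k$ and the two quoted theorems.
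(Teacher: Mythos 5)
Your three-term decomposition and the $K=K(N)\sim\log N$ optimisation are the right skeleton, and you correctly identify the crux: the initial-data error $\EE[W_p(\mu^N_0,\mu_0)]$ must not pick up the $e^{CaK(1+t)}$ prefactor, or the $\log N$ optimisation collapses. But the route you propose to the middle term does not deliver this. The result the paper actually has for the cutoff particle system, Lemma~\ref{lemma: convergence of cutoff KP}, is proved via Norris's linearised Kac process and reads $\EE[\sup_t W_p(\mu^{N,K}_t,\phi^K_t(\mu_0))]\le (N^{-\alpha}+\EE[W_p(\mu^{N,K}_0,\mu_0)]^\alpha)e^{CaK(1+t_\mathrm{fin})}$ --- the initial-data error appears raised to a power $\alpha<1$ \emph{and} multiplied by $e^{CaK}$. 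Your proposed alternative, a Tanaka coupling of the cutoff Kac process against $N$ i.i.d.\ copies of the cutoff McKean--Vlasov process with a $K$-independent Gr\"onwall constant, is not developed anywhere in the paper, and is not immediately available from the literature you cite: Fournier--Mischler~\cite{fournier2016rate} treat the \emph{Nanbu} process, where the i.i.d.\ tangent coupling is natural because particles jump singly; for the symmetric Kac process the paired jumps introduce a chaos error that you would need to close, and obtaining the $K$-independent Gr\"onwall constant is precisely the Povzner cancellation of Lemma~\ref{lemma: big calculation} --- nothing that can simply be ``quoted.'' So the middle-term estimate you rely on, in the form you state it, is a genuine gap.

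The paper's fix is structurally different and cleaner: insert the \emph{noncutoff} flow $\phi_t(\mu^N_0)$ as an extra intermediate, so that the interpolation becomes
\begin{equation*}
W_p\left(\mu^N_t,\phi_t(\mu_0)\right)\le C\left[W_p\left(\mu^N_t,\widetilde\mu^{N,K}_t\right)+W_p\left(\widetilde\mu^{N,K}_t,\phi^K_t(\mu^N_0)\right)+W_p\left(\phi^K_t(\mu^N_0),\phi_t(\mu^N_0)\right)+W_p\left(\phi_t(\mu^N_0),\phi_t(\mu_0)\right)\right].
\end{equation*}
In the first three terms every object is started from the \emph{same} initial datum $\mu^N_0$, so there is no initial-data contribution at all and Lemma~\ref{lemma: convergence of cutoff KP} may be applied with its $e^{CaK}$ prefactor acting only on $N^{-\alpha}$, which the choice $K\sim\log N$ handles. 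The initial-data error then enters only through the last term, which is controlled by the \emph{noncutoff} stability estimate of Theorem~\ref{thrm: wellposedness}(i), whose Gr\"onwall constant is $e^{Ca(1+t)}$ with no $K$ in sight. In other words: rather than demanding a cutoff mean-field lemma with a $K$-independent initial-data constant, the paper first proves the estimate in the special case $\mu_0=\mu^N_0$ (where the cutoff steps are clean), and only then perturbs the initial datum using the already-established noncutoff well-posedness. Your numerology ($K^{1/2-1/2\nu}$ as the dominating cutoff error, $K\sim c\log N/(a(1+t_\mathrm{fin}))$ balancing against $e^{CaK}N^{-\alpha}$, absorbing powers into $e^{Ca(1+t_\mathrm{fin})}$) is correct and matches the paper once this decomposition is adopted; and you are also right that uniqueness in law is supplied independently by the results of Section~\ref{sec: JSDE} and the appendix.
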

This estimate may also be understood as proving \emph{propagation of chaos}; this will be discussed in the literature review below.
\subsection{Plan of the Paper} Our programme will be as follows. \begin{enumerate}[label=\roman*).]\item The remainder of this section is a literature review, discussing the existing results and techniques and the relationships to the current work.
\item Section \ref{sec: metrics} is a brief aside on metrics on probability measures. We record some properties of the semimetrics $W_p$ introduced above, and discuss their relationships with the usual Wasserstein$_p$ metrics $w_p$ and with a family of metrics $\mathfrak{w}_\gamma$ which are useful in applying the results of \cite{norris2016consistency}. We also introduce, for future reference, a discrete approximation scheme in $W_p$.
\item In Section \ref{sec: JSDE}, we introduce an alternative parametrisation of possible jumps, and hence formulate the labelled Kac process discussed above as the solution to a stochastic differential equation driven by Poisson random measures. We state results on the well-posedness of the resulting stochastic differential equation and its relationship to the unlabelled Kac process; the proofs are deferred to Appendix \ref{sec: pf_wellposedness} for ease of readability. 
\item Section \ref{sec: moments} collects some results from the literature concerning the moments of the Kac process and the Boltzmann equation. We also prove a novel `concentration of moments' result (Lemma \ref{lemma: concentration of moments}) which shows that the moments of a Kac process remain in a fixed interval with probability converging to $1$ in the large number limit $N\rightarrow \infty.$
\item Section \ref{sec: cutoff case} analyses the convergence of the cutoff Kac process to the cutoff Boltzmann equation, with slight modification of the techniques of \cite{norris2016consistency}. 
\item Section \ref{sec: TK} introduces the key coupling of cutoff and noncutoff Kac processes. We state the key result (Lemma \ref{lemma: big calculation}) on the divergence of the couplings in the $W_p$ distance, which leads to Theorem \ref{thrm: approximate with cutoff}. The proof of the lemma is deferred until Section \ref{sec: main calculations} for ease of readability.
\item In Section \ref{sec: T2}, we carefully propagate the previous coupling to the Boltzmann equation, use the convergence of the cutoff Kac process proven in Section \ref{sec: cutoff case}. This proves Theorem \ref{thrm: wellposedness}.
\item In Section \ref{sec: proof_of_poc}, we combine the previous results to deduce Theorem \ref{thrm: Kac Process}.
\item Section \ref{sec: main calculations} presents the main calculations on our Tanaka coupling, deferred from Section \ref{sec: TK}.
\item Finally, Appendix \ref{sec: pf_wellposedness} deals with some technical issues concerning the well-posedness for labelled and unlabelled Kac processes, and the relationships between these.  \end{enumerate}

\subsection{Literature Review} We will now briefly discuss related works and their relationship to our work.
\paragraph{1. Tanaka's Coupling.} The key idea in our analysis is a coupling pioneered by Tanaka \cite{tanaka1978probabilistic} in the case of cutoff Maxwell molecules $(\gamma=0, \nu<0$), who interpreted the Boltzmann equation in terms of a stochastic differential equation describing a `typical' particle; this was generalised by Fournier and M\'el\'eard \cite{fournier2001markov} to include the cases without cutoff, and for non-Maxwellian molecules and used to show uniqueness for the Boltzmann equation with Maxwell molecules \cite{tanaka2002uniqueness}. Since then, this coupling and analytic counterparts have been a popular idea in the treatment of the Boltzmann equation \cite{bolley2007tanaka,fournier2008uniqueness,fournier2009well,fournier2016rate,pulvirenti1996theory,toscani1999probability}. \bigskip \\ Let us mention two particular works to which our approach can be compared. The main calculations in Sections \ref{sec: TK}, \ref{sec: main calculations} were inspired by Fournier and Mischler \cite{fournier2016rate} on the Nanbu particle system, in which only one particle jumps at a time. In our notation, the cited paper produces estimates in $W_0$; the major novelty of this work is that, by working in $W_p$ for $p$ large enough, and working with the symmetric Kac process, we are able to obtain a desirable cancellation of `bad' terms. Let us also remark that that the main result of Rousset \cite{rousset2014n} is very similar to Theorem \ref{thrm: approximate with cutoff} in obtaining a coupling of Kac processes with error uniform in $N$ in the case of Maxwell molecules, although the ends to which this is used are orthogonal to our programme. In this case, strong results concerning existence and uniqueness are already known and the coupling is used to investigate relaxation to equilibrium in the limit $t\rightarrow \infty$. In our case, we are unable to make the coupling uniform in time to investigate relaxation to equilibrium, but instead derive results concerning the Boltzmann equation (as discussed below) by taking the limits $K\rightarrow \infty, N\rightarrow \infty$ on fixed time intervals.
\paragraph{2. Well-Posedness of the Boltzmann Equation.} The results expressed in Theorem \ref{thrm: wellposedness} add to a long list of results concerning the well-posedness and stability of the Boltzmann equation. In the case of Maxwell molecules, including the noncutoff case, we refer the reader to \cite{toscani1999probability,tanaka2002uniqueness}; in the case of hard spheres ($\gamma=1, \nu<0$), let us mention the works \cite{arkeryd1972boltzmann,escobedo2010scalings,lu1999conservation,lu2012measure,mischler1999spatially}. Most recently, Mischler and Mouhot \cite{mischler2013kac} prove very strong `twice-differentiability' and exponential stability of the Boltzmann equation in the hard-spheres case measured in total variation distance, and the author obtained a uniform-in-time Wasserstein stability result in a previous work \cite{heydecker2019pathwise}. \bigskip \\ For the case of noncutoff hard potentials, the theory is substantially less complete. Fournier \cite{fournier2006uniqueness} examined the case where $|v-v_\star|^\gamma$ is replaced by a bounded function $\Phi$, and results for the case of full hard potentials have been found by Desvilettes and Mouhout \cite{desvillettes2009stability} and in the case of measure solutions by Fournier and Mouhout \cite{fournier2009well}. Let us note that the uniqueness and stability statement  in Theorem \ref{thrm: wellposedness} assume sonly a finite number of moments, rather than a finite exponential moment $\langle e^{\epsilon |v|^\gamma}, \mu_0\rangle<\infty$ as does the result of \cite{fournier2009well}, which is recalled in Proposition \ref{prop: weak wellposedness} below; correspondingly, the quantitative stability result is stronger. The result of \cite{desvillettes2009stability} requires the initial data $\mu_0$ to have a density $\frac{d\mu_0}{dv}\in W^{1,1}(\mathbb{R}^d,dv)$, and so requires fewer moments than our results but much more regularity.
\paragraph{3. Propagation of Chaos for the Kac Process.} The sense in which Kac first proposed to relate his stochastic process to Boltzmann's equation is through the \emph{propagation of chaos}: he proposed that, if $\mathcal{V}^N_t$ is a labelled Kac process, with symmetric initial conditions and $\mu_0\in \mathcal{S}$ is such that marginal distribution of $(V^1_0,...,V^k_0)$ is approximately $\mu_0^{\otimes k}$, then this approximation is propagated through time: the law of $(V^1_t,...V^k_t)$ is approximately $\phi_t(\mu_0)^{\otimes k}$, for any fixed $k, t$ in the regime where $N$ is large, and where the approximation is understood as the weak topology of measures on $(\RRd)^k$. This chaoticity property is equivalent to the convergence of the empirical measures \cite{sznitman1991topics}; quantitatively, the same arguments as in \cite{mischler2013kac,heydecker2019pathwise} show how the conclusion of Theorem \ref{thrm: Kac Process} can be viewed as a quantitative estimate of this approximation. We now mention some existing works in this direction: \begin{enumerate}[label=\roman*).] \item For Maxwell molecules, results in this direction were obtained by McKean \cite{mckean1975fluctuations}, Graham and M\'el\'eard \cite{graham1997stochastic} and Desvilettes, Graham and M\'el\'eard \cite{desvillettes1999probabilistic}. Strong results were obtained by Mischler and Mouhot \cite{mischler2013kac}, and close to optimal results were found by Cortez and Fontbona \cite{cortez2018quantitative}. \item Regarding the case of hard spheres, Mischler and Mouhot \cite{mischler2013kac} obtained results decaying as $(\log N)^{-r}$ for some $r>0$. Norris \cite{norris2016consistency} obtained results with optimal $N$ dependence, replacing the right-hand side in Theorem \ref{thrm: Kac Process} with the optimal $N$ dependence $N^{-1/d}$ but which are not uniform in time, and the author \cite{heydecker2019pathwise} obtained results with close-to-optimal $N$ dependence and which are uniform in time.
\item For the case of non-cutoff hard potentials, we are not aware of any results on the Kac process, but mention some works on related models. Fournier and Guilin \cite{fournier2015kac} consider a related particle system which approximates the Landau equation for hard potentials, and Fournier \cite{fournier2016propagation} deals with this model for soft potentials. The work \cite{fournier2016rate} which we have already mentioned considers the asymmetric \emph{Nanbu process} in which only one particle jumps at a time, and shows propagation of chaos for this system for Maxwell molecules and hard potentials; a recent work of Salem \cite{salem2019propagation} extends this to the case of soft potentials with a moderate angular singularity. \end{enumerate}

Our work combines ideas of Fournier and Mischler \cite{fournier2016rate} and Norris \cite{norris2016consistency}. Theorem \ref{thrm: approximate with cutoff} is based on the calculations of \cite{fournier2016rate}; in order to deduce Theorems \ref{thrm: wellposedness}, \ref{thrm: Kac Process}, we show how the arguments of \cite{norris2016consistency} apply to the cutoff Kac processes. Our result on the cutoff Kac process, presented in Lemma \ref{lemma: convergence of cutoff KP} is marginally stronger than the corresponding result, replacing results with `high probability' with $L^1$ estimates thanks to the concentration of moments result Lemma \ref{lemma: concentration of moments}. We also remark that the rate obtained in Theorem \ref{thrm: Kac Process} is equivalent to that of \cite{desvillettes1999probabilistic}, and is likely very far from optimal; it may be possible to improve on this by using the \emph{regularising effect of grazing collisions} \cite{alexandre1999remarks,alexandre2000entropy,desvillettes2005smoothness} to improve the estimates in Section \ref{sec: cutoff case}, but we will not explore this here.\\ 

Let us remark on the two philosophies of approach throughout these works. Kac \cite{kac1956foundations} initially introduced the stochastic process as a proxy with which to study the Boltzmann equation in the belief that a high-dimensional stochastic evolution may be easier to study than the limiting partial differential equation. To some extent, the opposite has been true; Mischler and Mouhout \cite{mischler2013kac} adopted a `top-down' approach, in which a detailed analyses of the Boltzmann equation, in the cases of (noncutoff) Maxwell molecules and hard spheres, were used to deduce results on Kac's process. By contrast, in the case we are interested in, comparatively few results are known for the Boltzmann equation (see above), and our approach is `bottom up': a careful analysis of Kac's process is used to study the Boltzmann equation.

\paragraph{Acknowledgements.} I am very grateful to my supervisor, James Norris, for the suggestion of this project and for several useful conversations, and to Nicolas Fournier for his comments and corrections. This work was supported by the UK Engineering and Physical Sciences Research Council (EPSRC) grant EP/L016516/1 for the University of Cambridge Centre for Doctoral Training, the Cambridge Centre for Analysis

\section{Metrics on Probability Measures}\label{sec: metrics}  We will briefly mention some properties of the semimetrics $W_p$ defined in (\ref{eq: define Wp}), and how they relate to the more common Wasserstein$_p$ metrics $w_p$. We will also introduce a class of metrics $\mathfrak{w}_\alpha, 0<\alpha\le 1$ which will be useful in applying the results of Norris \cite{norris2016consistency} to the cutoff Kac processes. For completeness, we will sketch the relationships between these classes of metrics; the proofs involved are at lease one of standard and elementary, and are therefore omitted. We also sketch a discrete approximation scheme in the metric $W_p$, details of which can be found in Norris \cite{norris2016consistency}. \bigskip \\  First, let us remark that $d_p$ does not satisfy the triangle inequality for $p>0$, but it is straightforward to check that the function $\delta_p(v,w)=|(1+|v|^p)^{1/2}v-(1+|w|^p)^{1/2}w|$ defines a metric, and that $d_p/\delta_p$ is bounded above, and away from $0$. It follows that $W_p$ is a semimetric, in the sense that one replaces the triangle inequality with a $C$-relaxed triangle inequality, for some new constant $C=C(p)$: for all $\mu, \nu, \xi\in \mathcal{S}^{p+2}$, we have  \begin{equation}\label{eq: triangle inequality}
     W_p(\mu, \nu)\le C_p\left(W_p(\mu, \xi)+W_p(\xi, \nu)\right).
 \end{equation} For completeness, let us recall usual Wasserstein$_p$ metrics $w_p$, defined for $p\ge 1$, $\mu, \nu\in \mathcal{S}^p$ by \begin{equation} \label{eq: usual WP}
     w_p(\mu, \nu)=\inf_{\pi \in \Pi(\mu, \nu)}\left(\int_{\mathbb{R}^d\times\mathbb{R}^d}|v-w|^p\pi(dv,dw)\right)^{1/p}.
 \end{equation} We also define a family of metrics $\mathfrak{w}_\gamma, 0<\gamma\le 1$ on $\mathcal{S}$ as follows. For $f:\RRd\rightarrow \mathbb{R}$, define $\widehat{f}(v)=f(v)/(1+|v|^2)$, and the $\gamma$-H\"older norm
 \begin{equation} \|f\|_{0,\gamma}:=\max\left(\sup_{v}|f|(v), \hspace{0.1cm} \sup_{v\neq w} \frac{|f(v)-f(w)|}{|v-w|^\gamma}\right). \end{equation} We write $\mathcal{A}_\gamma$ for the space of weighted $\gamma$-H\"older functions:\begin{equation} \label{eq: defn of script A}\mathcal{A}_\gamma:=\left\{f: \mathbb{R}^d \rightarrow \mathbb{R}: \|\hat{f}\|_{0,\gamma}\le 1\right\}\end{equation} and define the weighted Wasserstein metric of type $\gamma$ by the duality\begin{equation}\label{eq: definition of W} \mathfrak{w}_\gamma(\mu, \nu):= \sup_{f\in\mathcal{A}_\gamma}|\langle f, \mu-\nu\rangle|. \end{equation} \subsection{Relationships between Metrics} Let us now review some relationships between these metrics. \paragraph{1. $w_1$ and $\mathfrak{w}_\gamma$: Kantorovich-Wasserstein Duality.}In the case $p=1$, the metric $w_1$ is known as the Monge-Kantorovich-Wasserstein distance, and the well-known Kantorovich-Wasserstein duality can be written \begin{equation}\begin{split}\label{eq: dual W1}
 w_1\left(\mu, \nu\right)=\mathfrak{w}_1\left(\frac{\mu}{1+|v|^2},\frac{\nu}{1+|v|^2}\right)=\sup\left\{\langle f, \mu-\nu\rangle: \|f\|_{0,1}\le 1\right\}.\end{split} \end{equation} Further, $\mathfrak{w}_\gamma, 0<\gamma\le 1$ and $w_1, w_2$ all generate the topology of weak convergence on $\mathcal{S}$. \bigskip \\ Let us mention some quantitative comparisons between and within these classes. For all $f$, we have the bound $\|f\|_{0,\gamma}\le 2^{1-\gamma}\|f\|_{0,1}$, which leads to the comparison $\mathfrak{w}_\gamma\le 2^{1-\gamma}\mathfrak{w}_1$, while approximating $f\in \mathcal{A}_\gamma$ by $f^\epsilon\in c_\epsilon \mathcal{A}_1$ leads to the bound $\mathfrak{w}_\gamma\le \mathfrak{w}_1^\gamma$. \bigskip \\ We now compare $w_1$ and $\mathfrak{w}_1$. On the one hand, if $\|f\|_{0,1}\le 1$, then it is straightforward to see that $\|\widehat{f}\|_{0,1}\le c$ for some absolute constant $c$, so the duality (\ref{eq: dual W1}) implies that $w_1\le c\mathfrak{w}_1$. In the other direction, if $\|f\|_{0,1}\le 1$ and $\pi \in \Pi(\mu, \nu)$, then explicit calculations show that \begin{equation}\begin{split} |\langle f(1+|v|^2), \mu-\nu\rangle|\le & \int_{\RRd\times \RRd}(|v-w|)(|v|+|w|)\pi(dv,dw)\\&+2\int_{\RRd\times\RRd}(1\land |v-w|)(1+|w|^2)\pi(dv,dw).\end{split}\end{equation} If $\mu, \nu \in \mathcal{S}^p$ for some $p>2$, then one can interpolate and optimise over $f, \pi$ to obtain, for some $C=C(p), \alpha=\alpha(p)>0$, \begin{equation} \mathfrak{w}_1(\mu, \nu)\le C\Lambda_p(\mu,\nu)w_1(\mu, \nu)^\alpha. \end{equation} \paragraph{2. $W_p$ and $w_{p+2}$.} It is elementary to show that, for some constant $C=C(p)$ and all $\mu, \nu\in \mathcal{S}^{p+2}$, we have \begin{equation}\label{eq: compare WP WP}
     C^{-1}w_{p+2}(\mu, \nu)^{(p+2)/2}\le W_p(\mu, \nu)\le Cw_p(\mu, \nu)\Lambda_{p+2}(\mu, \nu).
 \end{equation} Further, for any sequence $\mu^n, \mu \in \mathcal{S}^{p+2}$, the convergence $w_{p+2}(\mu^n, \mu)\rightarrow 0$ implies convergence of the moments $\Lambda_{p+2}(\mu^n)\rightarrow \Lambda_{p+2}(\mu)$, and so $W_p, w_{p+2}$ have the same convergent sequences and generate the same topology on $\mathcal{S}^{p+2}$. \medskip \\ Let us also record the elementary interpolation estimate, for any $p'>p+2$ and any $\mu, \nu \in \mathcal{S}^{p'}$, \begin{equation} \label{eq: elementary interpolation} w_1(\mu, \nu)\le W_p(\mu, \nu)\le w_1(\mu,\nu)^\alpha \Lambda_{p'}(\mu,\nu) \end{equation} for some $\alpha=\alpha(p,p')>0$. 
\paragraph{3. $W_p$ and $\mathfrak{w}_\gamma$.} 
Combining the estimates above, if $p\ge 0, p'>p+2$ and $0<\gamma\le 1$, we have the equivalence \begin{equation}\label{eq: mathfrak wgamma and Wp}\mathfrak{w}_\gamma(\mu,\nu)\le C\Lambda_{p'}(\mu, \nu)W_p(\mu,\nu)^\gamma;\hs W_p(\mu,\nu)\le C\Lambda_{p'}(\mu,\nu)\mathfrak{w}_\gamma(\mu,\nu)^\alpha \end{equation} for some $C=C(p,p'), \alpha=\alpha(p,p')$. 
\subsection{Discrete Approximation Scheme} We record, for future use, the following result on approximating measures $\mu$ by discrete measures $\mu^N$ in the distance $W_p$. \begin{proposition}\label{prop: discrete scheme}[Discrete Approximation Scheme in $W_p$] Fix $p\ge 0$, and let $\mu\in \mathcal{S}^{q}$ for $q>p+2$. Then there exists a sequence $\mu^N\in \mathcal{S}_N$ of discrete approximations to $\mu$ such that \begin{equation} \label{eq: conclusion of DAS} W_p(\mu^N,\mu)\rightarrow 0; \hs \Lambda_{p'}(\mu^N)\rightarrow \Lambda_{p'}(\mu)\text{ for all }p'\le q. \end{equation} \end{proposition}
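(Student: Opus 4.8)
The plan is to build $\mu^N$ by drawing $N$ i.i.d.\ samples from $\mu$ and then applying a vanishingly small affine rescaling that restores the three normalisation constraints (\ref{eq: normalisation}), after which one checks that neither the sampling error nor the rescaling destroys convergence in $W_p$ or of the moments of order at most $q$. (A deterministic quantisation as in Norris \cite{norris2016consistency} works equally well; I describe the probabilistic version as it is quicker to present.)

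First I would take $(V^i)_{i\ge 1}$ i.i.d.\ with law $\mu$ and put $\nu^N = N^{-1}\sum_{i=1}^N \delta_{V^i}$. Write $m_N = \langle v, \nu^N\rangle$ and $s_N = \langle |v|^2,\nu^N\rangle - |m_N|^2 = \langle |v - m_N|^2, \nu^N\rangle \ge 0$. Since $\mu \in \mathcal{S}^q$ with $q > p+2 \ge 2$, the strong law of large numbers gives, almost surely, $m_N \to 0$, $s_N \to 1$, $\nu^N \to \mu$ weakly, and $\Lambda_q(\nu^N) \to \Lambda_q(\mu) < \infty$. Fix a realisation on which all of these hold. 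For $N$ large enough that $s_N > 0$, set $\lambda_N = s_N^{-1/2}$, $T_N(v) = \lambda_N(v - m_N)$, and $\mu^N = (T_N)_\ast \nu^N = N^{-1}\sum_i \delta_{T_N(V^i)}$. A direct computation gives $\langle 1,\mu^N\rangle = 1$, $\langle v,\mu^N\rangle = \lambda_N(m_N - m_N) = 0$ and $\langle |v|^2,\mu^N\rangle = \lambda_N^2 s_N = 1$, so $\mu^N \in \mathcal{S}_N$; and since $\lambda_N \to 1$ and $m_N \to 0$, the number $\epsilon_N := |\lambda_N - 1| + \lambda_N|m_N|$ tends to $0$ and satisfies $|T_N(v) - v| \le \epsilon_N(1 + |v|)$ for every $v \in \RRd$.

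It remains to transfer the two conclusions of (\ref{eq: conclusion of DAS}) from $\nu^N$ to $\mu^N$. For the top moment, the elementary inequality $\bigl|(1+|T_N(V^i)|^2)^{q/2} - (1+|V^i|^2)^{q/2}\bigr| \le C_q\, \epsilon_N\, (1 + |V^i|^2)^{q/2}$, valid once $\epsilon_N \le 1$, gives $|\Lambda_q(\mu^N) - \Lambda_q(\nu^N)| \le C_q \epsilon_N \Lambda_q(\nu^N) \to 0$, hence $\Lambda_q(\mu^N) \to \Lambda_q(\mu)$. For the metric, the coupling $N^{-1}\sum_i \delta_{(T_N(V^i), V^i)}$ gives $w_1(\mu^N,\nu^N) \le N^{-1}\sum_i |T_N(V^i) - V^i| \le \epsilon_N\bigl(1 + \langle |v|,\nu^N\rangle\bigr) \to 0$, while $w_1(\nu^N,\mu) \to 0$ is the standard a.s.\ convergence of empirical measures in $w_1$; since $w_1$ is a genuine metric, $w_1(\mu^N,\mu) \to 0$. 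As $\mu^N,\mu \in \mathcal{S}^q$ with $q > p+2$ and $\Lambda_q(\mu^N,\mu)$ is bounded, the interpolation (\ref{eq: elementary interpolation}) with $p' = q$ yields $W_p(\mu^N,\mu) \le w_1(\mu^N,\mu)^\alpha \Lambda_q(\mu^N,\mu) \to 0$. Finally, $w_1(\mu^N,\mu) \to 0$ together with $\sup_N \Lambda_q(\mu^N) < \infty$ makes the functions $(1+|v|^2)^{p'/2}$, for $p' < q$, uniformly integrable along $(\mu^N)$ (de la Vall\'ee--Poussin), so $\Lambda_{p'}(\mu^N) \to \Lambda_{p'}(\mu)$ for every $p' \le q$. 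Taking a realisation on which all of this holds, and discarding or repeating the finitely many small $N$ for which $s_N$ was not positive, produces a deterministic sequence $\mu^N \in \mathcal{S}_N$ obeying (\ref{eq: conclusion of DAS}).

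The point that needs care is that the affine renormalisation must be shown harmless \emph{simultaneously} in the weighted distance $W_p$, which controls moments only up to order $p+2$, and in the strictly higher moment $\Lambda_q$; it is precisely the gap $q > p+2$ that makes both manageable, since it lets one sample from $\mu \in \mathcal{S}^q$ — keeping $\Lambda_q(\nu^N)$, hence $\Lambda_q(\mu^N)$, bounded so that the factor $\epsilon_N \to 0$ kills the correction in every estimate — and at the same time permits the use of the interpolation (\ref{eq: elementary interpolation}) with exponent $p' = q$. Beyond this, all the ingredients — the a.s.\ convergence of empirical measures in $w_1$, the comparison estimates of Section \ref{sec: metrics}, and the de la Vall\'ee--Poussin criterion — are standard, so there is no substantial obstacle.
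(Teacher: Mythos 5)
Your proposal is correct and follows essentially the same route as the paper: sample i.i.d.\ from $\mu$, recentre by the sample mean, rescale by the inverse sample standard deviation to land in $\mathcal{S}_N$, then transfer $w_1$-convergence and the top-moment convergence to $W_p$ via the comparisons of Section \ref{sec: metrics}. The only cosmetic difference is in the last step for the lower moments $p'<q$, where you invoke de la Vall\'ee--Poussin while the paper cites a straightforward interpolation from the $p'=q$ case; both are fine.
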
 \begin{proof}[Sketch Proof] The following is based on an argument by Norris \cite[Proposition 9.3]{norris2016consistency}, and we will sketch only the main points. \bigskip \\ Let $(\Omega, \mathcal{F},\mathbb{P})$ be a probability space on which are defined   an infinite sequence of independent draws $u_1,u_2,....$ from $\mu$, and define for each $N$ \begin{equation} v^N_i=\frac{u_i-\overline{u}_N}{\sqrt{s_N}}; \hs 1\le i\le N\end{equation} where \begin{equation} \overline{u}_N=\frac{1}{N}\sum_{i=1}^N u_i; \hs s_N:=\sum_{i=1}^N|u_i-\overline{u}_N|^2. \end{equation} This construction guarantees that the empirical measures \begin{equation} \mu^N:=\frac{1}{N}\sum_{i=1}^N \delta_{v^N_i}\end{equation} lie in $\mathcal{S}_N$. One can verify that \begin{equation} w_1(\mu^N,\mu)\rightarrow 0;\hs \Lambda_{q}(\mu^N)\rightarrow \Lambda_{q}(\mu) \end{equation} almost surely, which implies the same for $W_p$ thanks to (\ref{eq: mathfrak wgamma and Wp}). In particular, the set of $\omega\in \Omega$ where $\mu^N(\omega), N\ge 1$ satisfy the conclusion (\ref{eq: conclusion of DAS}), restricted to the special case $p'=q$, has probability $1$, which implies the existence of such sequences. We conclude by noting that the special case $p'=q$ of (\ref{eq: conclusion of DAS}) implies all cases $p'\le q$ by a straightforward interpolation. \end{proof}

\section{A Jump Stochastic Differential Equation Associated to the Kac Process}\label{sec: JSDE} We begin our analysis of the Kac process by formulating a stochastic differential equation in $(\RRd)^N$ driven by Poissonian jump measures, which will correspond to the labelled Kac process discussed above.  Introduce first measurable maps $\iota=(\iota_1,...\iota_{d-1}):\RRd\rightarrow (\RRd)^{d-1}$ such that, for all $v\neq 0$, the set \begin{equation}\left\{\frac{v}{|v|},\frac{\iota_1(v)}{|v|},...,\frac{\iota_{d-1}(v)}{|v|}\right\}\end{equation}is an orthonormal basis of $\RRd$, and $\iota(-v)=-\iota(v)$. With this choice of $\iota$, define $\Gamma: \RRd\times \mathbb{S}^{d-2}\rightarrow \mathbb{R}^d$ by \begin{equation}\Gamma(v,\varphi)=\sum_{j=1}^{d-1}\varphi_j \iota_j(v). \end{equation} Let us also define \begin{equation} H(\theta)=\int_\theta^{\pi/2} b(\cos x)dx, \hs \theta\in \left(0,\frac{\pi}{2}\right). \end{equation} Thanks to (\ref{eq: form of B}), $H$ is now a bijection from $(0,\pi/2)$ to the ray $(0, \infty)$; let us write $G$ for its inverse. We finally define, for distinct $v,v_\star \in \RRd$ and $\varphi \in \mathbb{S}^{d-2}, z>0$\begin{equation} \theta(v, v_\star,z)=G\left(\frac{z}{|v-v_\star|^\gamma}\right); \end{equation} \begin{equation} a(v,v_\star,z,\varphi)=-\frac{1-\cos(\theta(v,v_\star,z))}{2}(v-v_\star)+\frac{\sin(\theta(v,v_\star,z))}{2}\Gamma(v-v_\star,\varphi).\end{equation}  In the case $v=v_\star$, we set $a(v,v_\star,z,\varphi)=0$; we note that, by construction, $a$ is antisymmetric in $v,v_\star$. Some estimates for the function $G$ are established in Section \ref{subsec: estimates for G}. \bigskip \\  With this parametrisation, we define a labelled Kac process to be the solution to an SDE with Poisson noise. For unordered pairs $\{i j\}=\{j i\}$ of distinct indeces $i,j=1,...,N$, let $\mathcal{N}^{\{ij\}}$ be independent Poisson random measures on $(0,\infty)\times \SSd\times (0,\infty)$, with intensity $2N^{-1}dsd\varphi dz$. A labelled Kac process is then exactly a solution $\mathcal{V}^N_t=(V^1_t,...,V^N_t)$ to the system of stochastic differential equations \begin{equation}\label{eq: SDE form}\tag{LK} V^i_t=V^i_0+\sum_{j\neq i}\int_{(0,t]\times \SSd\times(0,\infty)} a(V^i_{s-},V^j_{s-},z,\varphi)\h\Nn^{\{ij\}}(ds,d\varphi,dz)\end{equation} where the index $i$ runs over $1,...,N$. The factor of $2$ in the rate corresponds to working with unlabelled, rather than labelled, pairs of particles. Moreover, thanks to the antisymmetry of $a$ in the first two arguments, and recalling that $\Nn^{\{ij\}}=\Nn^{\{ji\}}$, we see that a jump in the $i^\text{th}$ particle $V^i_t\neq V^i_{t-}$ matches a jump in some $j^\text{th}$ particle, $j\neq i$. \bigskip \\ Clasically \cite{kurtz2011equivalence}, weak solutions to the stochastic differential equation (\ref{eq: SDE form}) are Markov processes $\mathcal{V}^N_t$ with the generator \begin{equation}
    \label{eq: labelled generator1}\begin{split} (\mathcal{G}^\mathrm{L}\widehat{F})(\mathcal{V}^N)&\\&\hspace{-1cm}=\frac{2}{N}\sum_{\{ij\}}\int_0^\infty dz\int_{\SSd}d\varphi \h \left(\widehat{F}\left(\mathcal{V}^N+a(V^i,V^j,z,\varphi)(\mathbf{e}_i-\mathbf{e}_j)\right)-\widehat{F}(\mathcal{V}^{N})\right) \end{split}
\end{equation} for Lipschitz $\widehat{F}:\mathbb{S}_N\rightarrow \mathbb{R}$, where the sum is over unordered pairs $\{ij\}=\{ji\}$, and we use the notation, for $1\le i\le N$ and $h\in \mathbb{R}^d$, $h\mathbf{e}_i$ is the vector $(0,...h,...0)$ in $(\mathbb{R}^d)^N$ with $h$ in the $i^\text{th}$ place. Thanks to the construction of $G$ and $a$, it is straightforward to check that the integral can be rewritten \begin{equation} \label{eq: labelled generator2} (\mathcal{G}^\mathrm{L}\widehat{F})(\mathcal{V}^N)=\frac{1}{N}\sum_{i=1}^N\sum_{j=1}^N\int_{\mathbb{S}^{d-1}}\left(\widehat{F}(\mathcal{V}^N_{i,j,\sigma})-\widehat{F}(\mathcal{V}^{N})\right)d\sigma \end{equation} where $\mathcal{V}^N_{i,j,\sigma}$ where $\mathcal{V}^N_{i,j,\sigma}$ denotes the vector in $(\mathbb{R}^d)^N$ where the $i^\text{th}$ and $j^\text{th}$ coordinates have been updated according to (\ref{eq: postcollisional}); we note the strong similarity with (\ref{eq: generator}). In this way, we can think of (\ref{eq: SDE form}) as corresponding to the Kac process where each particle is assigned a label, and we call (weak) solutions to (\ref{eq: SDE form}) a \emph{labelled Kac process}. We formalise the connection, and justify moving between the labelled and unlabelled dynamics, with the following proposition. \begin{proposition}\label{prop: labelled and unlabelled dynamics} \begin{enumerate}[label=\roman{*}).]
    \item Suppose $\mathcal{V}^N_t$ is a solution to to the stochastic differential equation (\ref{eq: SDE form}). Then the empirical measures $\mu^N_t=\theta_N(\mathcal{V}^N_t)$ are unlabelled Kac process. \item Every Kac process arises in this way: if $(\widetilde{\mu}^N_t, t\ge 0)$ is a Kac process starting at $\mu^N_0$, pick $\mathcal{V}^N_0\in \theta_N^{-1}(\mu^N_0)$ uniformly at random. Then there exists a weak solution to the stochastic differential equation (\ref{eq: SDE form}), starting at $\mathcal{V}^N_0$, such that $(\mu^N_t, t\ge 0)=(\theta_N(\mathcal{V}^N_t), t\ge 0)$ has the same law as $(\widetilde{\mu}^N_t, t\ge 0)$.
\end{enumerate}\end{proposition}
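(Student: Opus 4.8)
\emph{Part (i): generator comparison.} The natural route is to intertwine the two generators. By the classical equivalence between such jump SDEs and martingale problems \cite{kurtz2011equivalence} (recalled around (\ref{eq: labelled generator1})), any weak solution $\mathcal{V}^N_t$ of (\ref{eq: SDE form}) is a Markov process solving the martingale problem for $\mathcal{G}^{\mathrm{L}}$, the compensator being finite against Lipschitz test functions by (\ref{eq: mild singularity}) and the displacement bound $w_1(\mu^{N,v,v_\star,\sigma},\mu^N)\le 2|v-v_\star|\theta/N$. First I would note that $\theta_N(\mathcal{V}^N_t)\in\mathcal{S}_N$: every jump of (\ref{eq: SDE form}) updates two coordinates according to (\ref{eq: postcollisional}), conserving $\sum_i V^i_t$ and $\sum_i|V^i_t|^2$, so the normalisation (\ref{eq: normalisation}) is preserved. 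Next, for Lipschitz $F:(\mathcal{S}_N,w_1)\to\mathbb{R}$ the function $\widehat{F}:=F\circ\theta_N$ is Lipschitz on $\mathbb{S}_N$, since $w_1(\theta_N(\mathcal{V}),\theta_N(\mathcal{W}))\le N^{-1}\sum_i|V^i-W^i|$, and so lies in the domain of $\mathcal{G}^{\mathrm{L}}$. The identity to establish is the intertwining $(\mathcal{G}^{\mathrm{L}}(F\circ\theta_N))(\mathcal{V}^N)=(\mathcal{G}^N F)(\theta_N(\mathcal{V}^N))$, which I would verify by matching the two expressions term by term: a collisional update $\mathcal{V}^N\mapsto\mathcal{V}^N_{i,j,\sigma}$ induces $\mu^N\mapsto\mu^{N,V^i,V^j,\sigma}$ on $\mu^N=\theta_N(\mathcal{V}^N)$, the diagonal terms $i=j$ contribute nothing (there $a\equiv 0$ and the collision is trivial), and the change of variables $z=|v-v_\star|^\gamma H(\theta)$ with $\sigma\leftrightarrow(\theta,\varphi)$ converts the uniform intensity $2N^{-1}\,\dd z\,\dd\varphi$ over unordered pairs into $N^{-1}B(v-v_\star,\sigma)\,\dd\sigma$ over ordered pairs --- precisely the computation announced below (\ref{eq: labelled generator2}). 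Given this, the standard criterion for a function of a Markov process to again be Markov (for which the intertwining is exactly the hypothesis) shows $\mu^N_t=\theta_N(\mathcal{V}^N_t)$ is Markov with generator $\mathcal{G}^N$, hence a Kac process.

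\emph{Part (ii): lifting an unlabelled Kac process.} Here I would enlarge the probability space carrying $\widetilde{\mu}^N_t$ to also carry an initial labelling $\mathcal{V}^N_0$ drawn, conditionally on $\mu^N_0$, uniformly from the finite set $\theta_N^{-1}(\mu^N_0)\subset\mathbb{S}_N$, and then build a solution of (\ref{eq: SDE form}) from $\mathcal{V}^N_0$ with $\theta_N(\mathcal{V}^N_t)=\widetilde{\mu}^N_t$ along the trajectory. This amounts to lifting the collision structure of $\widetilde{\mu}^N_t$: labels ride with their atoms between collisions; at a collision the two participating atoms carry a determined pair of labels $\{ij\}$ when those atoms are distinct (any tie broken by fresh independent randomness), the assignment of the two postcollisional atoms to the labels $i,j$ is fixed by an auxiliary fair coin, and from each collision one reads off its direction $\sigma$, hence $(z,\varphi)$, via $z=|v-v_\star|^\gamma H(\theta)$ and $\sigma\leftrightarrow(\theta,\varphi)$. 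The point to verify is that, distributing the collision events in this way among the label-pairs, the resulting marked point processes $\mathcal{N}^{\{ij\}}$ are \emph{independent} Poisson random measures of intensity $2N^{-1}\,\dd s\,\dd\varphi\,\dd z$; this follows from the marking theorem for Poisson random measures together with the strong Markov property, using that by exchangeability each active pair of atoms is uniformly likely to be any label-pair. With the $\mathcal{N}^{\{ij\}}$ in hand, $\mathcal{V}^N_t$ solves (\ref{eq: SDE form}), so by Part (i) its empirical measure is a Kac process started from $\mu^N_0$, and by construction that empirical measure equals $\widetilde{\mu}^N_t$ and so, a fortiori, has the same law. No uniqueness for the unlabelled martingale problem is used here; conversely, that uniqueness is obtained downstream by combining this proposition with the coupling of Theorem \ref{thrm: approximate with cutoff}.

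\emph{The main obstacle.} The real difficulty --- and why the proof belongs in an appendix --- is the infinite collision rate: without cutoff, every pair of particles collides infinitely often on any time interval, so the collisions cannot be enumerated and the labelling and Poisson splitting above cannot be run literally. I would handle this by first carrying out both parts for the $K$-cutoff processes, where $\int B_K\,\dd\sigma<\infty$ is finite and the constructions are entirely classical (finitely many collisions; jump chain plus exponential holding times), and then passing to the limit $K\to\infty$. At fixed $N$ this is exactly what Theorem \ref{thrm: approximate with cutoff} provides: it controls the discrepancy of the labelled cutoff processes and thereby yields tightness of $(\mathcal{V}^{N,K}_t)_K$ and of their empirical measures, so a limit point is a solution of (\ref{eq: SDE form}) whose image is the noncutoff Kac process. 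The residual measure-theoretic points --- measurability of the kernel $\mu^N_0\mapsto\mathrm{Unif}(\theta_N^{-1}(\mu^N_0))$, and the null event that two distinct atoms coincide after a collision --- are routine.
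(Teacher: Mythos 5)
Your part (i) matches the paper's argument: both verify the generator intertwining $\mathcal{G}^{\mathrm{L}}(F\circ\theta_N)=(\mathcal{G}^N F)\circ\theta_N$ for Lipschitz $F$ and deduce that $\mu^N_t=\theta_N(\mathcal{V}^N_t)$ solves the unlabelled martingale problem in the filtration of $\mathcal{V}^N_t$. That part is fine.

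Part (ii) is where you diverge, and there is a genuine gap. The statement is: \emph{given} an arbitrary unlabelled Kac process $\widetilde{\mu}^N_t$, there exists a labelled solution whose image has the \emph{same law as} $\widetilde{\mu}^N_t$. Your fallback plan --- lift the $K$-cutoff processes and pass to the limit $K\to\infty$ --- produces \emph{a} labelled solution $\mathcal{V}^N_t$ of (\ref{eq: SDE form}) and, via part (i), \emph{a} noncutoff unlabelled Kac process $\theta_N(\mathcal{V}^N_t)$; but it does not show this process has the same law as the given $\widetilde{\mu}^N_t$. Identifying the limit with $\widetilde{\mu}^N_t$ would require uniqueness in law for the unlabelled martingale problem, which is exactly Proposition \ref{prop: wellposedness/uniqueness} and is proved \emph{downstream} of Proposition \ref{prop: labelled and unlabelled dynamics}; the paper is explicit that Proposition \ref{prop: labelled and unlabelled dynamics} must be established without invoking the paper's intermediate results precisely to avoid a cycle. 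You also invoke Theorem \ref{thrm: approximate with cutoff} for tightness, whose proof itself begins by applying Proposition \ref{prop: labelled and unlabelled dynamics} to move between labelled and unlabelled Kac processes, so the circularity is real rather than cosmetic. Your direct-construction sketch (labels riding atoms, marking theorem, exchangeability) has the same issue in disguise: the collision events of a given unlabelled Kac process are not handed to you as a Poisson random measure, so there is nothing to which the marking theorem applies, and you admit the construction cannot be run literally in the noncutoff case.

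What the paper does instead is precisely designed to close this gap without any limit-passing or uniqueness: Kurtz's consistency theorem (\cite{kurtz1998martingale,kurtz2011equivalence}, quoted in the appendix as Proposition \ref{prop: equivalence}) takes \emph{any} solution of the reduced martingale problem for $\mathcal{A}^\theta=\mathcal{G}$ together with the transition function $\alpha(\mu^N)=\mathrm{Unif}(\theta_N^{-1}(\mu^N))$ and returns a solution of the full martingale problem for $\mathcal{A}=\mathcal{G}^{\mathrm{L}}$ whose $\theta_N$-image has the prescribed law. The only work is verifying the hypotheses: separability of $\mathcal{G}^{\mathrm{L}}$ (from separability of $W^{1,\infty}(\mathbb{S}_N)$ and boundedness of $\mathcal{G}^{\mathrm{L}}$), the pregenerator property (via dissipativity of the cutoff generators $\mathcal{G}^{\mathrm{L}}_K$ and $\mathcal{G}^{\mathrm{L}}_K\to\mathcal{G}^{\mathrm{L}}$ in operator norm), and the compatibility $\alpha(\mu^N,\theta_N^{-1}(\mu^N))=1$. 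If you want to repair your proposal, you should replace the $K\to\infty$ limit with this abstract lifting result; the cutoff approximation is still used, but only to check that $\mathcal{G}^{\mathrm{L}}$ is a pregenerator, not to construct the lift.
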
  For ease of readability, the proof is deferred to Appendix \ref{sec: pf_wellposedness}. The first item is elementary, and uses the $\text{Sym}(N)$-symmetry of the labelled dynamics; the second item amounts to the careful application of a theorem due to Kurtz \cite{kurtz1998martingale,kurtz2011equivalence}. Since this proof does not rely on the intermediate results of this paper, we can use this equivalence without further comment without concern for circular arguments. We will also prove the following result concerning the well-posedness of the two formulations. \begin{proposition}\label{prop: wellposedness/existence}  For all $\mathcal{V}^N_0\in \mathbb{S}_N$, there exists a labelled Kac process $\mathcal{V}^N_t, t\ge 0$, that is, a weak solution to (\ref{eq: SDE form}), starting at $\mathcal{V}^N_0$. Therefore, if  $\mu^N_0\in \mathcal{S}_N$, then there exists an $N$-particle (unlabelled) Kac process starting at $\mu^N_0.$  \end{proposition}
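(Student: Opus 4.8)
The plan is to establish existence of a weak solution to the jump SDE \eqref{eq: SDE form} for any fixed initial configuration $\mathcal{V}^N_0 \in \mathbb{S}_N$ by an approximation-and-compactness argument, using the cutoff processes as the approximating sequence. The cutoff processes are unproblematic: for each $K$, replacing $b$ by its truncation (equivalently, restricting the $z$-integral in \eqref{eq: SDE form} to $z > H(\theta_0(K))$, which is a bounded region in the relevant sense) gives a jump SDE with a finite total jump rate out of any bounded set, so a standard construction (interlacing the jumps, or a routine application of existence theory for SDEs with finite-activity Poisson noise) produces a weak solution $\mathcal{V}^{N,K}_t$ started at $\mathcal{V}^N_0$. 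One should also record that these solutions conserve momentum and energy, so they stay on $\mathbb{S}_N$, and that the moment estimates collected in Section~\ref{sec: moments} apply uniformly in $K$ (the hard-potential, mild-singularity structure is exactly what makes $\int \theta B \, d\sigma < \infty$, hence the drift of any polynomial moment is controlled independently of $K$).

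Next I would prove tightness of the laws of $(\mathcal{V}^{N,K}_t)_{t\ge 0}$ in $K$, on the Skorokhod space $D([0,\infty); (\mathbb{R}^d)^N)$. The uniform-in-$K$ moment bounds give compact containment; for the Aldous-type oscillation condition one uses that, although the total rate diverges, the expected \emph{size} of the displacement accumulated over a short interval is small — precisely, $\mathbb{E}\big[\,|V^{i}_{t+h} - V^i_t|\,\big| \mathcal{F}_t\big]$ and the predictable quadratic variation over $[t,t+h]$ are both $O(h)$ with constants depending only on the moments, because $|a(v,v_\star,z,\varphi)|$ integrated in $z$ against $dz$ is comparable to $\int \theta B\,d\sigma \cdot |v-v_\star| < \infty$ and similarly for $|a|^2$ (this is the content of the estimates for $G$ promised in Section~\ref{subsec: estimates for G}). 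These bounds are uniform in $K$, so Aldous's criterion gives tightness. Extract a weakly convergent subsequence $\mathcal{V}^{N,K_m}_\cdot \Rightarrow \mathcal{V}^N_\cdot$.

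Then I would identify any subsequential limit as a weak solution to \eqref{eq: SDE form}, equivalently as a solution of the martingale problem for the generator \eqref{eq: labelled generator1}: for bounded Lipschitz $\widehat F$ one shows $\widehat F(\mathcal{V}^N_t) - \widehat F(\mathcal{V}^N_0) - \int_0^t (\mathcal{G}^{\mathrm L}\widehat F)(\mathcal{V}^N_s)\,ds$ is a martingale, by passing to the limit in the analogous identity for the cutoff processes. The passage to the limit splits into: (a) the cutoff generator applied to $\widehat F$ converges pointwise to $\mathcal{G}^{\mathrm L}\widehat F$ as $K\to\infty$, with a uniform-in-$\mathcal{V}^N$ (on moment-bounded sets) error controlled by $\int_{\theta \le \theta_0(K)} \theta\, B\,d\sigma \to 0$; (b) a uniform-integrability argument, again from the moment bounds, lets one interchange limit and expectation; and (c) the usual care in handling the Skorokhod topology (choosing $t$ outside the at-most-countable set of fixed discontinuity times, which here is empty anyway). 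Finally, Proposition~\ref{prop: labelled and unlabelled dynamics}(i) converts the resulting labelled Kac process into an unlabelled one started at $\mu^N_0 = \theta_N(\mathcal{V}^N_0)$, proving the second sentence.

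The main obstacle is step three, and specifically keeping the error in the generator convergence \emph{uniform over the state variable} along the tight sequence: because the diverging grazing collisions are precisely the ones being added back as $K\to\infty$, one must be sure that the martingale problem identity is not corrupted in the limit by an accumulation of small jumps. This is where the choice to work with Lipschitz (indeed bounded Lipschitz) test functions $\widehat F$ and the quantitative bound $w_1(\mu^{N,v,v_\star,\sigma},\mu^N) \le 2|v-v_\star|\theta/N$ pay off: the increment $\widehat F(\mathcal{V}^N + a(\cdot)(\mathbf{e}_i - \mathbf{e}_j)) - \widehat F(\mathcal{V}^N)$ is $O(|a|)$, and $\int |a|\,dz < \infty$ uniformly, so the whole construction never sees the non-integrable tail of $B$ directly — only its integral against $\theta$. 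Everything else is standard stochastic-analysis bookkeeping, and I would relegate the detailed estimates to Appendix~\ref{sec: pf_wellposedness} as the excerpt indicates.
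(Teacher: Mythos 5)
Your proposal is correct and follows essentially the same route as the paper: construct the cutoff processes, establish tightness via Aldous's criterion using $\int_0^\infty\int_{\mathbb{S}^{d-2}}|a|\,d\varphi\,dz \le C|v-v_\star|^{1+\gamma}$, and identify any subsequential limit as a solution of the martingale problem for $\mathcal{G}^{\mathrm L}$ by passing to the limit in the cutoff martingale identities, using convergence of the generators. The one simplification the paper exploits that you overlook is that $\mathbb{S}_N$ is itself a compact subset of $[-\sqrt{N},\sqrt{N}]^{Nd}$, so compact containment is immediate and the convergence $\mathcal{G}^{\mathrm L}_K\widehat{F}\to\mathcal{G}^{\mathrm L}\widehat{F}$ can be taken uniform on the whole state space without invoking the uniform-in-$K$ moment bounds or any uniform-integrability argument.
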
 \begin{proposition}\label{prop: wellposedness/uniqueness}  For all $\mathcal{V}^N_0\in \mathbb{S}_N$, the labelled Kac process $\mathcal{V}^N_t, t\ge 0$ starting at $\mathcal{V}^N_0$ has uniqueness in law. At the level of unlabelled dynamics if  $\mu^N_0\in \mathcal{S}_N$, then there exists a unique, in law, $N$-particle (unlabelled) Kac process starting at $\mu^N_0.$  \end{proposition}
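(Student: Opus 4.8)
The plan is to reduce to the labelled dynamics and then deduce uniqueness in law for the stochastic differential equation (\ref{eq: SDE form}) by comparison with the cutoff dynamics, whose well-posedness is elementary. By Proposition~\ref{prop: labelled and unlabelled dynamics}, an unlabelled Kac process from $\mu^N_0$ is the $\theta_N$-image of a labelled solution of (\ref{eq: SDE form}) whose initial value is drawn uniformly from $\theta_N^{-1}(\mu^N_0)$; so it suffices to show that, for each deterministic $\mathcal{V}^N_0\in\mathbb{S}_N$, the weak solution of (\ref{eq: SDE form}) is unique in law (the case of a random initial value then following by conditioning). Two a priori facts will be used freely: every solution of (\ref{eq: SDE form}) stays in the compact set $\mathbb{S}_N$, since each jump is of the form (\ref{eq: postcollisional}) and so conserves energy and momentum, whence all moments $\langle|v|^k,\mu^N_t\rangle$ are bounded in terms of $N$ and $k$ alone; and, for the cutoff kernel (\ref{eq: cutoff kernel}), the total jump rate is finite, so the $K$-cutoff version of (\ref{eq: SDE form}) has only finitely many jumps on each bounded interval, is solved pathwise by repeatedly ``locating the next collision and applying the associated map (\ref{eq: postcollisional})'', and hence has a unique strong solution and a well-defined law $P^{N,K}$ for each initial law.

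Next I would fix any solution $\mathcal{V}^N$ of (\ref{eq: SDE form}) on a stochastic basis carrying the $\Nn^{\{ij\}}$, with $\mathcal{V}^N_0$ deterministic, and construct on an enlargement a $K$-cutoff solution $\mathcal{V}^{N,K}$ from the same point, coupled so that $\EE\big[\sup_{t\le T}\sum_i|V^i_t-V^{i,K}_t|^2\big]\to 0$ as $K\to\infty$, for every $T$. This is the Tanaka-type coupling of Section~\ref{sec: main calculations} specialised to fixed $N$: at an atom of $\Nn^{\{ij\}}$ at $(\varphi,z)$, the process $\mathcal{V}^{N,K}$ performs the collision only when the associated deflection exceeds $\theta_0(K)$, and then with angular parameter $R_{s}\varphi$ rather than $\varphi$, where $R_s\in\mathrm{SO}(d-1)$ is an $\F_{s-}$-measurable rotation chosen to align the orthonormal frames $\iota(V^i_{s-}-V^j_{s-})$ and $\iota(V^{i,K}_{s-}-V^{j,K}_{s-})$. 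Since $R_s$ is predictable and the angular intensity $d\varphi$ is $\mathrm{SO}(d-1)$-invariant, the images of the $\Nn^{\{ij\}}$ under $(s,\varphi,z)\mapsto(s,R_{s}\varphi,z)$ are again Poisson random measures with the correct intensity, so $\mathcal{V}^{N,K}$ genuinely solves the $K$-cutoff equation from $\mathcal{V}^N_0$ and therefore has law $P^{N,K}$. For the estimate I would apply It\^o's formula to $\sum_i|V^i_t-V^{i,K}_t|^2$ along the jumps: the part of $a$ linear in $\varphi$ (the $\Gamma$-term) integrates to $0$ over $\SSd$, so the rotation is felt only at quadratic order, where the alignment renders $\int_{\SSd}|\Gamma(u,\varphi)-\Gamma(\tilde u,R_s\varphi)|^2\,d\varphi$ comparable to $|u-\tilde u|^2$ (modulo the familiar degeneracy when the relative velocities are small, which is harmless since the collisions then barely move the particles); the remaining terms, and those coming from discarding deflections below $\theta_0(K)$, are handled with the estimates on $G$ from Section~\ref{subsec: estimates for G} and the integrability of $\theta B$ and $\theta^2 B$ from (\ref{eq: mild singularity}). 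Since only an $N$-dependent constant is needed here, not the delicate $N$-uniform cancellations of Section~\ref{sec: main calculations}, one may use the crude moment bounds available on $\mathbb{S}_N$ and work with the unweighted $\ell^2$ distance in place of $W_p$; a Gr\"onwall argument --- localised along the stopping times at which a given finite number of deflections above a threshold have occurred, so as to accommodate the accumulation of small collisions --- then yields a bound of order $C(N,T)\,K^{1-1/\nu}$, which tends to $0$.

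Finally, this convergence shows $\mathcal{V}^{N,K}\to\mathcal{V}^N$ in probability in the Skorokhod space on each $[0,T]$, so $\operatorname{Law}(\mathcal{V}^N)=\lim_{K\to\infty}P^{N,K}$ depends only on $\mathcal{V}^N_0$; as the law of an arbitrary solution is thus pinned down, this is uniqueness in law for (\ref{eq: SDE form}), and, via Proposition~\ref{prop: labelled and unlabelled dynamics}, for the unlabelled Kac process. I expect the main obstacle to be the construction and control of this coupling in the presence of the non-Lipschitz dependence of $a$ on the velocities through the merely measurable frame $\iota$ --- this is precisely why a naive ``same driving noise'' pathwise-uniqueness argument fails to close and why the alignment rotation $R_s$ is indispensable --- together with the bookkeeping required to justify the jump calculus and the Gr\"onwall estimate despite the infinitely many collisions per pair; both difficulties are already met in Section~\ref{sec: main calculations}, so the present argument should be a lighter variant of that analysis.
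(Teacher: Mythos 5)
Your strategy coincides with the paper's: fix any solution $\mathcal{V}^N$ of (\ref{eq: SDE form}), construct the coupled cutoff process $\mathcal{V}^{N,K}$ on the same driving noise by aligning the angular frames with the previsible rotation of Lemma~\ref{lemma: ATT}, show the coupling collapses as $K\to\infty$ so that $\operatorname{Law}(\mathcal{V}^N)=\lim_K P^{N,K}$, and transfer the conclusion to the unlabelled dynamics via Proposition~\ref{prop: labelled and unlabelled dynamics}. Your observation that on $\mathbb{S}_N$ all moments are deterministically bounded by powers of $N$ is also exactly the device the paper uses; there the choice $b=N^{(p+\gamma)/2}$ forces $T^N_b=T^{N,K}_b=\infty$, so the stopping-time error terms of Lemma~\ref{lemma: Tanaka Coupling of KP} vanish.

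Where you diverge is in which Gr\"onwall estimate to drive. The paper simply reinvokes Lemma~\ref{lemma: Tanaka Coupling of KP} verbatim --- that is, the \emph{weighted} estimate on $\overline{d}_p^2$ with $p>p_0$, built on the full Povzner cancellation of Lemma~\ref{lemma: big calculation} --- because that machinery has already been assembled and remains valid (and circularity-free) here. You instead propose working with the \emph{unweighted} $\ell^2$ distance $\sum_i|V^i_t-V^{i,K}_t|^2$, arguing that the $N$-uniform cancellation is unnecessary once $|V^i_t|\le\sqrt N$ holds pathwise. This is right: what you need is precisely the bound on $\mathcal{E}^3_K$ from Lemma~\ref{lemma: broken up big calculation} (equivalently, Fournier--Mischler's Lemma 3.1/5.1), whose coefficient $(1+|v|^\gamma+\cdots)$ becomes an $N$-dependent constant on $\mathbb{S}_N$, and whose error is $O(K^{1-1/\nu})$; the delicate terms $\mathcal{E}^1_K,\mathcal{E}^2_K$ and the large-$p$ choice are then not needed. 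Your route is therefore a genuine, and lighter, variant; the paper's is more economical only in that it reuses a lemma it must prove anyway. One minor technical remark: the ``localisation along stopping times counting large deflections'' you suggest to handle the infinite jump activity is not what the paper does --- it appeals directly to the Darling--Norris jump-martingale theory, with integrability of the compensator furnished by the estimates on $\mathcal{E}^3_K$ and $\mathcal{Q}_K$ --- though your device would also serve.
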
 Both proofs are again deferred until Appendix \ref{sec: pf_wellposedness}. For the existence result, we  use the combination of tightness and consistency, which is standard for martingale problems; in particular, no other results of this paper are necessary for this argument. For uniqueness, we use the construction of a coupling and estimates in Lemma \ref{lemma: Tanaka Coupling of KP}, which itself depends only on the elementary calculations in  Section \ref{sec: main calculations}. We emphasise that the analyses of Sections \ref{sec: moments}, \ref{sec: TK} may be understood as \emph{\`a priori} estimates and do not require this uniqueness. In both cases, the item on unlabelled dynamics follows from the statement on labelled dynamics using Proposition \ref{prop: labelled and unlabelled dynamics}.  \bigskip \\   We also construct a cutoff version $\mathcal{V}^{N,K}_t=(V^{1,K}_t,...V^{N,K}_t)$ of these processes as follows.  In analogy to the definition above, set \begin{equation} a_K(v,v_\star,z,\varphi)=a(v,v_\star,z,\varphi)1\left(z\le K|v-v_\star|^\gamma\right).\end{equation} The $K$-cutoff version of (\ref{eq: SDE form}),  corresponding to the cutoff kernel $B_K$ defined in (\ref{eq: cutoff kernel}) is now \begin{equation} \tag{cLK}\label{eq: cutoff SDE} V^{i,K}_t=V^{i,K}_0+\sum_{j\neq i}\int_{(0,t]\times \SSd\times(0,\infty)} a_K(V^{i,K}_{s-},V^{j,K}_{s-},z,\varphi)\h\Nn^{\{ij\}}(ds,d\varphi,dz).\end{equation} In the notation above, $\theta_0(K)=H(K)\rightarrow 0$ as $K\rightarrow \infty.$ Let us remark that the statements equivalent to Propositions \ref{prop: labelled and unlabelled dynamics}, \ref{prop: wellposedness/existence}, \ref{prop: wellposedness/uniqueness} for the cutoff differential equation (\ref{eq: cutoff SDE}) and the corresponding cutoff Kac process $\mu^{N,K}_t$ are elementary, as in both cases the overall jump rates are uniformly bounded.

\section{Moment Estimates for the Kac Process and Boltzmann Equation}
\label{sec: moments} We now present some results concerning the moment evolution for both the Kac process and the Boltzmann equation. Subsections \ref{sec:cutoff moment estimates},\ref{subsec: noncutoff moments} collect some moment estimates for the Kac process and Boltzmann equation in the cutoff and noncutoff cases respectively, uniformly in both the number of particles $N$ and cutoff parameter $K$; these results are mostly classical, and we present results from the literature. The arguments are well-known for the Boltzmann equation, and have been proven for measure-valued solutions of the Boltzmann equation by Lu and Mouhot \cite{lu2012measure}, and for the Kac process by Mischler and Mouhot \cite{mischler2013kac} and Norris \cite{norris2016consistency}. In Subsection \ref{subsec: concentration of moments}, we prove a novel `concentration of moments' result.

\subsection{Moment Inequalities in the Cutoff Case}\label{sec:cutoff moment estimates}

We next turn to some moment inequalities for the cutoff process and associated limit equation.  The only novelty here is some care to ensure that the estimates are uniform in the cutoff parameter $K$ as soon as $K$ is bounded away from $0$.
 
 \begin{proposition} [Moment Inequalities for the Cutoff Kac Process and Boltzmann Equation] \label{thrm:momentinequalities} We have the following moment bounds for polynomial velocity moments: \\		
\begin{enumerate}[label={\roman*).},ref={\roman*.}] \item \label{lemma:momentboundpt1}  Let $(\mu^{N,K}_t)_{t\geq 0}$ be a $K$-cutoff Kac process on $N\ge 2$ particles, $K\ge 1$, started from $\mu^{N,K}_0$, and fix $q\ge p\ge 4$. Then there exists a constant $C(p,q)<\infty$, which does not depend on $K$, such that, for all $t\ge 0$, \begin{equation} \label{eq: pointwise moment bound}  \mathbb{E}\left[ \Lambda_q(\mu^{N,K}_t) \right] \leq C(1+t^{p-q})\Lambda_p(\mu^{N,K}_0).\end{equation} \item In the notation of the previous point, there exists a constant $C=C(p)$, also independent of $K$, such that for all $t_\mathrm{fin}\ge 0$,\begin{equation} \label{eq: local uniform moment bound} \mathbb{E}\left(\sup_{0\leq t \leq t_\text{fin}} \Lambda_p(\mu^{N,K}_t)\right) \leq (1+C(p)t_\text{fin})\Lambda_p(\mu^{N,K}_0).\end{equation} 
\item Let $k\ge 2$. In the notation of point i), we have the almost sure relation \begin{equation}\label{eq: geometric growth}
    \PP\left(\Lambda_k(\mu^{N,K}_t)\le 2^{\frac{k}{2}+1}\Lambda_k(\mu^{N,K}_{t-})\h\text{ for all }t\ge 0\right)=1.
\end{equation}
\item \label{lemma:momentboundpt2} Let $p, q$ be as above, and let, and $\mu_0\in \cup_{k>2}\mathcal{S}^k$. Then there exists a constant $C=C(p,q)$, which does not depend on $K\ge 1$, such that the solution $\phi^K_t(\mu_0)$ to the cutoff Boltzmann Equation satisfies \begin{equation}\label{eq: BE moment bound}  \Lambda_q(\phi^K_t(\mu_0)) \le C(1+t^{p-q})\Lambda_p(\mu_0).\end{equation}
\end{enumerate} \end{proposition}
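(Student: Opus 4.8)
The plan is to follow the classical Povzner-inequality route, taking care at each step that all constants depend only on the moment exponents and not on the cutoff parameter $K$, as soon as $K \geq 1$. The starting point is the action of the cutoff generator on test functions of the form $v \mapsto (1+|v|^2)^{k/2}$. Writing $\psi_k(v)=(1+|v|^2)^{k/2}$ and using the collision symmetry, one obtains for the cutoff Kac process
\begin{equation*}
\frac{\dd}{\dd t}\EE\left[\Lambda_k(\mu^{N,K}_t)\right] = \EE\left[\int_{\RRd\times\RRd\times\SSd}\left(\psi_k(v')+\psi_k(v'_\star)-\psi_k(v)-\psi_k(v_\star)\right)B_K(v-v_\star,\sigma)\,\mu^{N,K}_t(\dd v)\mu^{N,K}_t(\dd v_\star)\,\dd\sigma\right],
\end{equation*}
the $1/N$ and $N$ factors in \eqref{eq: generator} cancelling. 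The Povzner inequality (in the sharp form of Bobylev, or Lu--Mouhot \cite{lu2012measure} for measure solutions) gives pointwise in $(v,v_\star)$ a bound on the integrated gain minus loss of the form $\int_{\SSd}(\ldots)\,\dd\sigma \le |v-v_\star|^\gamma\left(C_k\,(\psi_k(v)\psi_2(v_\star)+\psi_2(v)\psi_k(v_\star)) - c_k\,(\psi_k(v)+\psi_k(v_\star))\psi_0(v_\star\wedge v\text{-type weight})\right)$; crucially, because the cutoff only removes grazing collisions $\theta \le \theta_0(K)$ with $\theta_0(K)\to 0$, and the Povzner gain constant is governed by $\int \theta^2 \beta(\theta)\,\dd\theta$ (finite and monotone in $K$), the constants $C_k, c_k$ can be taken uniform in $K\ge 1$ — this is exactly the ``only novelty'' the authors flag. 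Integrating against $\mu\otimes\mu$ and using $|v-v_\star|^\gamma \le \psi_\gamma(v)\psi_0(v_\star)+\psi_0(v)\psi_\gamma(v_\star)$ together with the energy normalisation $\langle\psi_2,\mu\rangle=1$, one arrives at a differential inequality $\frac{\dd}{\dd t}\EE[\Lambda_k]\le A\,\EE[\Lambda_{k+\gamma-\text{(something)}}] - B\,\EE[\Lambda_{k+\gamma}]$, and since $\gamma\ge 0$ an interpolation $\Lambda_{k} \le \Lambda_{k+\gamma}^{\theta}\Lambda_2^{1-\theta}$ turns the negative term into a genuine absorbing term $-B'\EE[\Lambda_k]^{(k+\gamma)/k}$. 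This is a Bernoulli-type ODE inequality whose solution is bounded by $C(k)(1+t^{-1/\text{(power)}})$, independent of the initial data for large $t$; combined with the trivial bound for small $t$ this yields \eqref{eq: pointwise moment bound} with the stated $C(1+t^{p-q})\Lambda_p(\mu_0)$ form (one first gets an unconditional bound $\Lambda_q(\mu_t)\le C(1+t^{p-q})$ when $\Lambda_p(\mu_0)\le$ const, then homogenises).

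For part (ii), the local-uniform-in-time bound, the plan is to use the semimartingale decomposition of $t\mapsto\Lambda_p(\mu^{N,K}_t)$: the drift is controlled by the Povzner estimate above (which for the single exponent $p$ and $\gamma\le 1$ gives drift $\le C(p)$ after using $\langle\psi_2,\mu\rangle=1$ to dominate the gain term by a constant times $\Lambda_p$, then the absorbing loss term keeps everything linear), and the martingale part is handled by the Burkholder--Davis--Gundy inequality applied to the compensated Poisson integrals of Section \ref{sec: JSDE}; the jump sizes are controlled using $w_1(\mu^{N,v,v_\star,\sigma},\mu^N)\le 2|v-v_\star|\theta/N$ so the predictable quadratic variation is $O(N^{-1})$ times a moment term, and after Gronwall one obtains \eqref{eq: local uniform moment bound}. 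Part (iii) is the most elementary: a single collision replaces $\delta_v+\delta_{v_\star}$ by $\delta_{v'}+\delta_{v'_\star}$ with $|v'|^2+|v'_\star|^2=|v|^2+|v_\star|^2$, hence $|v'|\le(|v|^2+|v_\star|^2)^{1/2}$ and similarly for $v'_\star$, so the jump in $N\Lambda_k$ is at most $2\cdot 2^{k/2}\max(|v|^2,|v_\star|^2)^{k/2}$-type and one checks directly that $\Lambda_k$ can increase by at most the factor $2^{k/2+1}$ at any single jump, which holds surely (not just a.s. — the ``a.s.'' in \eqref{eq: geometric growth} is just because the process itself is only defined up to null sets). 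Part (iv) is the deterministic analogue of (i): the same Povzner computation applies verbatim to the Boltzmann flow (no expectation, no martingale), giving the identical ODE inequality and hence \eqref{eq: BE moment bound}; alternatively one can cite Lu--Mouhot \cite{lu2012measure} directly and merely remark on the $K$-uniformity.

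The main obstacle is part (i): getting the Povzner constants uniform in $K$ and then extracting the correct time-homogeneity. The $K$-uniformity requires being careful that the constant in the Povzner gain inequality is $\int_0^{\pi/2}\sin^2(\theta/2)\,\beta(\theta)\,(\text{bounded factor})\,\dd\theta$ which, for $\beta(\theta)\sim\theta^{-1-\nu}$ with $\nu<1$, converges and is \emph{increasing} in the cutoff (removing grazing collisions only decreases it), so it is bounded by its $K=\infty$ value — one should state this monotonicity cleanly. The time-homogeneity (the $t^{p-q}$ rather than $t^0$) requires the generation-of-moments phenomenon: even with $\Lambda_p(\mu_0)$ only assumed finite, the $-B'\Lambda_q^{(q+\gamma)/q}$ absorption instantaneously creates a finite $\Lambda_q$ for $t>0$, quantified by comparison with the ODE $y'=-B'y^{1+\gamma/q}$ whose solutions decay like $t^{-q/\gamma}$; combining the two endpoint regimes and tracking how $\Lambda_p(\mu_0)$ enters gives the stated form. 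This generation argument, while standard, is the one place where care is genuinely needed to get the clean statement \eqref{eq: pointwise moment bound}.
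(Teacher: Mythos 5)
Your proposal for parts (i), (iii) and (iv) follows essentially the same Povzner route as the paper: derive a pointwise Povzner inequality, check that both the gain constant (bounded above) and the loss constant (bounded away from zero) are uniform over $K\ge 1$, and convert the resulting Bernoulli-type differential inequality into the moment-generation estimate. The only cosmetic difference is that the paper phrases the Povzner inequality in the $(z,\varphi)$-parametrisation via the function $G$, in the form (\ref{eq: prototype pozvner}), while you phrase it in the scattering angle $\theta$ directly; these are equivalent.

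For part (ii), however, there is a genuine gap. You propose to control $\EE\left[\sup_{t\le t_\mathrm{fin}}\Lambda_p(\mu^{N,K}_t)\right]$ by splitting into drift plus martingale and applying Burkholder--Davis--Gundy to the martingale part. Two problems. First, the jump of $\Lambda_p$ at a collision is of size $N^{-1}\left(\psi_p(v')+\psi_p(v'_\star)-\psi_p(v)-\psi_p(v_\star)\right)$, which is \emph{not} controlled by the Lipschitz bound $w_1(\mu^{N,v,v_\star,\sigma},\mu^N)\le 2|v-v_\star|\theta/N$; the jump carries the additional weight $|v|^{p-1}+|v_\star|^{p-1}$, so the quantity you must control is a moment of order about $2p+\gamma$. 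Second, and more seriously, the predictable quadratic variation of the martingale part is therefore of order $N^{-1}\int_0^t\Lambda_{2p+\gamma}(\mu^{N,K}_s)\,ds$; after BDG this requires control of $\EE\left[\int_0^{t_\mathrm{fin}}\Lambda_{2p+\gamma}(\mu^{N,K}_s)\,ds\right]$ given only the $p$-th moment at time $0$. The time-degenerate bound from part (i) gives $\EE\Lambda_{2p+\gamma}(\mu^{N,K}_s)\le C(1+s^{-p-\gamma})\Lambda_p(\mu^{N,K}_0)$, which is not integrable near $s=0$ for $p\ge 4$, so this route does not close. The paper avoids BDG entirely: from (\ref{eq: change of moment at collision}) the jump of $\langle|v|^p,\mu^{N,K}_t\rangle$ has a \emph{negative} Povzner part, so the jump is bounded above by the positive remainder $\frac{C_p}{N}(|V^i|^{p-1}|V^j|+|V^j|^{p-1}|V^i|)\sin G(\cdot)$ alone. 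Letting $A_t$ be the increasing process whose jumps are exactly these positive parts, one gets the \emph{pathwise} bound $\sup_{s\le t}\langle|v|^p,\mu^{N,K}_s\rangle\le\langle|v|^p,\mu^{N,K}_0\rangle+A_t$, and $\EE[A_{t_\mathrm{fin}}]$ is computed directly from its compensator, which involves only $\Lambda_{p+\gamma-1}\le\Lambda_p$. This is the step you should adopt: exploit the sign structure of the Povzner inequality to dominate the supremum by a monotone process, rather than trying to control the martingale fluctuations.
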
 
\begin{proof} Let us sketch the arguments leading to point items i)-ii)., which are similar to those of \cite[Proposition 3.1]{norris2016consistency} but require some modification. It is convenient to work with a labelled, cutoff Kac process $\mathcal{V}^{N,K}_t$ driven by Poisson random measures $\Nn^{\{ij\}}$, and whose empirical measures are $\mu^{N,K}_t$. We start from the bound, for all $p\ge 4$ and any $v,v_\star,z,\varphi$,\begin{equation}\begin{split} |v+a|^p &\le \left(\frac{1+\cos G(z/x^\gamma)}{2}\right)^{p/2} |v|^p+\left(\frac{\sin G(z/x^\gamma)}{2}\right)^{p/2}|v_\star|^p \\[1ex] & \hs \hs +C_p(|v|^{p-1}|v_\star|+|v_\star|^{p-1}|v|)\sin G(z/x^\gamma) \end{split} \end{equation} where we write $a=a(v,v_\star,z,\varphi)$ and $x:=|v-v_\star|$.  From this, and a similar inequality for $|v_\star-a|^p$, we obtain \begin{equation} \begin{split}\label{eq: change of moment at collision} |v+a|^p+|v_\star-a|^p-|v|^p-|v_\star|^p &\le -\beta(p, G(z/x^\gamma))\left(|v|^p+|v_\star|^p\right) \\[1ex] &   +C_p(|v|^{p-1}|v_\star|+|v_\star|^{p-1}|v|)\sin G(z/x^\gamma)\end{split} \end{equation} where \begin{equation} \beta(p, \theta)=\left(1-\left(\frac{1+\cos \theta}{2}\right)^{p/2}-\left(\frac{\sin \theta}{2}\right)^{p/2}\right).\end{equation} Since we consider only $p\ge 4$, we see that $\beta(p,\theta)>0$ for all $\theta\in (0,\pi/2]$. We conclude that, for any $v,v_\star$, \begin{equation}\label{eq: prototype pozvner}\begin{split}& \int_{(0,\infty)\times\SSd} (|v+a|^p+|v_\star-a|^p-|v|^p-|v_\star|^p)1(z\le K|v-v_\star|^\gamma)dzd\varphi  \\ & \hs \hs \hs    \le - |v-v_\star|^{\gamma}\left(\int_0^K \beta(p,G(z)) dz\right)(|v|^p+|v_\star|^p) \\ & \hs \hs \hs + C|v-v_\star|^{\gamma}\left(\int_0^K \sin G(z) dz\right)(|v|^{p-1}|v_\star|+|v_\star|^{p-1}|v|).\end{split}\end{equation} The coefficient multiplying the first term is bounded bounded away from $0$ for $K\ge 1$, and, in particular, we can bound the right-hand side above by replacing this coefficient by a $K$-independent constant. For the second term, $\int_0^K \sin G(z) dz$ is bounded, uniformly in $K$, since $\int_0^\infty \sin G(z) dz<\infty$.  With this modification, the same arguments as in \cite[Proposition 3.1]{norris2016consistency} lead to the first point of i). with $\Lambda_p$ replaced by $\langle |v|^p, \mu^{N,K}_t\rangle$. The conclusion follows on noting that, for some $C=C(p)$ and all $\mu \in \mathcal{S}$, \begin{equation} C^{-1}\Lambda_p(\mu)\le \langle |v|^p, \mu\rangle \le C\Lambda_p(\mu). \end{equation}   For the second point, we return to (\ref{eq: change of moment at collision}) to bound the jumps of $\Lambda_p(\mu^{N,K}_t)$ by \begin{equation} \frac{C_p}{N}(|V^{i,K}_{t-}|^{p-1}|V^{j,K}_{t-}|+|V^{j,K}_{t-}|^{p-1}|V^{i,K}_{t-}|)\h\sin G\left(z/|V^{i,K}_{t-}-V^{j,K}_{t-}|^\gamma\right)\end{equation}  at points of $\Nn^{\{ij\}}$, since the first term is always negative. We now consider the process $A_t$ whose jumps are exactly the right-hand side, so that $A_t$ is increasing and \begin{equation} \sup_{s\le t}\langle |v|^p, \mu^{N,K}_s\rangle \le \langle |v|^p, \mu^{N,K}_0\rangle + A_t. \end{equation} We now estimate \begin{equation}\begin{split} \EE[A_t] & \le C\EE\left[\int_0^t \int_{\RRd\times\RRd} (|v|^{p-1}|v_\star|+|v_\star|^{p-1}|v|)|v-v_\star|^\gamma \mu^{N,K}_s(dv)\mu^{N,K}_s(dv_\star)\right]\end{split}\end{equation} where the constant is independent of $K$, due to the inclusion of the factor $\sin G$ in the definition of $A_t$. Simplifying, we see that \begin{equation} \EE A_t\le C_p\EE \int_0^t \langle |v|^{p+\gamma-1},\mu^{N,K}_s\rangle ds \le C_p\EE \int_0^t \Lambda_p(\mu^{N,K}_s)ds \end{equation} and the conclusion now follows, using the previous point to bound $\EE \Lambda_p(\mu^{N,K}_t)$. Item iii) can be straightforwardly checked following the same calculations as \cite[Lemma 2.1]{heydecker2019pathwise}.
\bigskip \\  Regarding the Boltzmann Equation, items iv). and v). are standard in the literature for the cutoff Boltzmann equation, going back as far as Povzner and Bobylev, and these results can be checked by following the proofs in the quoted papers \cite{lu2012measure,mischler2013kac,norris2016consistency}. The key point here is that the constants arising in the proofs are independent of the cutoff $K$, which follows a similar pattern to the analysis of (\ref{eq: prototype pozvner}) above.\end{proof} 

\subsection{Moment Inequalities in the Noncutoff Case}\label{subsec: noncutoff moments}
We will also use similar results for the noncutoff processes. In this case, where the available statement on well-posedness of the Boltzmann equation, or uniqueness in law for the Kac process, are weaker than in the cutoff case, we are careful that all the results cited are \emph{\`a priori} bounds, which are valid for any solution to the Boltzmann equation and any unlabelled Kac process respectively. 

\begin{proposition} [Moment Inequalities for the Noncutoff Kac Process and Boltzmann Equation] \label{thrm:noncutoffmomentinequalities} We have the following moment bounds for polynomial velocity moments: \\		
\begin{enumerate}[label={\roman*).},ref=\roman*.] \item Let $\mu^{N}_t$ be a noncutoff Kac process on $N\geq 2$ particles, and let $q\ge p\ge 4$. Then there exists a constant $C(p,q)<\infty$, such that, for all $t\ge 0$, \begin{equation} \label{eq: noncutoff pointwise moment bound}  \mathbb{E}\left[ \Lambda_q(\mu^{N}_t) \right] \leq C(1+t^{p-q})\Lambda_p(\mu^{N}_0).\end{equation} \item In the notation of the previous point, there exists a constant $C=C(p)$ such that for all $t_\mathrm{fin}\ge 0$,\begin{equation} \label{eq: noncutoff local uniform moment bound} \mathbb{E}\left(\sup_{0\leq t \leq t_\text{fin}} \Lambda_p(\mu^{N}_t)\right) \leq (1+C(p)t_\text{fin})\Lambda_p(\mu^{N}_0).\end{equation} 

\item Let $\mu^N_t$ be as in item i)., and let $k\ge 2$. Then the bound (\ref{eq: geometric growth}) holds with $\mu^N_t$ in place of $\mu^{N,K}_t$.
\item Let $(\mu_t)_{t\ge 0}\subset \mathcal{S}$ be a solution to the noncutoff Boltzmann equation (\ref{BE}), and $p>2$. Then there exists a constant $C=C(p)<\infty$ such that \begin{equation} \sup_{t\ge 0}\Lambda_p(\mu_t)\le C\Lambda_p(\mu_0).\end{equation}

\item In the notation of the previous point, for all $t>0$, there exists $\epsilon=\epsilon(t)>0$ such that \begin{equation}\label{eq: exponential moment appears}
    \left\langle e^{\epsilon(t)|v|^\gamma}, \mu_t\right\rangle<\infty.
\end{equation} 

\end{enumerate} \end{proposition}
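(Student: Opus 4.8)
The case $\gamma=0$ is immediate: then $e^{\epsilon|v|^\gamma}=e^\epsilon$ is constant, so $\langle e^\epsilon,\mu_t\rangle=e^\epsilon<\infty$. So assume $\gamma\in(0,1]$, in which case the statement is the classical \emph{instantaneous generation of exponential moments} for hard potentials. The plan is to adapt the moment-production analysis for measure solutions of Lu and Mouhot \cite{lu2012measure}, itself based on the sharp Povzner inequalities going back to Bobylev, checking that the \emph{moderate} angular singularity $\nu<1$ is exactly what is needed to keep the relevant angular integrals finite. The argument runs in two stages: a Povzner-type differential inequality for the polynomial moments, with sufficient control of its constants as the moment order grows, followed by a summation of the corresponding Taylor series.

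\emph{Stage 1: Povzner inequality and generation of polynomial moments.} For $s\ge 1$ set $m_s(t)=\langle(1+|v|^2)^s,\mu_t\rangle\in[0,\infty]$, and note $m_0(t)=m_1(t)=1$ for all $t$ since $\mu_t\in\mathcal{S}$. Testing the weak formulation (\ref{BE}) against a compactly supported approximation $\phi_R$ of $w_s:=(1+|v|^2)^s$ and then letting $R\to\infty$, one arrives for a.e.\ $t$ at
\[
\frac{d}{dt}m_s(t)\ \le\ -\,c_s\,m_{s+\gamma/2}(t)\ +\ C_s\sum_{k=1}^{\lceil s/2\rceil}\binom{s}{k}\big(m_{k+\gamma/2}(t)\,m_{s-k}(t)+m_k(t)\,m_{s-k+\gamma/2}(t)\big).
\]
Two features matter. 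First, the loss term carries the \emph{extra} weight $m_{s+\gamma/2}$ thanks to the hard-potential factor $|v-v_\star|^\gamma$ ($\gamma>0$), and the Povzner constant $c_s$ can be taken with $c_s\ge c_\infty>0$ as $s\to\infty$, while $C_s$ grows only slowly (polynomially) in $s$; these are the sharp Povzner/Bobylev estimates. Second, for the \emph{noncutoff} kernel one must control angular integrals of the form $\int_{\mathbb{S}^{d-1}}\big(w_s(v')+w_s(v_\star')-w_s(v)-w_s(v_\star)\big)B(v-v_\star,\sigma)\,d\sigma$; after averaging over the azimuthal variable $\varphi\in\SSd$, the leading $O(\theta)$ contribution is odd in $\varphi$ and averages to zero, leaving a quantity which is $O\big(\theta^2(1+|v|^2+|v_\star|^2)^s\big)$ near $\theta=0$, so the relevant weight is $\int_{\mathbb{S}^{d-1}}\theta^2\,B(v-v_\star,\sigma)\,d\sigma$ — finite precisely by (\ref{eq: mild singularity}), which is where $\nu<1$ enters. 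Combining the differential inequality with the interpolation $m_s\le m_{s+\gamma/2}^{\,2s/(2s+\gamma)}$ (a power-mean inequality using $m_1=1$), which turns the loss into a Bernoulli-type term $-c_\infty m_s^{1+\gamma/(2s)}$, together with an induction on $s$ to control the gain, one obtains that all polynomial moments are generated, $\langle|v|^p,\mu_t\rangle\le C_p(1+t^{-2p/\gamma})$ for $t>0$, with the \emph{sharp} growth of $C_p$ in $p$ — this being the crux — which is what makes the order-$\gamma$ Taylor series converge.

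\emph{Stage 2: summation.} Write $\langle e^{\epsilon|v|^\gamma},\mu_t\rangle=\sum_{k\ge 0}\tfrac{\epsilon^k}{k!}\langle|v|^{\gamma k},\mu_t\rangle$, and let $E^n_\epsilon(t)=\sum_{k=0}^n\tfrac{\epsilon^k}{k!}\langle|v|^{\gamma k},\mu_t\rangle$, which is finite by Stage 1. Multiplying the differential inequality of Stage 1 (applied with $s=\gamma k/2$) by $\epsilon^k/k!$ and summing over $k\le n$ re-organises the binomial-convolution gain terms into a Cauchy product and yields a closed differential inequality for $E^n_\epsilon$ in which, by the sharp Povzner constant, a loss term inherited from the hard potential dominates the (quadratic) gain once $E^n_\epsilon$ is large. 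A comparison argument of the familiar type for ODEs $y'\le A-By^{1+\delta}$, $B>0$ — whose solutions are bounded on $(0,t_\mathrm{fin}]$ by a constant independent of the (possibly infinite) initial value — then gives, for $\epsilon=\epsilon(t_\mathrm{fin})$ small enough, $\sup_n\sup_{t_\mathrm{fin}/2\le t\le t_\mathrm{fin}}E^n_\epsilon(t)<\infty$; monotone convergence then yields $\langle e^{\epsilon|v|^\gamma},\mu_t\rangle<\infty$ on that interval. As $t_\mathrm{fin}$ is arbitrary, (\ref{eq: exponential moment appears}) holds for all $t>0$. (Equivalently, the sharp growth of $C_p$ from Stage 1 already makes the Taylor series converge directly for $\epsilon$ small depending on $t$.)

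\emph{Main obstacle.} All the real difficulty is in Stage 1. The sharp Povzner constants ($c_s\ge c_\infty>0$ and slowly-growing $C_s$) are what make Stage 2 close, and obtaining them is the technical heart; the one point genuinely specific to the present setting is verifying that the noncutoff angular integrals are finite with the correct dependence on $s$, which is exactly what the assumption of a \emph{moderate} singularity $\nu<1$ (equivalently $\int\theta B\,d\sigma<\infty$) provides. There is also the routine-but-careful matter of justifying the moment differential inequality for a bare measure solution, for which (\ref{BE}) only licenses compactly supported test functions: one runs the estimate on the truncations $\phi_R$ and removes the truncation using the very tail bounds being proved, first establishing the mere finiteness of $m_s(t)$ for $t>0$ and then the quantitative bound, as in \cite{lu2012measure}.
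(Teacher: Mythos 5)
Your proposal addresses only item v) of the five-item proposition. Items i)--iii) (noncutoff Kac-process moment bounds and the geometric jump control) and item iv) (polynomial moment propagation for the noncutoff Boltzmann flow) are not treated at all. In the paper, items i)--ii) follow as in Proposition~\ref{thrm:momentinequalities} via the Povzner estimate (\ref{eq: prototype pozvner}), item iii) by the argument of \cite[Lemma 2.1]{heydecker2019pathwise} (with some care about infinite jump activity), and item iv) by justifying the differential inequality for $m_p(t)=\Lambda_p(\mu_t)$ as in \cite[Equation 4.8]{fournier2008uniqueness}. All of these need to be addressed for a complete proof.

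For item v) itself, your route differs from the paper's in an interesting way: the paper simply establishes the a priori polynomial bounds $\sup_{s\ge t}\Lambda_p(\mu_s)<\infty$ via \cite[Lemma 4.1]{fournier2008uniqueness} and then invokes \cite[Theorem 1.3d]{lu2012measure} directly, after checking hypothesis~\textbf{(H3)} of that paper. You instead sketch a proof of that cited theorem from first principles: sharp Povzner inequalities with $c_s$ bounded below uniformly in $s$ and $C_s$ growing only polynomially, truncation and removal for unbounded test functions, and summation of the Taylor series via a Bernoulli-type comparison ODE. This is a faithful and essentially correct outline of the argument underlying \cite[Theorem 1.3d]{lu2012measure}, and it buys self-containedness at the cost of considerable length. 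One small inaccuracy worth correcting: after the $\varphi$-averaging, the residual near-grazing contribution is $O(\theta^2)$, so the angular integrability needed at that step is $\int\theta^2 B\,d\sigma<\infty$, which holds whenever $\nu<2$. Saying this is \emph{precisely} what (\ref{eq: mild singularity}) (i.e.\ $\nu<1$) provides overstates the role of the moderate-singularity hypothesis at that particular point; the genuine uses of $\nu<1$ in this paper lie elsewhere (well-definedness of the generator on Lipschitz test functions and the $K^{1-1/\nu}$ error rates).
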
 

\begin{proof}
Concerning the Kac process, Items i-ii) follow as in the previous proposition, and  item iii). follows the same argument as in Proposition \ref{thrm:momentinequalities} and \cite{heydecker2019pathwise}, although one must now be careful that $\mu^N_t$ is a jump process of infinite activity. \bigskip \\  Regarding the Boltzmann equation, item iv) can be proved in the same way as the results for the cutoff case by carfeully justifying a differential inequality for the moment $m_p(t)=\Lambda_p(\mu_t)$; see, for example, \cite[Equation 4.8]{fournier2008uniqueness} with $s=1$. For item v), we refer to \cite{fournier2008uniqueness} and \cite{lu2012measure}. \emph{\`a priori} estimates \cite[Lemma 4.1]{fournier2008uniqueness} show that, for all $t>0, p>2$, $\sup_{s\ge t}\Lambda_p(\mu_s)<\infty$. We now apply \cite[Theorem 1.3d]{lu2012measure}, noting that the hypothesis {\bf (H3).} required is is satisfied for our choice of kernel, although we warn the reader that the usage of $b$ in the cited paper differs from ours by a factor of $\sin^{d-2}\theta$; the cited result is precisely that, with these \emph{\`a priori} bounds, the claimed result (\ref{eq: exponential moment appears}) holds.
\end{proof}

\subsection{Concentration of Moments}
\label{subsec: concentration of moments}
We also prove the following result concerning the concentration of moments for the Kac process. The results above show uniform bounds on the expectation of moments, but to attain a bound of the form $\PP(\Lambda_p(\mu^N_t)\le b_N)\rightarrow 1$, we would need to take some sequence $b_N\rightarrow \infty$. The following result allows us to obtain such bounds with a single $b_N=b$ independent of $N$.

\begin{lemma}[Concentration of Moments]\label{lemma: concentration of moments} Fix $p\ge 2$, and let $q\ge 2p+\gamma$. Then there exist constants $C_1(p), C_2(p)$ such that, whenever $\mu^N_t$ is a (cutoff or non-cutoff) Kac process on $N$ particles satisfying an initial moment bound $\Lambda_q(\mu^N_0) \le a_3$, $a\ge 1$, then for all $t_\mathrm{fin}\ge 0$ and $\epsilon>0$, we have the bound \begin{equation} \PP\left(\sup_{t\le t_\mathrm{fin}} \langle|v|^p, \mu^N_t\rangle \ge \max(\langle|v|^p, \mu^N_t\rangle, C_1)+\epsilon\right) \le C_2t_\mathrm{fin}\h a_3\h N^{-1} \h \epsilon^{-2}. \end{equation} Define, for $b\ge 1$, \begin{equation}\label{eq: defn of TB} T^N_b=\inf\left\{t\ge 0: \Lambda_p(\mu^N_t)>\frac{b}{2^{\frac{p}{2}+1}}\right\}. \end{equation} As a consequnce of the estimate above, there exists $C=C(p)$ such that, if the initial data has the moment estimates $\Lambda_p(\mu^N_0)\le a_1$, $\Lambda_q(\mu^N_0)\le a_3$, then  \begin{equation}
    \PP(T^N_{Ca_1}\le t_\mathrm{fin}) \le Ct_\mathrm{fin}a_3 N^{-1}.
\end{equation} \end{lemma}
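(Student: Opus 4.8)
The plan is to control the martingale part of the moment process $\langle |v|^p,\mu^N_t\rangle$ and apply Doob's inequality. First I would write $M^N_t := \langle |v|^p, \mu^N_t\rangle$ and decompose it as $M^N_t = M^N_0 + A^N_t + \mathcal{M}^N_t$, where $A^N_t$ is the predictable drift coming from the generator and $\mathcal{M}^N_t$ is a martingale. From the Povzner-type estimate in (\ref{eq: change of moment at collision}) and its integrated consequence (\ref{eq: prototype pozvner}), the drift of $M^N_t$ has the structure $-(\text{coercive term}) + C\langle |v|^{p-1}|v_\star|+|v_\star|^{p-1}|v|, \mu^N_s\otimes\mu^N_s\rangle$; the coercive term dominates once $\langle |v|^p,\mu^N_s\rangle$ is large enough, because the negative part scales like $\langle |v|^{p+\gamma},\mu^N\rangle \gtrsim \langle |v|^p,\mu^N\rangle^{1+\gamma/p}$ (by Jensen, using the mass and energy normalisations in $\mathcal{S}$) while the positive part is at most $C\langle |v|^{p+\gamma-1},\mu^N\rangle\lesssim \langle |v|^p,\mu^N\rangle^{1+(\gamma-1)/p}$ plus lower-order contributions controlled by the energy $=1$. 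Hence there is a threshold constant $C_1=C_1(p)$ such that whenever $M^N_s > C_1$ the drift $\tfrac{d}{ds}A^N_s \le 0$, and in general $A^N_t \le (\text{something} \le 0)$ on the set where $M$ stays above $C_1$; more precisely, $M^N_{t\wedge\tau} \le \max(M^N_0,C_1) + \sup_{s\le t}\mathcal{M}^N_s$ for an appropriate stopping time, so it suffices to bound the martingale.

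Next I would estimate the quadratic variation of $\mathcal{M}^N_t$. Each jump of $\mathcal{N}^{\{ij\}}$ changes $M^N_t$ by $N^{-1}(|v'|^p + |v'_\star|^p - |v|^p - |v_\star|^p)$, which by (\ref{eq: change of moment at collision}) is bounded in absolute value by $C_p N^{-1}(|v|^{p-1}|v_\star| + |v_\star|^{p-1}|v|)\sin G(z/x^\gamma)$. Squaring and integrating against the intensity $2N^{-1}\,ds\,d\varphi\,dz$ and summing over the $O(N^2)$ pairs, the $N^{-2}$ from the squared jump and the $N^2$ pairs cancel against... no: one factor of $N$ survives in the denominator because the intensity carries $N^{-1}$, giving $\langle [\mathcal{M}^N]_t\rangle \le C_p N^{-1}\mathbb{E}\int_0^t \langle (|v|^{p-1}|v_\star|+|v_\star|^{p-1}|v|)^2|v-v_\star|^\gamma, \mu^N_s\otimes\mu^N_s\rangle\,ds$, where I have used $\int_0^\infty (\sin G(z))^2\,dz \le \int_0^\infty \sin G(z)\,dz < \infty$ (finite by (\ref{eq: mild singularity})). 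This expectation is bounded by $C_p\mathbb{E}\int_0^t \Lambda_{2p+\gamma}(\mu^N_s)\,ds$, and since $q \ge 2p+\gamma$, Proposition \ref{thrm:momentinequalities}(i) (or \ref{thrm:noncutoffmomentinequalities}(i)) with the initial bound $\Lambda_q(\mu^N_0)\le a_3$ gives $\mathbb{E}[\Lambda_{2p+\gamma}(\mu^N_s)] \le C a_3$ uniformly in $s$ — here the uniform-in-$K$ nature of those estimates is what lets me treat cutoff and non-cutoff cases simultaneously. Hence $\mathbb{E}[[\mathcal{M}^N]_{t_\mathrm{fin}}] \le C_2 t_\mathrm{fin} a_3 N^{-1}$, and Doob's $L^2$ maximal inequality converts this into $\PP(\sup_{t\le t_\mathrm{fin}}\mathcal{M}^N_t \ge \epsilon) \le C_2 t_\mathrm{fin} a_3 N^{-1}\epsilon^{-2}$, which combined with the drift bound of the first paragraph yields the first displayed inequality (after translating between $\langle|v|^p,\cdot\rangle$ and $\Lambda_p$ via the two-sided comparison already noted).

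For the consequence about $T^N_b$: take $b = C a_1$ with $C$ chosen larger than the constant relating $\langle|v|^p,\cdot\rangle$ and $\Lambda_p$ and larger than $2^{p/2+1}C_1$ and $2^{p/2+1}$ times the comparison constant, so that $\Lambda_p(\mu^N_0)\le a_1$ forces $\langle|v|^p,\mu^N_0\rangle \le \tfrac{b}{2\cdot 2^{p/2+1}}$, say, and so that crossing the level in the definition of $T^N_b$ requires $\langle|v|^p,\mu^N_t\rangle$ to exceed $\max(\langle|v|^p,\mu^N_0\rangle, C_1) + \epsilon$ for a fixed $\epsilon \asymp b \asymp a_1 \ge 1$. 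One subtlety I would handle carefully: at the stopping time the moment can jump, but by the geometric growth bound (\ref{eq: geometric growth}) the overshoot is at most a factor $2^{p/2+1}$, which is exactly why $T^N_b$ is defined with that factor built in — so the event $\{T^N_b \le t_\mathrm{fin}\}$ is contained in the event that $\sup_{t\le t_\mathrm{fin}}\langle|v|^p,\mu^N_t\rangle$ exceeds the relevant threshold by $\epsilon \gtrsim 1$, and the first part gives $\PP(T^N_b \le t_\mathrm{fin}) \le C_2 t_\mathrm{fin} a_3 N^{-1} \epsilon^{-2} \le C t_\mathrm{fin} a_3 N^{-1}$. The main obstacle is the first paragraph: making the heuristic "the coercive Povzner term beats the positive term once $M$ is large" into a clean statement that survives the stopping-time manipulation, i.e. correctly defining the stopping time $\tau$ and verifying $M^N_{t\wedge\tau} \le \max(M^N_0, C_1) + \sup_{s\le t\wedge\tau}\mathcal{M}^N_s$ — the infinite activity of the non-cutoff process means one should argue via the local martingale / localisation and the a priori moment bounds rather than pathwise, but since (\ref{eq: prototype pozvner}) is stated for the relevant integrals this is routine given the cited moment propositions.
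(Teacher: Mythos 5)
Your plan follows the paper's argument step for step: drift-plus-martingale decomposition of $\langle|v|^p,\mu^N_t\rangle$, a Povzner threshold $C_1$ above which the drift is nonpositive, the crossing argument forcing $\sup_{t}|\mathcal{M}^N_t|\ge\epsilon/2$ on the bad event, a bracket estimate of order $\Lambda_{2p+\gamma}/N$, Doob's $L^2$ inequality with Chebyshev, and the reformulation to $T^N_b$ via (\ref{eq: geometric growth}). The framework is correct and identical to the paper's.

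There is, however, one genuine error in the bracket estimate. You assert that the jump $H^p_{ij}=|v'|^p+|v_\star'|^p-|v|^p-|v_\star|^p$ is, by (\ref{eq: change of moment at collision}), bounded \emph{in absolute value} by $C_p(|v|^{p-1}|v_\star|+|v_\star|^{p-1}|v|)\sin G(z/x^\gamma)$. But (\ref{eq: change of moment at collision}) is a one-sided inequality, and the negative term you drop, $-\beta(p,G(z/x^\gamma))(|v|^p+|v_\star|^p)$, is not controlled by the cross-term. Setting $v_\star=0$ makes this concrete: then
\begin{equation*}
H^p_{ij}=|v|^p\left[\left(\tfrac{1+\cos\theta}{2}\right)^{p/2}+\left(\tfrac{1-\cos\theta}{2}\right)^{p/2}-1\right]\approx -\tfrac{p}{8}\,\theta^2\,|v|^p,
\end{equation*}
while your claimed bound vanishes identically. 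So $|H^p_{ij}|$ genuinely carries terms of order $|v|^p$ and $|v_\star|^p$ that must be retained. The paper instead bounds $(|v+a|^p-|v|^p)^2\le C_p(1+|v|^{2p-2}+|v_\star|^{2p-2})|a|^2$ by the mean value theorem, then uses $|a|^2=\tfrac12|v-v_\star|^2(1-\cos G(z/x^\gamma))$ and $\int_0^\infty G(z)^2\,dz<\infty$ from Lemma \ref{lemma: estimates for G}. This still produces $\int(H^p_{ij})^2\,d\varphi\,dz\le C_p(1+|v|^{2p+\gamma}+|v_\star|^{2p+\gamma})$ after the $z$-integration (the $x^\gamma$ appears by scaling), so the $N^{-1}$ rate, the need for $q\ge 2p+\gamma$, and everything downstream are unaffected; the slip is local and easily fixed, but the jump bound as you state it is false and should be replaced by the MVT estimate.
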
  The first statement here is somewhat sharper, and may be of independent interest; however, for applications later in the paper, it is useful to apply the second form, which absorbs some constants, without further comment.
\begin{proof} Thanks to Proposition \ref{prop: labelled and unlabelled dynamics}, it is sufficient to consider the case where $\mathcal{V}^N_t=(V^1_t,...,V^N_t)$ is a labelled Kac process, and $\mu^N_t$ are the associated empirical measures. Let us define \begin{equation}\begin{split} \label{eq: noisy moment ODE}M^N_t&:=\langle |v|^p, \mu^N_t\rangle - \langle |v|^p, \mu^N_0\rangle - \int_0^t \langle |v|^p, Q(\mu^N_s)\rangle ds \\&=\frac{1}{N}\int_{(0,t]\times\SSd\times(0,\infty)}\sum_{1\le i<j\le N} H^p_{ij}(s,\varphi,z)\overline{\Nn}^{\{ij\}}(ds,d\varphi,dz) \end{split}\end{equation}where we write \begin{equation}\begin{split} H^p_{ij}(t,\varphi,z):=&|V^i_{t-}+a(V^i_{t-},V^j_{t-},z,\varphi)|^p-|V^i_{t-}|^p\\& \hspace{1cm}+|V^j_{t-}-a(V^i_{t-},V^j_{t-},z,\varphi)|^p-|V^j_s|^p\end{split} \end{equation} and \begin{equation}\overline{\Nn}^{\{ij\}}(ds,d\varphi,dz)={\Nn}^{\{ij\}}(ds,d\varphi,dz)-\frac{2}{N}dsd\varphi dz. \end{equation} From the results of \cite{darling2008differential}, $M^N$ is a total variation martingale, and thanks to Pozvner estimates in the spirit of (\ref{eq: prototype pozvner}), for some $\beta=\beta(p)>0$ and all $\mu \in \mathcal{S}^p$, \begin{equation}\label{eq: moment drift} \begin{split}\langle |v|^p, Q(\mu)\rangle &\le -\beta \langle |v|^{p+\gamma}, \mu\rangle+\beta^{-1}\langle |v|^p, \mu\rangle \\&\le -\beta \langle |v|^p, \mu\rangle^{1+\gamma/p}+\beta^{-1}\langle |v|^p, \mu\rangle.  \end{split}\end{equation} Set $C_1=\beta^{-2p/\gamma}$, so that the right-hand side of (\ref{eq: moment drift}) is nonpositive as soon as $\langle |v|^p, \mu \rangle \ge C_1$. Define $T$ to be the stopping time \begin{equation}T=\inf\left\{t\ge 0: \langle|v|^p, \mu^N_t\rangle >\max(C_1,\langle |v|^p, \mu^N_0\rangle)+\epsilon\right\} \end{equation} and on the event $T\le t_\mathrm{fin}$,  define \begin{equation}
    T'=\sup\left\{t<T: \langle |v|^p, \mu^N_t\rangle \le \max(C_1,\langle |v|^p, \mu^N_0\rangle)\right\}.
\end{equation} This set is always nonempty, as it includes $0$, and we have \begin{equation} \limsup_{t\uparrow T'} \langle |v|^p, \mu^N_t\rangle \le \max(C_1,\langle |v|^p, \mu^N_0\rangle); \end{equation} \begin{equation}
    \langle |v|^p, \mu^N_t\rangle > \max(C_1,\langle |v|^p, \mu^N_0\rangle) \text{ for all }t\in (T', T].
\end{equation} By the choice of $C_1$, it follows that \begin{equation} \int_{(T',T]} \langle |v|^p, Q(\mu^N_s)\rangle ds \le 0 \end{equation} and so, from (\ref{eq: noisy moment ODE}), we must have $M^N_T-M^N_{T'-}\ge \epsilon.$ Therefore, on the event $\{T\le t_\mathrm{fin}\}$, we have the lower bound $\sup_{t\le t_\mathrm{fin}}|M^N_t|\ge \frac{\epsilon}{2}.$ \bigskip \\ Let us now estimate $M^N.$ From the analysis in \cite{darling2008differential}, we have \begin{equation}\label{eq: second moment of M} \begin{split}\EE\left[|M^N_t|^2\right]=\EE\left[\int_{(0,t]\times\SSd\times(0,\infty)}\sum_{1\le i<j\le N}\frac{1}{N^2}\left(H^p_{ij}(s,\varphi,z)\right)^2\hspace{0.1cm} \frac{2}{N}dsd\varphi dz\right].\end{split} \end{equation} To bound the integrand, we observe that $|a|^2=|V^i_s-V^j_s|^2(1-\cos G(z/x^\gamma))$, where $x=|V^i_s-V^j_s|$. We therefore obtain \begin{equation}\begin{split} (|V^i_s+a|^p-|V^i_s|^p)^2 &\le C(p)(1+|V^i_s+a|^{p-1}+|V^i_s|^p)^2|a|^2 \\& \le C(p)(1+|V^i_s|^{2p-2}+|V^j_s|^{2p-2})|V^i_s-V^j_s|^2(1-\cos G(z/x^\gamma)) \\ & \le C(p)(1+|V^i_s|^{2p}+|V^j_s|^{2p}) G(z/x^\gamma)^2.\end{split}\end{equation} Thanks to the estimates in Section \ref{subsec: estimates for G}, it follows that $\int_0^\infty G^2dz<\infty$, and in particular, the integral on the right-hand side of (\ref{eq: second moment of M}) is finite. Using a similar computation for $(|V^j_s-a|^p-|V^j_s|^p)^2$, we obtain \begin{equation} \begin{split}\int_{\SSd\times(0,\infty)}\left(H^p_{ij}(s,\varphi,z)\right)^2\hspace{0.1cm}d\varphi dz &\le C(p)(1+|V^i_s|^{2p}+|V^j_s|^{2p})x^\gamma\\ & \le C(p)(1+|V^i_s|^{2p+\gamma}+|V^j_s|^{2p+\gamma}).\end{split}\end{equation} Returning to (\ref{eq: second moment of M}), we sum over pairs $i,j$ to obtain, for some $C_2=C_2(p),$ \begin{equation} \EE\left[|M^N_t|^2\right]\le \frac{C_2}{16N}\EE\left[\int_0^t\Lambda_{2p+\gamma}(\mu^N_s)ds\right]. \end{equation} By the choice of $\mu^N_0$ and moment propagation results above, the right-hand side is at most $C_2t_\mathrm{fin}a_3/16N.$ The first item now follows by using Doob's $L^2$ inequality to bound $\EE[\sup_{t\le t_\mathrm{fin}} |M^N_t|^2]$, and Chebychev's inequality to bound the probability \begin{equation} \PP\left(\sup_{t\le t_\mathrm{fin}}|M^N_t|\ge \frac{\epsilon}{2}\right)\le 16\epsilon^{-2}\EE\left[|M^N_{t_\mathrm{fin}}|^2\right]. \end{equation}  The second item is a largely trivial reformulation of the first, noting that $\Lambda_p(\mu^N_t)/\langle |v|^p, \mu^N_t\rangle$ is bounded, and since $a_1\ge 1$ and $a_1\ge \langle |v|^p, \mu^N_t\rangle$, we can choose $C=C(p)$  so that, on the event $\{\Lambda_p(\mu^N_t)>{Ca_1}/{2^{\frac{p}{2}+1}}\}$, we also have $\langle |v|^p, \mu^N_t\rangle >\max(\langle |v|^p, \mu^N_t\rangle, C_1)+1.$ \end{proof} 

\newpage \section{Analysis of the Cutoff Kac Process \& Boltzmann Equation} \label{sec: cutoff case}
In this section, we will collect some results concerning the \emph{cutoff} Kac process and Boltzmann equation. Much of what we need exists in the literature already, and our emphasis is on tracking the dependence of the results on the cutoff parameter $K$, in preparation for taking $K\rightarrow \infty$ later.  Our techniques build primarily on the ideas of Norris \cite{norris2016consistency}, which ensure that the number of moments required for our convergence result does not depend on the cutoff parameter $K$. We are aware of other techniques which can show convergence of the $N$-particle cutoff system in the limit $N\rightarrow\infty$ \cite{mischler2013kac,heydecker2019pathwise}, but which would lead to results where the number of moments required grows with $K$. Let us mention, to begin with, that existence and uniqueness is well-established in the literature for the cutoff case; see, for instance, \cite{lu2012measure} and the references therein. In our terminology, \cite[Theorem 1.4]{lu2012measure} shows existence and uniqueness for $\mathcal{S}^p$-locally bounded solutions, for any $p>2$, which justifies the use of the semigroup notation $\phi^K_t$. The following result builds on \cite[Theorem 1.1]{norris2016consistency}, and quantifies the rate of convergence. \begin{lemma}[Convergence of the Cutoff Kac Process]\label{lemma: convergence of cutoff KP}  Let $p\ge 0$ and  $q>\max(4+3\gamma, p+2)$. Then there exists $C=C(G,q,d), \alpha=\alpha(d,p,q)\ge 0$ such that, whenever $a\ge 1$, $\mu_0\in \mathcal{S}$ and $\mu^{N,K}_t$ is a $K$-cutoff Kac with $K\ge 1$ and initial moment estimates \begin{equation} \Lambda_q(\mu_0)\le a,\hs \PP\left(\Lambda_q(\mu^{N,K}_0)\le a\right)=1\end{equation} then we have the convergence estimate, for all $t_\mathrm{fin}\ge 0$, \begin{equation} \EE\left[\h \sup_{t\le t_\mathrm{fin}} \h W_p\left(\mu^{N,K}_t, \phi^K_t(\mu_0)\right)\right]\le \left(N^{-\alpha}+\EE\left[W_p\left(\mu^{N,K}_0, \mu_0\right)\right]^\alpha \right)e^{CaK(1+t_\mathrm{fin})}. \end{equation} \end{lemma}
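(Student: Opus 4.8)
The plan is to follow the \emph{consistency-plus-stability} scheme behind \cite[Theorem~1.1]{norris2016consistency}, carried out in the weak metric $\mathfrak{w}_\eta$ for a fixed H\"older parameter $\eta\in(0,1]$ (one of the metrics of Section~\ref{sec: metrics}, whose index I rename $\eta$ to avoid a clash with the kinetic exponent $\gamma$) rather than directly in $W_p$, with two modifications: every constant is tracked so that the only dependence on the cutoff is an explicit factor $K$ in the exponent, and the high-probability conclusion of \cite{norris2016consistency} is upgraded to a bound in expectation. First I would use the equivalences \eqref{eq: mathfrak wgamma and Wp} to reduce the claim to an estimate of the shape $\EE\bigl[\sup_{t\le t_\mathrm{fin}}\mathfrak{w}_\eta(\mu^{N,K}_t,\phi^K_t(\mu_0))\bigr]\le\bigl(N^{-\beta}+\EE[W_p(\mu^{N,K}_0,\mu_0)]^{\eta}\bigr)e^{CaK(1+t_\mathrm{fin})}$ for some $\beta>0$: converting back to $W_p$ costs a moment factor $\Lambda_{p'}(\mu^{N,K}_t,\phi^K_t(\mu_0))^{O(1)}$ and raises the right-hand side to a power, which produces the exponent $\alpha$ of the statement. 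On the Boltzmann side this factor is deterministic via Proposition~\ref{thrm:momentinequalities}(iv); on the Kac side, uniformly over $t\le t_\mathrm{fin}$, it is controlled in $L^1$ by the local-uniform moment bound Proposition~\ref{thrm:momentinequalities}(ii), and its higher moments (needed below) by the same bound applied with a larger index. Since $\Lambda_q(\mu_0),\Lambda_q(\mu^{N,K}_0)\le a$, all such factors are of order $(C(1+t_\mathrm{fin})a)^{O(1)}$ and are absorbed into $e^{CaK(1+t_\mathrm{fin})}$.

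For the weak-metric estimate I would work with the labelled cutoff process $\mathcal{V}^{N,K}_t$ solving \eqref{eq: cutoff SDE}: for each $f$ in the generating class $\mathcal{A}_\eta$ one has a semimartingale decomposition $\langle f,\mu^{N,K}_t\rangle=\langle f,\mu^{N,K}_0\rangle+\int_0^t\langle f,Q_K(\mu^{N,K}_s)\rangle\,\dd s+M^{N,f}_t$ with $M^{N,f}$ a martingale, while $\phi^K_t(\mu_0)$ obeys the same identity with no martingale term. Subtracting, the analytic input is a $K$-quantified stability estimate, $\bigl|\langle f,Q_K(\mu)-Q_K(\nu)\rangle\bigr|\le CK\,\Lambda_{q_0}(\mu,\nu)\,\mathfrak{w}_\eta(\mu,\nu)$ for $f\in\mathcal{A}_\eta$, where the factor $K$ comes from $\int_{\mathbb{S}^{d-1}}B_K\,\dd\sigma=K|v|^\gamma$ and $q_0$ is a fixed number of moments; the hypothesis $q>\max(4+3\gamma,p+2)$ is exactly what is needed for this estimate, the metric conversions, and the fluctuation bound below to close up, and the constant $C$ is independent of $K\ge1$ by the $K$-uniformity of the Povzner-type coefficients already exploited in the proof of Proposition~\ref{thrm:momentinequalities}. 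Feeding this into a Gronwall argument run simultaneously over $f\in\mathcal{A}_\eta$ --- made rigorous, as in \cite{norris2016consistency}, by restricting to a countable separating subfamily and using that $t\mapsto\langle f,\mu^{N,K}_t\rangle$ is c\`adl\`ag while $t\mapsto\langle f,\phi^K_t(\mu_0)\rangle$ is continuous --- and integrating the Lipschitz coefficient $CK\Lambda_{q_0}(\mu^{N,K}_s)$ in time (which on the event $\{\sup_{s\le t_\mathrm{fin}}\Lambda_{q_0}(\mu^{N,K}_s)\le C'a\}$ is at most $CKat_\mathrm{fin}$) gives, on that event, $\sup_{t\le t_\mathrm{fin}}\mathfrak{w}_\eta(\mu^{N,K}_t,\phi^K_t(\mu_0))\le e^{CaK(1+t_\mathrm{fin})}\bigl(\mathfrak{w}_\eta(\mu^{N,K}_0,\mu_0)+\sup_{f\in\mathcal{A}_\eta}\sup_{s\le t_\mathrm{fin}}|M^{N,f}_s|\bigr)$.

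There remain two things to estimate. For the fluctuation term, each jump changes $\langle f,\mu^{N,K}\rangle$ by at most $O(N^{-1})$ times a velocity-moment factor, and jumps occur at total rate $\lesssim NK\Lambda_\gamma(\mu^{N,K}_s)$, so the predictable quadratic variation of $M^{N,f}$ is $\lesssim(K/N)\int_0^{t_\mathrm{fin}}\Lambda_{q_1}(\mu^{N,K}_s)\,\dd s$ for a fixed $q_1$; Doob's inequality, the moment bounds, and a net/chaining argument over $\mathcal{A}_\eta$ (as in \cite{norris2016consistency}, though a crude version suffices since we only need \emph{some} negative power of $N$) give $\EE\bigl[\sup_f\sup_s|M^{N,f}_s|\bigr]\le C(a,K,t_\mathrm{fin})N^{-\beta}$ with $(a,K,t_\mathrm{fin})$-dependence of the form already absorbed into the exponential. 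For the complement of the good moment event, Lemma~\ref{lemma: concentration of moments} shows it has probability $O(a\,t_\mathrm{fin}/N)$; combining this with an $L^2$ bound on $\sup_{t\le t_\mathrm{fin}}\mathfrak{w}_\eta(\mu^{N,K}_t,\phi^K_t(\mu_0))$ coming from the higher moments and Cauchy--Schwarz, its contribution to the expectation is itself a negative power of $N$, absorbed into the $N^{-\beta}$ term. Assembling these and converting back to $W_p$ as in the first step completes the argument, with $\alpha$ a product of $\eta$, $\beta$ and the exponents appearing in \eqref{eq: mathfrak wgamma and Wp}, the two exponents on the right-hand side being aligned using the bound $W_p(\mu^{N,K}_0,\mu_0)\le C(q)a^{O(1)}$ that follows from \eqref{eq: elementary interpolation} and the moment hypotheses.

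I expect the main difficulty to be bookkeeping rather than a single hard estimate: (i) running the Gronwall step over the whole supremum-defining family $\mathcal{A}_\eta$ at once while reconciling the c\`adl\`ag empirical measure with the continuous cutoff Boltzmann flow, and (ii) checking that every constant in the $Q_K$-stability estimate, in the moment propagation bounds, and in the fluctuation bound is genuinely independent of $K\ge1$, so that the cutoff enters only through the explicit $K$ in the exponent inherited from $\int B_K\,\dd\sigma=K|v|^\gamma$; both are resolved by adapting \cite{norris2016consistency} almost verbatim, the second resting only on the $K$-uniformity of the Povzner coefficients already used in Section~\ref{sec: moments}. The one genuinely new ingredient is the use of Lemma~\ref{lemma: concentration of moments}: it is what lets the good-moment event be taken with a single $a$-dependent threshold --- rather than a growing sequence $b_N$, which would ruin the exponential --- and its complement contribute only a negative power of $N$, thereby turning the high-probability statement of \cite{norris2016consistency} into the $L^1$ estimate claimed here.
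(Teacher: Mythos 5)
There is a genuine gap at the heart of your argument: the ``$K$-quantified stability estimate'' $\abs{\langle f,Q_K(\mu)-Q_K(\nu)\rangle}\le CK\,\Lambda_{q_0}(\mu,\nu)\,\mathfrak{w}_\eta(\mu,\nu)$ for $f\in\mathcal{A}_\eta$ does not hold when $\gamma>0$, and without it your Gr\"onwall loop does not close. Writing $\mu\otimes\mu-\nu\otimes\nu=\tfrac12\big[(\mu+\nu)\otimes(\mu-\nu)+(\mu-\nu)\otimes(\mu+\nu)\big]$, the estimate you want amounts to showing that, for fixed $f\in\mathcal{A}_\eta$, the function $g(v_\star)=\int (\Delta_{B_K}f)(v,v_\star)(\mu+\nu)(dv)$ lies in a bounded multiple of $\mathcal{A}_\eta$. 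But $(\Delta_{B_K}f)(v,v_\star)$ inherits the factor $\int B_K\,d\sigma=K|v-v_\star|^\gamma$, so it grows like $K(1+|v|^2+|v_\star|^2)|v-v_\star|^\gamma\sim K|v_\star|^{2+\gamma}$; hence $g/(1+|v_\star|^2)$ is unbounded and $g\notin C\,\mathcal{A}_\eta$ for any $C$. A direct test (a small amount of mass pushed to radius $R$, with $\Lambda_{q_0}$ kept of order one by scaling the mass) shows the ratio $\abs{\langle f,Q_K(\mu)-Q_K(\nu)\rangle}/\mathfrak{w}_\eta(\mu,\nu)$ can be made $\gtrsim R^\gamma$, unbounded. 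This is precisely the moment-loss obstruction to Wasserstein Lipschitz stability of the hard-potential collision operator, and no number of moments in the $\Lambda_{q_0}$ prefactor (nor the hypothesis on $q$) repairs it: the failure is in the weak metric itself. Iterating in a larger class $\mathcal{A}^r_\eta$ with growth $|v|^r$ just shifts the problem, since $Q_K$ then maps $\mathcal{A}^r_\eta$ into $\mathcal{A}^{r+\gamma}_\eta$.

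The paper's proof (Proposition~\ref{lemma: representation formula}, Lemma~\ref{lemma: fst}, Proposition~\ref{prop: short time mg estimate}) is specifically engineered to bypass exactly this. Rather than iterating $Q_K$ on a fixed test class, the test function is propagated backwards along the linearised Kac process, and the class $\mathcal{A}_\gamma$ \emph{is} preserved under that propagation, up to the multiplicative factor $z_t=\exp\!\big(CK\big(1+\int_0^t\Lambda_{2+\gamma}(\rho_s)ds\big)\big)$; the representation formula then gives $\langle f,\phi^K_t(\mu)-\phi^K_t(\nu)\rangle=\langle f_{0t}[\rho],\mu-\nu\rangle$ in one shot, with no time-iteration of $Q_K$ at the level of $\mathfrak{w}_\gamma$. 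That is a genuinely different mechanism from the consistency-plus-Gr\"onwall scheme you describe, and it is also how the exponent stays linear in $K$ (the extra $K^\alpha$ appearing in the kernel's modulus of continuity, Lemma~\ref{lemma: holder in TV}, enters only as a multiplicative prefactor of the decoupling rate and is absorbed into $z_t$, not exponentiated). The rest of your outline --- converting $\mathfrak{w}_\eta\leftrightarrow W_p$ via \eqref{eq: mathfrak wgamma and Wp}, controlling the martingale term uniformly over the test class, and using Lemma~\ref{lemma: concentration of moments} to trade the high-probability statement for an $L^1$ bound at an $a$-dependent (not $N$-dependent) moment threshold --- matches the paper; it is the core stability mechanism that needs to be replaced by the linearised-process representation formula.
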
 The following lemma, which may be of interest in its own right, arises as a step towards the above result. \begin{lemma}[Wasserstein Stability for Cutoff Boltzmann Equation]\label{lemma: W-W stability for cutoff}  For all $p\ge 0, p'>p+2$, there exists a constant $C=C(p,p')<\infty$ and $\alpha=\alpha(p,p')>0$ such that, for all $\mu, \nu\in \mathcal{S}^{2+\gamma}$ and all $K<\infty$, we have the stability, for $t\ge 0$, \begin{equation}
     W_p\left(\phi^K_t(\mu),\phi^K_t(\nu)\right)\le \exp\left(CK(1+t)\Lambda_{2+\gamma}(\mu, \nu)\right)W_p(\mu,\nu)^\alpha \Lambda^2_{p'}(\mu,\nu).
 \end{equation} \end{lemma}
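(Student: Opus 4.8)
My plan is to prove the estimate first in the Wasserstein cost $w_2$, which calls on nothing beyond the $(2+\gamma)$-th moment assumed in the hypothesis, and then to pass to $W_p$ by interpolation. For the $w_2$-estimate I would run a Tanaka-type coupling of the two cutoff flows. Since $B_K$ has a locally bounded collision rate, one may represent $\phi^K_\cdot(\mu)$ as the law of a nonlinear pure-jump process: with $\Nn$ a Poisson random measure on $(0,\infty)\times[0,1]\times\SSd\times(0,\infty)$ of Lebesgue intensity and $\alpha\mapsto U^\mu_s(\alpha)$ pushing the uniform law on $[0,1]$ onto $\phi^K_s(\mu)$,
\[
X_t=X_0+\int_{(0,t]\times[0,1]\times\SSd\times(0,\infty)}a\!\left(X_{s-},U^\mu_s(\alpha),z,\varphi\right)1\!\left(z\le K|X_{s-}-U^\mu_s(\alpha)|^\gamma\right)\Nn(ds,d\alpha,d\varphi,dz)
\]
has $\mathrm{Law}(X_t)=\phi^K_t(\mu)$, by It\^o's formula and the known uniqueness for (\ref{eq: BEK}); existence of this process is classical in the cutoff case (cf.\ \cite{fournier2009well}). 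I would couple by driving the analogous process $Y_t$, with $\mathrm{Law}(Y_t)=\phi^K_t(\nu)$, by the same $\Nn$, choosing $(X_0,Y_0)$ an optimal $w_2$-coupling of $(\mu,\nu)$ and $(U^\mu_s(\alpha),U^\nu_s(\alpha))$ distributed over $\alpha$ as an independent copy of $(X_s,Y_s)$ (the self-coupling), and with the direction of the angular variable for the $Y$-jump rotated, as in Section \ref{sec: main calculations}, so that the two images $\Gamma(X_{s-}-U^\mu_s,\cdot)$, $\Gamma(Y_{s-}-U^\nu_s,\cdot)$ are nearly aligned. Then $u_t:=\EE[|X_t-Y_t|^2]\ge w_2(\phi^K_t\mu,\phi^K_t\nu)^2$.

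The core is a Gr\"onwall bound for $u_t$ via Dynkin's formula. Writing $a^X=a(X,U^\mu,z,\varphi)1(z\le K|X-U^\mu|^\gamma)$ and similarly $a^Y$, we have $|X+a^X-(Y+a^Y)|^2-|X-Y|^2=2(X-Y)\cdot(a^X-a^Y)+|a^X-a^Y|^2$, and integrating over $(\alpha,\varphi,z)$ and taking expectations gives $\dot u_t\le(\text{drift})+(\text{quadratic})$. The key point is that the drift, which equals a multiple of $-\EE\big[(X-Y)\cdot\big(\Phi(X-U^\mu)-\Phi(Y-U^\nu)\big)\big]$ with $\Phi(w)=w|w|^\gamma$ (the $\Gamma$-component of $a$ integrating to zero against $d\varphi$), is \emph{nonpositive}: under the self-coupling, relabelling $(X,Y)\leftrightarrow(U^\mu,U^\nu)$ and using the oddness of $\Phi$ turn this expectation into $\tfrac12\EE\big[(\xi-\xi')\cdot(\Phi(\xi)-\Phi(\xi'))\big]\ge0$ by the monotonicity of $\Phi$, where $\xi=X-U^\mu$, $\xi'=Y-U^\nu$ — the same symmetry-driven cancellation exploited for the Kac process in Sections \ref{sec: TK}, \ref{sec: main calculations}. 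It then remains to bound the quadratic term $\EE\int_{[0,1]\times\SSd\times(0,\infty)}|a^X-a^Y|^2\,d\alpha\,d\varphi\,dz$. One splits $a^X-a^Y$ into a matched-region difference — controlled by the almost-alignment of the rotated angular variables together with certain $K$-uniform integrals of $G$ and $G'$, finite by (\ref{eq: mild singularity}) and Section \ref{subsec: estimates for G} — and a mismatch-region part, supported on a $z$-interval of length $K\big|\,|X-U^\mu|^\gamma-|Y-U^\nu|^\gamma\,\big|\le CK\big(|X-Y|^\gamma+|U^\mu-U^\nu|^\gamma\big)$ on which the deflection angle exceeds $\theta_0(K)$; using the independence $(U^\mu_s,U^\nu_s)\perp(X_s,Y_s)$, the identity $\EE|U^\mu_s-U^\nu_s|^2=u_s$, and the uniform-in-$(K,t)$ moment bound $\Lambda_{2+\gamma}(\phi^K_s\mu,\phi^K_s\nu)\le C\Lambda_{2+\gamma}(\mu,\nu)$ of Proposition \ref{thrm:momentinequalities}, this is bounded by $CK\Lambda_{2+\gamma}(\mu,\nu)u_s$. \textbf{Carrying out this last estimate while keeping the moment requirement at the level $2+\gamma$ is the main obstacle}, and is in essence a version, for two flows sharing the kernel $B_K$, of the calculation of Section \ref{sec: main calculations}. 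Gr\"onwall then gives $w_2(\phi^K_t\mu,\phi^K_t\nu)\le e^{CK(1+t)\Lambda_{2+\gamma}(\mu,\nu)}w_2(\mu,\nu)$.

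To bootstrap to $W_p$: if $\Lambda_{p'}(\mu,\nu)=\infty$ the claimed inequality is trivial, so assume $\mu,\nu\in\Ss^{p'}$. By (\ref{eq: elementary interpolation}) together with $w_1\le w_2$, and by (\ref{eq: compare WP WP}) together with $w_2\le w_{p+2}$, there is $\alpha_0>0$ with $W_p(\eta_1,\eta_2)\le w_2(\eta_1,\eta_2)^{\alpha_0}\Lambda_{p'}(\eta_1,\eta_2)$ for all $\eta_1,\eta_2$ and $w_2(\mu,\nu)\le CW_p(\mu,\nu)^{2/(p+2)}$. Applying the first bound with $\eta_i=\phi^K_t(\cdot)$, inserting the $w_2$-stability just obtained, the moment propagation $\Lambda_{p'}(\phi^K_t\mu,\phi^K_t\nu)\le C\Lambda_{p'}(\mu,\nu)$ of Proposition \ref{thrm:momentinequalities}, and the second estimate, and using $\Lambda_{p'}\ge1$ to absorb the multiplicative constants and raise $\Lambda_{p'}$ to $\Lambda_{p'}^2$, yields the assertion with $\alpha=2\alpha_0/(p+2)$.
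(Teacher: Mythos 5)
Your bootstrap from $w_2$ to $W_p$ is correct, and the symmetry argument for the drift term — interchanging $(X,Y)\leftrightarrow(U^\mu,U^\nu)$ together with the oddness of $\Phi$ to obtain $\tfrac12\EE[(\xi-\xi')\cdot(\Phi(\xi)-\Phi(\xi'))]\ge 0$ — is sound. However, the obstacle you flag in the quadratic term is not a technicality to be deferred; it is fatal to the $w_2$ route at the claimed moment level, and it is precisely the point at which the paper's argument diverges from yours.

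To see why, recall that the matched-region contribution to $\int|a^X-a^Y|^2\,dz\,d\varphi$ is (by the same computation as (\ref{eq: integral of norm a-atilde squared})--(\ref{eq: bound on t32}), with $p=0$) of the form
\begin{equation}
c\big(|X|^\gamma+|Y|^\gamma+|U^\mu|^\gamma+|U^\nu|^\gamma\big)\big(|X-Y|^2+|U^\mu-U^\nu|^2\big),
\end{equation}
and your target Gr\"onwall inequality $\dot u_t\le CK\Lambda_{2+\gamma}u_t$ then requires $\EE\big[|X_s|^\gamma|X_s-Y_s|^2\big]\le C\Lambda_{2+\gamma}(\mu,\nu)\,u_s$. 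The independence of $(X_s,Y_s)$ and $(U^\mu_s,U^\nu_s)$ gives you no leverage on this particular term, since both $|X_s|^\gamma$ and $|X_s-Y_s|^2$ are functions of the \emph{same} pair. And in general no such bound holds: one can choose a coupling in which the mass of $|X-Y|^2$ is concentrated on $\{|X|\ge R\}$, which forces $\EE[|X|^\gamma|X-Y|^2]/\EE[|X-Y|^2]\ge R^\gamma$ while $\Lambda_{2+\gamma}$ stays bounded as $R\to\infty$. Nor does the drift rescue you: its negative contribution is weighted by $|\xi|^\gamma+|\xi'|^\gamma=|X-U^\mu|^\gamma+|Y-U^\nu|^\gamma$, which can vanish even when $|X|,|Y|$ are large, so it does not dominate the weight $|X|^\gamma+|Y|^\gamma$ of the positive term. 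This is exactly the regime where, in Section \ref{sec: main calculations}, the calculation is saved by the \emph{negative Povzner term} $-\lambda_p|v|^{p+\gamma}|v-\widetilde v|^2$ appearing in Lemma \ref{lemma: big calculation} — but that term is only available in $W_p$ for $p>p_0$, and vanishes in your $w_2$ (i.e., $p=0$) setting. The result you are claiming at the intermediate step, Lipschitz $w_2$-stability with only a $(2+\gamma)$-moment in the exponent, is almost certainly false; the hard-potential flow is not $w_2$-Lipschitz without exponential moments (cf.\ Proposition \ref{prop: weak wellposedness}, which is what \cite{fournier2009well} actually proves with this coupling).

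The paper sidesteps this by working dually. Instead of coupling two nonlinear jump processes, it propagates test functions backwards through a linearised branching process (Definition \ref{def: LKP}, Proposition \ref{lemma: representation formula}), so that $\langle f,\phi^K_t\mu-\phi^K_t\nu\rangle=\langle f_{0t},\mu-\nu\rangle$ and one needs only a H\"older estimate on $f_{0t}$ (Lemma \ref{lemma: fst}). The crucial structural advantage is Lemma \ref{lemma: holder in TV}: the decoupling rate in the branching process is bounded by $CK^\alpha|v_0-w_0|^\gamma$ \emph{uniformly over the background particle $v_\star$}, because $v_\star\mapsto B_K(v-v_\star,\cdot)$ is self-improving for $|v_\star|$ large. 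It is exactly this $v_\star$-uniformity that makes a $(2+\gamma)$-moment condition on the environment sufficient, whereas in your primal coupling the mismatch region has width $K\big||X-U^\mu|^\gamma-|Y-U^\nu|^\gamma\big|$ which carries an unavoidable coupling between the particle weight and the discrepancy. If you want to make your approach work, you would need either to impose exponential moments (and thereby prove a weaker statement) or to replace $w_2$ by the metric $\mathfrak w_\gamma$ from the outset, at which point you would essentially be rederiving the paper's dual argument.
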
 
\subsection{Random Measures Associated to the Cutoff Process}
We begin by first introducing the \emph{jump measure} and compensator associated to the cutoff Kac process $(\mu^{N,K}_t)_{t\ge 0}$. 
\begin{defn}\label{def: JM+C}[Jump Measure and Compensator] Let $(\mu^{N,K}_t)_{t\ge 0}$ be a cutoff Kac process on $N$ particles.\begin{enumerate}[label=(\roman*.)]
    \item The \emph{jump measure} $m^{N,K}$ is the unnormalised empirical measure on $(0,\infty)\times \mathcal{S}_N$ on all pairs $(t, \mu^{N,K}_t)$ such that $\mu^{N,K}_t\neq \mu^{N,K}_{t-}$. \item Let $\mathcal{Q}_{N,K}$ be the kernel on $\mathcal{S}_N$ given by \begin{equation}
        \mathcal{Q}_{N,K}(\mu^N,A)=\int_{\RRd\times\RRd\times\mathbb{S}^{d-1}} 1(\mu^{N,v,v_\star,\sigma}\in A)B_K(v-v_\star,d\sigma)\mu^N(dv)\mu^N(dv_\star).
    \end{equation}
The \emph{compensator} $\overline{m}^{N,K}$ of the jump measure is the measure on $(0,\infty)\times \mathcal{S}_N$ given by \begin{equation} \overline{m}^{N,K}(dt,A)=\mathcal{Q}_{N,K}(A,d\mu^N)dt. \end{equation} \end{enumerate} \end{defn} Since we are working with the cutoff process, both of these measures are almost surely finite on compact subsets $(0,t]\times\mathcal{S}_N$, for any $t<\infty$.

\subsection{The Linearised Kac Process for the Cutoff Case} \label{subsection: LKP}
We next sketch the proof of Lemma \ref{lemma: W-W stability for cutoff}. In order to apply the ideas of \cite{norris2016consistency}, we check the following continuity property of the kernel. \begin{lemma}\label{lemma: holder in TV} There exists $C<\infty, \alpha>0$, depending only on $b$ such that, for all $v,v' \in \RRd$, we have the estimate \begin{equation} \sup_{v_\star\in \RRd} \left\|B_K(v-v_\star,\cdot)-B_K(v'-v_\star,\cdot)\right\|_{L^1(d\sigma)}\le CK^\alpha |v-v'|.\end{equation} \end{lemma}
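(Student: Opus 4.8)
The plan is to reduce the claimed estimate to a pointwise-in-$\sigma$ bound on $|B_K(v-v_\star,\sigma)-B_K(v'-v_\star,\sigma)|$, integrate against $d\sigma$, and keep careful track of the cutoff. Recall that $B_K(w,\sigma)=|w|^\gamma(\sin\theta)^{2-d}b(\cos\theta)\,\mathbbm 1(\theta>\theta_0(K))$, where $\cos\theta=\sigma\cdot w/|w|$ and $\theta_0(K)=H(K)$. Write $w=v-v_\star$, $w'=v'-v_\star$, so $|w-w'|=|v-v'|$; note that the $v_\star$-dependence enters only through this shift, so all estimates will be uniform in $v_\star$, as required. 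First I would split the difference into the contribution of the $|w|^\gamma$ prefactor and the contribution of the angular part: write $B_K(w,\sigma)-B_K(w',\sigma)$ as $(|w|^\gamma-|w'|^\gamma)\Phi_K(w,\sigma)+|w'|^\gamma(\Phi_K(w,\sigma)-\Phi_K(w',\sigma))$, where $\Phi_K(w,\sigma)=(\sin\theta_w)^{2-d}b(\cos\theta_w)\mathbbm 1(\theta_w>\theta_0(K))$ and $\theta_w$ is the angle associated to $w$.

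For the first term, $\int_{\mathbb S^{d-1}}\Phi_K(w,\sigma)\,d\sigma = K$ by the defining normalisation of $\theta_0(K)$ (after accounting for the $\mathbb S^{d-2}$-factor, i.e. $\int_{\mathbb S^{d-1}}B_K(w,\sigma)d\sigma=K|w|^\gamma$ gives $\int\Phi_K(w,\cdot)d\sigma=K$), and $||w|^\gamma-|w'|^\gamma|\le \gamma|w-w'|(|w|\vee|w'|)^{\gamma-1}\le |v-v'|$ since $\gamma\le 1$ implies $(|w|\vee|w'|)^{\gamma-1}\le$ a term that must be controlled — here one uses either $\gamma=1$ (trivial: $||w|-|w'||\le|v-v'|$) or, for $\gamma<1$, the Hölder bound $||w|^\gamma-|w'|^\gamma|\le|w-w'|^\gamma\le |v-v'|^\gamma$; combined with the mild blow-up this is absorbed into $CK^\alpha|v-v'|^{\min(1,\gamma)}$ after noting we only need a Hölder, not Lipschitz, estimate in the statement — and in fact the cited statement allows $\alpha$ to be a Hölder exponent, so I will aim for $CK^{\alpha_1}|v-v'|^{\alpha_2}$ and relabel. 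The genuinely delicate term is the second one, $|w'|^\gamma\int_{\mathbb S^{d-1}}|\Phi_K(w,\sigma)-\Phi_K(w',\sigma)|\,d\sigma$.

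For the angular term, the key geometric observation is that rotating $w$ to $w'$ changes the deflection angle: $|\theta_w-\theta_{w'}|\le$ (angle between $w$ and $w'$) $\le C|w-w'|/(|w|\wedge|w'|)$, which is the main source of the $v_\star$-non-uniformity worry — but this is harmless because $\Phi_K$ only charges $\theta>\theta_0(K)$, so the integrand vanishes unless $\theta_w$ or $\theta_{w'}$ exceeds $\theta_0(K)$, and on that region $(\sin\theta)^{2-d}b(\cos\theta)$ is bounded by its value at $\theta_0(K)$, namely $O(\theta_0(K)^{-d+1-\nu})$, a power of $K$ by the relation $\theta_0(K)=H(K)\asymp K^{-1/\nu}$ (from $H(\theta)\asymp\theta^{-\nu}$, which follows from $b(\cos\theta)\sim\theta^{-1-\nu}$). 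The $|w|\wedge|w'|$ in the denominator is still a problem for $w_\star$ near $v$; I would handle this by a separate easy argument on the set where $|w|\wedge|w'|$ is small — there $|w'|^\gamma$ itself is small and one bounds $\Phi_K$ by its $K$-dependent sup, giving a bound $|w'|^\gamma\cdot K^{\alpha}\cdot(\text{measure of }\sigma\text{-sphere})$ times a factor; more carefully, when $|w'|\le |v-v'|^{1/2}$ (say) both $|w|^\gamma$ and $|w'|^\gamma$ are $\le|v-v'|^{\gamma/2}$ and we bound each $\Phi_K\le CK^\alpha$ crudely, contributing $CK^\alpha|v-v'|^{\gamma/2}$; when $|w'|\ge|v-v'|^{1/2}$ the angular displacement is $\le C|v-v'|^{1/2}$ and one estimates $\int|\Phi_K(w,\cdot)-\Phi_K(w',\cdot)|d\sigma$ by splitting into (a) the symmetric-difference region where exactly one of $\theta_w,\theta_{w'}$ exceeds $\theta_0(K)$, whose $d\sigma$-measure is $O(\theta_0(K)^{d-2}\cdot|\theta_w-\theta_{w'}|)$ and on which $\Phi_K=O(\theta_0(K)^{-d+1-\nu})$, and (b) the region where both exceed $\theta_0(K)$, on which $|\Phi_K(w,\sigma)-\Phi_K(w',\sigma)|\le\|\Phi_K\|_{\mathrm{Lip}(\theta>\theta_0(K))}|\theta_w-\theta_{w'}|$ with the Lipschitz constant again $O(\theta_0(K)^{-d-\nu})$, a power of $K$. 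Multiplying through and using $\theta_0(K)\asymp K^{-1/\nu}$, all bounds collapse to $CK^{\alpha}|v-v'|^{\alpha'}$ for explicit $\alpha,\alpha'>0$; relabelling to match the statement's form $CK^\alpha|v-v'|$ is achieved since $|v-v'|\le|v-v'|^{\alpha'}$ is false for large separations, so strictly one should either state the lemma with a Hölder exponent on $|v-v'|$ or restrict attention (as \cite{norris2016consistency} does) to bounded regions; I would follow the convention that $\|\cdot\|$-estimates are only applied for $|v-v'|\le 1$, where $|v-v'|\le|v-v'|^{\alpha'}$, making the stated form legitimate.

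The main obstacle is precisely the degeneracy $|w|\wedge|w'|\to 0$ (equivalently $v_\star$ close to $v$ or $v'$), which threatens uniformity in $v_\star$; the resolution above — trading the small prefactor $|w'|^\gamma$ against the crude $K$-dependent sup bound on $\Phi_K$ on the bad set, and using the genuine angular-displacement estimate only on the good set — is the heart of the proof. Everything else (the geometric lemma relating $|\theta_w-\theta_{w'}|$ to $|w-w'|$, and the asymptotics $H(\theta)\asymp\theta^{-\nu}$ hence $\theta_0(K)\asymp K^{-1/\nu}$, which I would import from Section \ref{subsec: estimates for G}) is routine.
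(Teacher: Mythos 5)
Before assessing the argument itself, note a discrepancy in the source: the lemma as stated asks for $CK^\alpha|v-v'|$, but the paper's own proof concludes with $CK^\alpha|v-v'|^\gamma$, and this $\gamma$-H\"older form is precisely what is used downstream in the proof of Lemma \ref{lemma: fst} (decoupling rate $CK^\alpha|v_0-w_0|^\gamma$). So the exponent $1$ in the statement is a typo, and the target you should be aiming for is the $\gamma$-H\"older estimate; your instinct that a genuine H\"older exponent appears is correct, although note that in the lemma's wording $\alpha$ is only the exponent on $K$, not a licence to soften the exponent on $|v-v'|$.

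Your decomposition is genuinely different from the paper's. You peel off the radial prefactor, writing $B_K(w,\sigma)-B_K(w',\sigma)=(|w|^\gamma-|w'|^\gamma)\Phi_K(w,\sigma)+|w'|^\gamma\bigl(\Phi_K(w,\sigma)-\Phi_K(w',\sigma)\bigr)$, and then relate $|\theta_w-\theta_{w'}|$ to the angle between $w$ and $w'$. The paper instead differentiates $B_K(u,\cdot)$ directly in the direction $u$ for unit $u,u'$ (using convexity of $b$ to control $b'$ on the cutoff region), passes to vectors of norm at least one, handles the near-degenerate regime $|v_\star|\le 2$ by the crude bound $\|B_K(u,\cdot)\|_{L^1(d\sigma)}\le CK$, and reduces the general case to $v-v'=2e_1$ by a rigid motion together with a scaling of the kernel --- that scaling by $|v-v'|/2$ is exactly what produces the $|v-v'|^\gamma$ factor for free. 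Both arguments hinge on the same blow-up rates near $\theta_0(K)\asymp K^{-1/\nu}$; the difference is that in your approach the exponent on $|v-v'|$ is decided by where you place the degeneracy cut.

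And this is where the gap lies. Your threshold $|w'|\le|v-v'|^{1/2}$ gives exponent $\gamma/2$ from the bad region (you pay $|w'|^\gamma\lesssim|v-v'|^{\gamma/2}$ against the crude $K$-bound), while the good region gives $(1+\gamma)/2$; the worse of the two is $\gamma/2$, strictly weaker than the $\gamma$ that the paper proves and needs. Moreover your closing remark is backwards: for $|v-v'|\le 1$ and $\alpha'<1$ the inequality $|v-v'|\le|v-v'|^{\alpha'}$ is indeed true, but it says your bound $CK^\alpha|v-v'|^{\alpha'}$ is \emph{larger} than $CK^\alpha|v-v'|$ near zero, so it cannot be used to upgrade a H\"older estimate to the Lipschitz form; restricting to $|v-v'|\le 1$ does nothing here. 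The repair within your own framework is simply to cut at $|w'|\le|v-v'|$ rather than $|v-v'|^{1/2}$: on that set both $|w|^\gamma,|w'|^\gamma\lesssim|v-v'|^\gamma$ directly, while on the complement the angular displacement is $\lesssim|v-v'|/|w'|$ so $|w'|^\gamma\cdot(|v-v'|/|w'|)=|w'|^{\gamma-1}|v-v'|\le|v-v'|^\gamma$ for $\gamma\le 1$. Balancing the two regimes at this threshold recovers the sharp exponent $\gamma$ and brings your argument in line with what is proved and used in the paper.
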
 \begin{proof}  Using the convexity of $b$, it follows that $b$ is differentiable almost everywhere on $(0,1)$, and from (\ref{eq: form of B}), one can check the bound \begin{equation} b'(x)\le C(1-x)^{-2-\nu/2}\end{equation} for some constant $C$, which depends only on the singularity of $b$ itself. Let us recall that the cutoff for $B_K$ is at $\theta_0(K)=G(K)\sim K^{-1/\nu}$, and so there exists $\alpha>0, C$ such that \begin{equation} \sup_{x\le \cos \theta_0(K)} (|b(x)+|b'(x)|) \le CK^\alpha.\end{equation} Let us fix $u, u' \in \mathbb{S}^{d-1}$. By splitting the integral into three regions, we find \begin{equation}  \begin{split} \int_{\mathbb{S}^{d-1}} |B_K(u,\sigma)-B_K(u',\sigma)|d\sigma &\le |u'-u|\h\sup_{\theta \ge \theta_0(K)} \left(|b'(\theta)|+|b(\theta)|\right) \\ & \le C|u'-u|\h K^{\alpha}. \end{split} \end{equation}  This extends to general $v,w$ of norm at least $1$: \begin{equation} \label{eq: lipschitz in TV} \|B_K(v,\cdot)-B_K(w,\cdot)\|_{L^1(d\sigma)} \le C|v-w|K^\alpha \hs \forall v, w: |v|, |w|\ge 1. \end{equation} We now consider the total variation distance \begin{equation} \Delta B_K(v_\star)=\|B_K(e_1-v_\star,\cdot)-B_K(-e_1-v_\star,\cdot)\|_{L^1(d\sigma)}.\end{equation} If $|v_\star|\le 2$, then we use the bound $\|B_K(\pm e_1-v_\star,\cdot)\|_{L^1(d\sigma)}\le CK$ to see that $\Delta B_K \le CK$. On the other hand, in the region $|v_\star|\ge 2$, it follows from (\ref{eq: lipschitz in TV}) that $\Delta B_K\le CK^\alpha$ and, combining, we conclude that \begin{equation} \sup_{v_\star \in \RRd} \|B_K(e_1-v_\star,\cdot)-B_K(-e_1-v_\star,\cdot)\|_{L^1(d\sigma)} \le CK^\alpha. \end{equation} For general $v\neq v'$, there exists a rigid transformation of $\RRd$ taking $2e_1$ to $v-v'$. Using the scaling and rotational properties of the form (\ref{eq: form of B}) of $B_K$, we conclude that \begin{equation} \sup_{v_\star\in \RRd} \|B_K(v-v_\star,\cdot)-B_K(v'-v_\star,\cdot)\|_{L^1(d\sigma)} \le CK^\alpha |v-v'|^\gamma \end{equation} as desired. \end{proof} 
 Following the ideas of \cite[Section 4]{norris2016consistency}, we introduce the following \emph{linearised Kac process}. Throughout, we fix $K\ge 1$, and omit it from the notation. \begin{defn}\label{def: LKP} Let us write  $V=\mathbb{R}^d$ and $V^*$ for the signed space $V^*=V\times\{\pm 1\}=V^+\sqcup V^-$. We write $\pi: V^*\rightarrow V$ as the projection onto the first factor, and $\pi_\pm: V^\pm\rightarrow V$ for the obvious bijections. Let also $\rho=(\rho_t)_{t\ge 0}$ be family of measures on $V=\mathbb{R}^d$ such that \begin{equation} \langle 1, \rho_t \rangle =1;\hspace{1cm} \langle |v|^2, \rho_t\rangle =1 \hs \text{for all }t\ge 0;\end{equation}  \begin{equation} \label{eq: integrability for environment}  \int_0^t \Lambda_{2+\gamma}(\rho_s)ds <\infty \hspace{1cm} \text{for all }t<\infty.  \end{equation} The \emph{Linearised Kac Process} \emph{in environment $(\rho_t)_{t\ge 0}$} is the branching process on $V^*$ where each particle of type $(v,1)$, at rate $2B_K(v-v_\star,d\sigma)\rho_t(dv_\star)$, dies, and is replaced by three particles, of types \begin{equation} (v'(v,v_\star,\sigma),1);\hspace{0.5cm}(v_\star'(v,v_\star, \sigma),1);\hspace{0.5cm}(v_\star,-1) \end{equation} where $v', v_\star'$ are the post-collisional velocities. The dynamics are identical for particles of type $(v,-1)$, with the signs exchanged. \medskip \\ We write $\Xi^*_t$ for the associated process of unnormalised empirical measures on $V^*$, and define a signed measure $\Xi_t$ on $V$ by including the sign at each particle: \begin{equation} \Xi_t=\Xi^+_t-\Xi^-_t ; \hspace{1cm} \Xi^\pm_t=\Xi^\star_t\circ \pi_\pm^{-1}.\end{equation} We can also consider the same branching process, started from a time $s\ge 0$ instead. We write $E$ for the expectation over the branching process, which is not the full expectation in the case where $\rho$ is itself random. When we wish to emphasise the initial velocity $v$ and starting time $s$, we will write $E_{(s,v)}$ when the process is started from $\Xi^*_0=\delta_{(v,1)}$ at time $s$, and $E_v$ in the case $s=0$. \end{defn} Provided that the initial data $\Xi^\star_0$ satisfies $E\langle 1+|v|^2, |\Xi_0|\rangle <\infty$, one can show that the branching process $(\Xi_t)_{t\ge 0}$ is non-explosive, and therefore defined for all time $t\ge 0$. Moreover, the bound is propagated: \begin{equation} \label{eq: propagation of bound} E\langle 1+|v|^2, \Xi^*_t\rangle \le \exp\left(CK\int_0^t\Lambda_{2+\gamma}(\rho_s) ds\right)\h E\langle 1+|v|^2, \Xi^*_0\rangle.\end{equation} We can therefore define, for functions $f$ of quadratic growth, \begin{equation}\label{eq: fst} f_{st}(v)=E_{(s,v)} \langle f, \Xi_t\rangle.\end{equation} We will write $f_{st}[\rho]$ when we wish to emphasise the dependence on the environment $\rho$. The proof of Lemma \ref{lemma: W-W stability for cutoff} is based on the following representation formula, which can be proved with only slight modifications of \cite[Proposition 4.2]{norris2016consistency}. \begin{proposition}\label{lemma: representation formula}[Representation formula for Cutoff Cases] Let us fix $\mu, \nu\in \mathcal{S}^{2+\gamma}$, and consider the environment \begin{equation} \rho_t=\frac{\phi^K_t(\mu)+\phi^K_t(\nu)}{2}.\end{equation} Then, for all $t\ge 0$ and all $f$ of quadratic growth, \begin{equation} \langle f, \phi^K_t(\mu)-\phi^K_t(\nu)\rangle = \langle f_{0t}[\rho],\mu-\nu\rangle. \end{equation} Further, let $\mu^{N,K}_t$ be a cutoff Kac process on $N$ particles, and let $m^{N,K}, \overline{m}^{N,K}$ be its jump measure and compensator, as in Definition \ref{def: JM+C}. In this case, consider propagation $f_{st}=f_{st}[\rho^N]$ in the random environment \begin{equation} \rho^N_t=\frac{\mu^{N,K}_t+\phi^K(\mu)}{2}.\end{equation} Then, for all $t\ge 0$, and all functions $f$ of quadratic growth, we have \begin{equation} \langle f, \mu^{N,K}_t-\phi^K(\mu)\rangle = \langle f_{0t}[\rho^N],\mu^{N,K}_t - \mu\rangle + M^{N,K,f}_t \end{equation} where \begin{equation} M^{N,K,f}_t=\int_{(0,t]\times \mathcal{S}_N} \langle f_{st}[\rho^N], \mu^N-\mu^{N,K}_{s-}\rangle (m^{N,K}-\overline{m}^{N,K})(ds,d\mu^N).\end{equation} \end{proposition}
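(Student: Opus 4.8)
The plan is to follow the structure of \cite[Proposition 4.2]{norris2016consistency}, the only genuinely new input being the total-variation Hölder continuity of the cutoff kernel from Lemma \ref{lemma: holder in TV}, which replaces the continuity hypothesis on the kernel used there and accounts for the $K^\alpha$ factors that are then propagated into Lemmas \ref{lemma: W-W stability for cutoff} and \ref{lemma: convergence of cutoff KP}. The engine is the backward Kolmogorov equation for the propagator $f_{st}[\rho]$. Using the branching (Markov) property of the linearised Kac process one has the tower relation $f_{st}[\rho](v)=E_{(s,v)}\langle f_{ut}[\rho],\Xi_u\rangle$ for $s\le u\le t$; differentiating at $u=s$ gives, for $f$ of quadratic growth, $\partial_s f_{st}[\rho]=-\widetilde{\mathcal L}^{\rho_s}f_{st}[\rho]$ with $f_{tt}[\rho]=f$, where
\[ (\widetilde{\mathcal L}^\rho g)(v)=2\int_{\RRd\times\mathbb{S}^{d-1}}\bigl(g(v')+g(v'_\star)-g(v_\star)-g(v)\bigr)B_K(v-v_\star,d\sigma)\,\rho(dv_\star) \]
is the linearised three-particle generator, the sign bookkeeping matching the $\pm1$ types in Definition \ref{def: LKP}. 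That this equation makes sense, and that $|f_{st}[\rho](v)|\le C\exp\bigl(CK\int_s^t\Lambda_{2+\gamma}(\rho_u)\,du\bigr)(1+|v|^2)$, follows from non-explosion of the branching process, the propagation bound (\ref{eq: propagation of bound}) and the integrability hypothesis (\ref{eq: integrability for environment}); this is exactly what makes all the pairings below absolutely convergent.

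The second ingredient I would record is a weak (adjoint) identity: for $\mu,\nu\in\Ss^{2+\gamma}$, $\rho=(\mu+\nu)/2$ and $g$ of quadratic growth,
\[ \langle\widetilde{\mathcal L}^\rho g,\ \mu-\nu\rangle=\langle g,\ Q_K(\mu)-Q_K(\nu)\rangle. \]
Expanding both sides, their difference reduces to $I[\mu,\nu]-I[\nu,\mu]$, where $I[\alpha,\beta]=\int\int\bigl(g(v')+g(v'_\star)-g(v)-g(v_\star)\bigr)B_K(v-v_\star,d\sigma)\,\alpha(dv)\beta(dv_\star)$, and this vanishes by the symmetry of the weak form of the collision operator: relabelling $v\leftrightarrow v_\star$ and then applying $\sigma\mapsto-\sigma$ (under which $v'\leftrightarrow v'_\star$ and $\cos\theta\mapsto-\cos\theta$) exchanges $I[\mu,\nu]$ with $I[\nu,\mu]$. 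With these two facts the deterministic assertion is immediate: put $\Phi(s):=\langle f_{st}[\rho],\phi^K_s(\mu)-\phi^K_s(\nu)\rangle$ with $\rho_s=(\phi^K_s(\mu)+\phi^K_s(\nu))/2$; differentiating and using the backward equation together with (\ref{eq: BEK}) for $\phi^K_\cdot$ gives $\Phi'(s)=-\langle\widetilde{\mathcal L}^{\rho_s}f_{st},\phi^K_s(\mu)-\phi^K_s(\nu)\rangle+\langle f_{st},Q_K(\phi^K_s(\mu))-Q_K(\phi^K_s(\nu))\rangle=0$ by the adjoint identity, so $\Phi(t)=\Phi(0)$, which is the first claim (the differentiation under the integral being justified by the growth bound on $f_{st}$ and the cutoff moment bounds of Proposition \ref{thrm:momentinequalities}).

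For the statement involving the Kac process the subtlety is that the environment $\rho^N_s=(\mu^{N,K}_s+\phi^K_s(\mu))/2$ depends on the whole trajectory of $\mu^{N,K}$, so $f_{st}[\rho^N]$ is not adapted; I would argue pathwise, conditionally on a realisation of $(\mu^{N,K}_s)_{s\le t}$, treating $\rho^N$ as a fixed environment so that $s\mapsto f_{st}[\rho^N]$ is a deterministic function, and noting that the value of $f_{st}$ is insensitive to altering $\rho^N$ at the single instant $s$, so the environment's jumps contribute nothing extra. Then $A_s:=\langle f_{st}[\rho^N],\mu^{N,K}_s-\phi^K_s(\mu)\rangle$ is a finite-variation function of $s$: between jumps of $\mu^{N,K}$ the same computation as in the deterministic case, now with $\mu=\mu^{N,K}_s$, $\nu=\phi^K_s(\mu)$, $\rho=\rho^N_s$, gives $\tfrac{d}{ds}A_s=-\langle f_{st},Q_K(\mu^{N,K}_s)\rangle$, while at a jump of $\mu^{N,K}$ from $\mu^{N,K}_{s-}$ to $\mu^N$ one has $A_s-A_{s-}=\langle f_{st},\mu^N-\mu^{N,K}_{s-}\rangle$, so the jumps sum to $\int_{(0,t]\times\Ss_N}\langle f_{st},\mu^N-\mu^{N,K}_{s-}\rangle\,m^{N,K}(ds,d\mu^N)$. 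The bilinear identity $\langle f_{st},\mu^{N,v,v_\star,\sigma}-\mu^{N,K}_{s-}\rangle=N^{-1}\bigl(f_{st}(v')+f_{st}(v'_\star)-f_{st}(v)-f_{st}(v_\star)\bigr)$ then shows that integrating the same integrand against the compensator $\overline m^{N,K}$ returns exactly $\int_0^t\langle f_{st},Q_K(\mu^{N,K}_s)\rangle\,ds$, cancelling the drift; hence $A_t-A_0=\int\langle f_{st},\mu^N-\mu^{N,K}_{s-}\rangle(m^{N,K}-\overline m^{N,K})=M^{N,K,f}_t$, and rearranging $A_t$ and $A_0$ gives the claimed identity. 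That $M^{N,K,f}$ is a genuine martingale is the standard Lévy-system property of the jump measure once its integrand is checked to be integrable, which again uses (\ref{eq: propagation of bound}) and Proposition \ref{thrm:momentinequalities}.

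The step I expect to be the main obstacle is not the algebra above but the rigorous justification of this whole calculus with unbounded ($O(|v|^2)$) test functions and measures carrying only finitely many moments: proving the backward equation for $f_{st}$ holds in a sense strong enough to differentiate $\Phi$ and $A_s$, that the pairings $\langle\widetilde{\mathcal L}^{\rho_s}f_{st},\cdot\rangle$ are absolutely convergent and Fubini is legitimate in the adjoint identity, and that the pathwise finite-variation decomposition of $A_s$ (including controlling the non-adapted dependence of $f_{st}$ on $\rho^N$) is valid. These are precisely the places where the present singular-but-cutoff kernel forces one to invoke Lemma \ref{lemma: holder in TV} and to re-establish the growth bound (\ref{eq: propagation of bound}) and non-explosion, in place of the softer estimates available for hard spheres in \cite{norris2016consistency}; once those are in hand the identities are as above.
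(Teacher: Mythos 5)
Your proposal follows essentially the same route as the paper, which for this proposition simply points to \cite[Proposition 4.2]{norris2016consistency} and notes the proof carries over with slight modifications: derive the backward equation $\partial_s f_{st}[\rho]=-\widetilde{\mathcal L}^{\rho_s}f_{st}[\rho]$ from the Markov/branching property, use the weak symmetry identity $\langle\widetilde{\mathcal L}^{(\mu+\nu)/2}g,\mu-\nu\rangle=\langle g,Q_K(\mu)-Q_K(\nu)\rangle$ (which you correctly verify via the combined substitution $v\leftrightarrow v_\star$, $\sigma\mapsto-\sigma$, under which the kernel with $b$ supported on $[0,1)$ is indeed invariant), and then integrate $\Phi(s)$ in the deterministic case and the finite-variation function $A_s$ in the particle case. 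Your derivation, correctly, yields $\langle f_{0t}[\rho^N],\mu^{N,K}_0-\mu\rangle$ as the initial-data term; the $\mu^{N,K}_t$ appearing there in the statement as printed is a typo.

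One caution on a side remark: your last sentence asserting ``that $M^{N,K,f}$ is a genuine martingale is the standard L\'evy-system property'' overreaches. The propagator $f_{st}[\rho^N]$ depends on the environment $(\rho^N_u)_{s\le u\le t}$, hence on the \emph{future} of $\mu^{N,K}$, so the integrand in $M^{N,K,f}_t$ is \emph{not} predictable and the L\'evy-system argument does not apply directly. The identity you prove is purely pathwise — $M^{N,K,f}_t$ is defined as a difference of two (pathwise finite) integrals — and that is all the proposition asserts. The paper never claims $M^{N,K,f}$ itself is a martingale; when it needs probabilistic control it passes to stopped environments (see Proposition~\ref{prop: short time mg estimate} and the stopping time $T^N_b$ in the proof of Lemma~\ref{lemma: convergence of cutoff KP}), precisely to circumvent the anticipating nature of $f_{st}[\rho^N]$. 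This does not affect the correctness of your proof of the stated identity, but it is worth being precise about, since the subsequent estimate on $\sup_f\sup_t M^{N,K,f}_t$ does \emph{not} come from Doob applied to $M^{N,K,f}$.

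Apart from that slip, which concerns material outside the scope of the proposition, your argument — including the insensitivity of $f_{st}[\rho^N]$ to modification of $\rho^N$ at a single time instant, and the cancellation of the drift $-\langle f_{st},Q_K(\mu^{N,K}_s)\rangle$ against the compensator contribution via the bilinear identity $\langle f_{st},\mu^{N,v,v_\star,\sigma}-\mu^N\rangle=N^{-1}(f_{st}(v')+f_{st}(v'_\star)-f_{st}(v)-f_{st}(v_\star))$ — is exactly the intended adaptation of Norris's argument, with Lemma~\ref{lemma: holder in TV} and the growth bound (\ref{eq: propagation of bound}) supplying the quantitative inputs that make the differentiations and Fubini exchanges legitimate for the present cutoff kernel.
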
   Therefore, recalling the definition of $\mathfrak{w}_\gamma$ from Section (\ref{sec: metrics}), the proof of Lemma \ref{lemma: W-W stability for cutoff} reduces to estimating the growth and regularity of $f_{0t}$ when we start with a function $f\in \mathcal{A}_\gamma$. The following result adapts \cite[Proposition 4.3]{norris2016consistency} to our case. \begin{lemma}\label{lemma: fst}[Growth and Regularity of $f_{0t}$.] Fix $f\in \mathcal{A}_\gamma$ and an environment $\rho_t, t\ge 0$. Then $f_{0t}\in z_t \mathcal{A}_\gamma$, where \begin{equation} z_t=\exp\left(CK\left(1+\int_0^t \Lambda_{2+\gamma}(\rho_s)ds \right)\right) \end{equation}  for some constant $C$ independent of $K$. \end{lemma}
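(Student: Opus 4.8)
### Proof proposal for Lemma \ref{lemma: fst}

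The plan is to follow the strategy of \cite[Proposition 4.3]{norris2016consistency}: control separately the sup-norm and the H\"older seminorm of the weighted function $\widehat{f_{0t}}(v)=f_{0t}(v)/(1+|v|^2)$, using the branching-process representation $f_{st}(v)=E_{(s,v)}\langle f, \Xi_t\rangle$ and a Gronwall-type argument in the time variable. Since $f\in\mathcal{A}_\gamma$ means $|f(v)|\le 1+|v|^2$ and $f$ is weighted-$\gamma$-H\"older with constant $1$, the sup-norm bound $\supnorm{\widehat{f_{0t}}}\le z_t$ will follow almost immediately from the quadratic-growth propagation estimate (\ref{eq: propagation of bound}): indeed $|f_{0t}(v)|\le E_{(0,v)}\langle 1+|v|^2,|\Xi_t|\rangle\le E_{(0,v)}\langle 1+|v|^2,\Xi^*_t\rangle\le \exp(CK\int_0^t\Lambda_{2+\gamma}(\rho_s)\,ds)(1+|v|^2)$, which is of the required form. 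The real content is the H\"older estimate.

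For the H\"older seminorm, I would write a coupling of two copies of the linearised branching process, one started from $(0,v)$ and one from $(0,w)$, driven by the same Poisson clocks for as long as the genealogies agree; this is the standard device for propagating a Lipschitz/H\"older modulus through a branching mechanism. Differentiating (or rather, using Duhamel/Dynkin on) the quantity $f_{st}(v)-f_{st}(w)$ in the backward time variable $s$, one obtains an integral equation of the schematic form
\begin{equation}
 f_{st}(v)-f_{st}(w) = \big(\text{drift of the deterministic flow between collisions}\big) + \big(\text{collision terms}\big),
\end{equation}
where each collision term replaces $(v,+)$ by the triple $(v',+),(v_\star',+),(v_\star,-)$ and analogously for $w$; because $v\mapsto v'(v,v_\star,\sigma), v\mapsto v_\star'(v,v_\star,\sigma)$ are Lipschitz with constants bounded by an absolute constant (in fact the Jacobian factors appearing in (\ref{eq: postcollisional}) are $\le 1$ in the relevant norms, after accounting for the weight $1+|v|^2$), the weighted H\"older modulus of $f_{st}$ at time $s$ is controlled by that at time $t$ plus $\int_s^t$ of a rate times the same modulus. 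The rate here is exactly $2\int B_K(v-v_\star,d\sigma)$ integrated against $\rho_u$, which contributes a factor $K\Lambda_{2+\gamma}(\rho_u)$ up to constants — this is where the $K$-dependence and the moment $\Lambda_{2+\gamma}$ enter, and it is crucial that $\int B_K\,d\sigma = K|v-v_\star|^\gamma$ so that only $\gamma$-moments of $\rho$ appear. Gronwall in $s$ (from $t$ down to $0$) then yields $\|\widehat{f_{0t}}\|_{0,\gamma}\le \exp(CK(1+\int_0^t\Lambda_{2+\gamma}(\rho_s)\,ds))$, i.e.\ $f_{0t}\in z_t\mathcal{A}_\gamma$.

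The main obstacle, and the place where more care than in \cite{norris2016consistency} is needed, is handling the interaction between the weight $1+|v|^2$ and the collision map: the three offspring have velocities $v', v_\star', v_\star$ whose weights can be larger than $1+|v|^2$ (the offspring $(v_\star,-1)$ in particular carries the weight $1+|v_\star|^2$ of the \emph{environment} particle, not of the parent), so naively the weighted H\"older constant is not contractive. The fix is the standard one: absorb these extra weights into the rate, producing the $\Lambda_{2+\gamma}(\rho_u)$ factor — one needs the elementary inequalities $1+|v'|^2+|v_\star'|^2\le C(1+|v|^2)(1+|v_\star|^2)$ and $|v'-w'|\le C|v-w|$ together with $\int_{\mathbb{S}^{d-1}}|v-v_\star|^{\gamma}\,\text{(H\"older factor)}\,B_K(v-v_\star,d\sigma)\rho_u(dv_\star)\le CK\Lambda_{2+\gamma}(\rho_u)(1+|v|^2)$, and then the Gronwall argument closes. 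A secondary technical point is to justify the Duhamel identity rigorously given that the branching process, while non-explosive by (\ref{eq: propagation of bound}), has unbounded total rate as velocities grow; this is dispatched by a localisation/stopping argument exactly as in the cited proposition, using the moment bound to pass to the limit, so I would simply invoke that. Modulo these adaptations the argument is that of \cite[Proposition 4.3]{norris2016consistency}.
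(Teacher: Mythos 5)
Your growth estimate is fine and matches the paper's. For the H\"older seminorm, however, there is a genuine gap: you never address the mismatch between the collision rates seen by the two copies of the branching process, which is the non-trivial content of the paper's argument.

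Concretely: if the copy started at $v$ meets an environment particle $v_\star$, it scatters according to the measure $B_K(v-v_\star,d\sigma)\,\rho_u(dv_\star)$, while the copy started at $w$ scatters according to $B_K(w-v_\star,d\sigma)\,\rho_u(dv_\star)$. These are different measures on $\mathbb{S}^{d-1}$ (the kernel is non-Maxwellian), so a synchronous coupling "driven by the same Poisson clocks" does not exist; and in the Duhamel identity, in addition to the term you describe (same kernel, differing integrand, controlled by the Lipschitz continuity of $v\mapsto v'$), there is a second term proportional to the kernel difference $B_K(v-v_\star,d\sigma)-B_K(w-v_\star,d\sigma)$. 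This term involves $g(v_\star)$ (the environment offspring), which is identical for both copies and so only enters through the rate mismatch — but it enters at full size $O(1+|v_\star|^2)$, not damped by $|v-w|^\gamma$. Closing the Gronwall estimate therefore requires showing that the total variation distance of the kernels is Lipschitz in the velocity, namely
\begin{equation}
 \sup_{v_\star}\bigl\|B_K(v-v_\star,\cdot)-B_K(w-v_\star,\cdot)\bigr\|_{L^1(d\sigma)}\le CK^\alpha|v-w|,
\end{equation}
which is Lemma \ref{lemma: holder in TV} of the paper. In the paper's coupling formulation this is exactly the decoupling rate for coupled pairs, and the fact that the rate is small in $|v-w|$ is what makes the scheme contractive. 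Your proposal's list of "elementary inequalities" ($|v'-w'|\le C|v-w|$, $1+|v'|^2+|v_\star'|^2\le C(1+|v|^2)(1+|v_\star|^2)$, and the $\Lambda_{2+\gamma}$ bound on the total rate) are all "same-kernel" estimates and do not touch this term. Moreover you quote the rate constant as $CK$, whereas the kernel-difference term produces an extra multiplicative $K^\alpha$ (harmless, since $K^\alpha\le e^{CK}$, but it signals that the estimate comes from a different source than the total rate $K|v-v_\star|^\gamma$). The obstacle you identify — the weight bookkeeping for the environment offspring $(v_\star,-1)$ — is real but secondary and is handled by absorbing the extra weight into $\Lambda_{2+\gamma}(\rho_u)$, as you say; the one you omit is the one the paper devotes a separate lemma to.
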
 \begin{proof} This lemma may be proven by adapting the arguments leading to \cite[Propositions 4.3, 4.5]{norris2016consistency}, which we outline here. Firstly, the same estimate (\ref{eq: propagation of bound}) already proves the claimed growth condition. \bigskip \\ In order to estimate the difference $f_{0t}(v_0)-f_{0t}(w_0)$, we introduce a coupling of the processes $\Xi^\star_t$ started at the initial data $(v_0, 1), (w_0,1)$. We consider a branching process on $W=(\RRd \times\RRd)\sqcup \RRd \sqcup \RRd =W_0\sqcup W_1\sqcup W_2$, where particles can either be coupled pairs $(v,w)\in (\RRd\times\RRd)$, or uncoupled particles in one of two disjoint copies of $\RRd$, and where each particle is assigned a sign $\pm 1$. The branching rules for uncoupled particles are the same as in Definition \ref{def: LKP}, while coupled particles of type $(v,w)$ scatter to remain coupled as far as possible, but undergo decoupling transitions at rate \begin{equation} \|B_K(v-v_\star,\cdot)-B_K(w-v_\star,\cdot)\|_{L^1(d\sigma)}\h \rho_t(dv_\star).\end{equation} Let $\Gamma^{0\star}, \Gamma^{1\star}, \Gamma^{2\star}$ be the empirical measures on $W_i \times\{\pm 1\}$, $i=0,1,2$, and consider the projection maps $p_i: W_0\times\{\pm 1\} \rightarrow \RRd\times{\pm 1}$ by projecting onto the $i^\text{th}$ marginal, $i=1,2$. The empirical measures \begin{equation} \Xi^{i\star}_t=\Gamma^{0\star}_t\circ p_i^{-1}+\Gamma^{i\star}_t, \hs i=1,2 \end{equation} are now a coupling of Linearised Kac processes. Using the bound (\ref{eq: propagation of bound}) on each marginal, we have estimates starting from a coupled pair \begin{equation}\label{eq: growth of coupled} E_{(0,(v_0,w_0) \in V_0)}\langle 1+|v|^2+|w|^2, \Gamma^{0\star}_t\rangle \le z_t\h(1+|v_0|^2+|w_0|^2)\end{equation} or from decoupled particles, for all $0\le s\le t$, \begin{equation} E_{(s,v_0 \in V_1)}\langle 1+|v|^2, \Gamma^{1\star}_t\rangle \le z_t\h(1+|v_0|^2)\end{equation} and similarly for $V_2$. Let us now run this process starting from a particle of type $(v_0, w_0)\in V_0$. Using the triangle inequality inductively, $\Gamma^{0\star}$ is supported only on coupled pairs $(u,u')\in V_0$ with $|u-u'|\le |v_0-w_0|$, and thanks to Lemma \ref{lemma: holder in TV}, the rate of decoupling of such a pair is at most $CK^\alpha |v_0-w_0|^\gamma$. With this modification, the proof of \cite[Lemma 4.5]{norris2016consistency} now gives the estimate \begin{equation} \label{eq: growth of decoupled} E_{(0,(v_0,w_0)\in V_0)} \langle 1+|v|^2, \Gamma^{1,\star}_t+\Gamma^{2,\star}_t\rangle \le  CK^\alpha |v_0-w_0|^\gamma (1+|v_0|^2+|w_0|^2)\h z_t.\end{equation} Let us fix $f\in \mathcal{A}_\gamma$. Since the processes $\Xi^{i,\star}_t$ give a coupling of the linearised Kac processes started at $(v_0,1), (w_0,1)$ respectively, we have \begin{equation} f_{0t}(v_0)-f_{0t}(w_0)= E_{(0,(v_0,w_0)\in V_0)} \left\{\langle f\circ p_1-f\circ p_2, \Gamma^0_t\rangle +  \langle f, \Gamma^1_t\rangle -\langle f, \Gamma^2_t\rangle \right\}.\end{equation} On the support of $\Gamma^0_t$, the difference $f\circ p_1-f\circ p_2$ is at most $3(1+|v|^2+|w|^2)|v_0-w_0|^\gamma$, and we can estimate the integral using (\ref{eq: growth of coupled}). The other terms only gain contributions from decoupled particles, and we can estimate both such terms using (\ref{eq: growth of decoupled}) and recalling that $|f|\le 1+|v|^2$. We therefore put everything together to conclude that \begin{equation}\begin{split}  |f_{0t}(v_0)-f_{0t}(w_0)|&\le CK^\alpha|v_0-w_0|^\gamma (1+|v_0|^2+|w_0|^2)\h z_t.\end{split}   \end{equation}  which is the regularity desired. Finally, since $K^\alpha \le  \exp(CK)$ only appears in the decoupling rate and appears only as a multiplicative factor, rather than in the exponent, it can be absorbed into $z_t$, by changing the value of $C$ if necessary. \end{proof} \subsection{Proof of Lemmas \ref{lemma: convergence of cutoff KP}, \ref{lemma: W-W stability for cutoff}}\label{subsec: proof of cutoff estimates}Combining the previous lemmas, we prove the two Lemmata \ref{lemma: convergence of cutoff KP}, \ref{lemma: W-W stability for cutoff} which are the main focus of this section. We begin with Lemma \ref{lemma: W-W stability for cutoff}, which is a simple application of Proposition \ref{lemma: representation formula} and the estimates in Lemma \ref{lemma: fst}. \begin{proof}[Proof of Lemma \ref{lemma: W-W stability for cutoff}]  Let us fix $\mu, \nu \in \mathcal{S}^p$, for $p$ to be chosen later. Let $\rho$ be the environment \begin{equation} \rho_t=\frac{1}{2}(\phi^K_t(\mu)+\phi^K_t(\nu))\end{equation} and let $f_{st}$ denote the functions given by (\ref{eq: fst}) in this environment. For any $f\in \mathcal{A}_\gamma$, we have \begin{equation} \langle f, \phi^K_t(\mu)-\phi^K_t(\nu)\rangle =\langle f_{0t},\mu-\nu\rangle \le z_t \h \mathfrak{w}_\gamma(\mu,\nu)\end{equation} where $z_t$ is as in Lemma \ref{lemma: fst}; by Proposition \ref{thrm:momentinequalities}, we bound \begin{equation} z_t\le \exp\left(cK(1+t)\Lambda_{2+\gamma}(\mu,\nu)\right)\end{equation} and so, optimising over $f$, \begin{equation} \mathfrak{w}_\gamma(\phi^K_t(\mu),\phi^K_t(\nu))\le \exp\left(cK(1+t)\Lambda_{2+\gamma}(\mu,\nu)\right) \h \mathfrak{w}_\gamma(\mu,\nu).\end{equation} Finally, we use (\ref{eq: mathfrak wgamma and Wp}) twice to convert both sides from $\mathfrak{w}_\gamma$ to $W_p$: for some $C=C(G,p,q,d), \alpha=\alpha(p,p')$, \begin{equation}\begin{split}\label{eq: convert everything to WP} W_p\left(\phi^K_t(\mu),\phi^K_t(\nu)\right)&\le C\Lambda_{p'}\left(\phi^K_t(\mu),\phi^K_t(\nu)\right)\mathfrak{w}_\gamma\left(\phi^K_t(\mu),\phi^K_t(\nu)\right)^\alpha\\ & \le C\Lambda_{p'}(\mu,\nu)\mathfrak{w}_\gamma(\mu,\nu)^\alpha \exp\left(cK(1+t)\Lambda_{2+\gamma}(\mu,\nu)\right)\\& \le C\Lambda_{p'}(\mu, \nu)^2\exp\left(cK(1+t)\Lambda_{2+\gamma}(\mu,\nu)\right) W_p(\mu,\nu)^{\alpha\gamma}\end{split}\end{equation} which proves the claim for a new choice of $\alpha$. \end{proof} For the case with an $N$-particle Kac process, we will need to control the stochastic integral term, uniformly over $f$ belonging to the class of test functions $\mathcal{A}_\gamma$. This is achieved with the following proposition. \begin{proposition} \label{prop: short time mg estimate} Let $\mu^{N,K}_t, t\ge 0$ be a $N$-particle, $K$-cutoff Kac process, and let $\mathcal{F}^N_t$ be the natural filtration. Let $\rho=(\rho_t)_{t\ge 0}$ be a potentially random environment, adapted to $\mathcal{F}^N_t$, such that \begin{equation} \label{eq: moment condition for environment} \lambda^\star=\left\|\hspace{0.1cm}\sup_{t\ge 0} \Lambda_{2+\gamma}(\rho_t)\hspace{0.1cm}\right\|_{L^\infty(\mathbb{P})}<\infty. \end{equation} For $f\in \mathcal{A}_\gamma$ and $t\ge s\ge 0$, let $f_{st}[\rho]$ denote the propagation in this environment, as described in Definition \ref{def: LKP}. Let $q\ge 2+\gamma$ and $a\ge 1$, and suppose that $\mu^{N,K}_t$ has an initial moment $\Lambda_q(\mu^{N,K}_0)\le a$. Let $m^{N,K}, \overline{m}^{N,K}$ be as in Definition \ref{def: JM+C}, and write \begin{equation} \label{eq: stopped stochastic integral}\widetilde{M}^{N,K,f}_t[\rho]=\int_{(0,t]\times \mathcal{S}_N} \langle f_{st}[\rho], \mu^N-\mu^{N,K}_{s-}\rangle (m^{N,K}-\overline{m}^{N,K})(ds, d\mu^N).\end{equation} In this notation, we have the bound \begin{equation} \label{eq: sup over matilde} \EE\left[\hspace{0.1cm} \sup_{t\le t_\mathrm{fin}}\hspace{0.1cm} \sup_{f\in \mathcal{A}_\gamma} \hspace{0.1cm} \widetilde{M}^{N,K,f}_t[\rho]\h \right] \le CaN^{-\eta}\h \exp\left(C\lambda^\star K(1+t_\mathrm{fin})\right)\end{equation} for some $C=C(d,p,\beta)$ and $\eta=\eta(d,p)>0$. Here, we emphasise that $\EE$ refers to the $L^1$ norm with simultaneous expectation over $\mu^{N,K}_t$ and the environment $\rho$. \end{proposition}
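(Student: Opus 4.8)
The plan is to follow the treatment of the analogous stochastic--integral term in Norris~\cite{norris2016consistency}, being careful to isolate the $K$-- and $\lambda^\star$--dependence into an exponential prefactor. Fix $f\in\mathcal{A}_\gamma$ to begin with, and observe that at each jump time $s$ of $\mu^{N,K}$ a collision replaces a pair $(v,v_\star)$ by the post--collisional pair $(v',v'_\star)$, so that $\mu^N-\mu^{N,K}_{s-}=N^{-1}(\delta_{v'}+\delta_{v'_\star}-\delta_v-\delta_{v_\star})$ and the integrand in~(\ref{eq: stopped stochastic integral}) equals $N^{-1}\big(f_{st}(v')+f_{st}(v'_\star)-f_{st}(v)-f_{st}(v_\star)\big)$. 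By Lemma~\ref{lemma: fst} together with the hypothesis~(\ref{eq: moment condition for environment}) we have $f_{st}[\rho]\in z_t\,\mathcal{A}_\gamma$ with $z_t\le\exp\!\big(C\lambda^\star K(1+t)\big)$; feeding this weighted H\"older bound into momentum conservation $v+v_\star=v'+v'_\star$ and the elementary bound $|v-v'|+|v_\star-v'_\star|\le C|v-v_\star|\,\theta$ gives a pointwise estimate
\[
\big|f_{st}(v')+f_{st}(v'_\star)-f_{st}(v)-f_{st}(v_\star)\big|\le C\,z_{t_\mathrm{fin}}\,\omega(v,v_\star,\theta),
\]
where $\omega$ is polynomial in $|v|,|v_\star|$ and vanishes as $\theta\downarrow 0$.

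For fixed $f$, the process $\widetilde{M}^{N,K,f}_t[\rho]$ is a mean--zero $(\mathcal{F}^N_t)$--martingale; this is exactly the content of the argument behind Proposition~\ref{lemma: representation formula}, and uses only that $\rho$ is adapted, so that the apparent dependence of $f_{st}$ on the future of $\rho$ is harmless. I would then apply Doob's $L^2$ inequality and the compensator of Definition~\ref{def: JM+C} to bound $\EE\big[\sup_{t\le t_\mathrm{fin}}(\widetilde M^{N,K,f}_t)^2\big]$ by
\[
\frac{C\,z_{t_\mathrm{fin}}^2}{N^2}\;\EE\!\int_0^{t_\mathrm{fin}}\sum_{i\neq j}\int_{\mathbb{S}^{d-1}}\omega\big(V^{i}_s,V^{j}_s,\theta\big)^2\,B_K(V^i_s-V^j_s,d\sigma)\,ds .
\]
The inner $\sigma$--integral contributes a factor bounded by a fixed power of $K$ times $|V^i_s-V^j_s|^\gamma$, so summing over the $O(N^2)$ pairs this is at most $C N^{-1}z_{t_\mathrm{fin}}^2 K^{c}\,\EE\int_0^{t_\mathrm{fin}}\langle(1+|v|)^{r},\mu^{N,K}_s\rangle\,ds$ for some fixed $r$. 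Invoking the cutoff moment inequalities of Proposition~\ref{thrm:momentinequalities} together with $\Lambda_q(\mu^{N,K}_0)\le a$ (enlarging the moment parameter there as the value of $r$ requires), this is $\le Ca\,N^{-1}\exp\!\big(C\lambda^\star K(1+t_\mathrm{fin})\big)$, and taking square roots yields the asserted bound, with $N^{-1/2}$, for each fixed $f$.

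The remaining and decisive step is to upgrade this to a bound on $\EE\big[\sup_{t}\sup_{f\in\mathcal{A}_\gamma}\widetilde M^{N,K,f}_t\big]$. Since $f\mapsto\widetilde M^{N,K,f}_t$ is linear and, by dominated convergence over the branching process (legitimate because the propagated moment bounds above make the relevant velocities effectively integrable), continuous for local uniform convergence, the supremum may be taken over a fixed countable dense subfamily. I would then truncate: velocities of size $>R$ contribute, via the high moments supplied by Proposition~\ref{thrm:momentinequalities}, only $O(R^{-\kappa})$ for some $\kappa>0$; on the ball of radius $R$ one covers $\mathcal{A}_\gamma$ by finitely many sup--norm balls, applies the fixed-$f$ estimate at the centres together with a union bound, and controls the oscillation inside each ball by running $f-f'$ through Lemma~\ref{lemma: fst}. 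Optimising $R$ and the mesh against $N$ then produces the final bound, with a generic exponent $\eta=\eta(d,p)>0$ rather than $1/2$ --- the polynomial loss being precisely the cost of the covering argument --- and with the $K$-- and $\lambda^\star$--dependence still confined to the exponential prefactor.

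I expect this last step to be the main obstacle: the covering and union bound must be organised so that the number of centres grows sub--polynomially in $N$ while the within--ball oscillation remains $o(N^{-\eta})$, a balance that rests on the interplay between the $\gamma$--H\"older modulus defining $\mathcal{A}_\gamma$ and the moment growth controlled by Proposition~\ref{thrm:momentinequalities}. The earlier steps are essentially bookkeeping once Lemma~\ref{lemma: fst} and Proposition~\ref{lemma: representation formula} are in hand.
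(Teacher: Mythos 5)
The central step of your proposal rests on the claim that ``for fixed $f$, the process $\widetilde{M}^{N,K,f}_t[\rho]$ is a mean-zero $(\mathcal{F}^N_t)$-martingale \dots\ the apparent dependence of $f_{st}$ on the future of $\rho$ is harmless.'' This is not correct, and it is precisely the technical obstruction that the cited argument of Norris is built to handle. The function $f_{st}[\rho](v)=E_{(s,v)}\langle f,\Xi_t\rangle$ integrates out only the branching noise, not the environment; it is therefore a deterministic functional of $\rho|_{[s,t]}$, hence $\mathcal{F}^N_t$-measurable but not $\mathcal{F}^N_{s-}$-measurable. In particular, the integrand $\langle f_{st}[\rho],\mu^N-\mu^{N,K}_{s-}\rangle$ is \emph{not} previsible, so the compensated integral against $m^{N,K}-\overline{m}^{N,K}$ is not a zero-mean martingale, and Doob's $L^2$ inequality cannot be applied to it directly. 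There is an additional wrinkle pointing the same way: since the terminal time $t$ also appears inside the integrand through $f_{st}$, the family $\{\widetilde{M}^{N,K,f}_t\}_{t}$ is not even a stochastic integral with a fixed integrand, so it is not a single process that one could try to declare a martingale. Proposition~\ref{lemma: representation formula}, which you cite as establishing this, asserts only an algebraic identity and makes no claim about previsibility or about the mean of $M^{N,K,f}_t$.

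This is exactly why the paper's sketch emphasises the quantity
\begin{equation}
y_\beta(t)=z_t\sup_{0\le s\le s'\le t}\Bigl[(s'-s)^{-1}\int_s^{s'}\Lambda_{2+\gamma}(\rho_u)\,du\Bigr],
\end{equation}
which quantifies the H\"older continuity of $s\mapsto f_{st}(v)$. In Norris's argument this is used to replace the anticipating integrand $f_{st}$ by a version frozen on a finite time grid, bounding the freezing error through $y_\beta$; the resulting grid-frozen integrals \emph{are} genuine martingales increments, to which the $L^2$ bound and the union/covering argument apply, and the grid also handles $\sup_{t\le t_\mathrm{fin}}$. Your proposal omits this time-discretisation step entirely, so the first estimate (the $N^{-1/2}$ bound for fixed $f$) is not established. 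The truncation-and-covering strategy you propose for the supremum over $f\in\mathcal{A}_\gamma$ is in the right spirit and essentially matches the paper's, but it must be organised jointly with the time grid rather than after a fixed-$f$ Doob argument, since the latter is not available here.
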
 This largely follows the same arguments as the proof of \cite[Theorem 1.1]{norris2016consistency}. The key difference is that the hypotheses on the environment $\rho$ guarantee an $L^\infty(\mathbb{P})$ control on the quantities \begin{equation} z_t=\exp\left(cK\int_0^t \Lambda_{2+\gamma}(\rho_u)du\right) \end{equation} \begin{equation} y_\beta(t)=z_t\hspace{0.1cm}\sup_{0\le s\le s'\le t}\left[(s'-s)^{-1}\int_s^{s'}\Lambda_{2+\gamma}(\rho_u)du\right], \hs 0<\beta\le 1\end{equation} in terms of the multiplicative factor $e^{C\lambda^\star K(1+t)}$, which describe the continuity of $f_{st}(v)$ in $v$ and $s$ respectively. Finally, we indicate how these results may be used to prove Lemma \ref{lemma: convergence of cutoff KP}. \begin{proof}[Sketch Proof of Lemma \ref{lemma: convergence of cutoff KP}] Let us consider the linearised Kac process in the random environment \begin{equation} \rho^N_t=\frac{\mu^{N,K}_t+\phi^K_t(\mu^{N,K}_0)}{2} \end{equation} as in Lemma \ref{lemma: representation formula}, and for $b\ge 1$, consider the stopping times $T^N_b$ defined in (\ref{eq: defn of TB}) for the $(2+\gamma)^\text{th}$ moment. Let us write $\widetilde{M}^{N,K,f,b}_t$ for the stochastic integrals in (\ref{eq: stopped stochastic integral}) in the environment $\rho^{T^N_b}$. \bigskip \\ We consider the events $\{T_b\le t_\mathrm{fin}\}, \{T_b> t_\mathrm{fin}\}$ separately. On the event $\{T_b >  t_\mathrm{fin}\}$, we have the equalities \begin{equation} M^{N,K,f}_t=\widetilde{M}^{N,K,f,b}_t\text{ for all }f\in \mathcal{A}_\gamma\text{ and all }t\le  t_\mathrm{fin}\end{equation} while on $\{T_b\le t_\mathrm{fin}\}$ we have the trivial bound \begin{equation}\sup_{t\le  t_\mathrm{fin}}\mathfrak{w}_\gamma(\mu^{N,K}_t,\phi^K_t(\mu))\le 4.\end{equation} Combining, we have the bound \begin{equation} \sup_{t\le t_\mathrm{fin}} \h  \mathfrak{w}_\gamma\left(\mu^{N,K}_t,\phi^K_t(\mu^{N,K}_0)\right)\le \sup_{f\in \mathcal{A}_\gamma, t\le  t_\mathrm{fin}}\left\{ \widetilde{M}^{N,K,f,b}_t\right\}+4 \cdot 1(T^N_b\le  t_\mathrm{fin}).\end{equation} Since $q>2+\gamma$, the moment hypothesis on $\mu^{N,K}_0$ implies $\Lambda_{2+\gamma}(\mu^{N,K}_0)\le a$ almost surely, which is propagated to $\phi^K_t(\mu_0)$ by Proposition \ref{thrm:momentinequalities}. The first term is therefore controlled by Proposition \ref{prop: short time mg estimate}, with $\lambda^\star \le b+Ca$ for some constant $C$. We now take $b=Ca$, for some large $C$; by Lemma \ref{lemma: concentration of moments}, $C$ can be chosen so that $\PP(T^N_b\le  t_\mathrm{fin})\le CaN^{-1} t_\mathrm{fin}$. Combining, we obtain\begin{equation} \EE\left[\h\sup_{t\le  t_\mathrm{fin}}\h  \mathfrak{w}_\gamma\left(\mu^{N,K}_t,\phi^K_t(\mu^{N,K}_0)\right)\h\right] \le Ca N^{-\eta}\exp(C aK(1+ t_\mathrm{fin}))+Ca t_\mathrm{fin}N^{-1}\end{equation} and keeping the worse term\begin{equation} \EE\left[\h\sup_{t\le  t_\mathrm{fin}}\h  \mathfrak{w}_\gamma\left(\mu^{N,K}_t,\phi^K_t(\mu^{N,K}_0)\right)\h\right]\le  Ca N^{-\eta}\exp(C aK(1+ t_\mathrm{fin})). \end{equation} To convert this approximation into $W_p$, we argue as in (\ref{eq: convert everything to WP}). Fix $p'\in (p+2, q)$; thanks to the comparisons in Section \ref{sec: metrics}, for some $\alpha>0$, \begin{equation}\begin{split}&\sup_{t\le t_\mathrm{fin}} W_p\left(\mu^{N,K}_t,\phi^K_t(\mu^{N,K}_0)\right)\\& \hs \hs \le \left(\sup_{t\le t_\mathrm{fin}} \mathfrak{w}_\gamma\left(\mu^{N,K}_t,\phi^K_t(\mu^{N,K}_0)\right)\right)^\alpha\left(\sup_{t\le t_\mathrm{fin}} \Lambda_{p'}\left(\mu^{N,K}_t,\phi_t(\mu^{N,K}_0)\right)\right). \end{split}\end{equation} We now use H\"older's inequality with indexes $\frac{q}{p'}, \frac{q}{q-p'}$ and control the moment term with Proposition \ref{thrm:momentinequalities} to find that, for some new $\alpha>0$,\begin{equation}\begin{split} &\EE\left[\sup_{t\le t_\mathrm{fin}} W_p\left(\mu^{N,K}_t,\phi^K_t(\mu)\right)\right]\\&\hspace{1.5cm}\le C\h\EE\left[\sup_{t\le t_\mathrm{fin}} \mathfrak{w}_\gamma\left(\mu^{N,K}_t,\phi^K_t(\mu^{N,K}_0)\right)\right]^\alpha\h\EE\left[\sup_{t\le t_\mathrm{fin}}\Lambda_q(\mu^{N,K}_t,\phi^K_t(\mu^{N,K}_0))\right] \\[1ex] & \hspace{1.5cm}  \le CaN^{-\alpha \eta}\exp(CaK(1+t_\mathrm{fin})) \cdot Ca(1+t_\mathrm{fin}).\end{split} \end{equation} Absorbing constants and the moment factors into the exponent, we have shown that, for some $\alpha=\alpha(p,q,d)>0$, \begin{equation}  \EE\left[\sup_{t\le t_\mathrm{fin}} W_p\left(\mu^{N,K}_t,\phi^K_t(\mu^{N,K}_0)\right)\right] \le N^{-\alpha}\exp(CaK(1+t_\mathrm{fin}). \end{equation} The conclusion now follows by comparing $\phi^K_t(\mu^{N,K}_0)$ and $\phi^K_t(\mu_0)$ using Lemma \ref{lemma: W-W stability for cutoff}. \end{proof}

\section{Tanaka Coupling of the Kac Processes} \label{sec: TK}
In this section, we will exhibit the key coupling of Kac processes, and a family of lemmas which control how fast the distance between the coupling can grow.  \subsection{Accurate Tanaka's Trick} We begin with the following `accurate Tanaka Lemma', which generalises that of \cite{fournier2008uniqueness}. Our result is slightly more general, in that we allow any $d\ge 3$, while the result cited applies for only $d=3$. \begin{lemma}\label{lemma: ATT}[Accurate Tanaka's Trick] There exists a measurable function $R: \RRd\times\RRd\rightarrow \text{Isom}(\SSd)$ such that, for all $X, Y \in \RRd$ and $\varphi \in \SSd$, we have  \begin{equation} \Gamma(X,\varphi)\cdot \Gamma(Y, R\varphi)= \varphi_1^2(X\cdot Y)+(1-\varphi_1^2)|X||Y|\ge X\cdot Y. \end{equation}  Here, $\varphi_1$ denotes the first coordinate of $\varphi\in \SSd\subset \mathbb{R}^{d-1}$.\end{lemma}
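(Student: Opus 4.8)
The plan is to build $R$ from a canonical rotation of $\RRd$ carrying $X$ onto $Y$. The inequality is free: since $X\cdot Y\le|X||Y|$, the number $\varphi_1^{2}(X\cdot Y)+(1-\varphi_1^{2})|X||Y|$ is a convex combination of $X\cdot Y$ and $|X||Y|$, hence is $\ge X\cdot Y$. So the real task is to produce $R=R(X,Y)\in\mathrm{Isom}(\SSd)$ realising the stated equality. Since $\iota$, hence $\Gamma(\cdot,\varphi)$, is homogeneous of degree one in its velocity argument, we may rescale to $|X|=|Y|=1$ once $X,Y\ne0$; the case $X=0$ or $Y=0$ is trivial (both sides vanish, with the convention $\iota(0)=0$), and the colinear cases are immediate: $R=\mathrm{Id}$ if $Y=X$, while if $Y=-X$ then $\iota(Y)=-\iota(X)$, so $\Gamma(Y,\cdot)=-\Gamma(X,\cdot)$, and $R=\mathrm{diag}(1,-1,\dots,-1)\in O(d-1)$ works because it gives $\Gamma(X,\varphi)\cdot\Gamma(Y,R\varphi)=1-2\varphi_1^{2}=\varphi_1^{2}(X\cdot Y)+(1-\varphi_1^{2})$.

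For linearly independent unit vectors $X,Y$, let $\Pi=\mathrm{span}(X,Y)$, let $\alpha\in(0,\pi)$ be given by $\cos\alpha=X\cdot Y$, and let $\rho=\rho_{X,Y}\in SO(d)$ be the rotation that is the identity on $\Pi^{\perp}$ and, on $\Pi$, is the planar rotation through $\alpha$ taking $X$ to $Y$; an explicit Rodrigues-type formula in $X,Y,\cos\alpha,\sin\alpha$ makes $\rho_{X,Y}$ measurable in $(X,Y)$. Since $\rho X=Y$, $\rho$ restricts to a linear isometry $X^{\perp}\to Y^{\perp}$. Writing $L_v\colon\mathbb{R}^{d-1}\to v^{\perp}$, $\varphi\mapsto\sum_{j}\varphi_j\,\iota_j(v)/|v|$, for the orthonormal parametrisation carried by the frame $\iota$ — so that $\Gamma(v,\varphi)=|v|\,L_v\varphi$ — I set
\[
R:=L_Y^{-1}\circ\bigl(\rho|_{X^{\perp}}\bigr)\circ L_X\ \in\ O(d-1)=\mathrm{Isom}(\SSd),
\]
which is measurable in $(X,Y)$ because $\iota$ and $\rho_{X,Y}$ are.

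The verification is short. With $|X|=|Y|=1$, put $u=\Gamma(X,\varphi)=L_X\varphi$, a unit vector in $X^{\perp}$; by construction $\Gamma(Y,R\varphi)=L_Y(R\varphi)=\rho(u)\in Y^{\perp}$, so $\Gamma(X,\varphi)\cdot\Gamma(Y,R\varphi)=\langle u,\rho u\rangle$. Split $u=u_{\Pi}+u^{\perp}$ with $u_{\Pi}\in\Pi$ and $u^{\perp}\in\Pi^{\perp}$; since $u\perp X$ the part $u_{\Pi}$ lies on the line $\ell_X=\Pi\cap X^{\perp}$, which $\rho$ rotates inside $\Pi$ through $\alpha$, whereas $\rho$ fixes $u^{\perp}$ and preserves $\Pi$. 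Therefore $\langle u,\rho u\rangle=\langle u_{\Pi},\rho u_{\Pi}\rangle+|u^{\perp}|^{2}=\cos\alpha\,|u_{\Pi}|^{2}+(1-|u_{\Pi}|^{2})=|u_{\Pi}|^{2}(X\cdot Y)+(1-|u_{\Pi}|^{2})$, using $|u|=1$. Finally, as $\ell_X$ is one-dimensional, $|u_{\Pi}|=|\langle u,n\rangle|=|\langle\varphi,L_X^{-1}n\rangle|$ for a unit vector $n$ spanning $\ell_X$, and $L_X^{-1}n$ is a unit vector of $\mathbb{R}^{d-1}$.

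I expect the one genuine subtlety to be the final identification $|u_{\Pi}|=|\varphi_1|$, i.e.\ $L_X^{-1}n=\pm e_1$: a priori $L_X^{-1}n$ is an $(X,Y)$-dependent unit vector, since the fixed frame vector $\iota_1(X)$ need not span $\ell_X=\mathrm{span}(X,Y)\cap X^{\perp}$. As the claimed identity depends on $\varphi$ only through $\varphi_1^{2}$, the distinguished coordinate is immaterial; concretely, in the application one relabels $\varphi$ by a rotation of $\SSd$ — which leaves the law of the driving Poisson measures unchanged, their angular intensity on $\SSd$ being rotation invariant — so that the distinguished direction becomes $e_1$. Everything else is routine: the measurability of $(X,Y)\mapsto\rho_{X,Y}$, and the colinear and zero cases, all of which are dealt with above.
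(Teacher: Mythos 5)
Your planar-rotation construction is a genuinely different parametrisation from the one in the paper, but it produces essentially the same isometry, and — more importantly — it delivers the inequality $\Gamma(X,\varphi)\cdot\Gamma(Y,R\varphi)\ge X\cdot Y$, which is the only fact used downstream (in the proof of Lemma~\ref{lemma: big calculation}, around (\ref{eq: integral a dot atilde})). The paper instead constructs explicit orthonormal frames $j^1_X,\dots,j^{d-1}_X\subset X^\perp$ and $j^1_Y,\dots,j^{d-1}_Y\subset Y^\perp$, with the first vectors placed in $\mathrm{span}(X,Y)$ so that $j^1_X\cdot j^1_Y=X\cdot Y$ and $j^k_X\cdot j^k_Y=|X||Y|$ for $k\ge2$, and takes $R=P_YP_X^{-1}$ for the isometries matching these frames to the canonical basis. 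Your route via the rigid rotation $\rho_{X,Y}$ is cleaner in the generic case, at the cost of treating the colinear cases separately; in either approach the core computation — split $u=L_X\varphi$ into its components in $\mathrm{span}(X,Y)$ and its orthogonal complement and see how the rotation acts on each — is the same.

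The subtlety you flag is genuine, and you should own it rather than wave it away. Your $R$ gives, for some $(X,Y)$-dependent unit vector $n\in\SSd$ with $L_Xn$ spanning $\mathrm{span}(X,Y)\cap X^\perp$,
\[
\Gamma(X,\varphi)\cdot\Gamma(Y,R\varphi)=\langle\varphi,n\rangle^2(X\cdot Y)+(1-\langle\varphi,n\rangle^2)|X||Y|,
\]
and this equals $\varphi_1^2(X\cdot Y)+(1-\varphi_1^2)|X||Y|$ only when $n=\pm e_1$, i.e.\ when $\iota_1(X)$ happens to lie in $\mathrm{span}(X,Y)$ — which a fixed frame map $\iota$ cannot arrange for every $Y$. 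So the exact identity in the lemma, as written, is too strong. Your proposed remedy of ``relabelling $\varphi$ by a rotation'' does not repair the lemma, since the relabelling would have to act inside $\Gamma(X,\varphi)$ on the left as well; the honest fix is either to state the identity with $\langle\varphi,n\rangle^2$ for an explicit $n=n(X,Y)$, or to state only the inequality, which is all that is used. You may take some comfort in the fact that the paper's own proof has the same imprecision: it establishes $\Gamma(X,P_X\varphi)\cdot\Gamma(Y,P_Y\varphi)=\varphi_1^2(X\cdot Y)+(1-\varphi_1^2)|X||Y|$ and then asserts the result for $R=P_YP_X^{-1}$, but substituting $R\varphi=P_YP_X^{-1}\varphi$ yields $(P_X^{-1}\varphi)_1^2$ in place of $\varphi_1^2$, and one checks $(P_X^{-1}\varphi)_1^2=\langle\varphi,n\rangle^2$ for the same $n$ as above.

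One further small point: the opening reduction ``rescale to $|X|=|Y|=1$'' appeals to degree-one homogeneity of $\iota$, which is not among the stipulated properties (only $|\iota_j(v)|=|v|$ and $\iota(-v)=-\iota(v)$). It is harmless here, and indeed unnecessary: your own identity $\Gamma(v,\cdot)=|v|L_v$ with $L_v$ an isometry already isolates the scaling, so the argument goes through verbatim with the factors $|X|,|Y|$ carried along.
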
 \begin{proof} First, the case where either $X, Y=0$ is vacuous and can be omitted. Let us write, throughout, $S_X$ for the set \begin{equation} S_X=\{u\in \RRd: |u|=|X|, u\cdot X=0\}.\end{equation} By considering separately the cases where $X, Y$ are and are not colinear, we observe that we may choose $j^1_X, j^1_Y$ such that \begin{equation}\text{dim Span}(X,Y, j^1_X, j^1_Y)=2,\hs j^1_X\in S_X, j^1_Y\in S_Y;\hs j^1_X\cdot j^1_Y=X\cdot Y.\end{equation} With some care, the map $(X,Y)\mapsto (j^1_X, j^1_Y)$ can further be constructed to be measurable. We now construct, in a measurable way, $u_2, ...u_{d-1}$ as an orthonormal basis for $\text{Span}(X,j^1_X)^\perp=\text{Span}(Y, j^1_Y)^\perp$, and set \begin{equation} j^2_X=|X|u_2, \h j^3_X=|X|u_3,\h ....,\h j^{d-1}_X=|X|u_{d-1};\end{equation}
\begin{equation} j^2_Y=|Y|u_2, \h j^3_Y=|Y|u_3,\h ....,\h j^{d-1}_Y=|Y|u_{d-1}.\end{equation} Now, $\{j^1_X,...j^{d-1}_X\}$ are orthonogal, and lie in $S_X$, so there is a unique isometry $P_X\in \text{Isom}(\SSd)$ such that \begin{equation} \Gamma(X, P_Xe_k)=j^k_X,\hs k=1,...,d-1\end{equation} and similarly for $P_Y$. We now observe, for all $\varphi\in \SSd$, \begin{equation}\begin{split}  \Gamma(X,P_X\varphi)\cdot\Gamma(Y,P_Y\varphi)=\sum_{k=1}^{d-1} \phi_k^2 \h j_X\cdot j_Y &=\phi_1^2\h(X\cdot Y)+(1-\varphi_1^2)|X||Y| \\ &\ge \phi_1^2\h(X\cdot Y)+(1-\varphi_1^2)(X\cdot Y)\\ & =X\cdot Y\end{split}\end{equation} which implies the result when we define $R(X,Y)=P_YP_X^{-1}$.\end{proof} 
\subsection{Tanaka-Povzner Lemmata}
The key tool at the heart of our results is the following variant of some calculations in \cite[Lemmas 3.1, 3.3]{fournier2016rate}. The key point is the appearance of a large negative term, similar to that arising in the Povzner inequalities, which ensures the cancellation of `bad' terms and leads to a Gr\"onwall inequality. \begin{lemma}\label{lemma: big calculation} Let us write, for $v,\widetilde{v},v_\star, \widetilde{v}_\star\in \RRd$, $z\in (0,\infty), \varphi \in \SSd$ and $K<\infty$, \begin{align} a&=a(v,v_\star,z,\varphi); \hs v'=v+a; \hs v'_K=v+a_K(v,v_\star,z,\varphi); \\ \widetilde{a}_K&={a}_K(\widetilde{v},\widetilde{v}_\star,z,R(v-v_\star,\widetilde{v}-\widetilde{v}_\star)\varphi); \hs \widetilde{v}'_K=\widetilde{v}+\widetilde{a}_K \end{align} for the isometry $R(v-v_\star,\widetilde{v}-\widetilde{v}_\star): \SSd\rightarrow\SSd$ constructed in Lemma \ref{lemma: ATT}. Define \begin{equation} \begin{split}  &\mathcal{E}^1_K(v,\widetilde{v}, v_\star,\widetilde{v}_\star)=\int_0^\infty dz \int_{\SSd} d\varphi \left(d_p^2(v', \widetilde{v}'_K)-d_p^2(v,\widetilde{v})\right).\end{split}\end{equation} Let us define, for $p\ge 2$, \begin{equation} \lambda_p=|\mathbb{S}^{d-2}|\int_0^{\pi/2} \left(1-\left(\frac{1+\cos \theta}{2}\right)^{p/2}\right)\beta(\theta)d\theta.\end{equation} Then there exists $K_0(p)$,  constants $c=c(G,d)$ and $C=C(G,d,p)$, such that, whenever $K\ge K_0(p)$, we have \begin{equation} \begin{split} \label{eq: EK real}  \mathcal{E}_K(v,\widetilde{v},v_\star,\widetilde{v}_\star)& \le \left(c+\left(c-\frac{\lambda_p}{2}\right)|v|^{p+\gamma}+\left(c-\frac{\lambda_p}{2}\right)|\widetilde{v}|^{p+\gamma}\right)|v-\widetilde{v}|^2\\&\hs +c\left(|v_\star|^{p+\gamma}+|\widetilde{v}_\star|^{p+\gamma}\right)|v_\star-\widetilde{v}_\star|^2 \\&\hs +C\left(|v_\star|^{p+\gamma}+|\widetilde{v}_\star|^{p+\gamma}\right)d_p^2\left(v,\widetilde{v}\right)\\&\hs +C\left(|v|^{p+\gamma}+|\widetilde{v}|^{p+\gamma}\right)d_p^2\left(v_\star,\widetilde{v}_\star\right)\\ & \hs +C K^{1-1/\nu}(1+|v|^l+|v_\star|^l+|\widetilde{v}|^l+|\widetilde{v}_\star|^l)\end{split} \end{equation} where $l=p+2+\gamma$. \end{lemma}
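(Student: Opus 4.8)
The plan is to split off the cutoff error from an otherwise noncutoff computation, and then to run the Tanaka--Povzner scheme of \cite{fournier2016rate} in the weighted distance $d_p$. I will use throughout the change of variables $z\mapsto\theta=G(z/|v-v_\star|^\gamma)$, which (since $H'=-\beta$) turns $\int_0^\infty f(\theta(v,v_\star,z))\,dz$ into $|v-v_\star|^\gamma\int_0^{\pi/2}f(\theta)\beta(\theta)\,d\theta$, together with the identities $|a(v,v_\star,z,\varphi)|=|v-v_\star|\sin(\theta/2)\le\min(1,\theta)|v-v_\star|$ and $|\Gamma(X,\varphi)|=|X|$. Because $\nu<1$, every angular integral $\int_0^{\pi/2}\min(1,\theta)^k\beta(\theta)\,d\theta$ with $k\ge1$ and $\int_0^{\pi/2}(1-\cos\theta)\beta\,d\theta$ is finite, whereas $\int_0^{\pi/2}\beta=\infty$, so $\lambda_p\uparrow\infty$ as $p\to\infty$; this divergence is what makes the estimate effective for $p$ large.

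\emph{Step 1 (peeling off the cutoff).} Put $\widetilde v'=\widetilde v+a(\widetilde v,\widetilde v_\star,z,R\varphi)$, the full noncutoff image of the partner, and write $\mathcal E_K=\mathcal E_\infty+(\mathcal E_K-\mathcal E_\infty)$, where $\mathcal E_\infty$ is the same integral with $d_p^2(v',\widetilde v')-d_p^2(v,\widetilde v)$; both converge absolutely since the integrand is $O(\min(1,\theta)|v-v_\star|+\min(1,\widetilde\theta)|\widetilde v-\widetilde v_\star|)$, with $\widetilde\theta=G(z/|\widetilde v-\widetilde v_\star|^\gamma)$. On $\{\widetilde\theta>\theta_0(K)\}$ one has $\widetilde a_K=\widetilde a$, so the integrand of $\mathcal E_K-\mathcal E_\infty$ vanishes; on the complement $\widetilde v'_K=\widetilde v$, and the Lipschitz estimate $|d_p^2(u,w_1)-d_p^2(u,w_2)|\le C(1+|u|^p+|w_1|^p+|w_2|^p)(|u|+|w_1|+|w_2|)|w_1-w_2|$ with $|w_1-w_2|=|\widetilde a|\le\widetilde\theta|\widetilde v-\widetilde v_\star|$ and $|v'|\le2|v|+|v_\star|$ reduces $\mathcal E_K-\mathcal E_\infty$ to a polynomial in the velocities of degree $\le p+2$ times $\int_0^\infty\widetilde\theta\,1(\widetilde\theta\le\theta_0(K))\,dz=|\widetilde v-\widetilde v_\star|^\gamma\int_0^{\theta_0(K)}\theta\beta(\theta)\,d\theta$. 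Since $\theta_0(K)=G(K)\asymp K^{-1/\nu}$ and $\theta\beta(\theta)\asymp\theta^{-\nu}$ near $0$, this angular integral is $\asymp\theta_0(K)^{1-\nu}\asymp K^{1-1/\nu}$, and absorbing $|\widetilde v-\widetilde v_\star|^\gamma$ raises the total degree to $l=p+2+\gamma$, producing the last line of \eqref{eq: EK real}.

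\emph{Step 2 (the noncutoff core).} Writing $d_p^2(u,w)=(1+|u|^p+|w|^p)|u-w|^2$, split the integrand of $\mathcal E_\infty$ as
\begin{equation*}
\big[(|v'|^p-|v|^p)+(|\widetilde v'|^p-|\widetilde v|^p)\big]|v-\widetilde v|^2+(1+|v'|^p+|\widetilde v'|^p)\big(|v'-\widetilde v'|^2-|v-\widetilde v|^2\big).
\end{equation*}
For the \emph{moment term}, a Povzner-type computation gives, after integrating in $z,\varphi$, a bound $\int(|v'|^p-|v|^p)\,dz\,d\varphi\le-\lambda_p|v-v_\star|^\gamma|v|^p+C|v-v_\star|^\gamma\,(\text{lower-order cross monomials in }|v|,|v_\star|)$; a region split into $\{|v_\star|\le|v|/2\}$, where $|v-v_\star|^\gamma\ge2^{-\gamma}|v|^\gamma\ge\tfrac12|v|^\gamma$ (here $\gamma\le1$ is used crucially) and the cross terms are controlled by a small multiple of $|v|^{p+\gamma}$, and its complement, where the negative term is discarded and the cross terms are $\le C(p)|v_\star|^{p+\gamma}$, converts this into the gain $(c-\tfrac{\lambda_p}{2})|v|^{p+\gamma}|v-\widetilde v|^2$, its twin in $\widetilde v$, and contributions of the form $C|v_\star|^{p+\gamma}d_p^2(v,\widetilde v)$. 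For the \emph{distance term}, expand $|v'-\widetilde v'|^2-|v-\widetilde v|^2=2(v-\widetilde v)\cdot(a-\widetilde a)+|a-\widetilde a|^2$; integrating in $\varphi$ annihilates every term linear in $\Gamma$ (oddness of $\varphi\mapsto\Gamma(\cdot,\varphi)$ and invariance of $d\varphi$ under the isometry $R$), and the surviving $\Gamma$-quadratic term is handled by the pointwise consequence of Lemma \ref{lemma: ATT}, $|\Gamma(X,\varphi)-\Gamma(Y,R\varphi)|^2=|X|^2+|Y|^2-2\,\Gamma(X,\varphi)\cdot\Gamma(Y,R\varphi)\le|X-Y|^2$, applied with $X=v-v_\star$, $Y=\widetilde v-\widetilde v_\star$: this is precisely what replaces the dangerous contribution $\sim\sin^2\theta\,|v-v_\star|^2$ (whose $z$-integral $\asymp|v-v_\star|^{2+\gamma}$ is not of an admissible shape) by $\lesssim\sin^2\theta\,(|v-\widetilde v|^2+|v_\star-\widetilde v_\star|^2)$ together with a remainder proportional to $|\theta-\widetilde\theta|$, the latter bounded via the estimates for $G$ of Section \ref{subsec: estimates for G} by a finite modulus times $\big||v-v_\star|-|\widetilde v-\widetilde v_\star|\big|\le|v-\widetilde v|+|v_\star-\widetilde v_\star|$. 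Integrating in $z$, bounding the nonnegative factor $1+|v'|^p+|\widetilde v'|^p$ crudely by $C(1+|v|^p+|v_\star|^p+|\widetilde v|^p+|\widetilde v_\star|^p)$, and a final use of Young's inequality together with $|v-v_\star|^\gamma\le|v|^\gamma+|v_\star|^\gamma$ distribute the remaining terms into the four buckets of \eqref{eq: EK real}; symmetry of the collision in $(v,v_\star)$ matches the two constants denoted $c$, and $p_0$ is chosen so that $\lambda_p$ overwhelms the ($p$-dependent) constants multiplying the positive $|v|^{p+\gamma}|v-\widetilde v|^2$ errors, while $K\ge K_0(p)$ enters only to prevent the truncation from interfering with these absorptions.

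\emph{Expected main obstacle.} The work is entirely in Step 2: running the Povzner gain and the Tanaka cancellation simultaneously while keeping track of which constants must be independent of $p$ (this forces the use of the sharp form of the Povzner estimate and a careful region decomposition). Two features make it harder than the Maxwellian couplings of \cite{tanaka1978probabilistic,rousset2014n}: the two coupled collisions no longer share a deflection angle ($\theta\ne\widetilde\theta$), so the $G$-regularity of Section \ref{subsec: estimates for G} must be threaded through every estimate; and the admissible right-hand side carries no unweighted $|v_\star-\widetilde v_\star|^2$ term, so the ``flat'' contributions must genuinely cancel rather than merely be bounded --- which is exactly why one works in $d_p$ with $p$ large rather than in $d_0$ as in \cite{fournier2016rate} and exploits the antisymmetry of $a$.
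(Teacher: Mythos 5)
The outline is right and the key ideas are all present — peel off the cutoff (paying $K^{1-1/\nu}$ at degree $l$), run the Povzner gain on the moment factors, annihilate linear-in-$\Gamma$ contributions by $\varphi$-integration, use Lemma \ref{lemma: ATT} for the $\Gamma$-quadratic cross term, and use the $G$-estimates of Section \ref{subsec: estimates for G} for the $\theta\neq\widetilde\theta$ mismatch. Your peel-off at the start (comparing to $\mathcal E_\infty$ with the noncutoff $\widehat a$) is a legitimate reorganisation of the paper's treatment, which instead distributes the cutoff error across the terms $\T_4,\widetilde\T_4$ and has to work with the truncated Povzner constant $\lambda_{p,K}$ on the $\widetilde v$ side; your version avoids $\lambda_{p,K}$ altogether, which is slightly cleaner.

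There is, however, one genuine gap, in Step 2, at the point where you control the distance term. You write $d_p^2(v',\widetilde v')-d_p^2(v,\widetilde v)$ as a moment term plus $(1+|v'|^p+|\widetilde v'|^p)\bigl(|v'-\widetilde v'|^2-|v-\widetilde v|^2\bigr)$, and then bound the prefactor ``crudely by $C(1+|v|^p+|v_\star|^p+|\widetilde v|^p+|\widetilde v_\star|^p)$,'' saying finally that ``$p_0$ is chosen so that $\lambda_p$ overwhelms the ($p$-dependent) constants multiplying the positive $|v|^{p+\gamma}|v-\widetilde v|^2$ errors.'' This does not work. Any uniform bound of the form $|v'|^p\le C(p)(|v|^p+|v_\star|^p)$ requires $C(p)$ at least of order $2^{p/2}$ (already from energy conservation $|v'|\le(|v|^2+|v_\star|^2)^{1/2}$), whereas the Povzner gain grows only polynomially: a short computation using $\beta(\theta)\sim\theta^{-1-\nu}$ and $1-((1+\cos\theta)/2)^{p/2}\approx 1-e^{-p\theta^2/8}$ gives $\lambda_p\asymp p^{\nu/2}$. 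So $\lambda_p$ does \emph{not} overwhelm a $C(p)$ of this size, and the resulting coefficient of $|v|^{p+\gamma}|v-\widetilde v|^2$ cannot be brought down to the required $p$-independent $c(G,d)$. The paper circumvents this precisely by never invoking the crude bound: it replaces $|v'|^p$ by the pointwise Povzner bound $f_p(|v|,|v_\star|,z,x)$ of Lemma \ref{lemma: pozvner term} (and its $z$-independent majorant $f_p^\star$), whose leading coefficients on $|v|^p$ and $|v_\star|^p$ are exactly $1$; the $p$-dependent constants in $f_p$ sit only on the cross monomials $|v|^{p-1}|v_\star|,\,|v||v_\star|^{p-1}$, which after Young's inequality land exclusively in the buckets of \eqref{eq: EK real} whose coefficient is allowed to be $C(G,d,p)$, never on $|v|^{p+\gamma}|v-\widetilde v|^2$ or $|v_\star|^{p+\gamma}|v_\star-\widetilde v_\star|^2$. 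Note also that $f_p$ is $\varphi$-independent by construction, which is what actually licenses the ``$\varphi$-integration annihilates linear-in-$\Gamma$ terms'' step — the raw $|v'|^p$ does depend on $\varphi$ through $v\cdot\Gamma(v-v_\star,\varphi)$, so that step is not available before substituting $f_p$. To repair your argument, replace the crude bound on $1+|v'|^p+|\widetilde v'|^p$ by $1+f_p(|v|,|v_\star|,z,x)+f_p(|\widetilde v|,|\widetilde v_\star|,z,\widetilde x)$ (or by $f_p^\star$ on the sign-definite piece $|a-\widehat a|^2$), and keep track of the $1$ in front of $|v|^p$ through the Young's step; the rest of your sketch then goes through.
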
  \begin{rmk}Let us motivate this lemma, which is not necessarily transparent. We obtain, in expanding $\mathcal{E}_K$, the noise term in the final line, and terms proportional to $|v-\widetilde{v}|^2, |v_\star-\widetilde{v}_\star|^2$ with all possible polynomial weightings of order $p+\gamma$. The difficult terms are those like $|v|^{p+\gamma}|v-\widetilde{v}|^2$, which prevent a Gr\"onwall estimate. However, we ensure that the coefficients of such terms are independent of $p$, which allows us to cancel all such terms by the negative `Pozvner term' appearing in the first line by making $p$ large. \end{rmk} We will also use the following variants, which will be used to prove a local uniform estimate on our coupling. \begin{lemma}\label{lemma: quadratic pozvner} In the notation of the previous lemma, define also \begin{equation} \mathcal{Q}_K(v,\widetilde{v},v_\star,\widetilde{v}_\star)=\int_0^\infty dz \int_{\SSd}d\varphi \left(d^2_p(v',\widetilde{v}'_K)+d^2_p(v_\star',\widetilde{v}'_{\star K})-d^2_p(v,\widetilde{v})-d_p^2(v_\star,\widetilde{v}_\star)\right)^2.\end{equation} 
Then, for some C=C(G,d,p), we have 
\begin{equation}\begin{split}\label{eq: conclusion of QP}
            \mathcal{Q}_K(v,\widetilde{v},v_\star,\widetilde{v}_\star) & \le C(1+|v|^{2l}+|v_\star|^{2l}+|\widetilde{v}|^{2l}+|\widetilde{v}_\star|^{2l}).
        \end{split} \end{equation}  where $l=p+2+\gamma$ is as in Lemma \ref{lemma: big calculation}. \end{lemma}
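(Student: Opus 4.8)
The plan is to prove the bound by a one-step Taylor expansion of $d_p^2$ along the jump, using that the jump sizes are bounded by $|v-v_\star|$ (resp.\ $|\widetilde{v}-\widetilde{v}_\star|$) uniformly in $z,\varphi$ and that their squares are integrable in $z$. Unlike Lemma~\ref{lemma: big calculation}, no cancellation of leading-order terms is needed here: a crude, lossy bound suffices, which is exactly why the quadratic quantity $\mathcal{Q}_K$ is much easier to handle than $\mathcal{E}_K$.

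First I would record the elementary gradient estimate $\left|\nabla d_p^2(x,y)\right|\le C_p\left(1+|x|^{p+1}+|y|^{p+1}\right)$ for $p\ge 2$, obtained by differentiating $d_p^2(x,y)=(1+|x|^p+|y|^p)|x-y|^2$ and applying Young's inequality. Writing $D$ for the quantity inside the square in the definition of $\mathcal{Q}_K$, I split
\[ D=\left(d_p^2(v',\widetilde{v}'_K)-d_p^2(v,\widetilde{v})\right)+\left(d_p^2(v_\star',\widetilde{v}'_{\star K})-d_p^2(v_\star,\widetilde{v}_\star)\right) \]
and estimate each bracket by the mean value theorem along the segment joining $(v,\widetilde{v})$ to $(v',\widetilde{v}'_K)=(v+a,\widetilde{v}+\widetilde{a}_K)$, respectively $(v_\star,\widetilde{v}_\star)$ to $(v_\star',\widetilde{v}'_{\star K})=(v_\star-a,\widetilde{v}_\star-\widetilde{a}_K)$. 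Since $|a|\le|v-v_\star|$ and $|\widetilde{a}_K|\le|\widetilde{v}-\widetilde{v}_\star|$ uniformly in $z,\varphi$ (immediate from the definition of $a$, as $\theta\le\pi/2$), every point on either segment has norm $\lesssim|v|+|v_\star|$ or $\lesssim|\widetilde{v}|+|\widetilde{v}_\star|$, so the gradient there is bounded, uniformly in $z,\varphi$, by $W:=C_p(1+|v|^{p+1}+|v_\star|^{p+1}+|\widetilde{v}|^{p+1}+|\widetilde{v}_\star|^{p+1})$. This gives $|D|\le 2W(|a|+|\widetilde{a}_K|)$, hence $D^2\le 8W^2(|a|^2+|\widetilde{a}_K|^2)$, with $W$ independent of $z$ and $\varphi$.

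Next I would integrate. Since $|a|$ and $|\widetilde{a}_K|$ do not depend on the angular variable, the $\varphi$-integral produces only the factor $|\SSd|$; and after the substitution $u=z/|v-v_\star|^\gamma$ one gets $\int_0^\infty|a|^2\,dz=c_G\,|v-v_\star|^{2+\gamma}$, where $c_G<\infty$ by the decay estimates for $G$ gathered in Section~\ref{subsec: estimates for G} (concretely, $1-\cos G(u)\le\tfrac12 G(u)^2$ and $\int_0^\infty G^2<\infty$). The same bound holds for $\int_0^\infty|\widetilde{a}_K|^2\,dz\le c_G\,|\widetilde{v}-\widetilde{v}_\star|^{2+\gamma}$, the cutoff only shrinking the integrand. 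Combining, $\mathcal{Q}_K\le 8c_G|\SSd|\,W^2\left(|v-v_\star|^{2+\gamma}+|\widetilde{v}-\widetilde{v}_\star|^{2+\gamma}\right)$.

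Finally, I would expand and track exponents: $W^2\lesssim 1+\sum_w|w|^{2p+2}$ and $|v-v_\star|^{2+\gamma}+|\widetilde{v}-\widetilde{v}_\star|^{2+\gamma}\lesssim 1+\sum_w|w|^{2+\gamma}$, with sums over $w\in\{v,v_\star,\widetilde{v},\widetilde{v}_\star\}$; multiplying and applying Young's inequality term by term gives $\mathcal{Q}_K\lesssim 1+\sum_w|w|^{2p+4+\gamma}\le 1+\sum_w|w|^{2l}$, since $2p+4+\gamma\le 2p+4+2\gamma=2l$, which is the asserted estimate with $C=C(G,d,p)$ and, crucially, no dependence on $K$. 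There is no real obstacle; the only non-mechanical ingredient is the finiteness of $\int_0^\infty(1-\cos G(u))\,du$, which is already available from Section~\ref{subsec: estimates for G}.
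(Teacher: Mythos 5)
Your proof is correct, but it is a genuinely different argument from the paper's. The paper goes through an intermediate Lemma (the ``broken up'' version with $\mathcal{Q}^1_K,\mathcal{Q}^2_K$), then expands $d_p^2(v',\widetilde{v}'_K)-d_p^2(v,\widetilde{v})$ \emph{algebraically} into the structure
\[ \left(|v'|^p+|\widetilde{v}'_K|^p-|v|^p-|\widetilde{v}|^p\right)|v-\widetilde{v}|^2 + \left(1+|v'|^p+|\widetilde{v}'_K|^p\right)\left(|v'-\widetilde{v}'_K|^2-|v-\widetilde{v}|^2\right), \]
splits the second piece further into a coupled and an uncoupled part, squares, and then bounds the resulting three terms $\mathcal{T}_5,\mathcal{T}_6,\mathcal{T}_7$ separately — essentially mirroring the decomposition used for $\mathcal{E}_K$ in the preceding lemma. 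You instead apply the mean value theorem once to $d_p^2$ along the jump segment, observing that $|a|\le|v-v_\star|$ and $|\widetilde a_K|\le|\widetilde v-\widetilde v_\star|$ pointwise, so the gradient on the segment is bounded by a single polynomial weight $W$ uniformly in $(z,\varphi)$. The two approaches consume the same $z$-integrability inputs from Section~\ref{subsec: estimates for G} (finiteness of $\int G^2$; the paper also uses $\int G$, you do not need it) and land on exactly the same exponent $2p+4+\gamma\le 2l$. What the paper's route buys is structural parallelism with $\mathcal{E}_K$, keeping the same vocabulary of terms; what your route buys is brevity and the clear message that, unlike for $\mathcal{E}_K$, no cancellations are needed here so a direct Lipschitz/MVT bound suffices. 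Both are valid; your version is arguably the more economical for this particular lemma.

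One very small point worth making explicit if you write this up: the gradient bound $|\nabla d_p^2(x,y)|\le C_p(1+|x|^{p+1}+|y|^{p+1})$ requires $p\ge 2$ for $d_p^2$ to be $C^1$ (which holds here, as $p\ge p_0\ge 4$), and evaluating it along the segment requires the observation $|v+ta|\lesssim|v|+|v_\star|$, $|\widetilde v+t\widetilde a_K|\lesssim|\widetilde v|+|\widetilde v_\star|$ — you state this, but it's the only place where uniformity in $t\in[0,1]$ enters, so it deserves a full sentence.
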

\subsection{Coupling of the Kac Process} We now present our coupling of the Kac processes. Let $\mathcal{V}^N_t=(V^1_t,...,V^N_t)$ be a noncutoff Kac process, and let $\Nn^{\{ij\}}$, $1\le i \neq j \le N$, be the Poisson random measures of intensity $2N^{-1}dtd\varphi dz$ driving $\mathcal{V}^N_t$, so that\begin{equation}\label{eq: SDE form for coupling} V^i_t=V^i_0+\sum_{j\neq i}\int_{(0,t]\times \SSd\times(0,\infty)} a(V^i_{s-},V^j_{s-},z,\varphi)\h\Nn^{\{ij\}}(ds,d\varphi,dz), \hs i=1,...,N. \end{equation} Let us fix $\widetilde{\mathcal{V}}^{N,K}_0=(\widetilde{V}^{1,K}_0,....,\widetilde{V}^{N,K}_0)$ and define $\widetilde{\mathcal{V}}^{N,K}_t=(\widetilde{V}^{1,K}_t,...\widetilde{V}^{N,K}_t)$ by  \begin{equation} \label{eq: coupled cutoff} \widetilde{V}^{i,K}_t=\widetilde{V}^{i,K}_0+\sum_{j\neq i}\int_{(0,t]\times \SSd\times(0,\infty)} a_K(\widetilde{V}^{i,K}_{s-},\widetilde{V}^{j,K}_{s-},z,R^{i,j}_{s-}\varphi)\h\Nn^{\{ij\}}(ds,d\varphi,dz); \end{equation} \begin{equation} \label{eq: coupled cutoff2} R^{i,j}_t:=R(V^i_{t}-V^j_{t},\widetilde{V}^{i,K}_{t}-\widetilde{V}^{j,K}_{t})\end{equation} where $R:\RRd\times\RRd\rightarrow \text{Isom}(\SSd)$ is the isometry constructed in Lemma \ref{lemma: ATT}. We remark first that the rates of $\Nn^{\{ij\}}$ are all finite on the support of $a_K$, so that the stochastic differential equation (\ref{eq: coupled cutoff}, \ref{eq: coupled cutoff2}) is really a recurrence relation; in particular, $\widetilde{\mathcal{V}}^{N,K}_t$ is uniquely defined by the above equations. Next, we claim that $\widetilde{\mathcal{V}}^{N,K}_t$ is a $K$-cutoff Kac process on $N$ particles; this is the content of the following lemma, which is adapted from a similar claim \cite[Proposition 4.4]{fournier2016rate}. \begin{lemma} Let $\mathcal{V}^{N}_t$ be a noncutoff Kac process, and fix $\widetilde{\mathcal{V}}^{N,K}_0$. Then the process $(\widetilde{\mathcal{V}}^{N,K}_t)_{t\ge 0}$ constructed by (\ref{eq: coupled cutoff}) is a cutoff Kac process starting at $\widetilde{\mathcal{V}}^{N,K}_0.$ \end{lemma}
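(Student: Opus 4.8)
The plan is to verify that the process $(\widetilde{\mathcal{V}}^{N,K}_t)_{t \ge 0}$ defined by (\ref{eq: coupled cutoff}), (\ref{eq: coupled cutoff2}) solves the cutoff stochastic differential equation (\ref{eq: cutoff SDE}) driven by \emph{some} collection of Poisson random measures with the correct intensity $2N^{-1}\,ds\,d\varphi\,dz$, and then invoke Proposition \ref{prop: labelled and unlabelled dynamics} (in its cutoff form, noted to be elementary) to conclude that the empirical measures $\widetilde{\mu}^{N,K}_t = \theta_N(\widetilde{\mathcal{V}}^{N,K}_t)$ form a cutoff Kac process. The only subtlety is that in (\ref{eq: coupled cutoff}) the measures $\Nn^{\{ij\}}$ driving the second process are the \emph{same} as those driving the noncutoff process $\mathcal{V}^N_t$, but composed with the time-dependent, predictable rotation $R^{i,j}_{s-}$; I must argue that this composition produces a new family of Poisson random measures with the unchanged intensity.

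First I would record two structural facts about the jump rule and the rotation. The function $a_K(v, v_\star, z, \varphi)$ satisfies the antisymmetry in $(v,v_\star)$ inherited from $a$, so — as in the discussion below (\ref{eq: SDE form}) — a jump in coordinate $i$ driven by $\Nn^{\{ij\}}$ is matched by the corresponding jump in coordinate $j$, and energy and momentum of $\widetilde{\mathcal{V}}^{N,K}$ are preserved; hence the process stays in $\mathbb{S}_N$. Second, for fixed $(v,v_\star)$ and $(\widetilde v, \widetilde v_\star)$ the map $\varphi \mapsto R(v-v_\star, \widetilde v - \widetilde v_\star)\varphi$ is an isometry of $\SSd$ constructed in Lemma \ref{lemma: ATT}, hence preserves the uniform measure $d\varphi$; and $\iota$, $\Gamma$, $G$, $H$ are unaffected. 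Since the overall jump rate for the cutoff kernel is uniformly bounded, (\ref{eq: coupled cutoff})–(\ref{eq: coupled cutoff2}) is, as the authors remark, a genuine recursion: between consecutive jump times of the (locally finite) point process $\bigcup_{\{ij\}}\Nn^{\{ij\}}$, the rotations $R^{i,j}_t$ are constant, so $\widetilde{\mathcal{V}}^{N,K}_t$ is piecewise constant and well-defined.

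The core step is a change-of-variables / marking argument for the driving noise. Define, for each unordered pair $\{ij\}$, a new random measure $\widetilde{\Nn}^{\{ij\}}$ on $(0,\infty) \times \SSd \times (0,\infty)$ by pushing forward $\Nn^{\{ij\}}$ under the predictable map $(s, \varphi, z) \mapsto (s, (R^{i,j}_{s-})^{-1}\varphi, z)$ — equivalently, relabel each atom $(s,\varphi,z)$ of $\Nn^{\{ij\}}$ by replacing $\varphi$ with $R^{i,j}_{s-}\varphi$. Because $R^{i,j}_{s-}$ is $\mathcal F^N_{s-}$-measurable and a fixed isometry of $\SSd$ for each $s$, a standard result on time-dependent random changes of the mark of a Poisson random measure (see e.g. the marking/change-of-variables theory for Poisson point processes, as used in \cite{fournier2016rate}) shows that each $\widetilde{\Nn}^{\{ij\}}$ is again a Poisson random measure with intensity $2N^{-1}\,ds\,d\varphi\,dz$, that the family $(\widetilde{\Nn}^{\{ij\}})_{\{ij\}}$ is independent, and — since the rotations act only on $\varphi$ — rewriting (\ref{eq: coupled cutoff}) in terms of $\widetilde{\Nn}^{\{ij\}}$ turns it into exactly (\ref{eq: cutoff SDE}) for the velocities $\widetilde V^{i,K}$. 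Thus $(\widetilde{\mathcal V}^{N,K}_t)_{t\ge0}$ is a weak solution of (\ref{eq: cutoff SDE}) started at $\widetilde{\mathcal V}^{N,K}_0$, i.e.\ a labelled cutoff Kac process, and Proposition \ref{prop: labelled and unlabelled dynamics}(i) (cutoff version) gives the claim at the level of empirical measures.

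The main obstacle I anticipate is making the last paragraph rigorous: the rotation $R^{i,j}_{s-}$ depends jointly on both processes and is only predictable (not deterministic), so one cannot simply quote invariance of the intensity under a fixed measurable transformation. The clean way around this is to check the defining martingale problem directly — i.e.\ show that for bounded test functions $\widehat F$ the process $\widehat F(\widetilde{\mathcal V}^{N,K}_t) - \int_0^t (\mathcal G^{\mathrm L}_K \widehat F)(\widetilde{\mathcal V}^{N,K}_s)\,ds$ is a martingale, using the compensator of $\Nn^{\{ij\}}$ and the fact that integrating a bounded function of $\varphi$ against $d\varphi$ is unchanged under the isometry $R^{i,j}_{s-}$ for \emph{every} realization of the rotation. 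This verifies that $\widetilde{\mathcal V}^{N,K}_t$ has the cutoff generator (\ref{eq: labelled generator2}) with $B_K$ in place of $B$, which — again by the equivalence between the SDE, the martingale problem and the Kac process, cf.\ Proposition \ref{prop: labelled and unlabelled dynamics} and \cite{kurtz2011equivalence} — is exactly the assertion. Everything else (antisymmetry, conservation laws, well-posedness of the recursion) is routine bookkeeping.
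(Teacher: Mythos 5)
Your proposal is correct and takes essentially the same approach as the paper: both define the rotated noises $\widetilde{\Nn}^{\{ij\}}$ by relabelling atoms, and both verify the Poisson property by a martingale argument exploiting the previsibility of $R^{i,j}_{s-}$ together with the invariance of $d\varphi$ under each realisation of the isometry. One small slip: the pushforward map should be $(s,\varphi,z)\mapsto(s,R^{i,j}_{s-}\varphi,z)$, not its inverse --- your two parenthetical descriptions are not actually equivalent, and it is the second (replace $\varphi$ by $R^{i,j}_{s-}\varphi$) that matches the paper and makes (\ref{eq: coupled cutoff}) rewrite as (\ref{eq: cutoff SDE}).
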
 \begin{proof} It is sufficient to show that $\widetilde{\mathcal{V}}^{N,K}_t$ solves (\ref{eq: cutoff SDE}) for some choice of Poisson random measures. Let $\Nn^{\{ij\}}, 1\le i\neq j\le N$ be the random measures driving $\mathcal{V}^N_t$ and $\mathcal{F}_t$ their natural filtration; define random measures $\widetilde{\Nn}^{\{ij\}}$ by specifying, for bounded and compactly supported $f: (0,\infty)\times \SSd\times(0,\infty)\rightarrow \mathbb{R}$, \begin{equation} \begin{split} \int_{(0,\infty)\times \SSd\times(0,\infty)}f(t,\varphi,z)\widetilde{\Nn}^{\{ij\}}(dt,d\varphi,dz)&\\& \hspace{-4cm}=\int_{(0,\infty)\times\SSd\times(0,\infty)}f(t,R^{ij}_{t-}\varphi,z)\Nn^{\{ij\}}(dt,d\varphi,dz).\end{split}\end{equation} The construction (\ref{eq: coupled cutoff}) is now exactly that (\ref{eq: cutoff SDE}) holds for these measures. Moreover, since $R^{ij}_{t-}$ preserves the uniform measure $d\varphi$ and is previsible for the filtration $\mathcal{F}_t$, it follows that, if $H$ is a bounded and compactly supported previsible function on $(0,\infty)\times \SSd\times(0,\infty)$, the process \begin{equation} \begin{split} & \int_{(0,t]\times\SSd\times(0,\infty)}H(s,\varphi,dz)(\widetilde{\Nn}^{\{ij\}}(dt,d\varphi,dz)-2N^{-1}dsd\varphi dz) \\[1ex] & \hs  =\int_{(0,t]\times \SSd\times(0,\infty)} H(s,R^{ij}_{s-}\varphi,z)(\Nn^{\{ij\}}(ds,d\varphi,dz)-2N^{-1}dsd\varphi dz) \end{split}\end{equation} is a martingale, which implies that $\widetilde{\Nn}^{\{ij\}}$ is a Poisson random measure of rate $2N^{-1}dtd\varphi dz$, as desired.  \end{proof} Our first result on the coupling is the following, which proves Theorem \ref{thrm: approximate with cutoff}.

\begin{lemma}[Convergence of the Tanaka Coupling]\label{lemma: Tanaka Coupling of KP} There exists $p_0=p_0(G,d)$ and, for $p>p_0$, there exists $K_0=K_0(G,p,d)$ such that, whenever $p>p_0$ and $K>K_0$, we have the following estimates. \bigskip \\   Let $\mathcal{V}^N_t$ be a noncutoff labelled Kac process and $\widetilde{\mathcal{V}}^{N,K}_0\in \mathbb{S}_N$. Let $\widetilde{\mathcal{V}}^{N,K}_t$ be the cutoff Kac process constructed in (\ref{eq: coupled cutoff}), and define \begin{equation}
    \overline{d}_p^2(t):=\frac{1}{N}\sum_{i=1}^N d_p^2\left(V^i_t, \widetilde{V}^{i,K}_t\right).
\end{equation} Suppose the initial data $\mathcal{V}^N_0, \widetilde{\mathcal{V}}^{N,K}_0$ are such that the associated empirical measures $\mu^N_0,\widetilde{\mu}^{N,K}_0$ satisfy moment bounds 
\begin{equation} \max\left(\Lambda_{l}(\mu^N_0),\Lambda_{l}(\widetilde{\mu}^{N,K}_0)\right)\le a_2; \end{equation}
\begin{equation} \max\left(\Lambda_{q}(\mu^N_0),\Lambda_{q}(\widetilde{\mu}^{N,K}_0)\right)\le a_3; \end{equation}with $l$ as in Lemma \ref{lemma: big calculation} and $q=2l$, and for some $a_2, a_3> 1$. Fix $b>1$, and let $T^N_b$ be the stopping time (\ref{eq: defn of TB}) for the empirical measures $\mu^N_t$ of $\mathcal{V}^N_t$, with $p+\gamma$ in place of $p$, and similarly $T^{N,K}_b$ for $\mathcal{V}^{N,K}_t$. Then there exists $C=C(p,G,d)$ such that, for all $t\ge 0$, \begin{equation}\begin{split}\label{eq: PW tanaka} \EE\left[\overline{d}_p^2(t) \right] &\le e^{Cb(1+t)}\left(\overline{d}_p^2(0) +a_2tK^{1-1/\nu}\right)+ a_3C\PP(T^{N,K}_b\land T^N_b\le t)^{1/2}\end{split}\end{equation}  and, for all $t_\mathrm{fin}\ge 0$, \begin{equation}\begin{split}\label{eq: LU tanaka result} \EE\left[\sup_{t\le t_\mathrm{fin}}\overline{d}_p^2(t) \right] &\le e^{Cb(1+t_\mathrm{fin})}\left(\overline{d}_p^2(0) +a_2tK^{1-1/\nu}+\frac{Ca_3t_\mathrm{fin}^{1/2}}{N^{1/2}}\right) \\& \hs\hs\hs+ a_3C(1+t_\mathrm{fin})\PP(T^{N,K}_b\land T^N_b\le t)^{1/2}.\end{split}\end{equation} 
  \end{lemma}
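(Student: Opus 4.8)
The plan is to apply It\^o's formula to the average $\overline{d}_p^2(t)$ along the coupled jump system (\ref{eq: SDE form for coupling})--(\ref{eq: coupled cutoff}), localise at the stopping time $S:=T^N_b\wedge T^{N,K}_b$, close a Gr\"onwall inequality on the stopped process, and only then undo the localisation by Cauchy--Schwarz at the cost of $\PP(S\le t)^{1/2}$. The joint process $(\mathcal{V}^N_t,\widetilde{\mathcal{V}}^{N,K}_t)$ is Markov and both components jump only at points of the $\Nn^{\{ij\}}$; collecting the change of $\overline{d}_p^2$ at a point $(s,\varphi,z)$ of $\Nn^{\{ij\}}$, where $(V^i,V^j)$ moves by $\pm a$ and $(\widetilde{V}^{i,K},\widetilde{V}^{j,K})$ by $\pm\widetilde{a}_K$, and compensating with $2N^{-1}\,ds\,d\varphi\,dz$, the function $\mathcal{E}_K$ of Lemma \ref{lemma: big calculation} is precisely the resulting drift density per pair, so that
\[
\overline{d}_p^2(t\wedge S)=\overline{d}_p^2(0)+\int_0^{t\wedge S}\frac{1}{N^2}\sum_{i\neq j}\mathcal{E}_K\big(V^i_s,\widetilde{V}^{i,K}_s,V^j_s,\widetilde{V}^{j,K}_s\big)\,ds+M_{t\wedge S},
\]
with $M$ the compensated jump martingale. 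The well-definedness of $\mathcal{E}_K$ (Lemma \ref{lemma: big calculation}) and of the predictable quadratic variation via $\mathcal{Q}_K$ (Lemma \ref{lemma: quadratic pozvner}), together with the moment propagation of Propositions \ref{thrm:momentinequalities}, \ref{thrm:noncutoffmomentinequalities}, is what makes the drift integrable on $[0,t\wedge S]$ and makes $M_{\cdot\wedge S}$ a genuine $L^2$-martingale, so that $\EE[M_{t\wedge S}]=0$.

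For the drift I would insert (\ref{eq: EK real}) and sum over $j$. The terms carrying a $|v|^{p+\gamma}$ or $|\widetilde{v}|^{p+\gamma}$ weight on a square $|v-\widetilde{v}|^2$ collect, after relabelling the summation index, into $\tfrac{N-1}{N^2}\big(C_0-\tfrac12\lambda_p\big)\sum_i\big(|V^i_s|^{p+\gamma}+|\widetilde{V}^{i,K}_s|^{p+\gamma}\big)|V^i_s-\widetilde{V}^{i,K}_s|^2$ with $C_0=C_0(G,d)$ \emph{independent of} $p$; since $\lambda_p\uparrow\infty$, fixing $p_0$ with $\lambda_{p_0}>2C_0$ makes this quantity nonpositive for $p>p_0$, and it is simply dropped. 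The unweighted term of (\ref{eq: EK real}) gives $\le C\overline{d}_p^2(s)$; the cross terms $C(|v_\star|^{p+\gamma}+|\widetilde{v}_\star|^{p+\gamma})d_p^2(v,\widetilde{v})$ and $C(|v|^{p+\gamma}+|\widetilde{v}|^{p+\gamma})d_p^2(v_\star,\widetilde{v}_\star)$ sum in the second index to $\big(\langle|v|^{p+\gamma},\mu^N_s\rangle+\langle|v|^{p+\gamma},\widetilde{\mu}^{N,K}_s\rangle\big)\overline{d}_p^2(s)$, and on $[0,S]$ the bracket is $\le Cb$ by the definition of $T^N_b,T^{N,K}_b$ and item iii of Propositions \ref{thrm:momentinequalities}, \ref{thrm:noncutoffmomentinequalities} (which controls the $(p+\gamma)$-moments up to and including $S$); the noise term sums to $CK^{1-1/\nu}(1+\Lambda_l(\mu^N_s)+\Lambda_l(\widetilde{\mu}^{N,K}_s))$. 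Thus for $s<S$ the drift density is $\le Cb\,\overline{d}_p^2(s)+CK^{1-1/\nu}(1+\Lambda_l(\mu^N_s)+\Lambda_l(\widetilde{\mu}^{N,K}_s))$ with $C=C(p,G,d)$; taking expectations, using $\EE\Lambda_l(\mu^N_s),\EE\Lambda_l(\widetilde{\mu}^{N,K}_s)\le Ca_2$ uniformly in $s$ (Propositions \ref{thrm:momentinequalities}, \ref{thrm:noncutoffmomentinequalities}, with $K$-free constants), and Gr\"onwall's lemma yields $\EE[\overline{d}_p^2(t\wedge S)]\le e^{Cb(1+t)}(\overline{d}_p^2(0)+a_2 t K^{1-1/\nu})$.

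For (\ref{eq: LU tanaka result}) I would take the supremum in the It\^o formula: $\sup_{t\le t_\mathrm{fin}}\overline{d}_p^2(t\wedge S)$ is bounded by $\overline{d}_p^2(0)$, plus the (nonnegative) drift integral, plus $\sup_{t\le t_\mathrm{fin}}|M_{t\wedge S}|$. By Doob's $L^2$ inequality the last term has expectation $\le 2\EE[[M]_{t_\mathrm{fin}\wedge S}]^{1/2}$; using Lemma \ref{lemma: quadratic pozvner} (so $\mathcal{Q}_K\le C(1+|v|^{2l}+\cdots)$), summing over pairs, and using $2l=q$ with $\EE\Lambda_q(\mu^N_s),\EE\Lambda_q(\widetilde{\mu}^{N,K}_s)\le Ca_3$ gives $\EE[[M]_{t_\mathrm{fin}\wedge S}]\le Ca_3 t_\mathrm{fin}N^{-1}$, hence $\EE[\sup_{t\le t_\mathrm{fin}}|M_{t\wedge S}|]\le Ca_3 t_\mathrm{fin}^{1/2}N^{-1/2}$. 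A Gr\"onwall argument for $t\mapsto\EE[\sup_{s\le t}\overline{d}_p^2(s\wedge S)]$ then gives the stopped form of (\ref{eq: LU tanaka result}). To remove the localisation: on $\{S>t\}$ (resp. $\{S>t_\mathrm{fin}\}$) stopped and unstopped quantities coincide, while on the complement Cauchy--Schwarz gives $\EE[\overline{d}_p^2(t)\mathbf{1}(S\le t)]\le\PP(S\le t)^{1/2}\EE[\overline{d}_p^4(t)]^{1/2}$; since $\overline{d}_p^4(t)\le N^{-1}\sum_i d_p^4(V^i_t,\widetilde{V}^{i,K}_t)\le C(1+\Lambda_{2p+4}(\mu^N_t)+\Lambda_{2p+4}(\widetilde{\mu}^{N,K}_t))$ and $2p+4\le q=2l$, this is $\le Ca_3^{1/2}\PP(S\le t)^{1/2}\le Ca_3\PP(S\le t)^{1/2}$, and the supremum version uses $(\sup_t\overline{d}_p^2(t))^2\le C(1+\sup_t\Lambda_q(\mu^N_t)+\sup_t\Lambda_q(\widetilde{\mu}^{N,K}_t))$ with item ii of Propositions \ref{thrm:momentinequalities}, \ref{thrm:noncutoffmomentinequalities} to produce the term $Ca_3(1+t_\mathrm{fin})\PP(S\le t_\mathrm{fin})^{1/2}$. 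Recalling $S=T^N_b\wedge T^{N,K}_b$, these are exactly (\ref{eq: PW tanaka}), (\ref{eq: LU tanaka result}).

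The genuine difficulty, modulo the deferred Lemmas \ref{lemma: big calculation} and \ref{lemma: quadratic pozvner}, is the cancellation in the drift: one must check that \emph{every} coefficient of a $|v|^{p+\gamma}$- or $|\widetilde{v}|^{p+\gamma}$-weighted square $|v-\widetilde{v}|^2$ appearing in (\ref{eq: EK real}) --- both before and after summing over the second particle --- stays bounded by a constant that does not grow with $p$, so that the Povzner gain $\tfrac12\lambda_p$, which does grow with $p$, eventually dominates. This is precisely what pins down $p_0=p_0(G,d)$, while the restriction $K>K_0(G,p,d)$ is inherited verbatim from the hypotheses of Lemma \ref{lemma: big calculation}; everything else is Gr\"onwall, Doob's inequality, and the cutoff-uniform moment estimates of Section \ref{sec: moments}.
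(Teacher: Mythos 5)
Your proposal is correct and follows essentially the same route as the paper's proof: the drift of $\overline{d}_p^2$ is controlled by Lemma \ref{lemma: big calculation}, the $p$-independent bad coefficient is absorbed by the Povzner gain $\lambda_p$, one localises at $T^N_b\wedge T^{N,K}_b$ and runs Gr\"onwall, the supremum estimate comes from Doob's $L^2$ inequality together with Lemma \ref{lemma: quadratic pozvner}, and the localisation is removed by a $q$-moment bound together with a H\"older (in the paper, with exponent $q/(p+2)$) or Cauchy--Schwarz (your version, via $\overline{d}_p^4\le N^{-1}\sum_i d_p^4$) step --- both give the claimed $a_3\,\PP(T\le t)^{1/2}$. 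The one small imprecision is describing the drift integral as ``nonnegative'' when passing to $\sup_{t\le t_\mathrm{fin}}\overline{d}_p^2(t\wedge S)$: it need not be, but since you immediately replace it by the increasing upper bound entering the Gr\"onwall inequality, the argument is unaffected.
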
This is the key result from which our results follow. Let us make the following remarks. \begin{remark}\begin{enumerate}[label=\roman*).]
      \item We will show below that this essentially establishes Theorem \ref{thrm: approximate with cutoff}. The form presented here, where we are free to choose $b$, is useful for dealing with the well-posedness issues deferred from Proposition \ref{prop: labelled and unlabelled dynamics}. \item In principle, one could perform a finer analysis for $\mathcal{Q}_K$, to replace the third term in (\ref{eq: LU tanaka result}) with an error in terms of $\overline{d}_p^2(0), K^{-\alpha}$, for some $\alpha>0$. In this way, we would obtain an estimate for the uniform convergence on compacts in probability of $\widetilde{\mathcal{V}}^{N,K}$, as $K\rightarrow \infty$ with $N$ fixed, and which is uniform in $N$. Since we are mostly interested in a limit where $N,K \rightarrow \infty$ simultaneously, we will not explore this.
  \end{enumerate}  \end{remark} Before turning to the proofs of these bounds, let us show some applications. We will now deduce Theorem \ref{thrm: approximate with cutoff}, and record a corollary at the level of unlabelled cutoff processes which is natural for later analyses.
  \begin{proof}[Proof of Theorem \ref{thrm: approximate with cutoff}] Let $p_0(G,d), K_0(G,p,d)$ be as above, and fix $p>p_0, K>K_0$. Let us fix $\mu^N_0$, $\widetilde{\mu}^{N,K}_0, a_1, a_2, a_3$ as in the statement of the Theorem, and choose  $\mathcal{V}^{N}_0 \in \theta_N^{-1}(\mu^N_0), \widetilde{\mathcal{V}}^{N,K'}_0\in \theta_N^{-1}(\widetilde{\mu}^{N,K}_0)$ corresponding to $\mu^{N,K}_0, \widetilde{\mu}^{N,K'}_0$ which achieve the optimal coupling \begin{equation} W^2_p(\mu^{N,K}_0,\widetilde{\mu}^{N,K'}_0)=\frac{1}{N}\sum_{i=1}^N d_p^2(V^i_0, \widetilde{V}^{i,K'}_0). \end{equation} Now, let $\mathcal{V}^N_t$ be a noncutoff labelled Kac process starting at $\mathcal{V}^N_0$, and write $\mu^N_t=\theta_N(\mathcal{V}^N_t)$ for the process of empirical measures. Let $\mathcal{V}^{N,K}_t$ be the Kac processes constructed by Lemma \ref{lemma: Tanaka Coupling of KP} for the initial data $\mathcal{V}^N_0$ respectively with cutoff parameter $K$, and let $\mu^{N,K}_t$ be the associated empirical measures. We observe that \begin{equation}
      W_p^2\left(\mu^N_t,\widetilde{\mu}^{N,K}_t\right)\le \frac{1}{N}\sum_{i=1}^N d_p^2\left(V^i_t,\widetilde{V}^{i,K}_t\right)=\overline{d}^2_p(t)
  \end{equation} which we control by the previous lemma to obtain, for some $C$ and all $t\ge 0, b>1$ \begin{equation}\begin{split}
      \EE\left[W_p^2\left(\mu^N_t,\widetilde{\mu}^{N,K}_0\right)\right]&\le e^{Cb(1+t)}\left(W_p^2\left(\mu^N_0,\widetilde{\mu}^{N,K}_0\right)+a_2K^{1-1/\nu}\right)\\&\hs\hs\hs\h\h+Ca_3\PP\left(T^N_b\land T^{N,K}_b\le t\right)^{1/2}.\end{split}
  \end{equation} where $T^N_b, T^{N,K}_b$ are as above. Now, taking $b=Ca_1$ for some large $C=C(p)$, we use Lemma \ref{lemma: concentration of moments} to control the final term and obtain, for some $C$, \begin{equation}\begin{split}
      \PP\left(T^N_{Ca_1}\land T^{N,K}_{Ca_1}\le t\right)& \le   \PP\left(T^N_{Ca_1}\le t\right)+  \PP\left(T^{N,K}_{Ca_1}\le t\right)
  \\& \le Cta_3N^{-1}\end{split} \end{equation} as desired. We obtain (\ref{eq: statement of LU tanaka}) from (\ref{eq: LU tanaka result}) for the same processes $\mu^N_t,\widetilde{\mu}^{N,K}_t$ in exactly the same way.  \end{proof} We obtain, in a very similar way, the following corollary for coupling cutoff processes.
\begin{cor}\label{cor: couple cutoff}[Coupling of Cutoff Kac Processes] Let $p, q,l, K_0, C$ be as in Lemma \ref{lemma: Tanaka Coupling of KP}, and let $K'\ge K>K_0(G,p,d)$. Let $\mu^{N,K}_0,\widetilde{\mu}^{N,K'}_0\in \mathcal{S}_N$, with moments   \begin{equation} \Lambda_{p+\gamma}\left(\mu^N_0,\widetilde{\mu}^{N,K}_0\right)\le a_1; \hs  \Lambda_{l}\left(\mu^N_0,\widetilde{\mu}^{N,K}_0\right)\le a_2; \hs  \Lambda_{q}\left(\mu^N_0,\widetilde{\mu}^{N,K}_0\right)\le a_3. \end{equation}  Then there exists a coupling of cutoff Kac processes $\mu^{N,K}_t, \widetilde{\mu}^{N,K'}_t$ with cutoff parameters $K, K'$ respectively, such that \begin{equation} \EE\left[W^2_p(\mu^{N,K}_t, \widetilde{\mu}^{N,K'}_t)\right]\le e^{Ca_1(1+t)}(W^2_p(\mu^{N,K}_0,\widetilde{\mu}^{N,K}_0)+a_2tK^{1-1/\nu})+a_3^2CN^{-1/2}t. \end{equation}   \end{cor}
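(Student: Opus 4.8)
The plan is to argue along the same lines as the proof of Theorem \ref{thrm: approximate with cutoff}, inserting a noncutoff Kac process as an intermediary between the two cutoff processes and applying Lemma \ref{lemma: Tanaka Coupling of KP} twice. First I would fix a labelled representative $\mathcal{V}^N_0 \in \theta_N^{-1}(\mu^{N,K}_0)$ and then choose $\widetilde{\mathcal{V}}^{N,K'}_0 \in \theta_N^{-1}(\widetilde{\mu}^{N,K'}_0)$ so that the labelling realises the optimal transport, $\frac{1}{N}\sum_i d_p^2(V^i_0,\widetilde{V}^{i,K'}_0) = W_p^2(\mu^{N,K}_0,\widetilde{\mu}^{N,K'}_0)$. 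Let $\mathcal{V}^N_t$ be a noncutoff labelled Kac process started from $\mathcal{V}^N_0$, driven by Poisson measures $\Nn^{\{ij\}}$, with empirical measures $\mu^N_t = \theta_N(\mathcal{V}^N_t)$. From $\mathcal{V}^N_t$ and the same $\Nn^{\{ij\}}$, apply the construction (\ref{eq: coupled cutoff})--(\ref{eq: coupled cutoff2}) twice: once with cutoff parameter $K$ and initial data $\widetilde{\mathcal{V}}^{N,K}_0 := \mathcal{V}^N_0$, producing a $K$-cutoff Kac process $\mu^{N,K}_t$ starting from $\mu^{N,K}_0$; and once with cutoff parameter $K'$ and initial data $\widetilde{\mathcal{V}}^{N,K'}_0$, producing a $K'$-cutoff Kac process $\widetilde{\mu}^{N,K'}_t$. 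Both constructions are legitimate since $K' \ge K > K_0$, and the moment hypotheses of Lemma \ref{lemma: Tanaka Coupling of KP} for each pair hold because $\mu^N_0 = \mu^{N,K}_0$ and $\widetilde{\mu}^{N,K'}_0$ satisfy the bounds $\Lambda_l \le a_2$, $\Lambda_q \le a_3$ assumed in the statement.

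Then I would bound $W_p$ through $\mu^N_t$. By the relaxed triangle inequality (\ref{eq: triangle inequality}) and $(a+b)^2 \le 2a^2 + 2b^2$,
\[
W_p^2\big(\mu^{N,K}_t,\widetilde{\mu}^{N,K'}_t\big) \le 2C_p^2\Big( W_p^2\big(\mu^{N,K}_t,\mu^N_t\big) + W_p^2\big(\mu^N_t,\widetilde{\mu}^{N,K'}_t\big)\Big).
\]
Each term on the right is controlled by (\ref{eq: PW tanaka}) of Lemma \ref{lemma: Tanaka Coupling of KP}, taken with $b = Ca_1$ for a large $C = C(p)$: the first pair contributes zero initial error (since $\widetilde{\mathcal{V}}^{N,K}_0 = \mathcal{V}^N_0$), and the second contributes $W_p^2(\mu^{N,K}_0,\widetilde{\mu}^{N,K'}_0)$, while the two drift errors $a_2 t K^{1-1/\nu}$ and $a_2 t (K')^{1-1/\nu}$ are both dominated by $a_2 t K^{1-1/\nu}$, using $K' \ge K$ and $1-1/\nu \le 0$. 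The remaining ``bad event'' terms $a_3 C\,\PP(T^N_b \wedge T^{N,K}_b \le t)^{1/2}$ and $a_3 C\,\PP(T^N_b \wedge T^{N,K'}_b \le t)^{1/2}$ are handled by Lemma \ref{lemma: concentration of moments}: since $q = 2l \ge 2(p+\gamma) + \gamma$ and the $q$-th moments of all three initial configurations are at most $a_3$, enlarging $C$ if necessary gives $\PP(T^N_{Ca_1} \le t), \PP(T^{N,K}_{Ca_1} \le t), \PP(T^{N,K'}_{Ca_1} \le t) \le C t a_3 N^{-1}$, so each such term is $O(a_3^{3/2} t^{1/2} N^{-1/2})$.

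Finally I would collect everything: absorbing the constants $C_p$ and $C$ into a single $C = C(p,G,d)$, enlarging the exponent slightly, and using $a_3 \ge 1$ to replace $a_3^{3/2} t^{1/2} N^{-1/2}$ by $C a_3^2 N^{-1/2} t$, the two applications of Lemma \ref{lemma: Tanaka Coupling of KP} combine to
\[
\EE\big[ W_p^2\big(\mu^{N,K}_t,\widetilde{\mu}^{N,K'}_t\big)\big] \le e^{Ca_1(1+t)}\big( W_p^2(\mu^{N,K}_0,\widetilde{\mu}^{N,K'}_0) + a_2 t K^{1-1/\nu}\big) + C a_3^2 N^{-1/2} t,
\]
which is the assertion. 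There is no essential difficulty here beyond that already present in Theorem \ref{thrm: approximate with cutoff}; the one point that deserves a line of justification is that the construction (\ref{eq: coupled cutoff}) may be run simultaneously from the same noncutoff driver and the same Poisson measures at the two cutoff scales $K$ and $K'$, since the conclusion of Lemma \ref{lemma: Tanaka Coupling of KP} for a given cutoff/noncutoff pair is insensitive to how the driving noise is reused to build the other cutoff process.
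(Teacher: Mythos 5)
Your proposal is correct and follows essentially the same route as the paper's sketch: fix labelled representatives achieving the optimal $W_p$-coupling of the initial data, run a single noncutoff labelled process from $\mathcal{V}^N_0$, build the $K$- and $K'$-cutoff processes from the same Poisson drivers (one from $\mathcal{V}^N_0$ itself, the other from $\widetilde{\mathcal{V}}^{N,K'}_0$), apply Lemma \ref{lemma: Tanaka Coupling of KP} twice with $b=Ca_1$ and Lemma \ref{lemma: concentration of moments} for the stopping-time terms, and combine via the relaxed triangle inequality (\ref{eq: triangle inequality}). The only cosmetic difference is that you spell out the absorption of $(K')^{1-1/\nu}\le K^{1-1/\nu}$ and the conversion of the bad-event probabilities into the $a_3^2 C N^{-1/2}t$ term, which the paper leaves implicit.
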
 \begin{proof}[Sketch Proof] The proof is very similar to the above, and we will sketch the main points. Let us construct $\mathcal{V}^N_0 \in \theta_N^{-1}(\mu^N_0)$ and $ \widetilde{\mathcal{V}}^{N,K'}_0 \in \theta_N^{-1}(\widetilde{\mu}^{N,K'}_0)$ as in the previous proof. Following the previous proof, construct a \emph{non}cutoff labelled process $\mathcal{V}^N_t$ starting at $\mathcal{V}^N_0$ and a $K'$-cutoff $\widetilde{\mathcal{V}}^{N,K'}_t$ starting at $\widetilde{\mathcal{V}}^{N,K'}_0$. We take $\mu^N_t, \widetilde{\mu}^{N,K'}_t$ to be the associated empirical measures, which are (unlabelled) Kac processes. \bigskip \\ We now repeat this argument to construct a $K$-cutoff process $\mathcal{V}^{N,K}_t$ starting at the same point $\mathcal{V}^N_0=\mathcal{V}^{N,K}_0$, and let $\mu^{N,K}_t$ be the associated empirical measures. The same argument as the previous proof establishes controls on \begin{equation}
    \EE\left[W_p^2\left(\mu^N_t, \widetilde{\mu}^{N,K'}_t\right)\right]; \hs    \EE\left[W_p^2\left(\mu^N_t, {\mu}^{N,K}_t\right)\right].
\end{equation} Recalling the relaxed triangle inequality (\ref{eq: triangle inequality}), we combine these to find the desired estimate. \end{proof}
The estimate in Lemma \ref{lemma: Tanaka Coupling of KP} also allows us to establish Proposition \ref{prop: wellposedness/uniqueness}, the proof of which is deferred to the appendix.  
\subsection{Proof of Lemma \ref{lemma: Tanaka Coupling of KP}}\begin{proof}[Proof of Lemma \ref{lemma: Tanaka Coupling of KP}]Let $p\ge 2$ to be decided later, and consider the processes \begin{equation} M^i_t=d_p^2(V^i_t,V^{i,K}_t)-d_p^2(V^i_0,\widetilde{V}^{i,K}_0)-\frac{2}{N}\int_0^t \sum_{j=1}^N \mathcal{E}_K(V^i_s, V^{i,K}_s,V^j_s,V^{j,K}_s)ds \end{equation}for $1\le i\le N$, and their average\begin{equation} \label{eq: overline M}\begin{split}\overline{M}_t=\frac{1}{N}\sum_{i=1}^NM^i_t=\overline{d}_p^2(t)-\int_0^t \overline{\mathcal{E}}_K(s)ds\end{split}\end{equation} where we define \begin{equation} \overline{\mathcal{E}}_K(t):=\frac{2}{N^2}\sum_{i,j=1}^N \mathcal{E}_K(V^i_t,\widetilde{V}^{i,K}_t,V^j_t,\widetilde{V}^{j,K}_t).
\end{equation} By classical results in the theory of Markov chains \cite{darling2008differential}, each $M^i_t$ is a martingale, and hence so is $\overline{M}$. By Lemma \ref{lemma: big calculation}, provided $K$ is large enough, depending on $G,p,d$, we have, for some $c=c(G,d), C=C(G,d,p)$, \begin{equation}\begin{split}  \mathcal{E}_K(V^i_t,\widetilde{V}^{i,K}_t,V^j_t,\widetilde{V}^{j,K}_t)& \le \left(c+\left(c-\frac{\lambda_p}{2}\right)|V^i_t|^{p+\gamma}+\left(c-\frac{\lambda_p}{2}\right)|\widetilde{V}^{i,K}_t|^{p+\gamma}\right)|V^i_t-\widetilde{V}^{i,K}_t|^2\\&\hs +c\left(|V^j_t|^{p+\gamma}+|\widetilde{V}^{j,K}_t|^{p+\gamma}\right)|V^j_t-\widetilde{V}^{j,K}_t|^2 \\&\hs +C\left(|V^j_t|^{p+\gamma}+|\widetilde{V}^{j,K}_t|^{p+\gamma}\right)d_p^2\left(V^i_t,\widetilde{V}^{i,K}_t\right)\\&\hs +C\left(|V^i_t|^{p+\gamma}+|\widetilde{V}^{i,K}_t|^{p+\gamma}\right)d_p^2\left(V^j_t,\widetilde{V}^{j,K}_t\right)\\ & \hs +C K^{1-1/\nu}(1+|V^i_t|^l+|V^j_t|^l+|\widetilde{V}^{j,K}_t|^l+|\widetilde{V}^{i,K}_t|^l).\end{split} \end{equation} Let us now take the average over all $i,j$. The first two lines can be absorbed together, as can the third and the fourth; for some new constants $c,C$ with the same dependence as above,  \begin{equation}\label{eq: final sum}\begin{split}  \overline{\mathcal{E}}_K(t)& \le  \frac{1}{N}\sum_{i=1}^N \left(c+\left(c-\frac{\lambda_p}{2}\right)|V^i_t|^{p+\gamma}+\left(c-\frac{\lambda_p}{2}\right)|\widetilde{V}^{i,K}_t|^{p+\gamma}\right)|V^i_t-\widetilde{V}^{i,K}_t|^2 \\&\hs \hs \hs +C\left(\Lambda_{p+\gamma}(\mu^N_t)+\Lambda_{p+\gamma}(\widetilde{\mu}^{N,K}_t)\right)\overline{d}_p^2(t)\\[1ex] & \hs\hs\hs +C K^{1-1/\nu}\left(\Lambda_l(\mu^N_t)+\Lambda_l(\widetilde{\mu}^{N,K}_t)\right).\end{split} \end{equation} Let us now choose $p$. We recall that $c$ does \emph{not} depend on $p$, and return to the definition \begin{equation} \lambda_p:=|\mathbb{S}^{d-2}|\int_0^{\pi/2}\left(1-\left(\frac{1+\cos \theta}{2}\right)^{p/2}\right)\beta(\theta)d\theta. \end{equation} As $p\rightarrow \infty$, the term in parentheses converges up to $1$ for any $\theta\neq 0$, and so $\lambda_p$ converges to $|\mathbb{S}^{d-2}|\int_0^{\pi/2} \beta(\theta)d\theta=\infty$ by monotone convergence. In particular, there exists some $p_0$, depending only on $G,d$ such that, for all $p> p_0$, $\lambda_p\ge 2c$, and for such $p$, the first line of (\ref{eq: final sum}) can be absorbed into the second: \begin{equation} \label{eq: final final sum}\begin{split} \overline{ \mathcal{E}}_K(t) &\le C\left(\Lambda_{p+\gamma}(\mu^N_t)+\Lambda_{p+\gamma}(\widetilde{\mu}^{N,K}_t)\right)\overline{d}_p^2(t) + CK^{1-1/\nu}\left(\Lambda_l(\mu^N_t)+\Lambda_l(\widetilde{\mu}^{N,K}_t)\right)\end{split} \end{equation} whence \begin{equation}\begin{split} \label{eq: pregronwall dbar}
    \overline{d}_p^2(t)& \le \overline{d}_p^2(0)+C\int_0^t \left(\Lambda_{p+\gamma}(\mu^N_s)+\Lambda_{p+\gamma}(\widetilde{\mu}^{N,K}_s)\right)\overline{d}_p^2(s)ds \\[1ex] & \hs \hs  +CK^{1-1/\nu}\int_0^t\left(\Lambda_l(\mu^N_s)+\Lambda_l(\widetilde{\mu}^{N,K}_s)\right)ds +\overline{M}_t.\end{split}
\end{equation} Let us now write $T:=T^N_b\land T^{N,K}_b$ for the stopping times $T^N_b, T^{N,K}_b$ defined in the statement, and consider the moment prefactor in (\ref{eq: final final sum}, \ref{eq: pregronwall dbar}). We recall from Propositions \ref{thrm:momentinequalities}, \ref{thrm:noncutoffmomentinequalities} that, almost surely, for all $t\ge 0$, \begin{equation}\Lambda_{p+\gamma}(\mu^N_t)\le 2^{\frac{p+\gamma}{2}+1}\Lambda_{p+\gamma}(\mu^N_{t-})\end{equation} and similarly for $\widetilde{\mu}^{N,K}_t$. The moment factor is therefore at most $2b$ for all $s\le T$, and so we obtain, for all $t\ge 0$, \begin{equation} \int_0^{t\land T} \left(\Lambda_{p+\gamma}(\mu^N_s)+\Lambda_{p+\gamma}(\widetilde{\mu}^{N,K}_s)\right)\overline{d}_p^2(s)ds\le 2b\int_0^t \overline{d}_p^2(s\land T)ds. \end{equation} Stopping (\ref{eq: pregronwall dbar}) at $T$, we therefore obtain, for all $t\ge 0$, \begin{equation}\label{eq: gronwall dbar}\begin{split}
    \overline{d}_p^2(t\land T)&\le \overline{d}_p^2(0)+Cb\int_0^t d_p^2(s\land T) ds\\& \hs \hs  +CK^{1-1/\nu}\int_0^t\left(\Lambda_l(\mu^N_s)+\Lambda_l(\widetilde{\mu}^{N,K}_s)\right)ds+\overline{M}_{t\land T}. \end{split}
\end{equation} For the first item, we fix $t\ge 0$, and take expectations of (\ref{eq: gronwall dbar}). By optional stopping, $\EE[\overline{M}_{t\land T}]=0$, and we use the moment estimates in Propositions \ref{thrm:momentinequalities}, \ref{thrm:noncutoffmomentinequalities} to control the first term on the second line: \begin{equation}
    \EE\left[\int_0^t(\Lambda_l(\mu^N_s)+\Lambda_l(\widetilde{\mu}^{N,K}_s)ds\right] \le Cta_2.
\end{equation} We therefore use Gr\"onwall's Lemma to obtain \begin{equation}\label{eq: the previous term} \EE\left[\overline{d}_p^2(t\land T)\right]\le e^{Cbt}\left(\overline{d}_p^2(0)+Cta_2K^{1-1/\nu}\right).
\end{equation}  Next, we observe that \begin{equation} \overline{d}_p^2(t)\le \overline{d}_p^2(t\land T)+\overline{d}_p^2(t)1[T\le t].\end{equation} We now estimate the second term. From the bound $d_p^2(v,w)\le c(1+|v|^{p+2}+|w|^{p+2})$ we see that \begin{equation} \overline{d}_p^2(t)\le c\left(\Lambda_{p+2}(\mu^N_t)+\Lambda_{p+2}(\widetilde{\mu}^{N,K}_t)\right) \end{equation} We use H\"older's inequality with indexes $\frac{q}{p+2}$ and $\frac{q}{p+2+\gamma}\le 2$, to obtain \begin{equation}\begin{split} \label{eq: T small term}\EE\left[\overline{d}_p^2(t)1[T\le t]\right]&\le c\h \PP(T\le t)^{(p+2+\gamma)/q}\h\EE\left[\Lambda_q(\mu^N_t)+\Lambda_q(\widetilde{\mu}^{N,K}_t)\right]^{(p+2)/q} \\ & \le C\h\PP(T\le t)^{1/2} a_3\end{split}\end{equation} thanks to the moment bounds in Lemma \ref{thrm:noncutoffmomentinequalities} and the choice of initial data. Combining with the previous term (\ref{eq: the previous term}) now proves the first claim. \bigskip \\ For the second item, we return to the martingale $\overline{M}_t$ constructed above. From  \cite[Lemma 8.7]{darling2008differential}, the process \begin{equation}
    L_t=\overline{M}_t^2-\frac{2}{N^3}\sum_{\{ij\}}\int_0^t \mathcal{Q}_K(V^i_s, \widetilde{V}^{i,K}_s,V^j_s,\widetilde{V}^{j,K}_s)ds
\end{equation} is also a martingale, where the sum now runs over unordered pairs $\{ij\}$ of indexes. Thanks to the bound computed in Lemma \ref{lemma: quadratic pozvner}, we find \begin{equation} \begin{split}
    \EE\left[\overline{M}_{t_\mathrm{fin}}^2\right]& \le \frac{C}{N^3}\EE\left[\sum_{\{ij\}}\int_0^{t_\mathrm{fin}}(1+|V^i_s|^{q}+|V^j_s|^{q}+|\widetilde{V}^{i,K}_s|^{q}+|\widetilde{V}^{j,K}_s|^{q})ds\right]\\ & \hs \hs  \le \frac{C}{N}\int_0^{t_\mathrm{fin}}\EE(\Lambda_{2l}(\mu^N_s)+\Lambda_{2l}(\widetilde{\mu}^{N,K}_s))ds. \end{split}
\end{equation} Using the moment propagation estimate in Propositions  \ref{thrm:momentinequalities}, \ref{thrm:noncutoffmomentinequalities} and Doob's $L^2$ inequality, we conclude that \begin{equation}
    \EE\left[\sup_{s\le t_\mathrm{fin}} |\overline{M}_t|\right] \le \frac{Ca_3t_\mathrm{fin}^{1/2}}{N^{1/2}}.
\end{equation} With this estimate, we return to the argument above. Applying Gr\"onwall to (\ref{eq: gronwall dbar}), we obtain a pathwise estimate  \begin{equation}
    \begin{split}
        \sup_{t\le t_\mathrm{fin}}\h\overline{d}_p^2(t\land T)\le e^{Cbt}\left(\overline{d}_p^2(0)+K^{1-1/\nu}\int_0^{t_\mathrm{fin}} \left(\Lambda_l(\mu^N_s)+\Lambda_l(\widetilde{\mu}^{N,K}_s)\right)ds+\sup_{t\le t_\mathrm{fin}}|\overline{M}_t|\right).
    \end{split}
\end{equation} Taking expectations, we conclude that \begin{equation}\label{eq: LU stopped term}
    \EE\left[\sup_{t\le t_\mathrm{fin}}\h\overline{d}_p^2(t\land T)\right]\le e^{Cbt}\left(\overline{d}_p^2(0)+K^{1-1/\nu}Ct_\mathrm{fin}a_2+\frac{Ca_3t_\mathrm{fin}^{1/2}}{N^{1/2}}\right).
\end{equation}Following the same argument as in (\ref{eq: T small term}) we also bound \begin{equation}\begin{split}\label{eq: LU T small term}
    \EE\left[\left(\sup_{t\le t_\mathrm{fin}}\h\overline{d}_p^2(t)\right)1[T\le t_\mathrm{fin}]\right]&\le C\PP\left(T\le t_\mathrm{fin}\right)^{1/2}\EE\left[\sup_{t\le t_\mathrm{fin}}(\Lambda_q(\mu^N_t)+\Lambda_q(\widetilde{\mu}^{N,K}_t))\right]\\[1ex] & \le C\PP(T\le t_\mathrm{fin})^{1/2}t_\mathrm{fin}a_3.\end{split}
\end{equation} Combining (\ref{eq: LU stopped term}, \ref{eq: LU T small term}), we obtain the desired result.  \end{proof}

\section{Proof of Theorem \ref{thrm: wellposedness}}\label{sec: T2} We will now prove Theorem \ref{thrm: wellposedness}, based on the Tanaka coupling presented in Lemma \ref{lemma: Tanaka Coupling of KP} and Corollary \ref{cor: couple cutoff}. The proof is broken down into a series of lemmas; in order to give an overview of the strategy, we will state all the intermediate steps before turning to the proofs. Our first result transfers the coupling achieved in Corollary \ref{cor: couple cutoff} to solutions to the cutoff Boltzmann equation, potentially with different cutoff parameters and different initial data. \begin{lemma}\label{lemma: T1.1} Let $p>p_0(G,d)$ and $l=p+2+\gamma$. Then there exist a constant $C=C(G,p,d)$ such that, whenever $K'\ge K>K_0(G,p,d), a_1, a_2\ge 1$ and $\mu_0, \nu_0 \in \mathcal{S}$ satisfy moment bounds \begin{equation}
    \Lambda_{p+\gamma}(\mu_0, \nu_0)\le a_1; \hs \Lambda_l(\mu_0, \nu_0)\le a_2
\end{equation} then the solution maps $\phi^K_t$ to the cutoff Boltzmann equation (\ref{eq: BEK}) satisfy, for all $t\ge 0,$ \begin{equation}   W_p\left(\phi^K_t(\mu_0),\phi^{K'}_t(\nu_0)\right)\le e^{Ca_1(1+t)}\left(W_p\left(\mu_0, \nu_0\right)+a_2tK^{1/2-1/2\nu}\right).\end{equation}    \end{lemma}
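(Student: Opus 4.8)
The plan is to transfer the $N$-uniform cutoff comparison of Kac processes (Corollary \ref{cor: couple cutoff}) to the cutoff Boltzmann flows by sandwiching each flow between the associated particle systems and letting $N\to\infty$ with the cutoff parameters $K,K'$ held fixed. Given $\mu_0,\nu_0$, I would use the discrete approximation scheme (Proposition \ref{prop: discrete scheme}) to produce empirical measures $\mu^N_0\to\mu_0$, $\nu^N_0\to\nu_0$ in $W_p$ with all relevant moments converging; run the coupled pair of cutoff Kac processes $(\mu^{N,K}_t,\nu^{N,K'}_t)$ of Corollary \ref{cor: couple cutoff} started from $(\mu^N_0,\nu^N_0)$; compare $\mu^{N,K}_t$ to the deterministic flow $\phi^K_t(\mu^N_0)$ using Lemma \ref{lemma: convergence of cutoff KP} (with \emph{zero} initial discrepancy, since the particle system is started exactly at $\mu^N_0$), and then $\phi^K_t(\mu^N_0)$ to $\phi^K_t(\mu_0)$ using the cutoff stability estimate Lemma \ref{lemma: W-W stability for cutoff}; and symmetrically on the $\nu$-side. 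Chaining these with the relaxed triangle inequality (\ref{eq: triangle inequality}), taking expectations (the left-hand side $W_p(\phi^K_t(\mu_0),\phi^{K'}_t(\nu_0))$ being deterministic), and bounding the single stochastic segment by $\EE W_p\le(\EE W_p^2)^{1/2}$, one arrives at an estimate of the shape
\[
W_p\left(\phi^K_t(\mu_0),\phi^{K'}_t(\nu_0)\right)\le C\left(e^{Ca_1(1+t)}\left(W_p^2(\mu^N_0,\nu^N_0)+a_2tK^{1-1/\nu}\right)+R^K_N(t)\right)^{1/2},
\]
where $R^K_N(t)$ collects the Kac-to-Boltzmann error of Lemma \ref{lemma: convergence of cutoff KP}, the stability error of Lemma \ref{lemma: W-W stability for cutoff}, and the $a_3^2CN^{-1/2}t$ term from the Corollary.

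The decisive point is that, although the first two contributions to $R^K_N$ carry exponential factors $e^{CaK(1+t)}$, they also carry vanishing factors $N^{-\alpha}$, respectively $W_p(\mu^N_0,\mu_0)^\alpha$; hence for each fixed $K,K'$ I can let $N\to\infty$, use $W_p(\mu^N_0,\nu^N_0)\to W_p(\mu_0,\nu_0)$, take square roots, and perform the elementary manipulations needed (using $a_1\ge 1$, $\sqrt{x+y}\le\sqrt x+\sqrt y$, $\sqrt{a_2t}\le a_2(1+t)$, and absorbing constants into the exponent) to recover the stated form with the degraded exponent $K^{1/2-1/2\nu}$.

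I expect the main obstacle to be that the Corollary's error term $a_3^2CN^{-1/2}t$ requires an $N$-uniform bound on $a_3=\Lambda_{2l}(\mu^N_0)$, whereas the hypotheses only supply $\Lambda_l(\mu_0,\nu_0)\le a_2$: for data with barely $l$ moments, $\Lambda_{2l}(\mu^N_0)$ genuinely grows with $N$ and the scheme does not close. I would therefore split the proof. First establish the estimate under the extra assumption $\mu_0,\nu_0\in\mathcal{S}^{2l}$, so that $\Lambda_{2l}(\mu^N_0)\to\Lambda_{2l}(\mu_0)$ stays bounded and, with $a_3$ and $K,K'$ all fixed before the limit, the argument above closes. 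Then remove the assumption by density: approximate a general $\mu_0$ obeying the moment bounds by finitely supported measures $\mu_0^{(m)}\to\mu_0$ in $W_p$ (again via Proposition \ref{prop: discrete scheme}), which automatically lie in $\mathcal{S}^{2l}$ and whose $(p+\gamma)$-th and $l$-th moments converge to those of $\mu_0$; apply the high-moment case to $(\mu_0^{(m)},\nu_0^{(m)})$ and pass to the limit $m\to\infty$ using continuity of the cutoff flow, the ambient moment data being supplied by Propositions \ref{thrm:momentinequalities}, \ref{thrm:noncutoffmomentinequalities}. Any $\varepsilon$-loss in the moment constants along the approximating sequences is harmless, since the right-hand side is monotone in $a_1,a_2$.

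Within this density step the genuinely delicate sub-case is when the initial data has only $l=p+2+\gamma$ moments and $\gamma=0$, so that $l=p+2$: then $\phi^K_t(\mu_0^{(m)})\to\phi^K_t(\mu_0)$ cannot be read off directly from Lemma \ref{lemma: W-W stability for cutoff}, whose constant contains a factor $\Lambda_{p'}^2$ with $p'>p+2$ that may diverge along the sequence, and one must instead combine the decay of $W_p(\mu_0^{(m)},\mu_0)$ with the boundedness of $\Lambda_l(\mu_0^{(m)})$ by interpolation (or invoke the weaker well-posedness continuity of $\phi^K_t$ and upgrade the topology via moment propagation). All remaining ingredients — the relaxed triangle inequality, the conversions between $W_p$, $w_p$ and $\mathfrak{w}_\gamma$ from Section \ref{sec: metrics}, and the moment-propagation bounds — are routine, and enter only to ensure that the moment hypotheses of the cited lemmas hold along every process and limit involved.
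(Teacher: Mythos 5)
Your proof matches the paper's argument in both structure and detail: the same sandwich via Corollary \ref{cor: couple cutoff} and Lemma \ref{lemma: convergence of cutoff KP}, the same squared relaxed triangle inequality with the left-hand side deterministic, the same passage $N\to\infty$ at fixed $K,K'$ with the Kac-to-Boltzmann and discretisation errors vanishing, and the identical two-stage density argument (first under the extra $\Lambda_{2l}$-moment hypothesis so that $a_3$ stays $N$-uniform, then extending via Proposition \ref{prop: discrete scheme} and Lemma \ref{lemma: W-W stability for cutoff}). Your flag of the $\gamma=0$ edge case in the density step is apt --- the paper's invocation of Lemma \ref{lemma: W-W stability for cutoff} there tacitly assumes $l=p+2+\gamma>p+2$, i.e.\ $\gamma>0$, and the interpolation fix you suggest is the natural repair.
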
 

As a next step, we show that the solutions $\phi^K_t(\mu_0)$ to the cutoff Boltzmann equations converge, as $K\rightarrow \infty$, to \emph{a} solution of the {noncutoff} equation (\ref{BE}).

\begin{lemma}\label{lemma: T1.2} Let $p, l$ be as above, and let $\mu_0\in \mathcal{S}$ satisfy moment assumptions \begin{equation} \Lambda_{p+\gamma}(\mu_0)\le a_1, \hs \Lambda_{l}(\mu_0)\le a_2\end{equation} for some $a_1, a_2\ge 1$. Then, for some $(\phi_t(\mu))_{t\ge 0}\subset \mathcal{S}$ and some $C=C(G,p,d)$, \begin{equation} \label{eq: convergence of cutoff BE} W_p(\phi^K_t(\mu), \phi_t(\mu))\le e^{Ca_1(1+t)/2}ta_2K^{1/2-1/2\nu} \end{equation} for all $K>K_0(G,p,d)$.  Moreover, if $\nu_0 \in \mathcal{S}$ is another measure with the same moment estimates, we have the continuity \begin{equation}\label{eq: continuity of special solution} W_p\left(\phi_t(\mu_0),\phi_t(\nu_0)\right)\le e^{Ca_1(1+t)}W_p(\mu_0, \nu_0).\end{equation}   Finally, $(\phi_t(\mu_0): t\ge 0)$ is a solution to the noncutoff Boltzmann equation (\ref{BE}), and satisfies the moment estimates in Proposition \ref{thrm:noncutoffmomentinequalities}.   \end{lemma}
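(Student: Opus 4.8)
The plan is to build $\phi_t(\mu_0)$ as the $K\to\infty$ limit of the cutoff flows $\phi^K_t(\mu_0)$ (well-defined since $\mu_0\in\mathcal{S}^l$ with $l>2$, by \cite{lu2012measure}), and then read off the three assertions. First, apply Lemma \ref{lemma: T1.1} with $\nu_0=\mu_0$: for $K'\ge K>K_0$ and all $t\ge 0$,
\[ W_p\big(\phi^K_t(\mu_0),\phi^{K'}_t(\mu_0)\big)\le e^{Ca_1(1+t)}\,a_2\,t\,K^{1/2-1/2\nu}. \]
Since $\nu\in[0,1)$ we have $1/2-1/2\nu<0$ (read as $-\infty$ when $\nu=0$), so the right side tends to $0$ as $K\to\infty$; via the comparison $w_{p+2}\le CW_p^{2/(p+2)}$ from (\ref{eq: compare WP WP}) this is a genuine Cauchy condition in the complete metric space $(\mathcal{S}^{p+2},w_{p+2})$, yielding a limit $\phi_t(\mu_0)\in\mathcal{S}^{p+2}$, with convergence also in $W_p$. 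Letting $K'\to\infty$ above and using the relaxed triangle inequality (\ref{eq: triangle inequality}) gives (\ref{eq: convergence of cutoff BE}) after absorbing the constant $C_p$ into the exponential (legitimate because $a_1(1+t)\ge1$). The cutoff-\emph{uniform} moment bounds of Proposition \ref{thrm:momentinequalities} give $\Lambda_q(\phi^K_t(\mu_0))\le C(1+t^{l-q})\Lambda_l(\mu_0)$ for every $q\ge l$ and all $K\ge1$; since $W_p$-convergence implies weak convergence and $\mu\mapsto\Lambda_q(\mu)$ is weakly lower semicontinuous, these pass to $\phi_t(\mu_0)$, establishing the polynomial moment estimates and, in particular, $\sup_{s\le t}\sup_{K}\Lambda_l(\phi^K_s(\mu_0))\le Ca_2$, which I use below.

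For the continuity (\ref{eq: continuity of special solution}), I would apply Lemma \ref{lemma: T1.1} with $K'=K$ to get $W_p(\phi^K_t(\mu_0),\phi^K_t(\nu_0))\le e^{Ca_1(1+t)}\big(W_p(\mu_0,\nu_0)+a_2tK^{1/2-1/2\nu}\big)$, insert $\phi^K_t(\mu_0)$ and $\phi^K_t(\nu_0)$ between $\phi_t(\mu_0)$ and $\phi_t(\nu_0)$ using (\ref{eq: triangle inequality}) twice, and let $K\to\infty$: the two outer terms vanish by the first paragraph, the middle one is at most $e^{Ca_1(1+t)}W_p(\mu_0,\nu_0)+o(1)$, and absorbing $C_p^2$ into the exponent gives the claim.

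The remaining, and hardest, step is to verify that $\phi_\cdot(\mu_0)$ solves (\ref{BE}); the difficulty is that in (\ref{eq: BEK}) \emph{both} the operator $Q_K$ and the state $\phi^K_s(\mu_0)$ vary with $K$. Fix a compactly supported Lipschitz $f$ and set $\psi_K(v,v_\star)=\int_{\mathbb{S}^{d-1}}\big(f(v')+f(v_\star')-f(v)-f(v_\star)\big)B_K(v-v_\star,\sigma)\,d\sigma$, so that $\langle f,Q_K(\mu)\rangle=\langle\psi_K,\mu\otimes\mu\rangle$; let $\psi$ be the analogue with $B$, finite by (\ref{eq: mild singularity}). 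The crucial observation is that, because the angular singularity is \emph{moderate}, the bound $|f(v')+f(v_\star')-f(v)-f(v_\star)|\le\|f\|_{\mathrm{Lip}}|v-v_\star|\theta$ together with $\int_{\mathbb{S}^{d-1}}\theta B(w,\sigma)\,d\sigma<\infty$ yields the \emph{$K$-independent} bound $|\psi_K(v,v_\star)|\le C\|f\|_{\mathrm{Lip}}(1+|v|^{1+\gamma}+|v_\star|^{1+\gamma})$, while the same estimate restricted to $\{\theta\le\theta_0(K)\}$ gives $|\psi_K-\psi|\le C\|f\|_{\mathrm{Lip}}(1+|v|^{1+\gamma}+|v_\star|^{1+\gamma})\int_0^{\theta_0(K)}\theta\beta(\theta)\,d\theta\to0$ as $K\to\infty$ (the integral vanishing since $\theta\beta(\theta)$ is integrable at $0$ and $\theta_0(K)\downarrow0$); moreover $\psi$ is continuous on $\RRd\times\RRd$. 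I would then split
\[ \langle\psi_K,\phi^K_s\otimes\phi^K_s\rangle-\langle\psi,\phi_s\otimes\phi_s\rangle=\langle\psi_K-\psi,\phi^K_s\otimes\phi^K_s\rangle+\langle\psi,\phi^K_s\otimes\phi^K_s-\phi_s\otimes\phi_s\rangle, \]
where (with $\phi^K_s=\phi^K_s(\mu_0)$, $\phi_s=\phi_s(\mu_0)$) the first term is $O\big(\Lambda_l(\phi^K_s)^2\int_0^{\theta_0(K)}\theta\beta\,d\theta\big)=o(1)$ uniformly on $[0,t]$, and the second tends to $0$ by the weak convergence $\phi^K_s\to\phi_s$ together with uniform integrability of $(1+\gamma)$-th moments (valid since $1+\gamma<l$ and the $l$-th moments are uniformly bounded on $[0,t]$). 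Hence $\langle f,Q_K(\phi^K_s(\mu_0))\rangle\to\langle f,Q(\phi_s(\mu_0))\rangle$ for each $s\le t$, with a bound uniform in $s$ and $K$; dominated convergence in $s$, together with $\langle f,\phi^K_t(\mu_0)\rangle\to\langle f,\phi_t(\mu_0)\rangle$ and $\phi^K_0(\mu_0)=\mu_0$, turns (\ref{eq: BEK}) into (\ref{BE}). Finally, the polynomial moment bounds from the first paragraph (obtained by lower semicontinuity) give the appearance-of-moments estimates, and items iv)–v) of Proposition \ref{thrm:noncutoffmomentinequalities} now hold directly because $\phi_\cdot(\mu_0)$ is a solution of (\ref{BE}).
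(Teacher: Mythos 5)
Your proposal follows the paper's proof very closely and is substantially correct: you build $\phi_t(\mu_0)$ as the $K\to\infty$ limit via Lemma \ref{lemma: T1.1}, deduce (\ref{eq: convergence of cutoff BE}) and (\ref{eq: continuity of special solution}) by the relaxed triangle inequality, and establish that the limit solves (\ref{BE}) by exactly the paper's decomposition (split according to $\Delta_{B}-\Delta_{B_K}$ and $\phi^K\otimes\phi^K - \phi\otimes\phi$, truncate tails using a cutoff/uniform integrability, and invoke weak convergence on compacts). One small cosmetic deviation is that you run the Cauchy argument in $w_{p+2}$ where the paper uses $w_1$; your choice is, if anything, a bit more economical since $w_{p+2}$-convergence with $p+2\ge 2$ automatically preserves the energy constraint, whereas the paper has to rely on moment bounds afterwards.

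The one place where your write-up needs shoring up is the assertion that the Cauchy sequence $\phi^K_t(\mu_0)$ converges ``also in $W_p$.'' As written this is circular: you then go on to derive $W_p(\phi^K_t,\phi_t)\to 0$ from the relaxed triangle inequality by ``letting $K'\to\infty$,'' which is precisely the $W_p$-convergence you just asserted. Since $W_p$ is only a semimetric and is not \emph{a priori} complete, this step needs an actual argument. The paper closes it using the interpolation (\ref{eq: elementary interpolation}) $W_p(\mu,\nu)\le \Lambda_{p'}(\mu,\nu)\,w_1(\mu,\nu)^\alpha$ together with $K$-uniform bounds on $\Lambda_{p+3}(\phi^K_t(\mu_0))$ (from instant moment gain, Proposition \ref{thrm:momentinequalities}) and weak lower semicontinuity of the moment functional, treating $t=0$ separately. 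An alternative, equally valid fix is to note that $W_p$, being an optimal-transport cost with nonnegative continuous integrand, is lower semicontinuous with respect to weak convergence in each argument, so that $W_p(\phi^K_t,\phi_t)\le\liminf_{K'\to\infty}W_p(\phi^K_t,\phi^{K'}_t)$; but one of these two arguments should be spelled out rather than asserted.
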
 

We next extend the maps $\phi_t$ defined above to all of $\mathcal{S}^{p+1}$, and obtain the claimed continuity estimate in this context

\begin{lemma}\label{lemma: T1.3} Let $p,l$ be as above. The solution maps $\phi_t: \mathcal{S}^l\rightarrow \mathcal{S}$ defined above can be extended to $\cup_{p'>p+2}\mathcal{S}^{p'}$, such that, for all $\mu_0\in \cup_{p'>p+2}S^{p'}$, $(\phi_t(\mu_0): t\ge 0)$ is a solution to the Boltzmann Equation (\ref{BE}), and so that (\ref{eq: continuity of special solution}) holds whenever $\mu_0, \nu_0 \in \cup_{p'>p+2}\mathcal{S}^{p'}$ satisfy a moment estimate $\Lambda_{p+\gamma}(\mu_0,\nu_0)\le a$, for some $a\ge 1$.\end{lemma}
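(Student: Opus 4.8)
The plan is a density argument that bootstraps the continuity estimate (\ref{eq: continuity of special solution}) of Lemma \ref{lemma: T1.2}. Fix $p'>p+2$ and $\mu_0\in\mathcal{S}^{p'}$; since $\gamma\le 1<2$ we have $p+\gamma<p+2<p'$, so $\Lambda_{p+\gamma}(\mu_0)<\infty$. Apply Proposition \ref{prop: discrete scheme} with $q=p'$ to get discrete measures $\mu_0^{(n)}\in\mathcal{S}_n$ with $W_p(\mu_0^{(n)},\mu_0)\to 0$ and $\Lambda_{p''}(\mu_0^{(n)})\to\Lambda_{p''}(\mu_0)$ for every $p''\le p'$; each $\mu_0^{(n)}$ is finitely supported, hence lies in $\mathcal{S}^l$, and $a_1:=\sup_n\Lambda_{p+\gamma}(\mu_0^{(n)})<\infty$, $\sup_n\Lambda_{p'}(\mu_0^{(n)})<\infty$. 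Thus $\phi_t(\mu_0^{(n)})$ is defined by Lemma \ref{lemma: T1.2}.

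First I would check that $\big(\phi_t(\mu_0^{(n)})\big)_n$ is $W_p$-Cauchy, locally uniformly in $t$: by the relaxed triangle inequality (\ref{eq: triangle inequality}) and (\ref{eq: continuity of special solution}),
\[
W_p\!\left(\phi_t(\mu_0^{(n)}),\phi_t(\mu_0^{(m)})\right)\le C_p\,e^{Ca_1(1+t)}\!\left(W_p(\mu_0^{(n)},\mu_0)+W_p(\mu_0^{(m)},\mu_0)\right)\longrightarrow 0
\]
as $n,m\to\infty$. Using the uniform bound on $\Lambda_{p'}(\phi_t(\mu_0^{(n)}))$ from Proposition \ref{thrm:noncutoffmomentinequalities} and the comparison (\ref{eq: compare WP WP}), this sequence is $w_{p+2}$-Cauchy, hence converges in $(\mathcal{S}^{p+2},w_{p+2})$; by Fatou the limit lies in $\mathcal{S}^{p'}$, and we define it to be $\phi_t(\mu_0)$. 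Applying (\ref{eq: continuity of special solution}) to an interleaving of any two approximating sequences for the same $\mu_0$ shows the limit does not depend on the choice of sequence; taking the constant sequence shows the extension agrees with Lemma \ref{lemma: T1.2} on $\mathcal{S}^l$. Passing to the limit in (\ref{eq: continuity of special solution}) along approximating sequences for $\mu_0$ and $\nu_0$ with $\Lambda_{p+\gamma}(\mu_0,\nu_0)\le a$, controlling cross terms by (\ref{eq: triangle inequality}) and absorbing the resulting constants into the exponent, gives the continuity estimate on $\cup_{p'>p+2}\mathcal{S}^{p'}$; the moment bounds of Proposition \ref{thrm:noncutoffmomentinequalities} pass to the limit by lower semicontinuity.

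It remains to verify that $t\mapsto\phi_t(\mu_0)$ solves (\ref{BE}). Fix $f$ Lipschitz of compact support and write $\phi^{(n)}_\cdot=\phi_\cdot(\mu_0^{(n)})$, so $\langle f,\phi^{(n)}_t\rangle=\langle f,\mu_0^{(n)}\rangle+\int_0^t\langle f,Q(\phi^{(n)}_s)\rangle\,ds$. Since $W_p$-convergence implies weak convergence, the two non-integral terms converge to those for $\phi_\cdot(\mu_0)$. For the integral term, write $\langle f,Q(\mu)\rangle=\langle G_f,\mu\otimes\mu\rangle$ with $G_f(v,v_\star)=\int_{\mathbb{S}^{d-1}}\{f(v')+f(v'_\star)-f(v)-f(v_\star)\}\,B(v-v_\star,\sigma)\,d\sigma$. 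From $|f(v')+f(v'_\star)-f(v)-f(v_\star)|\le C_f\min(1,|v-v_\star|\theta)$ and (\ref{eq: mild singularity}) — the estimate already used below (\ref{eq: generator}) — one checks that $G_f$ is continuous on $\RRd\times\RRd$, including across the diagonal where it vanishes, and satisfies $|G_f(v,v_\star)|\le C_f(1+|v|^{1+\gamma}+|v_\star|^{1+\gamma})$, a growth strictly below $p+2$ since $p\ge 2$. Because $\phi^{(n)}_s\to\phi_s$ in $W_p$ with $\Lambda_{p'}(\phi^{(n)}_s)$ bounded uniformly in $n$ and $s$ (so the velocities are uniformly integrable to any order $<p'$), we get $\langle G_f,\phi^{(n)}_s\otimes\phi^{(n)}_s\rangle\to\langle G_f,\phi_s\otimes\phi_s\rangle$ for each $s$, while $|\langle G_f,\phi^{(n)}_s\otimes\phi^{(n)}_s\rangle|\le C_f(1+\Lambda_{p+\gamma}(\phi^{(n)}_s))$ stays bounded uniformly in $n,s$; dominated convergence then carries the limit through $\int_0^t$, so $\phi_\cdot(\mu_0)$ solves (\ref{BE}), and it inherits the moment estimates of Proposition \ref{thrm:noncutoffmomentinequalities} by lower semicontinuity.

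The main obstacle is this last passage to the limit: the continuity and controlled growth of the effective kernel $G_f$, and the convergence $\langle G_f,\phi^{(n)}_s\otimes\phi^{(n)}_s\rangle\to\langle G_f,\phi_s\otimes\phi_s\rangle$. The delicate point is the grazing regime: for $\theta<\delta$ the bracket $f(v')+f(v'_\star)-f(v)-f(v_\star)$ is small by the Lipschitz bound while $\int_{\{\theta<\delta\}}\theta\,B(v-v_\star,\sigma)\,d\sigma$ is finite uniformly over $v,v_\star$ in bounded sets, so that part of $G_f$ is uniformly negligible; away from grazing, $B$ is bounded and ordinary weak convergence of $\phi^{(n)}_s\otimes\phi^{(n)}_s$ applies. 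The remaining steps are routine bookkeeping.
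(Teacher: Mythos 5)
Your proposal is correct and follows essentially the same route as the paper: approximate $\mu_0$ by the discrete scheme of Proposition~\ref{prop: discrete scheme}, use (\ref{eq: continuity of special solution}) together with the relaxed triangle inequality to show the approximating orbits are Cauchy in a Wasserstein sense, define $\phi_t(\mu_0)$ as the limit, pass the continuity estimate to the limit, and verify~(\ref{BE}) by a truncation/uniform-integrability argument using the continuity and $|v-v_\star|^{1+\gamma}$ growth of the effective kernel together with the uniform moment bounds of Proposition~\ref{thrm:noncutoffmomentinequalities}. The only difference is cosmetic: the paper delegates the final verification to ``the same argument as in Lemma~\ref{lemma: T1.2}'' (there phrased with explicit cutoff functions $\psi_R$), whereas you spell it out directly, which is a reasonable way to fill in the delegated step.
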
 To conclude Theorem \ref{thrm: wellposedness}, we must show that the solutions obtained in this way are the unique solutions to (\ref{BE}) as soon as $\mu_0$ has $p'$ moments, for any $p'>p+2$. We will use the following auxiliary result, which appears as \cite[Corollary 2.3iii)]{fournier2009well}. \begin{proposition}\label{prop: weak wellposedness} Suppose $\mu_0\in \mathcal{S}$ satisfies, for some $\epsilon>0$, $\langle e^{\epsilon|v|^\gamma}, \mu_0\rangle<\infty$. Then there exists at most one solution to the Boltzmann Equation (\ref{BE}) taking values in $\mathcal{S}$ and starting at at $\mu_0$. \end{proposition}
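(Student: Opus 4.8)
This is a restatement of \cite[Corollary 2.3iii)]{fournier2009well}, and the plan is to recall the structure of that proof, which dovetails with the Tanaka-coupling machinery already set up above. Let $(\mu_t)_{t\ge 0}$ and $(\nu_t)_{t\ge 0}$ be two solutions of (\ref{BE}) in $\mathcal{S}$ with $\mu_0=\nu_0$ satisfying $\langle e^{\epsilon|v|^\gamma},\mu_0\rangle<\infty$. The first step is to record a priori regularity. By Proposition \ref{thrm:noncutoffmomentinequalities}(iv), every polynomial moment of $\mu_t$ and of $\nu_t$ is bounded uniformly in $t\ge 0$ in terms of the corresponding moment of $\mu_0$, which is finite since an exponential moment dominates all polynomial ones. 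More importantly, since $\mu_0$ itself carries the exponential moment, a Gevrey-type propagation argument (in the spirit of \cite{lu2012measure,fournier2008uniqueness}, upgrading Proposition \ref{thrm:noncutoffmomentinequalities}(v) so that it is valid down to $t=0$) yields $\sup_{t\le T}\langle e^{\epsilon'|v|^\gamma},\mu_t\rangle<\infty$, and the same for $\nu_t$, for every $T<\infty$ and some $\epsilon'=\epsilon'(T)>0$. This uniform exponential bound is the only place where the hypothesis is genuinely used, and it is what will keep the Gr\"onwall constant below finite.

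Next I would pass to a probabilistic description. Using the parametrisation of collisions through $a$, $G$ and $\Gamma$ from Section \ref{sec: JSDE}, together with a superposition/representation principle for measure solutions (as carried out in \cite{fournier2009well}, or by a martingale-problem argument in the spirit of Proposition \ref{prop: labelled and unlabelled dynamics} and \cite{kurtz2011equivalence}), realise $(\mu_t)$ as the flow of one-dimensional time-marginals of a process $(V_t)_{t\ge 0}$ solving
\begin{equation*}
V_t=V_0+\int_{(0,t]\times\RRd\times\SSd\times(0,\infty)} a(V_{s-},w,z,\varphi)\,\Nn(ds,dw,d\varphi,dz),
\end{equation*}
with $V_0\sim\mu_0$ and $\Nn$ a Poisson random measure of intensity $\mu_s(dw)\,d\varphi\,dz\,ds$; the a priori bounds from the first step guarantee that this process is well-defined and non-explosive, and produce in the same way a process $(\widetilde V_t)_{t\ge 0}$ with marginals $(\nu_t)$.

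Now the coupling. I would drive $(V_t)$ and $(\widetilde V_t)$ by a common Poisson measure: at a putative collision at time $s$ with data $(w,z,\varphi)$, let $\widetilde w$ be the image of $w$ under a measurable selection of an optimal coupling $\pi_s\in\Pi(\mu_s,\nu_s)$ for the quadratic cost, let $V$ collide with partner $w$ and deflection $\varphi$, and let $\widetilde V$ collide with partner $\widetilde w$ and deflection $R(V-w,\widetilde V-\widetilde w)\varphi$, where $R$ is the isometry from Lemma \ref{lemma: ATT}. Writing $W(t)=\EE\big[|V_t-\widetilde V_t|^2\big]$, a Povzner-type expansion of $|V'-\widetilde V'|^2-|V-\widetilde V|^2$ — the computation underlying Lemma \ref{lemma: big calculation}, but needed here only in its crude form, since for a single tagged pair there is no averaging and hence no cancellation to exploit — integrated against $z$, $\varphi$ and against $\pi_s$ in $(w,\widetilde w)$, produces a differential inequality of the shape
\begin{equation*}
\frac{d}{dt}W(t)\le c(t)\,W(t)+\mathcal{R}(t),
\end{equation*}
where $c(t)$ depends only on low moments of $\mu_t,\nu_t$ (and hence is locally integrable by the first step, once the growing collision rate $|v-w|^\gamma$ is tamed either by passing to a weighted Wasserstein distance or by invoking the uniform exponential moments), and $\mathcal{R}(t)$ collects the error produced by the mismatch of the relative speeds $|v-w|^\gamma\neq|\widetilde v-\widetilde w|^\gamma$, hence of collision rates and deflection angles, between the two processes. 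Using that the coupling of processes couples the marginals, so that $w_2^2(\mu_t,\nu_t)\le W(t)$, the remaining task is to absorb $\mathcal{R}(t)$ into $c(t)\,W(t)$; then, since $W(0)=0$, Gr\"onwall's lemma gives $W\equiv 0$, i.e. $\mu_t=\nu_t$ for all $t$.

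The main obstacle is precisely the bound on $\mathcal{R}(t)$: one needs an estimate of the form $\int_0^\infty\!\int_{\SSd}|a(v,w,z,\varphi)-a(\widetilde v,\widetilde w,z,R\varphi)|^2\,d\varphi\,dz\le C\big(|v|+|w|+|\widetilde v|+|\widetilde w|\big)\big(|v-\widetilde v|^2+|w-\widetilde w|^2\big)$, which rests on the continuity of $x\mapsto G(z/x^\gamma)$ near the grazing regime $z\to\infty$ — this is exactly where the moderate-singularity condition $\nu\in[0,1)$ enters, making $\int_0^\infty G(z)^2\,dz$ and the associated $z$-integrals converge — and the prefactor $C(\cdot)$ turns out to grow polynomially of high degree, so that closing the Gr\"onwall requires the uniform exponential-moment control from the first step. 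I would import the needed estimates on $G$ from Section \ref{subsec: estimates for G} and otherwise follow \cite[Section 3]{fournier2009well} essentially verbatim.
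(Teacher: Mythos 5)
The paper offers no proof of this proposition at all: it is stated purely as an imported result, quoted verbatim from \cite[Corollary 2.3iii)]{fournier2009well}, which is exactly how your first sentence treats it, so your proposal matches the paper's approach. Your additional sketch of the Fournier--Mouhot coupling argument is a fair outline of the cited proof and correctly identifies where the exponential moment enters (the unbounded weight $|v|^{\gamma}$ multiplying $|v-\widetilde v|^{2}$, which has no compensating Povzner term in the unweighted $w_2$ setting), though the prefactor there grows only like degree $\gamma\le 1$ rather than ``polynomially of high degree''.
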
 Let us now show how these results imply the claimed result. \begin{proof}[Proof of Theorem \ref{thrm: wellposedness}] In light of Lemma \ref{lemma: T1.3} above, it remains only to prove that the solutions constructed above are unique. Let us fix $p'>p+2$ and $\mu_0\in \mathcal{S}^{p'}$ and such that $\Lambda_{p+2}(\mu_0)\le a$ for some $a\ge 1$. Let $(\mu_t)_{t\ge 0}\subset \mathcal{S}$ be any solution to (\ref{BE}) starting at $\mu_0$; we will now show that $\mu_t=\phi_t(\mu_0)$ for all $t\ge 0$. \bigskip \\ Fix $s>0, t\ge 0$. Thanks to the appearance of exponential moments in Proposition \ref{thrm:noncutoffmomentinequalities}, there exists $\epsilon=\epsilon_s>0$ such that $\langle e^{\epsilon |v|^\gamma}, \mu_s\rangle<\infty$, and by Proposition \ref{prop: weak wellposedness}, there exists at most one energy-conserving solution starting at $\mu_s$. Since both $(\phi_u(\mu_s))_{u\ge 0}$ and $(\mu_{u+s})_{u\ge 0}$ are such solutions, we conclude that $\phi_t(\mu_s)=\mu_{t+s}$ for all such $t,s$. \bigskip \\ Let us now take the limit $s\downarrow 0$. Using (\ref{BE}) and the duality (\ref{eq: dual W1}), it is straightforward to see that $w_1(\mu_s, \mu_0)\rightarrow 0$. By (\ref{eq: elementary interpolation}),  \begin{equation}\label{eq: W continuity at 0}
    W_p(\mu_{s}, \mu_0)\le w_1(\mu_{s},\mu_0)^{\alpha}\Lambda_{p'}(\mu_{s},\mu_0)
\end{equation} for some $\alpha=\alpha(p')>0$; the moment factor is bounded uniformly in $s>0$ by Lemma \ref{thrm:noncutoffmomentinequalities}, and so the right-hand side converges to $0$ as $s\downarrow 0$. Lemma \ref{lemma: T1.3} now shows that, up to a new choice of $C$, \begin{equation} W_p(\phi_t(\mu_s), \phi_t(\mu_0))\le e^{Ca(1+t)}W_p(\mu_s, \mu_0)\rightarrow 0.\end{equation} Using the same argument as (\ref{eq: W continuity at 0}), $W_p(\mu_{t+s},\mu_t)\rightarrow 0$, and we conclude that \begin{equation} W_p\left(\mu_t,\phi_t(\mu_0)\right)\le \limsup_{s\downarrow 0}\left[CW_p\left(\phi_{t}(\mu_s), \phi_t(\mu_0)\right)+CW_p\left(\mu_{t+s}, \mu_t\right)\right] =0 \end{equation} and so we have the desired uniqueness.\end{proof} 

\subsection{Proof of Lemmas} \label{subsec: actual proof of T1} \begin{proof}[Proof of Lemma \ref{lemma: T1.1}] Let us consider the case first where the initial data $\mu_0, \nu_0$ have a finite $q^\text{th}$ moment $\Lambda_q(\mu_0, \nu_0)\le a_3$ for some $a_3\ge 1$. Applying Proposition \ref{prop: discrete scheme}, take $N$-particle empirical measures $\mu^N_0 \in \mathcal{S}_N$ such that $W_p(\mu^N_0, \mu_0)\rightarrow 0$ and such that the $l^\text{th}, q^\text{th}$ moments converge: $\Lambda_l(\mu^N_0)\rightarrow \Lambda_l(\mu_0), \Lambda_q(\mu^N_0)\rightarrow \Lambda_q(\mu_0)$; construct $\nu^N_0$ similarly for $\nu_0$. Using the relaxed triangle inequality, it follows that, for some $C=C(p)$, \begin{equation}\label{eq: limsup distance of initial data}
    \limsup_{N\rightarrow \infty} W_p(\mu^N_0, \nu^N_0)\le C\h W_p(\mu_0, \nu_0).
\end{equation} Let us now take $\mu^{N,K}_t$, $\nu^{N,K'}_t$ be the cutoff Kac processes constructed in Corollary \ref{cor: couple cutoff} started at these initial data; fix $t\ge 0$, and consider \begin{equation}
    \begin{split}
        W_p^2\left(\phi^K_t(\mu_0),\phi^{K'}_t(\nu_0)\right)&\le C\h\EE\bigg[W_p^2\left(\phi^K_t(\mu_0),\mu^N_t\right)+W_p^2\left(\mu^N_t,\nu^{N,K'}_t\right)\\& \hs\hs  +W_p^2\left(\nu^{N,K'}_t,\phi^{K'}_t(\nu_0)\right)\bigg].
    \end{split}
\end{equation} Using Corollary \ref{cor: couple cutoff} to bound the middle term, we have \begin{equation}
    \begin{split}
        W_p^2\left(\phi^K_t(\mu_0),\phi^{K'}_t(\nu_0)\right)&\le C\h\EE\left[W_p^2\left(\phi^K_t(\mu_0),\mu^N_t\right)+W_p^2\left(\nu^{N,K'}_t,\phi^{K'}_t(\nu_0)\right)\right] \\[1ex] &\hs +e^{Ca_1(1+t)}\left(W_p^2\left(\mu^N_0, \nu^N_0\right) +a_2K^{1-1/\nu}\right)\\&\hs +a_3^2CN^{-1/2}.
    \end{split}
\end{equation}Let us now take $N\rightarrow \infty$. Thanks to Lemma \ref{lemma: convergence of cutoff KP}, both terms on the first line converge to $0$, as does the final term. Using (\ref{eq: limsup distance of initial data}), we conclude that \begin{equation}
    W_p^2\left(\phi^K_t(\mu_0),\phi^{K'}_t(\nu_0)\right)\le e^{Ca_1(1+t)}\left(W_p^2\left(\mu^N_0, \nu^N_0\right)+a_2K^{1-1/\nu}\right)
\end{equation} and taking the square root gives the desired result. \bigskip \\ Let us now show how this extends to initial data $\mu_0, \nu_0$ with only $l=p+\gamma+2$ moments as in the statement. In this case, we use Proposition \ref{prop: discrete scheme} again, with $l$ in place of $q$, to construct $\mu^N_0\in \mathcal{S}_N$ such that \begin{equation} W_p(\mu^N_0, \mu_0)\rightarrow 0,\hs  \Lambda_l(\mu^N_0)\rightarrow \Lambda_l(\mu_0)\end{equation} and similarly $\nu^N_0$ for $\nu_0.$ Since $\mu^N_0, \nu^N_0$ are compactly supported, the previous estimate applies so that \begin{equation} \label{eq: T1.1 for approximate initial data}
    W_p\left(\phi^K_t(\mu^N_0),\phi^{K'}_t(\nu^N_0)\right)\le e^{Ca_1(1+t)}\left(W_p\left(\mu^N_0, \nu^N_0\right)+a_2K^{1/2-1/2\nu}\right).
\end{equation} Using Lemma \ref{lemma: W-W stability for cutoff}, \begin{equation}
    W_p\left(\phi^K_t(\mu^N_0),\phi^K_t(\mu_0)\right)\rightarrow 0; \hs  W_p\left(\phi^{K'}_t(\nu^N_0),\phi^{K'}_t(\nu_0)\right)\rightarrow 0.
\end{equation} The same argument as above therefore allows us to take $N\rightarrow \infty$ in (\ref{eq: T1.1 for approximate initial data}), noting that no moments higher than $l^\text{th}$ appear, to conclude that  \begin{equation} \label{eq: T1.1 for nonapproximate initial data}
    W_p^2\left(\phi^K_t(\mu_0),\phi^{K'}_t(\nu_0)\right)\le Ce^{Ca_1(1+t)}\left(W_p\left(\mu_0, \nu_0\right)+a_2K^{1/2-1/2\nu}\right).
\end{equation}    \end{proof} \begin{proof}[Proof of Lemma \ref{lemma: T1.2}] Let us fix $\mu_0$ and consider the space $\mathcal{C}=C([0,\infty), (\mathcal{S},w_1))$, equipped with a metric inducing uniform convergence on compact time intervals; since $(\mathcal{S}, w_1)$ is complete, so is $\mathcal{C}$. Recalling that $w_1\le W_p$, the previous observation shows that $(\phi^K(\mu_0), t\ge 0)_{K\ge 1}$ are Cauchy in $\mathcal{C}$, and hence converge to some process $(\phi_t(\mu_0), t\ge 0)$. \bigskip \\ Next, let us show that $\phi^K_t(\mu_0)\rightarrow \phi_t(\mu_0)$ in $W_p$. For $t=0$, $\phi^K_0(\mu_0)=\mu_0$, and so there is nothing to prove. If $t>0$ then, by point iii) of Proposition \ref{thrm:momentinequalities}, there exists $\lambda_{p+3}=\lambda_{p+3}(t)<\infty$ such that, for all $K\ge 1,$ \begin{equation} \Lambda_{p+3}\left(\phi^K_t(\mu_0)\right)\le \lambda_{p+3}(t).\end{equation} By lower semicontinuity of $\mu\mapsto \Lambda_{p+3}(\mu)$ in $w_1$, the same is true for the limit $\phi_t(\mu_0)$, and using the estimates in Section \ref{sec: metrics},\begin{equation}
    W_p\left(\phi^K_t(\mu_0),\phi_t(\mu_0)\right)\le \Lambda_{p+3}\left(\phi^K_t(\mu_0),\phi_t(\mu_0)\right)\h w_1\left(\phi^K_t(\mu_0),\phi_t(\mu_0)\right)^{\alpha}
\end{equation}for some $\alpha>0$. By construction, the second term on the right-hand side converges to $0$, and the first term is bounded, so the left-hand side converges to $0$ as desired. We now conclude the bound (\ref{eq: convergence of cutoff BE}): if $K>K_0(G,p,d)$, then for all $K'\ge K$, \begin{equation}
    \begin{split}
        W_p\left(\phi^K_t(\mu_0),\phi_t(\mu_0)\right)&\le C\left( W_p\left(\phi^K_t(\mu_0),\phi^{K'}_t(\mu_0)\right)+  W_p\left(\phi^{K'}_t(\mu_0),\phi_t(\mu_0)\right)\right) \\[1ex] & \le C\left(e^{Ca_1(1+t)/2}a_2tK^{1/2-1/2\nu}+W_p\left(\phi^{K'}_t(\mu_0),\phi_t(\mu_0)\right)\right).
    \end{split}
\end{equation}Taking $K'\rightarrow \infty$, the second term on the final line converges to $0$, and the desired bound follows, absorbing the prefactor $C=C(p)$ into the exponent. The bound (\ref{eq: continuity of special solution}) is similar: if $\mu_0, \nu_0$ in $\mathcal{S}^l$ satisfy \begin{equation} \Lambda_{p+\gamma}(\mu_0, \nu_0)\le a_1;\hs \Lambda_l(\mu_0,\nu_0)\le a_2\end{equation} then we bound, for any $K$,\begin{equation} \begin{split} W_p\left(\phi_t(\mu_0),\phi_t(\nu_0)\right)&\le C\big(W_p\left(\phi^K_t(\mu_0), \phi_t(\mu_0)\right)+W_p\left(\phi^K_t(\mu_0), \phi^K_t(\nu_0)\right) \\&\hs +W_p\left(\phi^K_t(\nu_0), \phi_t(\nu_0)\big) \right)\\[1ex]& \le Ce^{Ca_1(1+t)}\left(3a_2tK^{1/2-1/2\nu}+W_p(\mu_0,\nu_0)\right) \end{split} \end{equation} where, in the second line, we have used Lemma \ref{lemma: T1.1} to compare $\phi^K_t(\mu_0), \phi^K_t(\nu_0)$ and used the previous part to estimate the other two terms. Taking $K\rightarrow \infty$, we conclude the desired bound, again up to a new choice of $C$. \bigskip \\ It remains to show that $\phi_t(\mu_0), t\ge 0$ solves the full, noncutoff Boltzmann equation (\ref{BE}). We begin with an analysis of the the Boltzmann collision operator, borrowing from \cite{lu2012measure}. Let us define, for bounded, Lipschitz $f:\RRd\rightarrow \mathbb{R}$, \begin{equation}
    (\Delta_Bf)(v,v_\star):=\int_{\mathbb{S}^{d-1}}(f(v')+f(v'_\star)-f(v)-f(v_\star))B(v-v_\star,d\sigma);
\end{equation}
\begin{equation}
  (\Delta_{B_K}f)(v,v_\star):=\int_{\mathbb{S}^{d-1}}(f(v')+f(v'_\star)-f(v)-f(v_\star))B(v-v_\star,d\sigma)
\end{equation} and observe that \begin{equation}
    \langle f, Q(\mu)\rangle=\int_{\RRd\times\RRd} (\Delta_B f)(v,v_\star)\mu(dv)\mu(dv_\star)
\end{equation}and similarly for $B_K$. It is straightforward to see that each $\Delta_{B_K}f$ is continuous on $\RRd\times\RRd$, and the straightforward estimate $ |v'-v|\le |v-v_\star|\sin\theta$ implies the growth bound \begin{equation} \left|(\Delta_B f)(v,v_\star)\right|\le C(f)|v-v_\star|\int_{\mathbb{S}^{d-1}} \sin \theta B(v-v_\star,\sigma)d\sigma \le C(f)|v-v_\star|^{1+\gamma}  \end{equation} for some constant $C=C(f)$, depending only on the Lipschitz constant of $f$, and similarly for $B_K$. The same argument also shows that \begin{equation}\label{eq: Q vs cutoff Q}
    \left|(\Delta_B f)(v,v_\star)-(\Delta_{B_K}f)(v,v_\star)\right|\le C(f)\epsilon_K|v-v_\star|^{1+\gamma};
\end{equation}\begin{equation}\epsilon_K=\int_{\mathbb{S}^{d-1}}(\sin \theta)1(\theta\le \theta_0(K)) B(u, d\sigma)\rightarrow 0
\end{equation} so that $\Delta_{B_K}f\rightarrow \Delta f$, uniformly on compact subsets of $\RRd\times\RRd$; it therefore follows that $\Delta_B f$ is continuous. \bigskip \\ Equipped with this dual formulation, let us fix $t\ge 0$ and a bounded, Lipschitz $f$. Writing $\mu^K_t:=\phi^K_t(\mu_0), \mu_t:=\phi_t(\mu_0)$, we claim that \begin{equation}
    \langle f, Q_K(\phi^K_t(\mu_0))\rangle \rightarrow \langle f, Q(\phi_t(\mu_0))\rangle.
\end{equation} For all $R\ge 0$, let $\psi_R:\RRd\times\RRd\rightarrow [0,1]$ be a smooth, compactly supported cutoff function, such that $\psi_R(v,v_\star)=1$ on the ball $\{|v|^2+|v_\star|^2\le R\}$. We estimate, uniformly in $K$, \begin{equation}\begin{split}\label{eq: tails of Q}
    &\int_{\RRd\times\RRd} |(\Delta_{B}f)(v,v_\star)(1-\psi_R(v,v_\star))|\mu^K_t(dv)\mu^K_t(dv_\star) \\ & \hs \hs \le C(f)\int_{\RRd\times\RRd}(1+|v|^2+|v_\star|^2)1(|v|^2+|v_\star|^2\ge R)\mu^K_t(dv)\mu^K_t(dv_\star)\\[1ex] & \hs \hs \le C(f)R^{-p}a_2\h\Lambda_l\left(\mu_0\right)\end{split}
\end{equation} where, in the final line, we used the moment hypothesis on $\mu_0$, with $l>p+2$, and the moment propagation result in Proposition \ref{thrm:momentinequalities}; the same argument holds for the limit $\mu_t$. It is elementary to show that the Wasserstein convergence $w_1(\mu^K_t,\mu_t)\rightarrow 0$ implies that, for all compactly supported, continuous $g:\RRd\times\RRd\rightarrow \mathbb{R}$, we have \begin{equation}\label{eq: wasserstein tensor}
    \int_{\RRd\times\RRd}g(v,v_\star)\left(\mu^K_t(dv)\mu^K_t(dv_\star)-\mu_t(dv)\mu_t(\mu_0)(dv_\star)\right)\rightarrow 0
\end{equation} and, in particular, this holds with $g=(\Delta_B f)(v,v_\star)\psi_R$. We now write \begin{equation}
    \begin{split}
       \left| \left\langle f, Q(\mu_t)-Q(\mu^K_t)\right\rangle\right| & \\& \hspace{-2cm} \le \int_{\RRd\times\RRd}|(\Delta_Bf)|(1-\psi_R)(v,v_\star)(\mu^K_t(dv)\mu^K_t(dv_\star)+\mu_t(dv)\mu_t(dv_\star))\\ &   \hspace{-2cm} + \left|\int_{\RRd\times\RRd }(\Delta_B f)\psi_R(v,v_\star)(\mu^K_t(dv)\mu^K_t(dv_\star)-\mu_t(dv)\mu_t(dv_\star))\right|.
    \end{split}
\end{equation} The second term converges to $0$ by (\ref{eq: wasserstein tensor}), so using (\ref{eq: tails of Q}) twice on the first term, \begin{equation}
    \begin{split}
        \limsup_{K\rightarrow \infty}\left|\left\langle f, Q(\mu_t)-Q(\mu^K_t)\right\rangle\right|\le CR^{-p}a_2\Lambda_l(\mu_0)
    \end{split}
\end{equation} and, since $R$ was arbitrary, we have shown that \begin{equation}
    \label{eq: convergence of Q 1} \left\langle f, Q(\mu^K_t)\right\rangle\rightarrow \left\langle f, Q(\mu_t)\right\rangle.
\end{equation}Finally, integrating (\ref{eq: Q vs cutoff Q}), we find \begin{equation}\begin{split}
    \left|\langle f, Q(\mu^K_t)-Q_K(\mu^K_t)\rangle\right| & \le \int_{\RRd\times\RRd}|(\Delta_Bf)-(\Delta_{B_K}f)|(v,v_\star)\mu^K_t(dv)\mu^K_t(dv_\star)\\&  \le C(f)\epsilon_K\int_{\RRd\times\RRd}(1+|v|^2+|v_\star|^2)\mu^K_t(dv)\mu^K_t(dv_\star) \\ & \le C(f)\epsilon_K\rightarrow 0 \end{split}
\end{equation} and, combining with (\ref{eq: convergence of Q 1}), we see that $\langle f, Q(\mu^K_t)\rangle\rightarrow \langle f, Q(\mu_t)\rangle$ as claimed. \bigskip\\ We now conclude. For any $t\ge 0$ and any bounded, Lipschitz $f$, we have \begin{equation}
    \langle f, \mu^K_t\rangle =\langle f, \mu_0\rangle+ \int_0^t\langle f, Q_K(\mu^K_s)\rangle ds.
\end{equation} The integrand $\langle f, Q_K(\mu^K_s)\rangle$ is bounded, uniformly in $s\le t$ and $K\ge 1$, and converges to $\langle f, Q(\mu_s)\rangle$ for all $s$, while the left-hand side converges to $\langle f, \mu_t\rangle$. We therefore take the limit $K\rightarrow \infty$ to conclude that, for all bounded, Lipschitz $f$ and all $t\ge 0$ \begin{equation}
    \langle f,\mu_t\rangle=\langle f, \mu_0\rangle+\int_0^t \langle f, Q(\mu_s)\rangle ds
\end{equation} as desired. \end{proof} Finally, Lemma \ref{lemma: T1.3} follows much the same pattern as above. \begin{proof}[Proof of Lemma \ref{lemma: T1.3}] To extend the maps $\phi_t$, fix $p'>p+2$ and $\mu_0\in \mathcal{S}^{p'}$. Using Proposition \ref{prop: discrete scheme} again, let $\mu^N_0 \in \mathcal{S}_N$ be a sequence of discrete measures such that $W_p(\mu^N_0, \mu_0)\rightarrow 0$ and $\Lambda_{p+\gamma}(\mu^N_0)\rightarrow \Lambda_{p+\gamma}(\mu_0)$; in particular, $\Lambda_{p+\gamma}(\mu^N_0)\le 2a$ for all $N$ large enough. The bound (\ref{eq: continuity of special solution}) obtained in the previous lemma applies to show that, for all such $N$ and all $t\ge 0$,\begin{equation} w_1(\phi_t(\mu^N_0),\phi_t(\mu^{N'}_0))\le W_p\left(\phi_t(\mu^N_0),\phi_t(\mu^{N'}_0)\right)\le e^{2Ca(1+t)}W_p(\mu^N_0, \mu^{N'}_0). \end{equation} The right-hand side converges to $0$ as $N, N'\rightarrow \infty$, which implies that $\phi_t(\mu^N_0)$ converges, uniformly in $w_1$ on compact time intervals, to some limit. If we now define $\phi_t(\mu_0)$ to be this limit, a similar calculation shows that the resulting $\phi_t(\mu_0)$ is independent of the choice of limiting sequence, and the same argument as in Lemma \ref{lemma: T1.2} above shows that $(\phi_t(\mu_0), t\ge 0)$ is again a solution to the noncutoff Boltzmann equation (\ref{BE}). Finally, if $\mu_0, \nu_0$ are two such measures, one applies (\ref{eq: continuity of special solution}) to approximating sequences $\mu^N_0,\nu^N_0$ and passes to the limit $N\rightarrow \infty$ to obtain the same result for $\mu_0, \nu_0$, again up to a new constant $C$ in the exponent. \end{proof}

\section{Proof of Theorem \ref{thrm: Kac Process}}\label{sec: proof_of_poc}
We now prove the Theorem \ref{thrm: Kac Process} concerning the convergence of the full, non-cutoff Kac process to the solution to the Boltzmann equation in the many-particle limit $N\rightarrow \infty$.
\begin{proof}[Proof of Theorem \ref{thrm: Kac Process}] The uniqueness in law follows from Propositions \ref{prop: labelled and unlabelled dynamics}, \ref{prop: wellposedness/uniqueness}, which are discussed in Appendix \ref{sec: pf_wellposedness}. \bigskip \\ For the convergence estimate, let $\mu^N_t, t\ge 0$ be any unlabelled Kac process, and consider the case $\mu_0=\mu^N_0$. Fix $t_\mathrm{fin}$ and $K$ to be chosen later; for this $K$, let $\widetilde{\mu}^N_t, \widetilde{\mu}^{N,K}_t$ be the coupling of noncutoff and cutoff Kac processes, both starting at $\mu^N_0$ given Theorem \ref{thrm: approximate with cutoff}. By uniqueness in law, it is sufficient to prove the estimate with $\widetilde{\mu}^N_t$ in place of $\mu^N_t$. For some constants $C=C(p,q), \alpha=\alpha(p,q)$, we have the following estimates. By Theorem \ref{thrm: approximate with cutoff},\begin{equation} \EE\left[\sup_{t\le t_\mathrm{fin}}W_p\left(\widetilde{\mu}^N_t,\widetilde{\mu}^{N,K}_t\right)\right] \le e^{Ca(1+t_\mathrm{fin})}(K^{1/2-1/2\nu}+N^{-1/4});\end{equation} by Lemma \ref{lemma: convergence of cutoff KP}\begin{equation} \EE\left[\sup_{t\le t_\mathrm{fin}}W_p\left(\widetilde{\mu}^{N,K}_t,\phi^K_t(\mu^N_0)\right)\right]\le \exp\left(CaK(1+t_\mathrm{fin})\right)\h N^{-\alpha}\end{equation} by Lemma \ref{lemma: convergence of cutoff KP}, and by Lemma \ref{lemma: T1.2},\begin{equation} \sup_{t\le t_\mathrm{fin}}\h W_p\left(\phi^K_t(\mu^N_0),\phi_t(\mu^N_0)\right)\le e^{Ca(1+t_\mathrm{fin})}K^{1/2-1/2\nu}. \end{equation} Combining, and keeping the worst terms, we have the estimate \begin{equation} \EE\left[\sup_{t\le t_\mathrm{fin}}\h W_p\left(\widetilde{\mu}^N_t,\phi_t(\mu^N_0)\right)\right]\le e^{Ca(1+t_\mathrm{fin})}K^{1/2-1/2\nu}+e^{CaK(1+t_\mathrm{fin})}\h N^{-\alpha}.\end{equation} We now choose \begin{equation} K=\max\left(1,\frac{1}{2Ca(1+t_\mathrm{fin})}\log(N^{\alpha})\right)\end{equation} to conclude that \begin{equation}\EE\left[\sup_{t\le t_\mathrm{fin}}\h W_p\left(\widetilde{\mu}^N_t,\phi_t(\mu^N_0)\right)\right] \le e^{Ca(1+t_\mathrm{fin})}\h (\log N)^{1/2-1/2\nu}. \end{equation}  Finally, by Theorem \ref{thrm: wellposedness}, we have \begin{equation} \EE\left[\sup_{t\le t_\mathrm{fin}}\h W_p\left(\phi_t(\mu^N_0),\phi_t(\mu_0)\right)\right]\le e^{Ca(1+t_\mathrm{fin})}\h \EE\left[W_p(\mu^N_0, \mu_0)\right]\end{equation} and combining gives the claimed bound. \end{proof}

\section{Main Calculations on the Tanaka Coupling} \label{sec: main calculations}
\subsection{Some Estimates for $G$}\label{subsec: estimates for G}
In preparation for the proofs of Lemma \ref{lemma: big calculation}, we will first record some basic estimates concerning the regularity and integrability of $G$.
\begin{lemma}\label{lemma: estimates for G} \begin{enumerate}[label=\roman{*}.)]\item Let $G$ be as above. Then, for some constants $0<c_1\le c_2<\infty$, we have \begin{equation} c_1(1+z)^{-1/\nu}\le G(z)\le c_2(1+z)^{-1/\nu}.\end{equation} Moreover, $G$ is continuously differentiable, and $c_1, c_2$ above can be chosen such that \begin{equation} \hs c_1(1+z)^{-1-1/\nu}\le G'(z)\le c_2(1+z)^{-1-1/\nu}.  \end{equation} \item We have \begin{equation} \int_0^\infty z\left|\frac{d}{dz}\left(1-\cos G(z))\right)\right|\h dz < \infty. \end{equation} \item There exists a constant $c<\infty$ such that, for all $x,y>0$,  \begin{equation} \label{eq: L2 difference of Gs} \int_0^\infty \left(G\left(\frac{z}{x}\right)-G\left(\frac{z}{y}\right)\right)^2 \h dz \le c\frac{|x-y|^2}{x+y}.\end{equation} \end{enumerate} \end{lemma}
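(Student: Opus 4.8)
The plan is to reduce all three estimates to the asymptotic $\beta(\theta)\sim\theta^{-1-\nu}$ via the defining relations $G=H^{-1}$, $H(\theta)=\int_\theta^{\pi/2}\beta(x)\,dx$. Note first that $H\in C^1$ with $H'=-\beta<0$ on $(0,\pi/2)$, so $H$ is a decreasing $C^1$ bijection onto $(0,\infty)$, $G\in C^1$, and the inverse function theorem gives $G'(z)=-1/\beta(G(z))$, so $|G'(z)|=1/\beta(G(z))$. For part (i), choose $\theta_0\in(0,\pi/2)$ and $0<k_1\le k_2$ with $k_1\theta^{-1-\nu}\le\beta(\theta)\le k_2\theta^{-1-\nu}$ on $(0,\theta_0]$; integrating and using $H(\theta_0)>0$ gives $H(\theta)\asymp\theta^{-\nu}$ as $\theta\downarrow0$, and inverting this yields $G(z)\asymp z^{-1/\nu}$ for $z$ large. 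For $z$ in a compact subinterval of $(0,\infty)$, $G(z)$ stays in a compact subinterval of $(0,\pi/2)$ on which $\beta$ is continuous and positive, so there $G(z)$ and $|G'(z)|=1/\beta(G(z))$ are pinched between positive constants, as is $(1+z)^{-1/\nu}$; combining the two regimes gives $G(z)\asymp(1+z)^{-1/\nu}$ on all of $(0,\infty)$ and $|G'(z)|\asymp G(z)^{1+\nu}\asymp(1+z)^{-1-1/\nu}$ away from $z=0$. The only delicate point is $z\downarrow0$ (equivalently $G(z)\uparrow\pi/2$); there one always has the elementary bound $z\,|G'(z)|=H(G(z))/\beta(G(z))\le\pi/2-G(z)\le\pi/2$, using that $\beta=b\circ\cos$ is nonincreasing (the convex $b$ supported in $[0,1)$ being nondecreasing there), which is all the applications require.

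For part (ii), I would substitute $\theta=G(z)$, i.e. $z=H(\theta)$ and $|G'(z)|\,dz=|d\theta|$, in $\int_0^\infty z\,\bigl|\frac{d}{dz}(1-\cos G(z))\bigr|\,dz=\int_0^\infty z\,\sin(G(z))\,|G'(z)|\,dz$ to obtain $\int_0^{\pi/2}H(\theta)\sin\theta\,d\theta$; by Fubini this equals $\int_0^{\pi/2}\beta(x)\,(1-\cos x)\,dx\le\frac{\pi}{4}\int_0^{\pi/2}x\,\beta(x)\,dx<\infty$, the finiteness being exactly the moderate–singularity hypothesis (\ref{eq: mild singularity}).

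For part (iii), take $x<y$ (the case $x=y$ is trivial and the integrand is symmetric in $x,y$). The fundamental theorem of calculus in $r$ gives $G(z/y)-G(z/x)=\int_x^y\frac{z}{r^2}\,|G'(z/r)|\,dr$, so Minkowski's integral inequality in $L^2(dz)$ yields
\[
\left(\int_0^\infty\bigl(G(z/x)-G(z/y)\bigr)^2\,dz\right)^{1/2}\le\int_x^y\left(\int_0^\infty\frac{z^2}{r^4}\,|G'(z/r)|^2\,dz\right)^{1/2}dr=\sqrt{c_0}\int_x^y r^{-1/2}\,dr,
\]
where the substitution $w=z/r$ shows the inner integral equals $c_0/r$ with $c_0:=\int_0^\infty w^2\,|G'(w)|^2\,dw$; by part (i) this is finite, since the integrand is $O(w^{-2/\nu})$ at infinity (integrable because $\nu<2$) and at most $(\pi/2)^2$ near the origin by $w\,|G'(w)|\le\pi/2$. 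Hence the left side is at most $4c_0(\sqrt y-\sqrt x)^2=4c_0(y-x)^2/(\sqrt x+\sqrt y)^2\le4c_0(x-y)^2/(x+y)$, which is (\ref{eq: L2 difference of Gs}) with $c=4c_0$.

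The main obstacle is this last step. The obvious attempts — the mean-value bound $|G(z/x)-G(z/y)|\lesssim|G'(\cdot)|\,z\,|x-y|/(xy)$, or Cauchy–Schwarz on the $r$–integral — give right-hand sides of order $(x-y)^2y/x^2$ or $(x-y)\log(y/x)$, neither of which is dominated by $(x-y)^2/(x+y)$ when $x\ll y$. Applying Minkowski's inequality rather than Cauchy–Schwarz is precisely what turns the estimate into one for $(\sqrt y-\sqrt x)^2$, after which the elementary identity $(\sqrt y-\sqrt x)^2=(y-x)^2/(\sqrt x+\sqrt y)^2\le(y-x)^2/(x+y)$ closes the argument. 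Parts (i) and (ii), by contrast, are routine: inverting power-law asymptotics, and a substitution that collapses the integral to the already-assumed integrability of $\theta\mapsto\theta\,\beta(\theta)$.
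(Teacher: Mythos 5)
Your proof is correct; parts (i) and (ii) take essentially the same route as the paper, while part (iii) is a genuinely different argument. For (i)--(ii), both you and the paper integrate the tail asymptotics $\beta(\theta)\sim\theta^{-1-\nu}$ to get $H(\theta)\asymp\theta^{-\nu}$ and invert; for (ii) the paper bounds the integrand pointwise by $c(1+z)^{-2/\nu}$ directly from (i), whereas you substitute $\theta=G(z)$ and apply Fubini to reduce to the moderate-singularity hypothesis~(\ref{eq: mild singularity}), which is marginally more self-contained. You are also noticeably more careful near $z\downarrow0$: since $b$ is convex on $(-1,1)$ and supported on $[0,1)$, it is continuous with $b(0)=0$, hence $\beta(\pi/2)=0$ and $|G'(z)|=1/\beta(G(z))\to\infty$ as $z\downarrow0$, so the two-sided bound on $G'$ as written in the lemma cannot hold uniformly down to the origin; your replacement $z\,|G'(z)|=H(G(z))/\beta(G(z))\le\pi/2-G(z)\le\pi/2$, valid because $\beta=b\circ\cos$ is nonincreasing, is exactly what keeps the integrals in (ii) and the constant $c_0$ in (iii) finite there. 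For (iii) the paper (following \cite[Lemma 1.1]{fournier2008uniqueness}) integrates the $G'$ bound to compare $G$ with the profile $(1+z)^{-1/\nu}$ and then invokes the elementary inequality $a^{1/\nu}-b^{1/\nu}\le c\,(a-b)/(a^{1-1/\nu}+b^{1-1/\nu})$ with $a=(1+z/x)^{-1}$, $b=(1+z/y)^{-1}$ to obtain a pointwise bound that integrates to $c|x-y|^2/x$. You instead write the difference as an $r$-integral by the fundamental theorem of calculus and apply Minkowski's integral inequality in $L^2(dz)$; the scaling $w=z/r$ collapses each inner norm to $\sqrt{c_0/r}$, and the identity $(\sqrt y-\sqrt x)^2=(y-x)^2/(\sqrt x+\sqrt y)^2\le(y-x)^2/(x+y)$ closes the estimate. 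The Minkowski route is cleaner and more modular --- it never needs a pointwise comparison of $G$ with a specific power-law profile, only finiteness of $c_0=\int_0^\infty w^2|G'(w)|^2\,dw$ --- and your diagnosis of why the naive mean-value and Cauchy--Schwarz estimates produce the wrong power of $x,y$ when $x\ll y$ is accurate.
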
  \begin{proof} \begin{enumerate}[label=\roman{*}).]\item  For the first claim, we use the definition of $H$ and (\ref{eq: form of B}) to see that, for some constants $c_1, c_2\in (0,\infty)$ and all $\theta\in (0,\pi/2)$, \begin{equation} c\int_\theta^{\pi/2} x^{-1-\nu}dx \le H(\theta) \le C\int_\theta^{\pi/2} x^{-1-\nu}dx \end{equation} so that \begin{equation}\frac{c_1}{\nu}\left(\theta^{-\nu}-\left(\frac{\pi}{2}\right)^{-\nu}\right) \le H(\theta) \le \frac{c_2}{\nu}\left(\theta^{-\nu}-\left(\frac{\pi}{2}\right)^{-\nu}\right). \end{equation} The first claim now follows, potentially for a new choice of $c_1, c_2$. The differentiability is an immediate consequence of the inverse function theorem. Indeed, we have \begin{equation} G'(z)=\frac{1}{H'(G(z))}=-\frac{1}{b(\cos G(z))} \end{equation} and so the second claim follows from the first, using (\ref{eq: form of B}).    \item We have\begin{equation} z\frac{d}{dz}((1-\cos G(z)))=z\h (\sin G(z))\h G'(z) \end{equation} and so \begin{equation} \left|z\frac{d}{dz}(1-\cos G(z))\right|\le zG(z)|G'(z)|. \end{equation} Using the bounds from the previous part, it follows that the right-hand side is bounded by $c_2(1+z)^{-2/\nu}$ for some $c_2<\infty$, which is integrable because $\nu \in (0,1)$. \item The following is a slight variant of \cite[Lemma 1.1]{fournier2008uniqueness}, and is included here for completeness. Recalling that $G$ is decreasing, and integrating the bound on $G'$ found in part i)., we see that, for all $0\le z\le w$ and some $c<\infty$, we have \begin{equation}0\le G(z)-G(w)\le c\left((1+z)^{-1/\nu}-(1+w)^{-1/\nu}\right). \end{equation} We also recall that, for all $a>b>0$, we have \begin{equation}a^{1/\nu}-b^{1/\nu}\le c\frac{a-b}{a^{1-1/\nu}+b^{1-1/\nu}}. \end{equation} For any $z>0$, $0<y<x$, we apply this bound with $a=(1+z/x)^{-1}, b=(1+z/y)^{-1}$ to obtain \begin{equation}\begin{split}\label{eq: square me} 0\le  G\left(\frac{z}{x}\right)-G\left(\frac{z}{y}\right) &\le c\left((1+z/x)^{-1/\nu}-(1+z/y)^{-1/\nu})\right) \\& \le c\left|\frac{x}{x+z}-\frac{y}{y+z}\right|\left(1+\frac{z}{x}\right)^{1-1/\nu} \\[1ex] & \le c|x-y|(x+z)^{-1/\nu}x^{-1+1/\nu}.\end{split}\end{equation}We square and integrate over $z$, to obtain for all $x>y>0$, \begin{equation} \begin{split} \int_0^\infty \left(G\left(\frac{z}{x}\right)-G\left(\frac{z}{y}\right)\right)^2dz & \le c|x-y|^2 x^{1-2/\nu}x^{-2+2/\nu} \\ & =c\frac{|x-y|^2}{x}\le c\frac{|x-y|^2}{x+y}.\end{split} \end{equation} This concludes the proof of both claimed bounds in the case $x>y>0$; for $y>x$, we reverse the roles of $x\leftrightarrow y$. \end{enumerate} \end{proof}
\subsection{Proof of Lemma \ref{lemma: big calculation}} 
 
We now turn to the proof of Lemma \ref{lemma: big calculation}, which was deferred earlier. In order to avoid unnecessarily unwieldy expressions, we introduce some notation. We define $x=|v-v_\star|, \widetilde{x}=|\widetilde{v}-\widetilde{v}_\star|$, and write $L$ for the cuttoff $L=K\widetilde{x}^\gamma$. We will also write $R$ for $R(v-v_\star,\widetilde{v}-\widetilde{v}_\star)$, and suppress the dependence of $a,\widetilde{a}_K, \mathcal{E}_K$ on $v,\widetilde{v},v_\star,\widetilde{v}_\star$.  Throughout, $c$ will denote a constant which is allowed to depend only on $G, d$, and $C$ will denote a constant which is also allowed to depend on $p$; both are understood to vary from line to line as necessary. \bigskip \\ Our first lemma is the following, which gives us control over the `Pozvner' term, similar to the estimates in Section \ref{sec: moments}. Since this estimate produces the key negative term in Lemma \ref{lemma: big calculation} and is essential for subsequent calculations, it is presented as a separate lemma. \begin{lemma}\label{lemma: pozvner term}  For all $v, v_\star, z$, we have the bound \begin{equation} \label{eq: fp} \begin{split} |v+a|^p &\le |v|^p\left(\frac{1+\cos G(z/x^\gamma)}{2}\right)^{p/2}+|v_\star|^p\left(\frac{\sin G(z/x^\gamma)}{2}\right)^{p/2} \\& \hspace{3cm}+ C \left(|v|^{p-1}|v_\star|+|v||v_\star|^{p-1}\right)\sin G(z/x^\gamma) \\[1ex] &=:f_p(|v|,|v_\star|,z,x).\end{split} \end{equation} \end{lemma}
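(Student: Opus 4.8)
The plan is to compute $|v+a|^2$ exactly using the orthogonality properties built into the definition of $a$, and then raise to the power $p/2$ via an elementary convexity/binomial estimate. First I would recall that $a = a(v,v_\star,z,\varphi) = -\tfrac{1-\cos\theta}{2}(v-v_\star) + \tfrac{\sin\theta}{2}\Gamma(v-v_\star,\varphi)$ with $\theta = G(z/x^\gamma)$ and $x = |v-v_\star|$, and that $\Gamma(v-v_\star,\varphi)$ is orthogonal to $v-v_\star$ with $|\Gamma(v-v_\star,\varphi)| = |v-v_\star| = x$ (since $\varphi \in \SSd$ is a unit vector and $\{\iota_j(v-v_\star)/x\}$ is orthonormal). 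Writing $v+a = \tfrac{v+v_\star}{2} + \tfrac{\cos\theta}{2}(v-v_\star) + \tfrac{\sin\theta}{2}\Gamma(v-v_\star,\varphi) = v' $, the standard energy identity for the collision map (\ref{eq: postcollisional}) gives $|v'|^2 = \tfrac{1+\cos\theta}{2}|v|^2 + \tfrac{1-\cos\theta}{2}|v_\star|^2 + (\text{cross term})$, where the cross term is controlled by $|v||v_\star|\sin\theta$ up to a constant. It is cleaner to use the sharper bound $|v'|^2 \le \left(\tfrac{1+\cos\theta}{2}\right)|v|^2 + \left(\tfrac{1-\cos\theta}{2}\right)|v_\star|^2 + C|v||v_\star|\sin\theta$, valid since $\tfrac{1-\cos\theta}{2} \le C\sin\theta$ on $(0,\pi/2]$ actually one should keep $|v_\star|^2$ but that is absorbed later; alternatively one bounds directly as in the cutoff moment estimate (\ref{eq: change of moment at collision}).

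Next I would raise to the power $p/2$. The key elementary inequality is: for $\alpha,\beta \ge 0$ with $\alpha+\beta \le 1$ (here $\alpha = \tfrac{1+\cos\theta}{2}$, $\beta = \tfrac{1-\cos\theta}{2}$, indeed $\alpha+\beta=1$) and $A,B,\varepsilon \ge 0$,
\[
(\alpha A + \beta B + \varepsilon)^{p/2} \le \alpha^{p/2} A^{p/2} + \beta^{p/2} B^{p/2} + C_p(A+B+1)^{p/2-1}\varepsilon + \text{(lower order)},
\]
which follows from the mean value theorem applied to $t \mapsto t^{p/2}$ together with $\alpha^{p/2}+\beta^{p/2} \le \alpha + \beta = 1$ wait that is the wrong direction; rather one uses concavity/convexity: since $p/2 \ge 2 > 1$, $t\mapsto t^{p/2}$ is convex, so one cannot directly split $(\alpha A + \beta B)^{p/2}$. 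Instead the correct route is the one already used in the paper around (\ref{eq: change of moment at collision}): write $|v'|^2 = \alpha|v|^2 + \beta|v_\star|^2 + r$ with $|r| \le C|v||v_\star|\sin\theta$, note $\alpha = \cos^2(\theta/2)$, $\beta = \sin^2(\theta/2)$, and apply the pointwise bound $(a+b)^{p/2} \le a^{p/2} + b^{p/2} + C_p(a^{p/2-1}b + a b^{p/2-1})$ with $a = \alpha|v|^2$, $b = \beta|v_\star|^2 + r$; after expanding and using $\sin(\theta/2) \le \sin\theta$, $\alpha \le 1$, $\beta^{p/2} = (\sin^2(\theta/2))^{p/2} \le (\tfrac{\sin\theta}{2})^{p/2}$ for $p\ge 2$ (since $\sin(\theta/2)\le \tfrac12\sin\theta$ is false — rather $\sin(\theta/2) \le \sin\theta$ on $[0,\pi/2]$, and $\cos^2(\theta/2)=\tfrac{1+\cos\theta}{2}$), the cross terms collect into $C(|v|^{p-1}|v_\star| + |v||v_\star|^{p-1})\sin G(z/x^\gamma)$, completing the estimate.

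The main obstacle — really the only subtlety — is bookkeeping the powers of $\sin(\theta/2)$ versus $\sin\theta$ and $\cos(\theta/2)$ versus $\tfrac{1+\cos\theta}{2}$ so that the two ``clean'' terms come out exactly as the stated $\left(\tfrac{1+\cos G}{2}\right)^{p/2}|v|^p$ and $\left(\tfrac{\sin G}{2}\right)^{p/2}|v_\star|^p$, while every error term — including the genuine cross term $r$ in $|v'|^2$ and the binomial cross terms from raising to the $p/2$ power — is dominated by $C(|v|^{p-1}|v_\star| + |v||v_\star|^{p-1})\sin G(z/x^\gamma)$. For the latter one repeatedly uses $p\ge 2$, the inequality $\sin^2(\theta/2) = \tfrac{1-\cos\theta}{2} \le \tfrac{\sin\theta}{2}$ wait that needs checking: $1-\cos\theta \le \sin\theta \iff 1-\cos\theta \le \sin\theta$, true on $[0,\pi/2]$ since $1-\cos\theta = 2\sin^2(\theta/2) \le 2\sin(\theta/2)\cos(\theta/2) = \sin\theta$ as $\sin(\theta/2)\le\cos(\theta/2)$ there — and Young's inequality to convert any mixed power $|v|^a|v_\star|^b$ with $a+b = p$, $1\le a,b\le p-1$, into a bounded multiple of $|v|^{p-1}|v_\star| + |v||v_\star|^{p-1}$ (valid again because $p\ge 2$ forces $\{a,b\}$ to lie between $1$ and $p-1$, and $x^a y^b$ with $a+b$ fixed is log-convex in $(a,b)$ so maximised at the endpoints). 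This last step is the one requiring a small amount of care but is entirely routine; no singular integral in $z$ enters here since the bound is pointwise in $z$, with all $z$-dependence carried transparently by $\theta = G(z/x^\gamma)$. Finally, setting this right-hand side equal to $f_p(|v|,|v_\star|,z,x)$ records the bound in the form needed for the subsequent integration against $dz\,d\varphi$ in the proof of Lemma \ref{lemma: big calculation}.
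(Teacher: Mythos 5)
Your proposal follows the paper's proof essentially line for line: expand $|v+a|^2$ using the orthogonality of $\Gamma(v-v_\star,\varphi)$ to $v-v_\star$, obtain a bound of the form $\tfrac{1+\cos\theta}{2}|v|^2+\tfrac{1-\cos\theta}{2}|v_\star|^2+C|v||v_\star|\sin\theta$, raise to the power $p/2$ via the elementary inequality $(a+b)^{p/2}\le a^{p/2}+b^{p/2}+C_p(a^{p/2-1}b+ab^{p/2-1})$, replace $\bigl(\tfrac{1-\cos\theta}{2}\bigr)^{p/2}$ by $\bigl(\tfrac{\sin\theta}{2}\bigr)^{p/2}$ via $1-\cos\theta\le\sin\theta$ on $[0,\pi/2]$, and sweep all cross terms into $C(|v|^{p-1}|v_\star|+|v||v_\star|^{p-1})\sin\theta$ by Young.

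One step you state but do not justify, and which is the single non-routine point of the lemma: the cross term must come out as $O(|v||v_\star|\sin\theta)$, \emph{not} $O((|v|^2+|v_\star|^2)\sin\theta)$ — a naive Cauchy--Schwarz on $v\cdot\Gamma$ gives only $|v||v-v_\star|\le|v|(|v|+|v_\star|)$, which would yield a $|v|^p\sin\theta$ error that swamps the Povzner gain. The fix is exactly the orthogonality $(v-v_\star)\cdot\Gamma=0$, so that $v\cdot\Gamma=v_\star\cdot\Gamma$ and therefore $|v\cdot\Gamma|\le\min(|v|,|v_\star|)\,|v-v_\star|\le 2|v||v_\star|$. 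You cite the orthogonality but do not draw this consequence; it is worth making explicit, as without it the claimed form of $f_p$ would not follow.
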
 \begin{proof} Let us start from \begin{equation} v+a=v\left(\frac{1+\cos G(z/x^\gamma)}{2}\right)+v_\star\left(\frac{1-\cos G(z/x^\gamma)}{2}\right)+\frac{\sin G(z/x^\gamma)}{2}\Gamma(v-v_\star,\varphi).\end{equation} We now take the norm of both sides, recalling that $|\Gamma(v-v_\star,\varphi)|=|v-v_\star|$: \begin{equation}\begin{split} |v+a|^2&=\left(\frac{1+\cos G(z/x^\gamma)}{2}\right)^2|v|^2 +\left(\frac{1-\cos G(z/x^\gamma)}{2}\right)^2|v_\star|^2 \\& \hs \hs +\left(\frac{\sin G(z/x^\gamma)}{2}\right)^2(|v|^2+|v_\star|^2+2|v||v_\star|) \\& \hs \hs +\left(\frac{\sin G(z/x^\gamma)}{2}\right)\left(\frac{1+\cos G(z/x^\gamma)}{2}\right)v\cdot \Gamma(v-v_\star,\varphi)\\& \hs \hs +\left(\frac{\sin G(z/x^\gamma)}{2}\right)\left(\frac{1-\cos G(z/x^\gamma)}{2}\right)v_\star\cdot \Gamma(v-v_\star,\varphi)\\& \hs \hs +\left(\frac{1-\cos G(z/x^\gamma)}{2}\right)\left(\frac{1+\cos G(z/x^\gamma)}{2}\right)v\cdot v_\star.\end{split}\end{equation} For the third and fourth lines, we use orthogonality to see that $v\cdot \Gamma(v-v_\star,\varphi)=v_\star\cdot \Gamma(v-v_\star,\varphi)$. It follows that \begin{equation}\left|v\cdot \Gamma(v-v_\star,\varphi)\right|\le \min(|v|,|v_\star|)(|v|+|v_\star|)\le 2|v||v_\star|.\end{equation} Using the inequality $1-\cos G(z)\le \sin G(z)$, we now group similar terms to obtain \begin{equation} \begin{split} |v+a|^2 & \le \left(\frac{1+\cos G(z/x^\gamma)}{2}\right)|v|^2+\left(\frac{1-\cos G(z/x^\gamma)}{2}\right)|v_\star|^2+C\sin G(z/x^\gamma)|v||v_\star|\\ & :=h_1+h_2+h_3. \end{split}\end{equation} We now raise both sides to the $(p/2)^\text{th}$ power, recalling the inequality $(x+y)^{p/2}\le x^{p/2}+y^{p/2}+C(xy^{p/2-1}+x^{p/2-1}y)$, valid for all $x, y>0$. It is straightforward to see that the cross terms are dominated by the final term in (\ref{eq: fp}): \begin{equation} h_1^{p/2-1}(h_2+h_3)+h_1(h_2+h_3)^{p/2-1}\le C(|v|^{p-1}|v_\star|+|v||v_\star|^{p-1})\sin G(z/x^\gamma); \end{equation}  \begin{equation} h_2^{p/2-1}h_3+h_2h_3^{p/2-1}\le C(|v|^{p-1}|v_\star|+|v||v_\star|^{p-1})\sin G(z/x^\gamma); \end{equation}
\begin{equation} h_3^{p/2}\le C(|v|^{p-1}|v_\star|+|v||v_\star|^{p-1})\sin G(z/x^\gamma). \end{equation} Using this inequality twice, we thus obtain \begin{equation} \begin{split}|v+a|^p &\le h_1^{p/2}+(h_2+h_3)^{p/2}+ C(|v|^{p-1}|v_\star|+|v||v_\star|^{p-1})\sin G(z/x^\gamma) \\ & \le h_1^{p/2}+h_2^{p/2}+h_3^{p/2}+  C(|v|^{p-1}|v_\star|+|v||v_\star|^{p-1})\sin G(z/x^\gamma)  \\ & \le h_1^{p/2}+h_2^{p/2}+  C(|v|^{p-1}|v_\star|+|v||v_\star|^{p-1})\sin G(z/x^\gamma) \end{split} \end{equation}  which gives the desired bound on substituting the definitions of $h_1, h_2$. \end{proof}  We now break up $\mathcal{E}_K$ as follows. We define \begin{equation} \mathcal{E}^1_K=\int_0^\infty dz \int_{\SSd} d\varphi \left(|v'|^p|v'-\widetilde{v}'_K|^2-|v|^p|v-\widetilde{v}|^2\right); \end{equation} \begin{equation} \begin{split}  &\mathcal{E}^2_K(v,\widetilde{v}, v_\star,\widetilde{v}_\star)=\int_0^\infty dz \int_{\SSd} d\varphi \left(|\widetilde{v}'_K|^p|v'-\widetilde{v}'_K|^2-|\widetilde{v}|^p|v-\widetilde{v}|^2\right)\end{split};\end{equation}\begin{equation} \mathcal{E}^3_K(v,\widetilde{v},v_\star,\widetilde{v}_\star)=\int_0^\infty dz \int_{\SSd}d\varphi \h (|v'-\widetilde{v}'_K|^2-|v-\widetilde{v}|^2). \end{equation} In this way, using the definition of $d_p$, it follows that $\mathcal{E}_K=\mathcal{E}^1_K+\mathcal{E}^2_K+\mathcal{E}^3_K$. It therefore suffices to prove the following estimates. \begin{lemma}\label{lemma: broken up big calculation}For some constants $K_0=K_0(p), c=c(G,d)$ and $C=C(G,d,p)$, and $q=p+2+\gamma$, whenever $K\ge K_0(p)$, we have \begin{equation} \begin{split} \label{eq: E K conclusion} \mathcal{E}_K(v,\widetilde{v},v_\star,\widetilde{v}_\star)& \le \left(c+\left(c-\frac{\lambda_p}{2}\right)|v|^{p+\gamma}+c|\widetilde{v}|^{p+\gamma}\right)|v-\widetilde{v}|^2\\&\hs +\left(c|v_\star|^{p+\gamma}+c|\widetilde{v}_\star|^{p+\gamma}\right)|v_\star-\widetilde{v}_\star|^2 \\&\hs +C\left(|v_\star|^{p+\gamma}+|\widetilde{v}_\star|^{p+\gamma}\right)\left(1+|v|^p+|\widetilde{v}|^p\right)|v-\widetilde{v}|^2\\&\hs +C\left(|v|^{p+\gamma}+|\widetilde{v}|^{p+\gamma}\right)\left(1+|v_\star|^p+|\widetilde{v}_\star|^p\right)|v_\star-\widetilde{v}_\star|^2\\ & \hs +CK^{1-1/\nu}(1+|v|^l+|v_\star|^l+|\widetilde{v}|^l+|\widetilde{v}_\star|^l);\end{split} \end{equation} \begin{equation} \label{eq: widetilde E K} \begin{split} \mathcal{E}^2_K(v,\widetilde{v},v_\star,\widetilde{v}_\star)& \le \left(c+\left(c-\frac{\lambda_p}{2}\right)|\widetilde{v}|^{p+\gamma}+c|{v}|^{p+\gamma}\right)|v-\widetilde{v}|^2\\&\hs +\left(c|v_\star|^{p+\gamma}+c|\widetilde{v}_\star|^{p+\gamma}\right)|v_\star-\widetilde{v}_\star|^2 \\&\hs +C\left(|v_\star|^{p+\gamma}+|\widetilde{v}_\star|^{p+\gamma}\right)\left(1+|v|^p+|\widetilde{v}|^p\right)|v-\widetilde{v}|^2\\&\hs +C\left(|v|^{p+\gamma}+|\widetilde{v}|^{p+\gamma}\right)\left(1+|v_\star|^p+|\widetilde{v}_\star|^p\right)|v_\star-\widetilde{v}_\star|^2\\ & \hs +CK^{1-1/\nu}(1+|v|^l+|v_\star|^l+|\widetilde{v}|^l+|\widetilde{v}_\star|^l) \end{split} \end{equation} and \begin{equation} \begin{split} \mathcal{E}^3_K(v,\widetilde{v},v_\star,\widetilde{v}_\star) & \le c(1+|v|^{\gamma}+|\widetilde{v}|^\gamma+|v_\star|^\gamma+|\widetilde{v}_\star|^\gamma)(|v-\widetilde{v}|^2+|v_\star-\widetilde{v}_\star|^2) \\ & \hs +CK^{1-1/\nu}(1+|v|^l+|\widetilde{v}|^l+|v_\star|^l+|\widetilde{v}_\star|^l). \end{split}  \end{equation} \end{lemma}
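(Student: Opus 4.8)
The plan is to prove the three inequalities in Lemma~\ref{lemma: broken up big calculation} by the same two--step scheme: first replace the truncated jump $\widetilde a_K$ by the full jump $\widetilde a:=a(\widetilde v,\widetilde v_\star,z,R\varphi)$, obtaining a ``fully noncutoff'' quantity $\mathcal{E}^{j,\infty}$, estimate that, and then control the correction $\mathcal{E}^j_K-\mathcal{E}^{j,\infty}$. The common computational device is to write $v'=v+a$, $\widetilde v'_K=\widetilde v+\widetilde a_K$ and expand the relevant squared distance, e.g. $|v'-\widetilde v'_K|^2=|v-\widetilde v|^2+2(v-\widetilde v)\cdot(a-\widetilde a_K)+|a-\widetilde a_K|^2$, and then integrate in $\varphi\in\SSd$ first: the terms linear in $\Gamma(v-v_\star,\varphi)$ or $\Gamma(\widetilde v-\widetilde v_\star,R\varphi)$ have zero $\varphi$-mean (the latter since $R$ preserves the uniform measure on $\SSd$). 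Since $\mathcal{E}^2_K$ is obtained from $\mathcal{E}^1_K$ by the substitution $(v,v_\star)\leftrightarrow(\widetilde v,\widetilde v_\star)$ --- up to the asymmetry of which side is cut off, handled uniformly in the correction step --- it is enough to prove (\ref{eq: E K conclusion}) and the $\mathcal{E}^3_K$-bound in detail.

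For the noncutoff quadratic estimates, the key cancellation is Lemma~\ref{lemma: ATT}, which I would use in the form $|\Gamma(v-v_\star,\varphi)-\Gamma(\widetilde v-\widetilde v_\star,R\varphi)|\le|(v-v_\star)-(\widetilde v-\widetilde v_\star)|$ pointwise in $\varphi$ (from $|\Gamma(X,\varphi)-\Gamma(Y,R\varphi)|^2=|X|^2+|Y|^2-2\Gamma(X,\varphi)\cdot\Gamma(Y,R\varphi)\le|X-Y|^2$), and in the averaged form that makes the surviving cross term in $\mathcal{E}^3$ manageable. The only other issue is the mismatch between the deflection angles $\theta=G(z/|v-v_\star|^\gamma)$ and $\widetilde\theta=G(z/|\widetilde v-\widetilde v_\star|^\gamma)$: writing $\bar a$ for the $v$-side jump computed with the angle $\widetilde\theta$, one has $|\bar a-\widetilde a|\le\sin\widetilde\theta\,|(v-v_\star)-(\widetilde v-\widetilde v_\star)|$ by the Tanaka bound above, while $\int_0^\infty|a-\bar a|^2\,dz\le\tfrac14|v-v_\star|^2\int_0^\infty\big(G(z/|v-v_\star|^\gamma)-G(z/|\widetilde v-\widetilde v_\star|^\gamma)\big)^2\,dz$ is controlled by Lemma~\ref{lemma: estimates for G}(iii), the algebraic inequality $t(1-t^\gamma)\le1-t$ ($0<t\le1$, $\gamma\le1$) turning this into $c(|v-v_\star|^\gamma+|\widetilde v-\widetilde v_\star|^\gamma)|(v-v_\star)-(\widetilde v-\widetilde v_\star)|^2$. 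Carrying out the $z$-integration with $\int_0^\infty(1-\cos G(z/x^\gamma))\,dz=c\,x^\gamma$ and $\int_0^\infty\sin G(z/x^\gamma)\,dz=c\,x^\gamma$ (finite by (\ref{eq: mild singularity})) and using $x^\gamma=|v-v_\star|^\gamma\le1+|v|^\gamma+|v_\star|^\gamma$ yields the $\mathcal{E}^3_K$-bound and, after inserting $|v'|^p\le c(|v|^p+|v_\star|^p)$, the third and fourth lines of (\ref{eq: E K conclusion}).

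For the Povzner parts, I split $|v'|^p|v'-\widetilde v'|^2-|v|^p|v-\widetilde v|^2=|v'|^p(|v'-\widetilde v'|^2-|v-\widetilde v|^2)+(|v'|^p-|v|^p)|v-\widetilde v|^2$; the first summand is the geometric term just handled, and into the second I feed Lemma~\ref{lemma: pozvner term}. Integrating the resulting leading term against $dz\,d\varphi$, the change of variable $u=H(\theta)$ identifies $|\SSd|\int_0^\infty\big(1-(\tfrac{1+\cos G(u)}{2})^{p/2}\big)\,du=\lambda_p$, so it becomes $-\lambda_p|v-v_\star|^\gamma|v|^p|v-\widetilde v|^2$, and $|v-v_\star|^\gamma\ge|v|^\gamma-|v_\star|^\gamma$ converts this to $-\lambda_p|v|^{p+\gamma}|v-\widetilde v|^2$ modulo a term $+\lambda_p|v_\star|^\gamma|v|^p|v-\widetilde v|^2$ absorbed into line three. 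I keep only $\tfrac{\lambda_p}{2}$ of the gain, holding the rest in reserve to dominate the positive, $p$-independent $c\,|v|^{p+\gamma}|v-\widetilde v|^2$ produced by the geometric summand, leaving the coefficient $c-\tfrac{\lambda_p}{2}$ asserted; the remaining $|v_\star|^p(\tfrac{\sin\theta}{2})^{p/2}$ and cross terms of Lemma~\ref{lemma: pozvner term} integrate to $c\,|v-v_\star|^\gamma(|v|^p+|v_\star|^p)|v-\widetilde v|^2$-type contributions distributed between lines one and three. Finally the cutoff correction $\mathcal{E}^j_K-\mathcal{E}^{j,\infty}$ is supported on $z>L:=K|\widetilde v-\widetilde v_\star|^\gamma$, where $\widetilde a_K=0$ and it only involves $\widetilde a$; the substitution $u=z/|\widetilde v-\widetilde v_\star|^\gamma$ turns the $z$-integrals into $|\widetilde v-\widetilde v_\star|^\gamma\int_K^\infty G(u)\,du$ and $|\widetilde v-\widetilde v_\star|^{2+\gamma}\int_K^\infty G(u)^2\,du$, which by Lemma~\ref{lemma: estimates for G}(i) are $O(K^{1-1/\nu})$ and $O(K^{1-2/\nu})\le O(K^{1-1/\nu})$; bounding the $v$-side factors by $|v'|^p\le c(|v|^p+|v_\star|^p)$ and $|v'-\widetilde v|\le c(1+|v|+|v_\star|+|\widetilde v|)$ (uniform in $\varphi$), every correction is at most $CK^{1-1/\nu}$ times a monomial of total degree $\le l=p+2+\gamma$ in $|v|,|v_\star|,|\widetilde v|,|\widetilde v_\star|$, hence $\le CK^{1-1/\nu}(1+|v|^l+|v_\star|^l+|\widetilde v|^l+|\widetilde v_\star|^l)$, which is the last line of each bound.

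The main obstacle is exactly the bookkeeping flagged in the Remark after Lemma~\ref{lemma: big calculation}: the coefficients on the dangerous terms $|v|^{p+\gamma}|v-\widetilde v|^2$ and $|\widetilde v|^{p+\gamma}|v-\widetilde v|^2$ must come out \emph{independent of $p$}, which forces one to exploit the Tanaka lower bound on $\Gamma\cdot\Gamma$ and the exact value $\lambda_p$ of the leading Povzner integral before passing to absolute values, rather than crudely estimating $|a\cdot\widetilde a_K|\le|a||\widetilde a_K|$, and to check that no $p$-dependent constant enters through the geometric summand of $\mathcal{E}^1_K,\mathcal{E}^2_K$. The subsidiary technical nuisance is making the cutoff corrections genuinely $O(K^{1-1/\nu})$ with polynomial weights of degree \emph{exactly} at most $l$: this uses $\nu<1$ (so $2/\nu>1/\nu>1$ and the tails $\int_K^\infty G$, $\int_K^\infty G^2$ converge to negative powers of $K$) and the fact that after the substitution $u=z/|\widetilde v-\widetilde v_\star|^\gamma$ the residual $\widetilde v$-powers are positive and telescope with the $v$-side weights to total degree $p+2+\gamma$, which has to be verified for the linear and the quadratic parts of $|a-\widetilde a_K|^2$ separately.
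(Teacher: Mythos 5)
Your high-level architecture matches the paper's: isolate a Povzner piece producing $-\lambda_p$, a quadratic Tanaka piece for $|a-\widehat{a}|^2$ via Lemma~\ref{lemma: ATT} and the $G$-mismatch estimate of Lemma~\ref{lemma: estimates for G}, a cross term killed by the $\varphi$-mean-zero of $\Gamma$, and a $z>L$ tail controlled by $\int_K^\infty G\,du\sim K^{1-1/\nu}$. You also correctly identify the crux — the coefficient multiplying $|v|^{p+\gamma}|v-\widetilde v|^2$ from the non-Povzner part must be $p$-independent so that $\lambda_p/2$ can beat it. Unfortunately, the specific split you propose defeats exactly this.

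You write
\begin{equation*}
|v'|^p|v'-\widetilde v'|^2-|v|^p|v-\widetilde v|^2=|v'|^p\bigl(|v'-\widetilde v'|^2-|v-\widetilde v|^2\bigr)+\bigl(|v'|^p-|v|^p\bigr)|v-\widetilde v|^2
\end{equation*}
and feed Lemma~\ref{lemma: pozvner term} only into the second summand, while bounding the first summand's weight by $|v'|^p\le c(|v|^p+|v_\star|^p)$, asserting this produces a \emph{$p$-independent} $c\,|v|^{p+\gamma}|v-\widetilde v|^2$. That assertion is false: by energy conservation $|v'|^2\le|v|^2+|v_\star|^2$ with equality attainable, so the sharp constant is $c=2^{p/2-1}$. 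The geometric summand then contributes $\sim 2^{p/2}|v|^{p+\gamma}|v-\widetilde v|^2$, while $\lambda_p$ grows only like $p^{\nu/2}$ (substitute $u=\sqrt{p}\,\theta$ in the defining integral and use $\beta(\theta)\sim\theta^{-1-\nu}$), so the reserved $\lambda_p/2$ can never dominate it. Note also that this bad term cannot be hidden in line three of \eqref{eq: E K conclusion}, since that line carries an obligatory $|v_\star|^{p+\gamma}$ or $|\widetilde v_\star|^{p+\gamma}$ prefactor which vanishes when $v_\star=\widetilde v_\star=0$ but $|v|^{p+\gamma}|v-\widetilde v|^2$ does not. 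A second, related problem: with $|v'|^p$ as the weight, the weight is $\varphi$-dependent (through $a$), so the $\int_{\SSd}(a-\widehat a)\,d\varphi$ cancellation that you rightly invoke for $\mathcal{E}^3_K$ no longer applies verbatim to the weighted cross term.

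The fix, which is precisely what the paper does, is to apply the Povzner bound $|v'|^p\le f_p(|v|,|v_\star|,z,x)$ of Lemma~\ref{lemma: pozvner term} to the \emph{whole product} $|v'|^p|v'-\widetilde v'_K|^2$ (both factors nonnegative) \emph{before} expanding the square. Then
\begin{equation*}
f_p|v'-\widetilde v'_K|^2-|v|^p|v-\widetilde v|^2=\bigl(f_p-|v|^p\bigr)|v-\widetilde v|^2+f_p\bigl(|v'-\widetilde v'_K|^2-|v-\widetilde v|^2\bigr),
\end{equation*}
which is the paper's $\T_1+(\T_2+\T_3+\T_4)$. The payoff is twofold: $f_p$, and hence its uniform envelope $f_p^\star=|v|^p+C_p(|v||v_\star|^{p-1}+|v|^{p-1}|v_\star|)+|v_\star|^p$, carries coefficient $1$ on $|v|^p$ (the $p$-dependence sits only on the cross terms, absorbed by Young into the lines with $p$-dependent $C$); and $f_p$ is $\varphi$-independent, so the mean-zero-of-$\Gamma$ cancellation survives in the cross term. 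Your treatment of the quadratic Tanaka piece, the $G$-mismatch, the $\lambda_p$ identification, and the cutoff tail all go through once this weight is used; you should also note, for $\mathcal{E}^2_K$, that the cutoff truncates the Povzner integral to $\lambda_{p,K}$ rather than $\lambda_p$, which is why $K_0(p)$ must be chosen so that $\lambda_{p,K}\ge\tfrac{5}{6}\lambda_p$ — this is a feature of $\T_1$ on the cutoff side, not of the correction step as you suggest.
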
 
 \begin{proof}[Proof of Lemmas \ref{lemma: big calculation}, \ref{lemma: broken up big calculation}] Let us begin from the bound (\ref{eq: fp}), and define also \begin{equation} f_p^\star(|v|,|v_\star|)= |v|^p+C(|v||v_\star|^{p-1}+|v|^{p-1}|v_\star|)+|v_\star|^p\end{equation} which is an upper bound for $f_p$, uniformly in $z, x$. We therefore find \begin{equation}\begin{split}\mathcal{E}_K^1\le \int_0^\infty dz\int_{\SSd} d\varphi \h (f_p(|v|,|v_\star|,z,x)|v-\widetilde{v}+a-\widetilde{a}|^2-|v|^p|v-\widetilde{v}|^2). \end{split} \end{equation} Let us also introduce \begin{equation} \widehat{a}=a(\widetilde{v},\widetilde{v}_\star,z,R(v-v_\star,\widetilde{v}-\widetilde{v}_\star)\varphi) \end{equation} so that $\widetilde{a}=\widehat{a}1(z\le L)$. We can therefore replace $\widetilde{a}$ by $\widehat{a}$, introducing a further error: \begin{equation}\begin{split} \mathcal{E}^1_K&\le \int_0^\infty dz \int_{\SSd}d\varphi \h (f_p(|v|,|v_\star|,z,x)|v-\widetilde{v}+a-\widehat{a}|^2-|v|^p|v-\widetilde{v}|^2)\\& \hs + \int_L^\infty dz\int_{\SSd}d\varphi \h f_p(|v|,|v_\star|,z,x)\left(|v-\widetilde{v}+a|^2-|v-\widetilde{v}+a-\widehat{a}|^2\right).\end{split}\end{equation} Finally, we expand the squared norm $|v-\widetilde{v}+a-\widehat{a}|^2$ in the first line to obtain the decomposition  \begin{equation} \label{eq: decomposition for E} \begin{split} \mathcal{E}^1_K &\le \T_1 + \T_2+\T_3+\T_4\end{split} \end{equation} where we define \begin{equation}
     \T_1:=\int_0^\infty dz \int_{\SSd}d\varphi \h \left(f_p(|v|,|v_\star|,z,x)-|v|^p\right) |v-\widetilde{v}|^2;
 \end{equation} \begin{equation}
\T_2:=2\int_0^\infty dz\int_{\SSd} d\varphi \h f_p(|v|,|v_\star|,z,x)(v-\widetilde{v})\cdot(a-\widehat{a});
 \end{equation} \begin{equation} \T_3:= \int_0^\infty dz\int_{\SSd} d\varphi \h f^\star_p(|v|,|v_\star|)|a-\widehat{a}|^2;\end{equation} \begin{equation} \T_4:=
     \int_L^\infty dz\int_{\SSd} d\varphi \h f_p(|v|,|v_\star|,z,x) \left(|v+a-\widetilde{v}|^2-|v+a-\widetilde{v}-\widehat{a}|^2\right).
 \end{equation}We will now analyse this bound for $\mathcal{E}^1_K$ in detail, and an equivalent analysis of $\mathcal{E}^2_K, \mathcal{E}^3_K$ will be discussed at the end of the proof. Let us now deal with these terms one by one. 
 
 \paragraph{1. Analysis of $\T_1$. } Recalling the construction of $G$, the moment integral in $\T_1$ can be reparametrised in terms of $\theta$: \begin{equation} \begin{split} \label{eq: start of t1} &\hspace{-1cm}\int_0^\infty (f_p(|v|,|v_\star|,z,x)-|v|^p)dz \\& =-|v-v_\star|^\gamma |v|^p |\mathbb{S}^{d-2}|\int_0^{\pi/2} \beta(\theta)\left(1-\left(\frac{1+\cos \theta}{2}\right)^{p/2} \right) d\theta  \\ &\hs  +|v-v_\star|^\gamma (|v_\star|^p+C(|v||v_\star|^{p-1}+|v|^{p-1}|v_\star|)) |\mathbb{S}^{d-2}|\int_0^{\pi/2} \beta(\theta)\sin(\theta)d\theta\\[1ex]&\le -\lambda_p |v-v_\star|^\gamma |v|^p+ C|v-v_\star|^\gamma(|v_\star|^p+|v||v_\star|^{p-1}+|v|^{p-1}|v_\star|).\end{split} \end{equation} On the negative term, we use the bound $ |v|^\gamma-|v_\star|^\gamma \le |v-v_\star|^\gamma$ and Young's inequality to see that \begin{equation}\begin{split}-|v-v_\star|^\gamma|v|^p& \le -|v|^{p+\gamma} +  |v_\star|^\gamma|v|^p \\ & \le -|v|^{p+\gamma} + \frac{1}{4}|v|^{p+\gamma} +  4^{p/\gamma}|v_\star|^{p+\gamma}. \end{split}\end{equation} For the positive term in (\ref{eq: start of t1}), we use $|v-v_\star|^\gamma \le |v|^\gamma+|v_\star|^\gamma$ to obtain \begin{equation} \begin{split} |v-v_\star|^\gamma(|v_\star|^p+|v||v_\star|^{p-1}+|v|^{p-1}|v_\star|) &\le |v|^{\gamma}|v_\star|^p+|v|^{\gamma+1}|v_\star|^{p-1}+|v|^{p+\gamma-1}|v| \\ & +|v_\star|^{\gamma+p}+|v||v_\star|^{p+\gamma-1}+|v|^{p-1}|v_\star|^{p+\gamma}+|v|^{p-1}|v_\star|^{1+\gamma}.\end{split}  \end{equation} We now use Young's inequality on each term to obtain \begin{equation} \begin{split} C|v-v_\star|^\gamma(|v_\star|^p+|v||v_\star|^{p-1}+|v|^{p-1}|v_\star|) \le \frac{\lambda_p}{4}|v|^{p+\gamma}+C|v_\star|^{p+\gamma}\end{split} \end{equation} Combining, we have shown that \begin{equation} \int_0^\infty (f_p(|v|,|v_\star|,z,x)-|v|^p)dz \le -\frac{\lambda_p}{2}|v|^{p+\gamma}+C |v_\star|^{p+\gamma}\end{equation} and so \begin{equation}\label{eq: T1} \T_1 \le -\frac{\lambda_p}{2}|v|^{p+\gamma}|v-\widetilde{v}|^2+C |v_\star|^{p+\gamma}|v-\widetilde{v}|^2.\end{equation}
 
 \paragraph{2. Analysis of $\T_2$. } We first observe that  \begin{equation} \int_{\SSd}d\varphi \h (a-\widehat{a})=-\frac{1}{2}(1-\cos G(z/x^\gamma))(v-v_\star)+\frac{1}{2}(1-\cos G(z/\widetilde{x}^\gamma))(\widetilde{v}-\widetilde{v}_\star). \end{equation} It therefore follows that \begin{equation}\label{eq: first form of T2} \T_2=(v-\widetilde{v})\cdot\left\{\Phi(\widetilde{x},|v|,|v_\star|,x)(\widetilde{v}-\widetilde{v}_\star)-\Phi(x,|v|,|v_\star|,x)(v-v_\star)\right\}\end{equation} where we define, for any $y,u,v,w>0$, \begin{equation} \begin{split} \Phi(y,u,v,w)&=\int_0^\infty dz \h f_p(u,v,z,w)(1-\cos G(z/y^\gamma))dz \\ & :=\Psi(y^\gamma,u,v,w). \end{split} \end{equation} We differentiate the function thus defined to obtain \begin{equation} \label{eq: differentiate psi}\begin{split} \frac{\partial}{\partial y} \Psi(y,u,v,w) &=\int_0^{\infty}f_p(u,v,z,w)\left(-\frac{z}{y}\right)\frac{d}{d z}\left(1-\cos G\left(\frac{z}{y}\right)\right)dz \\&=\int_0^\infty f_p(u,v,yz,w)\left(z\frac{d}{dz}(1-\cos G(z))\right)dz \end{split} \end{equation} where the final line follows by an integration by substitution $z\mapsto yz$. From the calculations in Lemma \ref{lemma: estimates for G}, we therefore conclude that \begin{equation}\left|\frac{\partial}{\partial y} \Psi(y,u,v,w) \right|\le cf_p^\star(u,v).  \end{equation} Now, using the bound $|x^\gamma-y^\gamma|\le 2|x-y|/(x^{1-\gamma}+y^{1-\gamma})$, we obtain \begin{equation} \label{eq: control of PhiL}
  \left|\Phi(x,|v|,|v_\star|, x)-\Phi(\widetilde{x},|v|,|v_\star|,x)\right| \le \frac{c|x-\widetilde{x}|}{x^{1-\gamma}+\widetilde{x}^{1-\gamma}}f_p^\star(|v|,|v_\star|)\end{equation} and, for all $y>0$, \begin{equation} |\Phi(x, |v|, |v_\star|,y)|\le cy^\gamma f_p^\star(|v|, |v_\star|).\end{equation} We therefore obtain the bound \begin{equation}\label{eq: T2} \begin{split} |\T_2|&\le |v-\widetilde{v}|\bigg\{|v-v_\star-\widetilde{v}+\widetilde{v}_\star||\Phi(x,|v|,|v_\star|,x)+\Phi(\widetilde{x},|v|,|v_\star|,x)|\\ & \hs +(|v-v_\star|+|\widetilde{v}-\widetilde{v}_\star|)\left|\Phi(x,|v|,|v_\star|, x)-\Phi(\widetilde{x},|v|,|v_\star|,x)\right|\bigg\} \\[1ex] &\le c\left(|v-\widetilde{v}|^2+|v_\star-\widetilde{v}_\star|^2\right)(|v|^\gamma+|v_\star|^\gamma+|\widetilde{v}|^\gamma +|\widetilde{v}_\star|^\gamma)f_p^\star(|v|,|v_\star).\end{split} \end{equation} 
\paragraph{3. Analysis of $\T_3$.} We now turn to the term $\T_3$, and begin by noting that \begin{equation}\begin{split}\label{eq: a dot atilde} a\cdot \widehat{a}&=\frac{1}{4}\left(1-\cos G(z/x^\gamma)\right)\left(1-\cos G(z/\widetilde{x}^\gamma)\right)(v-v_\star)\cdot(\widetilde{v}-\widetilde{v}_\star) \\ &-\frac{1}{4} \left(1-\cos G(z/x^\gamma)\right)\sin G(z/\widetilde{x}^\gamma)(v-v_\star)\cdot \Gamma(\widetilde{v}-\widetilde{v}_\star, R\varphi) \\ &-\frac{1}{4} \left(1-\cos G(z/\widetilde{x}^\gamma)\right)\sin G(z/{x}^\gamma)\Gamma(v-v_\star,\varphi)\cdot(\widetilde{v}-\widetilde{v}_\star)\\& +\frac{1}{4}\sin G(z/{x}^\gamma)\sin G(z/\widetilde{x}^\gamma)\Gamma(v-v_\star,\varphi)\cdot\Gamma(\widetilde{v}-\widetilde{v}_\star, R\varphi). \end{split}\end{equation} We now integrate over $\varphi \in \SSd$. Since $\int_{\SSd}\Gamma(u,\varphi)d\varphi=0$ and $R$ preserves the uniform measure $d\varphi$, the middle two lines integrate to $0$. We also recall, from the construction of $R=R(v-v_\star, \widetilde{v}-\widetilde{v}_\star)$ in Lemma \ref{lemma: ATT}, that $\Gamma(v-v_\star,\varphi)\cdot \Gamma(\widetilde{v}-\widetilde{v}_\star,R\varphi)\ge (v-v_\star)\cdot (\widetilde{v}-\widetilde{v}_\star)$, and so integrating (\ref{eq: a dot atilde}) gives \begin{equation} \begin{split} \label{eq: integral a dot atilde}  \int_{\SSd} a\cdot \widehat{a}\h d\varphi & \ge \frac{1}{4}\bigg[\left(1-\cos G(z/x^\gamma)\right)\left(1-\cos G(z/\widetilde{x}^\gamma)\right)\\ & \hs \hs +\sin G(z/{x}^\gamma)\sin G(z/\widetilde{x}^\gamma)\bigg](v-v_\star)\cdot (\widetilde{v}-\widetilde{v}_\star) \\ &=\frac{1}{4}\bigg[(1-\cos G(z/x^\gamma))+(1-\cos G(z/\widetilde{x}^\gamma))\\& \hs \hs -\bigg(1-\cos\big(G(z/x^\gamma)-G(z/\widetilde{x}^\gamma)\big)\bigg)\bigg](v-v_\star)\cdot (\widetilde{v}-\widetilde{v}_\star).\end{split} \end{equation} Similar, elementary calculations show that \begin{equation}\label{eq: norm of a atilde} |a|^2=\frac{1}{2}(1-\cos G(z/x^\gamma))|v-v_\star|^2;\hs |\widehat{a}|^2=\frac{1}{2}(1-\cos G(z/\widetilde{x}^\gamma))|\widetilde{v}-\widetilde{v}_\star|^2. \end{equation} We now observe that\begin{equation} \int_0^\infty (1-\cos G(z/x^\gamma))dz=cx^\gamma \end{equation}and so, from (\ref{eq: integral a dot atilde}, \ref{eq: norm of a atilde}), we obtain \begin{equation} \begin{split} \label{eq: integral of norm a-atilde squared}\int_0^\infty dz\int_{\SSd}d\varphi \h |a-\widehat{a}|^2 & \le \frac{c}{2}\left(x^{2+\gamma}+\widetilde{x}^{2+\gamma}-(v-v_\star)\cdot(\widetilde{v}-\widetilde{v}_\star)(x^\gamma+\widetilde{x}^\gamma)\right) \\ &\hs +\frac{x\widetilde{x}}{4}\int_0^\infty \left(G(z/x^\gamma)-G(z/\widetilde{x}^\gamma)\right)^2dz. \end{split} \end{equation} Recalling that $x^2=(v-v_\star)\cdot (v-v_\star)$ and $\widetilde{x}^2=(\widetilde{v}-\widetilde{v}_\star)\cdot(\widetilde{v}-\widetilde{v}_\star)$, the term in parentheses on the first line rearranges to \begin{equation}\begin{split} &x^{2+\gamma}+\widetilde{x}^{2+\gamma} -(v-v_\star)\cdot(\widetilde{v}-\widetilde{v}_\star)(x^\gamma+\widetilde{x}^\gamma) \\[1ex] & \hs = (v-v_\star)\cdot\left[(v-v_\star)-(\widetilde{v}-\widetilde{v}_\star)\right]x^\gamma \\ & \hs \hs + (\widetilde{v}-\widetilde{v}_\star)\cdot\left[(\widetilde{v}-\widetilde{v}_\star)-(v-v_\star)\right]\widetilde{x}^\gamma\\[1ex]& \hs = \left((v-\widetilde{v})-(v_\star-\widetilde{v}_\star)\right)\cdot\left[(v-v_\star)x^\gamma-(\widetilde{v}-\widetilde{v}_\star)\widetilde{x}^\gamma\right].\end{split} \end{equation} The same estimates as in (\ref{eq: control of PhiL}) now give \begin{equation} \label{eq: bound on t31}\begin{split} &x^{2+\gamma}+\widetilde{x}^{2+\gamma} -(v-v_\star)\cdot(\widetilde{v}-\widetilde{v}_\star)(x^\gamma+\widetilde{x}^\gamma)\\ & \hs \hs \le c\left(|v-\widetilde{v}|^2+|v_\star-\widetilde{v}_\star|^2\right)(|v|^\gamma+|v_\star|^\gamma+|\widetilde{v}|^\gamma+|\widetilde{v}_\star|^\gamma).\end{split} \end{equation} Let us now consider the final line of (\ref{eq: integral of norm a-atilde squared}). By Lemma \ref{lemma: estimates for G}, we have the bound\begin{equation} \int_0^\infty \left(G(z/x^\gamma)-G(z/\widetilde{x}^\gamma)\right)^2dz\le c\frac{|x^\gamma-\widetilde{x}^\gamma|^2}{x^\gamma+\widetilde{x}^\gamma}.\end{equation} We therefore obtain\begin{equation} \label{eq: bound on t32} \begin{split} &x\widetilde{x}\int_0^\infty \left(G(z/x^\gamma)-G(z/\widetilde{x}^\gamma)\right)^2 dz  \le c\frac{\min(x,\widetilde{x})}{\max(x,\widetilde{x})^{1-\gamma}}|x-\widetilde{x}|^2 \\[1ex] & \hs \hs \le c(|v|^\gamma+|v_\star|^\gamma+|\widetilde{v}|^\gamma+|\widetilde{v}_\star|^\gamma)(|v-\widetilde{v}|^2+|v_\star-\widetilde{v}_\star|^2).\end{split} \end{equation} Combining (\ref{eq: integral of norm a-atilde squared}, \ref{eq: bound on t31}, \ref{eq: bound on t32}), we have shown that \begin{equation} \label{eq: T3}  \T_3 \le c(|v|^\gamma+|v_\star|^\gamma+|\widetilde{v}|^\gamma+|\widetilde{v}_\star|^\gamma)(|v-\widetilde{v}|^2+|v_\star-\widetilde{v}_\star|^2)f_p^\star(|v|,|v_\star). \end{equation} \paragraph{4. Analysis of $\T_4$.} The final error term is the term $\mathcal{T}_4$, which corresponds to collisions in the noncutoff system with no corresponding event in the cutoff system. As a result, we anticipate that $\T_4$ will not be bounded in terms of $v-\widetilde{v}, v_\star-\widetilde{v}_\star$, but will be small in the limit $K\rightarrow \infty$. Let us recall that the integration limit $L$ is defined as $L:=K\widetilde{x}^\gamma.$  By expanding out the norms, we bound the integrand, for $z\ge L$, \begin{equation}\begin{split}\left|f_p(|v|,|v_\star|,z,x)(|v+a-\widetilde{v}|^2-|v+a-\widetilde{v}-\widehat{a}|^2)\right| \le cf_p^\star(|v|,|v_\star|)|\widehat{a}|(|v|+|\widetilde{v}|+|v_\star|+|\widetilde{v}_\star|).\end{split}\end{equation} As above, we have \begin{equation} |\widehat{a}|=\sqrt{\frac{1}{2}\left(1-\cos G\left(\frac{z}{\widetilde{x}^\gamma}\right)\right)}|\widetilde{v}-\widetilde{v}_\star| \le \frac{1}{2}G\left(\frac{z}{\widetilde{x}^\gamma}\right)|\widetilde{v}-\widetilde{v}_\star|.\end{equation} We therefore obtain the bound \begin{equation} \T_4 \le cf_p^\star(|v|,|v_\star|)(|v|^2+|v_\star|^2+|\widetilde{v}|^2+|\widetilde{v}_\star|^2)\int_L^\infty G\left(\frac{z}{\widetilde{x}^\gamma}\right)dz.\end{equation}Recalling the definition of $L=K\widetilde{x}^\gamma$, the integral evalues to \begin{equation} \int_L^\infty G\left(\frac{z}{\widetilde{x}^\gamma}\right)dz=\widetilde{x}^\gamma\int_K^\infty G(z)dz \le c\widetilde{x}^\gamma K^{1-1/\nu}.\end{equation} We therefore find \begin{equation} \label{eq: T4}\T_4 \le CK^{1-1/\nu}(|v|^{p+2+\gamma}+|\widetilde{v}|^{p+2+\gamma}+|v_\star|^{p+2+\gamma}+|\widetilde{v}_\star|^{p+2+\gamma})\end{equation}  Recalling that $l:=p+2+\gamma$, this is exactly the error claimed.  
\paragraph{5. Converting into the form desired. } Combining (\ref{eq: T1}, \ref{eq: T2}, \ref{eq: T3}, \ref{eq: T4}), we see that \begin{equation} \begin{split}\label{eq: recombine} \mathcal{E}^1_K&\le \left((c-\frac{\lambda_p}{2})|v|^{p+\gamma}+C|v_\star|^{p+\gamma}\right)|v-\widetilde{v}|^2 \\ & \hs +c(|v|^\gamma+|\widetilde{v}|^\gamma+|v_\star|^\gamma+|\widetilde{v}_\star|^\gamma)f_p^\star(|v|,|v_\star|)(|v-\widetilde{v}|^2+|v_\star-\widetilde{v}_\star|^2) \\[1ex] & \hs +C(|v|^l+|\widetilde{v}|^l+|v_\star|^l+|\widetilde{v}_\star|^l)K^{1-1/\nu}. \end{split}\end{equation} The first and last lines are already in the form desired in the statement of the lemma. Let us now examine the middle term. Using Young on the cross-terms in $f_p^\star$, we see that \begin{equation} f^\star_p(|v|,|v_\star|)\le 2|v|^p+C|v_\star|^p\end{equation} and so \begin{equation} \begin{split}&\hspace{-1cm}(|v|^\gamma+|v_\star|^\gamma+|\widetilde{v}|^\gamma+|\widetilde{v}_\star|^\gamma)f^\star_p(|v|,|v_\star|) \\[1ex]&\le c(|v|^{p+\gamma}+|v|^p|v_\star|^\gamma+|v|^p|\widetilde{v}|^{\gamma}+|v|^p|\widetilde{v}_\star|^{\gamma}) \\ & \hs +C (|v_\star|^{p+\gamma}+|v_\star|^p|v|^\gamma+|v_\star|^p|\widetilde{v}|^\gamma+|v_\star|^p|\widetilde{v}_\star|^{\gamma})\end{split}\end{equation} We now use Young's inequality on all terms appearing in this expression; for the second term, we use Peter-Paul to find \begin{equation} C (|v_\star|^{p+\gamma}+|v_\star|^p|v|^\gamma+|v_\star|^p|\widetilde{v}|^\gamma+|v_\star|^p|\widetilde{v}_\star|^{\gamma}) \le c|v|^{p+\gamma}+c|v_\star|^{p+\gamma}+C(|v_\star|^{p+\gamma}+|\widetilde{v}_\star|^{p+\gamma}).\end{equation} Therefore, \begin{equation} \begin{split}  &(|v|^\gamma+|v_\star|^\gamma+|\widetilde{v}|^\gamma+|\widetilde{v}_\star|^\gamma)f_p^\star(|v|,|v_\star|) \\& \hspace{3cm}\le c|v|^{p+\gamma}+c|\widetilde{v}|^{p+\gamma}+C(1+|v_\star|^{p+\gamma}+|\widetilde{v}_\star|^{p+\gamma})(1+|v|^p+|\widetilde{v}|^p).\end{split} \end{equation} We use this inequality for the term multiplying $|v-\widetilde{v}|^2$ in the second line of (\ref{eq: recombine}), and reverse the roles of $v\leftrightarrow v_\star, \widetilde{v}\leftrightarrow \widetilde{v}_\star$ for the term involving $|v_\star-\widetilde{v}_\star|^2$. Together, we see that \begin{equation}\label{eq: final form of T2} \begin{split} &(|v|^\gamma+|\widetilde{v}|^\gamma+|v_\star|^\gamma+|\widetilde{v}_\star|^\gamma)f_p^\star(|v|,|v_\star|)(|v-\widetilde{v}|^2+|v_\star-\widetilde{v}_\star|^2) \\ & \hs \hs \hs \le c(|v|^{p+\gamma}+|\widetilde{v}|^{p+\gamma})|v-\widetilde{v}|^2 +c(|v_\star|^{p+\gamma}+|\widetilde{v}_\star|^{p+\gamma})|v_\star-\widetilde{v}_\star|^2\\ & \hs \hs \hs +C (1+|{v}_\star|^{p+\gamma}+|\widetilde{v}_\star|^{p+\gamma})(1+|v|^p+|\widetilde{v}|^p)|v-\widetilde{v}|^2 \\ & \hs \hs \hs  + C (1+|{v}|^{p+\gamma}+|\widetilde{v}|^{p+\gamma})(1+|v_\star|^p+|\widetilde{v}_\star|^p)|v_\star-\widetilde{v}_\star|^2 \end{split} \end{equation} which gives the bound desired for $\mathcal{E}^1_K$. 
\paragraph{6. Estimate on $\mathcal{E}^2_K$. } We now turn to the analysis of ${\mathcal{E}}^2_K$, which follows a similar pattern to $\mathcal{E}^1_K$ above. In this case, we use the bound \begin{equation}\label{eq: fpk} |\widetilde{v}+\widetilde{a}_K|^p\le f_{p,L}(|\widetilde{v}|,|\widetilde{v}_\star|,z,\widetilde{x})=\begin{cases} f_p(|\widetilde{v}|,|\widetilde{v}_\star|,z,\widetilde{x}), & z\le L; \\ |\widetilde{v}|^p, &z>L \end{cases}\end{equation} which has the same upper bound $f^\star_p$. We therefore obtain a decomposition equivalent to (\ref{eq: decomposition for E}): \begin{equation}\begin{split} {\mathcal{E}}^2_K  &\le\widetilde{\T}_1+ \widetilde{\T}_2  +\widetilde{\T}_3+\widetilde{\T}_4\end{split} \end{equation} where \begin{equation}
     \widetilde{\T}_1:=\int_0^\infty dz \h(f_{p,L}(|\widetilde{v}|,|\widetilde{v}_\star|,z,x)-|\widetilde{v}|^p)|v-\widetilde{v}|^2;
\end{equation} \begin{equation}
    \widetilde{\T}_2:= 2\int_0^L dz \int_{\SSd}d\varphi \h f_{p,L}(|\widetilde{v}|,|\widetilde{v}_\star|,z,x)(v-\widetilde{v})\cdot(a-\widehat{a});
\end{equation}\begin{equation}
    \widetilde{\T}_3:=\int_0^\infty dz \int_{\SSd}d\varphi \h  f^\star_p(|\widetilde{v}|,|\widetilde{v}_\star|)|a-\widehat{a}|^2;
\end{equation} \begin{equation}
    \widetilde{\T}_4:= \int_L^\infty dz \int_{\SSd}d\varphi \h f_{p,L}(|\widetilde{v}|,|\widetilde{v}_\star|,z,\widetilde{x})\left|2(v-\widetilde{v})\cdot \widehat{a}+|\widehat{a}|^2\right|
\end{equation}The analyses of $\widetilde{\T}_3, \widetilde{\T}_4$ are identical to the arguments above, and we will now discuss the necessary modifications for $\widetilde{\T}_1, \widetilde{\T}_2$. \paragraph{6a. Analysis of $\widetilde{\T}_1$. } Let us begin with $\widetilde{\T}_1$. The same reparametrisation gives \begin{equation} \begin{split} \int_0^\infty (f_{p,L}(|\widetilde{v}|,|\widetilde{v}_\star|z,\widetilde{x})-|\widetilde{v}|^p)dz&\le -|\widetilde{v}-\widetilde{v}_\star|^\gamma |\widetilde{v}|^p |\mathbb{S}^{d-2}|\int_{\theta_0(K)}^{\pi/2}\left(1-\left(\frac{1+\cos \theta}{2}\right)^{p/2}\right)\beta(\theta)d\theta \\ & +C|\widetilde{v}-\widetilde{v}_\star|^\gamma (|\widetilde{v}_\star|^p+|\widetilde{v}|^{p-1}|\widetilde{v}_\star|+|\widetilde{v}||\widetilde{v}_\star|^{p-1}).\end{split} \end{equation} We therefore obtain \begin{equation} \begin{split} & \int_0^\infty (f_{p,L}(|\widetilde{v}|,|\widetilde{v}_\star|z,\widetilde{x})-|\widetilde{v}|^p)dz\\ & \hs \hs \le -|\widetilde{v}-\widetilde{v}_\star|^\gamma |\widetilde{v}|^p \lambda_{p,K}  +|\widetilde{v}-\widetilde{v}_\star|^\gamma (|\widetilde{v}_\star|^p+C(|\widetilde{v}|^{p-1}|\widetilde{v}_\star|+|\widetilde{v}||\widetilde{v}_\star|^{p-1})) \\[1ex] & \hs \hs \le -\lambda_{p,K}|\widetilde{v}|^{p+\gamma}+\lambda_p|\widetilde{v}_\star|^\gamma|\widetilde{v}|^p+C|\widetilde{v}-\widetilde{v}_\star|^\gamma (|\widetilde{v}_\star|^p+|\widetilde{v}|^{p-1}|\widetilde{v}_\star|+|\widetilde{v}||\widetilde{v}_\star|^{p-1})\end{split} \end{equation} where\begin{equation} \lambda_{p,K}:=|\mathbb{S}^{d-2}|\int_{\theta_0(K)}^{\pi/2} \left(1-\left(\frac{1+\cos \theta}{2}\right)^{p/2}\right)\beta(\theta)d\theta \le \lambda_p.\end{equation} We now use Peter-Paul on the positive terms, independently of $K$, to obtain \begin{equation} \lambda_p|\widetilde{v}_\star|^\gamma|\widetilde{v}|^p+C|\widetilde{v}-\widetilde{v}_\star|^\gamma (|\widetilde{v}_\star|^p+|\widetilde{v}|^{p-1}|\widetilde{v}_\star|+|\widetilde{v}||\widetilde{v}_\star|^{p-1}) \le \frac{\lambda_p}{3}|\widetilde{v}|^{p+\gamma}+C|\widetilde{v}_\star|^{p+\gamma}.\end{equation} By monotone convergence, $\lambda_{p,K}\rightarrow \lambda_p$ as $K\rightarrow \infty$ with $p$ fixed; in particular, for some $K_0=K_0(G,p,d)$ and all $K\ge K_0(G,p,d)$, $\lambda_{p,K}\ge \frac{5}{6}\lambda_p$. For such $K$, we have shown that \begin{equation}\widetilde{\T}_1 \le -\frac{\lambda_p}{2}|\widetilde{v}|^{p+\gamma}|v-\widetilde{v}|^2 +C|\widetilde{v}_\star|^{p+\gamma}|v-\widetilde{v}|^2. \end{equation} \paragraph{6b. Analysis of $\widetilde{\T}_2$.} Following the same manipulations as (\ref{eq: first form of T2}), we obtain \begin{equation} \widetilde{\T}_2=(v-\widetilde{v})\cdot\left\{(\Psi_{0L}+\Psi_{L\infty})(\widetilde{x}^\gamma,|\widetilde{v}|,|\widetilde{v}_\star|,\widetilde{x})(\widetilde{v}-\widetilde{v}_\star)-(\Psi_{0L}+\Psi_{L\infty})(\widetilde{x}^\gamma,|\widetilde{v}|,|\widetilde{v}_\star|,\widetilde{x})(v-v_\star)\right\} \end{equation} where we define \begin{equation} \Psi_{0L}(y,u,v,w)=\int_0^L f_p(u,v,z,w)(1-\cos G(z/y))dz \end{equation} and \begin{equation}\Psi_{L\infty}(y,u,v,w)=\int_L^\infty v^p(1-\cos G(z/y))dz.  \end{equation} One then repeats the differentiation (\ref{eq: differentiate psi}) for each part separately, to obtain a bound \begin{equation} \left|\frac{\partial}{\partial y} \Psi_{0L}(y,u,v,w)\right|+\left|\frac{\partial}{\partial y} \Psi_{L\infty}(y,u,v,w)\right| \le cf_p^\star(u,v) \end{equation} and the rest of the argument follows as for $\T_2.$ \paragraph{7. Bound on $\mathcal{E}^3_K$.} Finally, let us mention $\mathcal{E}^3_K$. This term is strictly easier than the two above: there is no term analagous to $\T_1$, and one can omit the moment prefactors in the remaining terms. Alternatively, one may note that $\mathcal{E}^3_K$ is exactly that analysed in \cite[Lemma 3.1]{fournier2016rate}, and the claimed bound is exactly the content of \cite[Lemma 5.1]{fournier2016rate}. \end{proof} 
\subsection{Proof of Lemma \ref{lemma: quadratic pozvner}} We now turn to the proof of the quadratic bound Lemma \ref{lemma: quadratic pozvner}, where we replace the integrand of $\mathcal{E}_K$ with its square. In this case, the integrand is nonnegative, and there is no hope of exploiting cancellations in the way we did above. On the other hand, the statement we seek to prove is much weaker; we ask only for local boundedness of $\mathcal{Q}_K$, rather than being small in a suitable sense when $|v-\widetilde{v}|, |v_\star-\widetilde{v}_\star|$ are small. It will be sufficient to prove the following slightly simpler lemma, which breaks up $\Q_K$ in a similar way to the decomposition $\mathcal{E}_K=\mathcal{E}^1_K+\mathcal{E}^2_K+\mathcal{E}^3_K$ above.

\begin{lemma}\label{lemma: broken up quadratic pozvner}
    Define \begin{equation}
        \Q_K^{1}=\int_0^\infty dz\int_{\SSd} d\varphi \h\left(d_p^2(v',\widetilde{v'}_K)-d_p^2(v,\widetilde{v})\right)^2;
    \end{equation}\begin{equation}
        \Q_K^{2}=\int_0^\infty dz\int_{\SSd} d\varphi \h\left(d_p^2(v'_\star,\widetilde{v'}_{\star K})-d_p^2(v_\star,\widetilde{v}_\star)\right)^2.
    \end{equation} Then the estimate (\ref{eq: conclusion of QP}) holds with either $\Q^{1}_K$ or $\Q^{2}_K$ in place of $\Q_K$.
\end{lemma}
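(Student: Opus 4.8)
## Proof proposal for Lemma \ref{lemma: broken up quadratic pozvner}

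The plan is to follow the decomposition already established in the proof of Lemma \ref{lemma: big calculation}, but to crudely bound squares of differences rather than exploit cancellations. By symmetry in $(v,v_\star)\leftrightarrow(v_\star,v_\star...)$, wait — more precisely, $\Q_K^2$ is obtained from $\Q_K^1$ by exchanging the roles of the starred and unstarred velocities together with replacing $a$ by $-a$ (since the post-collisional map sends $v\mapsto v+a$, $v_\star\mapsto v_\star-a$, and $a$ is antisymmetric), so it suffices to treat $\Q_K^1$. First I would write, for any reals $A,B$, the elementary bound $(A-B)^2 \le 2A^2 + 2B^2$ — no, better: we need $(d_p^2(v',\widetilde v'_K) - d_p^2(v,\widetilde v))^2$, and since $d_p^2(v',\widetilde v'_K) \le C(1+|v'|^{p+2}+|\widetilde v'_K|^{p+2})$ while $|v'| \le |v| + |a|$ and $|\widetilde v'_K| \le |\widetilde v| + |\widehat a|$ (with $\widehat a$ as in the proof of Lemma \ref{lemma: big calculation}), both $d_p^2$ terms are bounded by $C(1+|v|^{2l}+\dots)$-type quantities times powers of $(1-\cos G)$. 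The key point is that the \emph{square} of the integrand, integrated in $z$, remains finite because $\int_0^\infty G(z)^2\,dz < \infty$ and $\int_0^\infty (1-\cos G(z))\,dz < \infty$ by Lemma \ref{lemma: estimates for G}.

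The main steps, in order, are as follows. (1) Expand $d_p^2(v',\widetilde v'_K) - d_p^2(v,\widetilde v)$ as a telescoping sum, first replacing $\widetilde v'_K$ by $\widehat v' := \widetilde v + \widehat a$ (incurring an error supported on $z > L := K\widetilde x^\gamma$, which contributes at most $C(\dots)\int_L^\infty G(z)\,dz \le CK^{1-1/\nu}(\dots)$ after squaring and using $G \le c(1+z)^{-1/\nu}$), then using the mean-value-type inequality $|d_p^2(a,b) - d_p^2(a',b')| \le C(1+|a|^{p+1}+|a'|^{p+1}+|b|^{p+1}+|b'|^{p+1})(|a-a'|+|b-b'|)$ together with $|v'-v| = |a|$ and $|\widehat v' - \widetilde v| = |\widehat a|$. (2) Use the explicit formulas $|a|^2 = \tfrac12(1-\cos G(z/x^\gamma))|v-v_\star|^2$ and $|\widehat a|^2 = \tfrac12(1-\cos G(z/\widetilde x^\gamma))|\widetilde v - \widetilde v_\star|^2$ from (\ref{eq: norm of a atilde}), so that $|a| \le c\,G(z/x^\gamma)\,|v-v_\star|$ and similarly for $|\widehat a|$. (3) Square the resulting bound; the integrand is then controlled by $C(1+|v|^{2p+2}+\dots)(|v-v_\star|^2 + |\widetilde v - \widetilde v_\star|^2)\big(G(z/x^\gamma)^2 + G(z/\widetilde x^\gamma)^2\big)$, and we use $\int_0^\infty G(z/x^\gamma)^2\,dz = x^\gamma \int_0^\infty G(z)^2\,dz \le c\,x^\gamma$ (finite by Lemma \ref{lemma: estimates for G}i, since $2/\nu > 2 > 1$). (4) Absorb the factors $|v-v_\star|^2 x^\gamma = |v-v_\star|^{2+\gamma} \le c(1+|v|^{2+\gamma}+|v_\star|^{2+\gamma})$ and the polynomial weights $|v|^{2p+2}$ etc.\ into a single bound of the form $C(1+|v|^{2l}+|v_\star|^{2l}+|\widetilde v|^{2l}+|\widetilde v_\star|^{2l})$ via Young's inequality, recalling $l = p+2+\gamma$ so that $2p+2 + (2+\gamma) = 2l - \gamma \le 2l$. (5) Finally fold in the error term from step (1), which is already of the required form (indeed with an extra factor $K^{1-1/\nu} \le 1$), and invoke the $(v,v_\star)$-exchange symmetry to conclude the same bound for $\Q_K^2$.

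I expect step (1) — keeping track of the algebra in expanding the difference of the two $d_p^2$ terms while isolating a clean multiplicative $|a| + |\widehat a|$ factor — to be the most error-prone, but not genuinely hard: the quantity $d_p^2$ is smooth with polynomially-controlled derivatives, so the mean value inequality is routine. The only structural subtlety is ensuring that when $z > L$ the contribution of the ``missing'' cutoff collision is correctly attributed: on that range $\widetilde a_K = 0$ so $\widetilde v'_K = \widetilde v$, and the difference $d_p^2(v+a,\widetilde v) - d_p^2(v,\widetilde v)$ is bounded by $C(1+|v|^{p+1}+|a|^{p+1}+|\widetilde v|^{p+1})|a|$, whose square integrated over $z > L$ is $\le C(\dots)\int_L^\infty G(z)^2\,dz \le C(\dots)\,\widetilde x^\gamma K^{1-2/\nu}$, comfortably within the claimed bound since $K^{1-2/\nu} \le 1$ (or even $\le K^{1-1/\nu}$). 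No large negative ``Povzner'' term is needed or available here, which is exactly why the statement asks only for local boundedness rather than a Grönwall-ready estimate.
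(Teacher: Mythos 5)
Your proposal is correct, and it takes a mildly different route from the paper's. The paper's proof expands $d_p^2(v',\widetilde{v}'_K)-d_p^2(v,\widetilde{v})$ algebraically into three explicit pieces --- a moment-weight change $(|v'|^p+|\widetilde{v}'_K|^p-|v|^p-|\widetilde{v}|^p)|v-\widetilde v|^2$, the term $(1+|v'|^p+|\widetilde{v}'_K|^p)\bigl(2(a-\widehat a)\cdot(v-\widetilde v)+|a-\widehat a|^2\bigr)$, and a missing-collision correction supported on $z>L$ --- then squares with crude bounds and collects them into three integrals $\T_5,\T_6,\T_7$ analysed one at a time. You instead apply a single Lipschitz-type bound $|d_p^2(v',\widetilde{v}'_K)-d_p^2(v,\widetilde v)|\le C(1+|v'|^{p+1}+|v|^{p+1}+|\widetilde{v}'_K|^{p+1}+|\widetilde v|^{p+1})(|a|+|\widehat a|)$ and then square, landing on $\int G(z/x^\gamma)^2\,dz=x^\gamma\int G^2\,dz$ in one shot. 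Both routes depend on exactly the same ingredients: the $G$-estimates of Lemma \ref{lemma: estimates for G} (in particular $\int_0^\infty G^2\,dz<\infty$), $|a|\le c\,|v-v_\star|\,G(z/x^\gamma)$ from (\ref{eq: norm of a atilde}), and Young's inequality to reorganise the polynomial weights into a symmetric $2l$-moment bound. Your packaging is slightly cleaner in that it treats the moment weight and the spatial factor of $d_p^2$ simultaneously, avoids reusing the $\T_3$ analysis, and, because the estimate asks only for local boundedness, correctly observes that the $z\le L$ versus $z>L$ split is unnecessary (one can simply bound $|\widetilde{v}'_K-\widetilde v|=|\widetilde a_K|\le|\widehat a|$ and drop the indicator). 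You also make explicit the antisymmetry $(v,v_\star,a)\leftrightarrow(v_\star,v,-a)$ that reduces $\Q_K^2$ to $\Q_K^1$, a point the paper leaves implicit. Two very minor slips: (i) mid-way through your step (1) you write $\int_L^\infty G\,dz\le CK^{1-1/\nu}$ for the remainder, whereas after squaring the relevant quantity is $\int_L^\infty G^2\,dz$ (you correct this in your closing paragraph); (ii) the scaling factor there should be $x^\gamma$, coming from $|a|$, rather than $\widetilde x^\gamma$ --- both are harmless since either is absorbed by the polynomial moments and the resulting $K$-power is $\le 1$.
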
 Once we have established these estimates, the second point of Lemma \ref{lemma: quadratic pozvner} follows from the easy comparison $\Q^2_K\le 2\Q^{2,1}_K+2\Q^{2,1}_K$.

\begin{proof}[Proof of Lemmas \ref{lemma: quadratic pozvner}] We use the same notation as above, and start from a decomposition similar to (\ref{eq: decomposition for E}): \begin{equation} \begin{split} d_p^2(v',\widetilde{v}_K')^2-d_p^2(v,\widetilde{v})&=(|v'|^p+|\widetilde{v}'_K|^p-|v|^p-|\widetilde{v}|^p)|v-\widetilde{v}|^2 \\ & \hs + (1+|v'|^p+|\widetilde{v}'_K|^p)(2(a-\widehat{a})\cdot(v-\widetilde{v})+|a-\widehat{a}|^2) \\ & \hs +(1+|v'|^p+|\widetilde{v}'_K|^p)(2\widehat{a}\cdot(v+a-\widetilde{v})+|\widehat{a}|^2)1(z\ge L). \end{split} \end{equation} We now square each term, and use the crude bounds $|a|\le |v|+|v_\star|, |\widehat{a}|\le |\widetilde{v}|+|\widetilde{v}_\star|$ to see that \begin{equation} \begin{split} &(d_p^2(v',\widetilde{v}_K')^2-d_p^2(v,\widetilde{v}))^2 \le c(|v'|^p+|\widetilde{v}'_K|^p-|v|^p-|\widetilde{v}|^p)^2|v-\widetilde{v}|^4 \\ & \hs +c(1+|v'|^p+|\widetilde{v}'_K|^p)^2(|v|^2+|\widetilde{v}|^2+|v_\star|^2+|\widetilde{v}|^2)|a-\widehat{a}|^2\\& \hs +c(1+|v'|^p+|\widetilde{v}'_K|^p)^2(|v|^2+|\widetilde{v}|^2+|v_\star|^2+|\widetilde{v}|^2)|\widehat{a}|^21(z\ge L)).\end{split} \end{equation} We can now replace every instance of $|v'|^p\le C(|v|^p+|v_\star|^p)$, and similarly for $\widetilde{v}'_K$, and drop the factor $1(z\ge L)$ in the final term.  In this way, we obtain \begin{equation} \begin{split} \mathcal{Q}^1_K&\le C\left(\T_5+\T_6+\T_7\right); \end{split} \end{equation} where the three terms are \begin{equation}\begin{split}
   \T_5:= \int_0^\infty dz\int_{\SSd}d\varphi \left||v'|^p-|v|^p+|\widetilde{v}_K'|^p-|\widetilde{v}|^p\right|&\\&\hspace{-3cm}\dots\times(1+|v|^{p+4}+|v_\star|^{p+4}+|\widetilde{v}|^{p+4}+|\widetilde{v}_\star|^{p+4});\end{split}
\end{equation}\begin{equation}
    \T_6:=\int_0^\infty dz\int_{\SSd}d\varphi (1+|v|^{2p+2}+|v_\star|^{2p+2}+|\widetilde{v}^{p+2}|+|\widetilde{v}_\star|^{2p+2})|a-\widehat{a}|^2;
\end{equation} \begin{equation}
   \T_7:=\int_0^\infty dz\int_{\SSd}d\varphi (1+|v|^{2p+2}+|v_\star|^{2p+2}+|\widetilde{v}|^{p+2}+|\widetilde{v}_\star|^{2p+2})|\widehat{a}|^2.
\end{equation}Let us now analyse these integrals one by one. The analysis of $\T_5$ is similar to that of $\T_1$, although with an absolute value, and the integrals appearing in $\T_6, \T_7$ can be reduced to the calculations for $\T_3, \T_4$ in the previous proof. 
\paragraph{1. Analysis of $\T_5$.} We start from the observation that, for all $v, w\in \mathbb{R}^d,$ we have \begin{equation}||v|^p-|w|^p|\le C(1+|v|^{p-1}+|w|^{p-1})|v-w|. \end{equation}  It follows that \begin{equation}\begin{split} \left||v'|^p-|v|^p\right|&\le C(1+|v|^{p-1}+|v+a|^{p-1})|a| \\ & \le C(1+|v|^{p-1}|+|v_\star|^{p-1})(|v|+|v_\star|)G\left(\frac{z}{x^\gamma}\right).\end{split}\end{equation} Integrating, we find that \begin{equation}\begin{split}\int_0^\infty dz\int_{\SSd}d\varphi \left||v'|^p-|v|^p\right| &\le C(1+|v|^p+|v_\star|^p)\int_0^\infty G(z/x^\gamma)dz \\& \le C(1+|v|^{p+\gamma}+|v_\star|^{p+\gamma}).\end{split} \end{equation} A similar argument applies for $||\widetilde{v}'_K|^p-|\widetilde{v}|^p|.$ Including the moment prefactors, we obtain \begin{equation}\label{eq: bound on Q1} \T_5\le C(1+|v|^{2p+4+\gamma}+|v_\star|^{2p+4+\gamma}+|\widetilde{v}|^{2p+4+\gamma}+|\widetilde{v}_\star|^{2p+4+\gamma}). \end{equation} \paragraph{2. Analysis of $\T_6$.} For $\T_6$, we note that the moment prefactor is constant over the integral, and that we already analysed $\int_0^\infty dz\int_{\SSd} |a-\widehat{a}|^2$ when analysing $\T_3$ in the previous proof. Absorbing the terms $|v-\widetilde{v}|^2$ and $|v_\star-\widetilde{v}_\star|^2$, the same calculations as above therefore give \begin{equation} \label{eq: bound on Q2} \Q_2\le C(1+|v|^{2p+4+\gamma}+|v_\star|^{2p+4+\gamma}+|\widetilde{v}|^{2p+4+\gamma}+|\widetilde{v}_\star|^{2p+4+\gamma}). \end{equation} \paragraph{3. Analysis of $\T_7$.} As above, the moment prefactor is independent of the integration variables $z,\varphi$, and the problem reduces to estimating $\int_L^\infty \int_{\SSd}|\widehat{a}|^2$, which is analagous to $\T_4$. We recall that \begin{equation} |\widehat{a}|^2=\frac{1}{2}|\widetilde{v}-\widetilde{v}_\star|^2\left(1-\cos G\left(\frac{z}{\widetilde{x}} \right)\right)\le \frac{1}{4}|v-\widetilde{v}|^2 G\left(\frac{z}{\widetilde{x}^\gamma}\right)^2.\end{equation} Therefore, \begin{equation} \int_0^\infty dz\int_{\SSd}d\varphi |\widehat{a}|^2 \le C|v-\widetilde{v}|^2|v-\widetilde{v}|^\gamma \int_0^\infty G(z)^2dz.   \end{equation} The final integral is finite, thanks to the estimates established in Subsection \ref{subsec: estimates for G}, so we conclude \begin{equation}\label{eq: bound on Q3}\T_7 \le C(1+|v|^{2p+4+\gamma}+|v_\star|^{2p+4+\gamma}+|\widetilde{v}|^{2p+4+\gamma}+|\widetilde{v}_\star|^{2p+4+\gamma}). \end{equation} Combining (\ref{eq: bound on Q1}, \ref{eq: bound on Q2}, \ref{eq: bound on Q3}) gives the claimed result. \end{proof}
 
 \begin{appendix}
 	\section{Appendix: Proof of Propositions \ref{prop: labelled and unlabelled dynamics}, \ref{prop: wellposedness/existence}, \ref{prop: wellposedness/uniqueness}} \label{sec: pf_wellposedness} We finally address the well-posedness issues regarding the labelled and unlabelled Kac processes, which have been deferred. We will now prove Propositions \ref{prop: labelled and unlabelled dynamics}, which describes the relationships between the labelled and unlabelled dynamics, and Propositions \ref{prop: wellposedness/existence}, \ref{prop: wellposedness/uniqueness}, which assert a moderate well-posedness for the stochastic differential equation (\ref{eq: SDE form}) and of the martingale problem for the generator (\ref{eq: generator}) of the unlabelled dynamics.\bigskip \\ Our strategy is as follows. The first item of Proposition \ref{prop: labelled and unlabelled dynamics} is elementary, and relies on a consistency between the unlabelled and labelled generators $\mathcal{G}, \mathcal{G}^\mathrm{L}$; for the second item, we carefully state a result of Kurtz \cite{kurtz1998martingale,kurtz2011equivalence} and show how it applies in our case. For Proposition \ref{prop: wellposedness/existence}, we can show existence by standard techniques for martingale problems, using tightness and consistency of the generators; this does not use any result in the paper, and can be read independently of the more delicate esimtates. For uniqueness in Proposition \ref{prop: wellposedness/uniqueness}, we use the coupling and estimates in Section \ref{sec: TK}, which we emphasise do not rely on this result. We do not seek any estimates uniformly in $N$, and we can replace moment estimates with the trivial bound $|V^i_t|\le\sqrt{N}$. For ease of presentation, we will use the estimates we have already developed in this paper, although those from the literature \cite{fournier2016rate} would work equally well.
\bigskip \\ Let us recall some notation which will be needed. We will frequently move between objects defined on the labelled Kac sphere \begin{equation}
    \mathbb{S}_N=\left\{\mathcal{V}^N=(V^1,...V^N)\in (\RRd)^N, \h \sum_{i=1}^N V^i=0, \h \sum_{i=1}^N |V^i|^2=N\right\}
\end{equation} and the unlabelled state space $\mathcal{S}_N$; we recall that $\theta_N:\mathbb{S}_N\rightarrow \mathcal{S}_N$ is the map  \begin{equation} \label{eq: define thetaN}\mathcal{V}^N=(V^1,...,V^N)\mapsto \frac{1}{N}\sum_{i=1}^N\delta_{V^i}.\end{equation} For clarity, we will indicate functions on $\mathbb{S}_N$ with a $\widehat{\cdot}$ to distinguish them from those on $\mathcal{S}_N$. We will equip $\mathcal{S}^N$ with the distance \begin{equation}
    |\mathcal{V}^N-\mathcal{W}^N|:=\sum_{i=1}^N|V^i-W^i|
\end{equation} where the right-hand side is the Euclidean norm on $\mathbb{R}^d$. We will write $W^{1,\infty}(\mathbb{S}_N)$ for the Sobolev space of functions $\widehat{F}:\mathbb{S}_N\rightarrow \mathbb{R}$ which are Lipschitz with respect to this distance, equipped with the norm \begin{equation}
    \|\widehat{F}\|_{W^{1,\infty}(\mathbb{S}_N)}:=\max\left(\sup_{\mathcal{V}^N}|\widehat{F}(\mathcal{V}^N)|, \sup_{\mathcal{V}^N\neq \mathcal{W}^N}\frac{|\widehat{F}(\mathcal{V}^N)-\widehat{F}(\mathcal{V}^N)|}{|\mathcal{V}^N-\mathcal{W}^N|}\right)
\end{equation} and define $W^{1,\infty}(\mathcal{S}_N)$ similarly, equipping $\mathcal{S}_N$ with the Wasserstein$_1$ distance $\mathcal{W}_1$. It is elementary to show that these spaces are separable. Let us also recall, for convenience, the generators of the labelled and unlabelled dynamics, given respectively by \begin{equation}  \label{eq: generator recall}
     (\mathcal{G}^NF)(\mu^N)=N\int_{\mathbb{R}^d\times\mathbb{R}^d\times \mathbb{S}^{d-1}} (F(\mu^{N, v,v_\star,\sigma})-F(\mu^N))B(v-v_\star,\sigma)\mu^N(dv)\mu^N(dv_\star)d\sigma;
 \end{equation}  \begin{equation} \label{eq: labelled generator 3} (\mathcal{G}^\mathrm{L}\widehat{F})(\mathcal{V}^N)=\frac{1}{N}\sum_{i=1}^N\sum_{j=1}^N\int_{\mathbb{S}^{d-1}}\left(\widehat{F}(\mathcal{V}^N_{i,j,\sigma})-\widehat{F}(\mathcal{V}^{N})\right)d\sigma \end{equation} for Lipschitz functions $F\in W^{1,\infty}(\mathcal{S}_N), \widehat{F}\in W^{1,\infty}(\mathbb{S}_N)$ respectively.  With this notation fixed, we turn to the proof of the two propositions.   \begin{proof}[Proof of Proposition \ref{prop: labelled and unlabelled dynamics}] For item i)., observe the following consistency between the unlabelled generator (\ref{eq: generator recall}) and labelled generator (\ref{eq: labelled generator 3}), which follows from the $\text{Sym}(N)$ symmetry of the labelled dynamics: if $F\in W^{1,\infty}(\mathcal{S}_N)$, then $\widehat{F}:=F\circ \theta_N\in W^{1,\infty}(\mathbb{S}_N)$, and \begin{equation} \label{eq: consistency of generators} \mathcal{G}^L\left(F\circ \theta_N\right)=\left(\mathcal{G} F\right)\circ \theta_N. \end{equation} Now, let $\mathcal{V}^N_t$ be a labelled Kac process, for some filtration $(\mathcal{F}_t)_{t\ge 0}$; it follows that $\mathcal{V}^N_t$ solves the martingale problem for (\ref{eq: labelled generator 3}) for the same filtration. Now, let $\mu^N_t=\theta_N(\mathcal{V}^N_t)$ be the associated empirical measures, and fix $F\in W^{1,\infty}(\mathcal{S}_N)$. For $\widehat{F}=F\circ \theta_N$ as above, the consistency (\ref{eq: consistency of generators}) gives \begin{equation}
     \begin{split}
         F(\mu^N_t)-F(\mu^N_0)-\int_0^t (\mathcal{G}F)(\mu^N_s)ds = \widehat{F}(\mathcal{V}^N_t)-\widehat{F}(\mathcal{V}^N_0)-\int_0^t (\mathcal{G}^\mathrm{L}\widehat{F})(\mathcal{V}^N_s)ds.
     \end{split}
 \end{equation} The right-hand side is a martingale by assumption, and hence $\mu^N_t$ solves the martingale problem for (\ref{eq: generator recall}) in the filtration $(\mathcal{F}_t)_{t\ge 0}$, as desired; in particular, $\mu^N_t$ is a Markov process with generator (\ref{eq: generator recall}). \bigskip \\ For item ii), we will use the following result, which generalises the implication needed, due to Kurtz \cite{kurtz1998martingale,kurtz2011equivalence}. Let us first fix some terminology. For a topological space $E$, let us write $\overline{C}(E)$ for the space of bounded, continuous functions on $E$, $B(E)$ for the space of bounded, Borel-measurable functions on $E$, and $\mathcal{P}(E)$ for the space of Borel probability measures. Given another such space $E_0$, a transition function $\alpha$ from $E_0$ to $E$ is a mapping from $E_0\rightarrow \mathcal{P}(E)$ such that, for all Borel sets $A\subset E$, the map $y\mapsto \alpha(y, A)$ is a Borel function on $E_0$; for such $\alpha$ and $f\in B(E)$, define $\alpha f \in B(E_0)$ by \begin{equation} (\alpha f)(y):=\int_E f(z)\alpha(y, dz). \end{equation} We will write $M_E[0,\infty), D_E[0,\infty)$ for the measurable, respectively c\`ad\`ag functions from $[0,\infty)$ to $E$. \bigskip \\ Let us say that a linear operator $\mathcal{A}\subset B(E)\times B(E)$ is seperable if there exists a countable subset $\{f_\beta, \beta\ge 1\}\subset \mathcal{D}(\mathcal{A})$ such that, for all $(f,g)\in \mathcal{A}$, there exists a subsequence $\beta_i\rightarrow \infty$ such that $(f_i,\mathcal{A}f_i)$ are bounded uniformly in $i$, and converge pointwise to $(f,g)$. We say that a linear operator $\mathcal{A}$ is a pregenerator if it is dissipative, and there exists a sequence of functions $q_n: E\rightarrow \mathcal{P}(E), r_n:E\rightarrow [0,\infty)$ such that, for all $f\in \mathcal{D}(\mathcal{A})$, we have the pointwise convergence \begin{equation}
    r_n(x)\int_E(f(y)-f(x))q_n(x,dy)\rightarrow (\mathcal{A}f)(x)\hs \text{ for all }x\in E.
\end{equation}  With these definitions, we can state the following result, which appears as part of \cite[Theorem 1.4]{kurtz2011equivalence} \begin{proposition}\label{prop: equivalence} Let $(E,r), (E_0, r_0)$ be complete, separable metric spaces. Let $\mathcal{A}\subset \overline{C}(E)\times \overline{C}(E)$ be a linear operator which is seperable and a pre-generator, and whose domain $\mathcal{D}(\mathcal{A})$ separates points in $E$. Suppose that $\theta: E\rightarrow E_0$ is Borel measureable, and $\alpha$ is a transition function from $E_0$ to $E$ satisfying the compatibility condition $\alpha(y,\theta^{-1}(y))=1$ for all $y\in E_0$. Let $\mathcal{A}^\theta$ be the linear operator \begin{equation}
    \mathcal{A}^\theta=\left\{\left(\alpha f, \alpha(\mathcal{A}f)\right):f\in \mathcal{D}(\mathcal{A})\right\}\subset B(E_0)\times B(E_0).
\end{equation} Let $\mathcal{L}_0\in \mathcal{P}(E_0)$, and let $\widetilde{\mathcal{L}}_0=\alpha_\#\mathcal{L}_0 \in \mathcal{P}(E)$ be given by \begin{equation} \widetilde{\mathcal{L}}_0(A)=\int_{E_0} \alpha(y,A)\mathcal{L}_0(dy). \end{equation} If $\widetilde{\mu}=(\widetilde{\mu}_t)_{t\ge 0}$ is a solution of the martingale problem for $(\mathcal{A}^\theta, \mathcal{L}_0)$, then there exists a solution $\mathcal{V}$ of the martingale problem for $(\mathcal{A}, \widetilde{\mathcal{L}}_0)$ such that $\widetilde{\mu}$ has the same law on $M_{E_0}[0,\infty)$ as $\mu=\theta\circ \mathcal{V}$. Further, if $\widetilde{\mu}$, and hence $\mu$, has a modification with sample paths in $D_{E_0}[0,\infty)$, then the modified $\widetilde{\mu},\mu$ have the same law on $D_{E_0}[0,\infty).$ \end{proposition}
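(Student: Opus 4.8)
The final statement is the Markov mapping theorem of Kurtz, reproduced here from \cite[Theorem 1.4]{kurtz2011equivalence}; in the body of the paper it is invoked only as a black box, so the plan for the ``proof'' is to cite that reference, and I merely indicate the shape of the argument and the routine verification of its hypotheses that is all that is really needed in our applications. In every instance where we use it --- with $E=\mathbb{S}_N$, $E_0=\mathcal{S}_N$, $\theta=\theta_N$, $\mathcal{A}=\mathcal{G}^{\mathrm L}$, and $\alpha(\mu^N,\cdot)$ the uniform law on $\theta_N^{-1}(\mu^N)$ --- the standing hypotheses hold for elementary reasons: $E$ and $E_0$ are Polish; $\mathcal{D}(\mathcal{A})=W^{1,\infty}(\mathbb{S}_N)$ separates points and, being separable, contains a countable bounded-pointwise-dense subset, with $\mathcal{A}$ carrying $W^{1,\infty}$-convergent sequences to boundedly pointwise convergent ones (using the $O(\theta)$ bound on jump sizes against the Lipschitz constant), so that $\mathcal{A}$ is separable in Kurtz's sense; and $\mathcal{A}$ is a pregenerator, since truncating its (possibly infinite-activity) jump mechanism at total rate $n$ yields bounded operators $\mathcal{A}_n$ with $\mathcal{A}_n f\to\mathcal{A}f$ pointwise for $f\in\mathcal{D}(\mathcal{A})$, and $\mathcal{A}_n$ has exactly the form $r_n(x)\int_E(f(y)-f(x))q_n(x,dy)$ for a probability kernel $q_n$ and a rate $r_n$. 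The compatibility $\alpha(y,\theta^{-1}(y))=1$ is immediate.

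The substance of the theorem is the \emph{lifting} construction: from a solution $\widetilde\mu=(\widetilde\mu_t)_{t\ge0}$ of the martingale problem for $(\mathcal{A}^\theta,\mathcal{L}_0)$ one must build a process $\mathcal{V}$ on $E$ solving the martingale problem for $(\mathcal{A},\widetilde{\mathcal{L}}_0)$ with $\theta\circ\mathcal{V}$ equal in law to $\widetilde\mu$. The first, easy, observation is at the level of one-dimensional distributions: since $(\alpha f)(\widetilde\mu_t)-(\alpha f)(\widetilde\mu_0)-\int_0^t(\alpha\mathcal{A}f)(\widetilde\mu_s)\,ds$ is a mean-zero martingale for each $f\in\mathcal{D}(\mathcal{A})$, taking expectations and writing $\pi_t:=\alpha_\#\mathcal{L}(\widetilde\mu_t)\in\mathcal{P}(E)$, so that $\mathbb{E}[(\alpha f)(\widetilde\mu_t)]=\langle f,\pi_t\rangle$ and $\mathbb{E}[(\alpha\mathcal{A}f)(\widetilde\mu_t)]=\langle\mathcal{A}f,\pi_t\rangle$, shows that $t\mapsto\pi_t$ solves the forward equation $\langle f,\pi_t\rangle=\langle f,\pi_0\rangle+\int_0^t\langle\mathcal{A}f,\pi_s\rangle\,ds$ with $\pi_0=\widetilde{\mathcal{L}}_0$. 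One then has to promote this to a genuine $E$-valued process $\mathcal{V}$ that not only has these marginals but also realises $\alpha(\widetilde\mu_t,\cdot)$ as the conditional law of $\mathcal{V}_t$ given $\mathcal{F}^{\widetilde{\mu}}_t=\sigma(\widetilde\mu_s:s\le t)$ and solves the $\mathcal{A}$-martingale problem. Kurtz achieves this by encoding the joint requirements on $(\mathcal{V},\widetilde\mu)$ as a single filtered martingale problem for an enlarged operator and applying a general existence theorem --- tightness of natural approximations together with an Echeverria--Weiss-type characterisation of solutions via the forward equation, which is exactly where the pregenerator and separability hypotheses enter. Once $\mathcal{V}$ is constructed, the compatibility condition $\alpha(y,\theta^{-1}(y))=1$ forces $\theta(\mathcal{V}_t)=\widetilde\mu_t$ almost surely for every $t$, whence $\theta\circ\mathcal{V}$ and $\widetilde\mu$ have the same finite-dimensional distributions, and so the same law on $M_{E_0}[0,\infty)$.

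For the final assertion, suppose $\widetilde\mu$ --- and hence $\theta\circ\mathcal{V}$ --- admits a modification with sample paths in $D_{E_0}[0,\infty)$. A law on $D_{E_0}[0,\infty)$ is determined by its finite-dimensional distributions along a countable dense set of times together with right-continuity; since these already agree for the two modified processes, they have the same law as elements of $D_{E_0}[0,\infty)$. The construction also equips $\mathcal{V}$ itself with a measurable, and under the same hypothesis c\`adl\`ag, modification.

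The one genuine difficulty is the lifting step of the second paragraph: upgrading the correct behaviour of the one-dimensional marginals to an honest $E$-valued process carrying the prescribed conditional distributions and the full martingale-problem dynamics. This is the technical core of \cite{kurtz1998martingale,kurtz2011equivalence}, and there is nothing to be gained by reproducing it; for the present paper it suffices to have checked, as above, that the hypotheses apply in each use, and to cite the theorem.
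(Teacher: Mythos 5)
Your proposal takes essentially the same approach as the paper: Proposition \ref{prop: equivalence} is stated there without proof, as a direct quotation of part of \cite[Theorem 1.4]{kurtz2011equivalence}, and the verification of its hypotheses (separability of $\mathcal{G}^{\mathrm L}$, the pregenerator property via cutoff approximations, the compatibility of $\alpha$ with $\theta_N$) is carried out not here but in the proof of Proposition \ref{prop: labelled and unlabelled dynamics}, exactly as you indicate. Your additional sketch of the internal structure of Kurtz's lifting argument is accurate but not required, and citing the theorem as a black box is precisely what the paper does.
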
 Let us now show how this applies in our case. We will take $E, E_0$ to be the labelled and unlabelled Kac spheres $E=\mathbb{S}_N, E_0=\mathcal{S}_N$ respectively, equipped with the metrics as above. We take $\mathcal{A}$ to be the labelled generator $\mathcal{G}^L$ given by (\ref{eq: labelled generator1}), defined on $F\in W^{1,\infty}(\mathbb{S}_N)$, and let $\theta=\theta_N$ be given by (\ref{eq: define thetaN}). We define $\alpha$ as the average over the preimage \begin{equation}
    \alpha(\mu^N)=\frac{1}{\#\theta_N^{-1}(\mu^N)}\h\sum_{\mathcal{V}^N\in \theta_N^{-1}(\mu^N)}\delta_{\mathcal{V}^N}.
\end{equation} We remark that, if $\mu^N\in \mathcal{S}_N$ and $\mathcal{V}^N\in\theta_N^{-1}(\mu^N)$, then $\alpha(\mu^N)$ can be rewritten \begin{equation}\label{eq: rewrite alpha}
    \alpha(\mu^N)=\frac{1}{N!}\sum_{\pi \in \text{Sym}(N)} \delta_{\mathcal{V}^{N,\pi}}
\end{equation} where $\mathcal{V}^{N,\pi}$ denotes the action of $\pi\in \text{Sym}(N)$ permuting the $N$ components $V^1,..,V^N\in\RRd$ of $\mathcal{V}^N$. It is elementary, if somewhat tedious, to check that with these choices, the linear operator $\mathcal{A}^\theta$ is exactly the unlabelled generator $\mathcal{G}$, defined on $W^{1,\infty}(\mathcal{S}_N)$; the inclusion $\mathcal{G}\subset \mathcal{A}^\theta$ is exactly the statement (\ref{eq: consistency of generators}), and for the other inclusion $\mathcal{A}^\theta\subset \mathcal{G}$, we  use (\ref{eq: rewrite alpha}) to check that, for $\widehat{F}:\mathbb{S}_N\rightarrow \mathbb{R}$ Lipschitz, $\alpha \widehat{F}: \mathcal{S}_N\rightarrow \mathbb{R}$ is Lipschitz, and straightforward calculations show that $\mathcal{G}(\alpha \widehat{F})=\alpha(\mathcal{G}^L \widehat{F})$ as desired. \bigskip \\ To see that $\mathcal{A}=\mathcal{G}^L$ is separable, we note that $W^{1,\infty}(\mathbb{S}_N)$ is separable, and $\mathcal{G}^\mathrm{L}:W^{1,\infty}(\mathbb{S}_N)\rightarrow L^\infty(\mathbb{S}_N)$ is a bounded linear map. Its graph is therefore separable in the stronger topology induced by $\mathcal{G}^\mathrm{L}\subset W^{1,\infty}(\mathbb{S}_N)\times L^\infty(\mathbb{S}_N)$, and so is separable in the topology of bounded pointwise convergence in the definition above.  \bigskip \\ To see that $\mathcal{G}^\mathrm{L}$ is a pregenerator, let us define $\mathcal{G}^\mathrm{L}_K$ to be the cutoff equivalent, replacing $B$ by the cutoff kernel $B_K$ (\ref{eq: cutoff kernel}). It is straightforward to write $\mathcal{G}^\mathrm{L}_K$ in the form desired, and $\mathcal{G}^\mathrm{L}_K\rightarrow \mathcal{G}_K$ in the space of bounded linear maps $\mathcal{B}(W^{1,\infty}(\mathbb{S}_N),L^\infty(\mathbb{S}_N))$. Elementarily, each $\mathcal{G}^\mathrm{L}_K$ is the generator of a cutoff, labelled Kac process, and so generates a semigroup of contraction mappings; by the Lumer-Phillips Theorem, they are therefore dissipative; we can then take a limit to conclude that $\mathcal{G}^\mathrm{L}$ is dissipative, and so is a a pregenerator.  \bigskip \\ We can now apply the conclusion of Proposition \ref{prop: equivalence} above. Let us fix $\mu^N_0 \in \mathcal{S}_N$, and let $(\widetilde{\mu}^N_t)_{t\ge 0}$ be a solution to the martingale problem for the unlabelled generator (\ref{eq: generator}) starting at $\mu^N_0$. The law $\widetilde{\mathcal{L}}_0$ given by Proposition \ref{prop: equivalence} exactly corresponds to picking $\mathcal{V}^N_0\in \theta_N^{-1}(\mu^N_0)$ uniformly at random, as in the statement of the proposition, and by the result quoted above, there exists a solution to the martingale problem for (\ref{eq: labelled generator1}), starting at $\mathcal{V}^N_0$ such that $\widetilde{\mu}^N_t$ has the same law as $\theta_N(\mathcal{V}^N_t)$. $\mathcal{V}^N_t$ is therefore a weak solution to the stochastic differential equation (\ref{eq: SDE form}), and so we have proven the claim of item ii).  \end{proof}  \begin{proof}[Proof of Proposition \ref{prop: wellposedness/existence}] Let us fix $\mathcal{V}^N_0$; for each $K$, $\mathcal{V}^{N,K}_t$ be a solution to (\ref{eq: cutoff SDE}), starting at $\mathcal{V}^N_0$, with cutoff parameter $K$. Since the rates are finite, such processes can be constructed elementarily, and have uniqueness in law. We check tightness via Aldous' criterion; thanks to the energy constraint, each $\mathcal{V}^{N,K}_t$ takes values in $[-N^{1/2},N^{1/2}]^{Nd}$, and for equicontinuity, we estimate \begin{equation} \label{eq: uicontinuity} \int_0^\infty dz\int_{\SSd}d\varphi \h |a(v,v_\star,z,\varphi)| \le C|v-v_\star|^{1+\gamma}\le C(1+|v|^2+|v_\star|^2).\end{equation}  As above, let $\mathcal{G}^\mathrm{L}, \mathcal{G}^\mathrm{L}_K$ be the (noncutoff/cutoff) labelled generators, and fix $\widehat{F}\in W^{1,\infty}(\mathbb{S}_N)$. As mentioned above, is straightforward to show that $\mathcal{G}^L_K\widehat{F}$ are continuous, and converge uniformly to $\mathcal{G}^\mathrm{L}\widehat{F}$; it follows that any subsequential limit point of $\mathcal{V}^{N,K}_t$, as $K\rightarrow \infty$, is a  solution to the martingale problem for (\ref{eq: labelled generator1}), and hence is a weak solution to (\ref{eq: SDE form}). \end{proof} \begin{proof}[Proof of Proposition \ref{prop: wellposedness/uniqueness}] For uniqueness in law, let $\mathcal{V}^{N}_t$ be any solution to (\ref{eq: SDE form}) starting at $\mathcal{V}^N_0$. We now apply Lemma \ref{lemma: Tanaka Coupling of KP}; fix $p>p_0(G,d), K>K_0(G,p,d)$ as in the statement, and $0\le t_1<...<t_m$, we take $b=N^{(p+\gamma)/2}$, so that $T^N_b=T^{N,K}_b=\infty$. The cited lemma now shows that $(\mathcal{V}^N_{t_i})_{i\le m}$ is the limit in probability, of $(\mathcal{V}^{N,K}_{t_i})_{i\le m}$, for cutoff labelled Kac processes $\mathcal{V}^{N,K}_t$ starting at $\mathcal{V}^N_0$, as $K\rightarrow \infty$. Since the law of each $\mathcal{V}^{N,K}_t$ is uniquely determined, the same is true of the $m$-tuple $(\mathcal{V}^N_{t_i})_{i\le m}$. Since $t_i$ were arbitrary, we conclude that the law of $\mathcal{V}^N_t$ is unique, as claimed. \end{proof}
 \end{appendix}

\end{document}